\definecolor{red}{rgb}{.7,0,0}
\definecolor{purple}{rgb}{0.7,0,0.7}
\newcommand{\N}{\mathbb N}
\newcommand{\R}{\mathbb{R}}
\newcommand{\C}{\mathbb{C}}
\newcommand{\Z}{\mathbb{Z}}
\newcommand{\Q}{\mathbb{Q}}
\newcommand{\F}{\mathbb{F}}
\newcommand{\proj}{\mathbb{P}}
\newcommand{\Gl}{\mathrm{GL}}
\newcommand{\GL}{\mathrm{GL}}
\newcommand{\Hom}{\mathrm{Hom}}
\newcommand{\E}{\mathbb{E}\,}
\newcommand{\spann}{\mathrm{span}}
\newcommand{\rk}{\mathrm{rk}}
\newcommand{\vol}{\mathrm{vol}}
\newcommand{\cM}{\mathcal{M}}
\newcommand{\codim}{\mathrm{codim}}
\renewcommand{\a}{\alpha}
\renewcommand{\b}{\beta}
\newcommand{\s}{\sigma}
\newcommand{\e}{\varepsilon}
\newcommand{\g}{\gamma}
\newcommand{\frm}{\mathfrak{m}}
\newcommand{\val}{\mathrm{val}}
\DeclareMathOperator{\chara}{char}
\newcommand{\Prob}{\mathrm{Prob}}
\newcommand{\diag}{\mathrm{diag}} 
\newcommand{\ot}{\otimes}
\newcommand{\Reg}{\mathrm{Reg}}
\DeclareMathOperator{\im}{im}
\newcommand{\Gr}{G}
\newcommand{\Gra}{\mathrm{Gr}}
\newcommand{\Hy}{\mathcal{H}} 
\newcommand{\RnO}{R_{\ne 0}}
\newcommand{\mnO}{\frm_{\ne 0}}
\newcommand{\bN}{\bar{\N}}
\newcommand{\bZ}{\bar{\Z}}
\newcommand{\Nko}{\bN^k_o}
\newcommand{\cG}{\mathcal{G}}
\definecolor{ao(english)}{rgb}{0.0, 0.5, 0.0}
\DeclareMathOperator{\Spec}{Spec}
\renewcommand{\:}{\colon}
\newtheorem{thm}{Theorem}
\newtheorem{lemma}[thm]{Lemma}
\newtheorem{cor}[thm]{Corollary}
\newtheorem{prop}[thm]{Proposition}
\newtheorem*{prop*}{Proposition}
\newtheorem*{claim}{Claim}
\theoremstyle{definition}
\newtheorem{defi}[thm]{Definition}
\theoremstyle{remark} 
\newtheorem{remark}[thm]{Remark}
\newtheorem{example}[thm]{Example}
\numberwithin{equation}{subsection}
\numberwithin{thm}{subsection}
\title{Nonarchimedean integral geometry} 
\author{Peter B\"urgisser}\thanks{Supported by the ERC under the European Union's Horizon 2020 research and innovation programme (grant agreement no. 787840).}
\address{Institute of Mathematics, Technische Universit\"at Berlin}
\email{pbuerg@math.tu-berlin.de}
\author{Avinash Kulkarni}\thanks{Supported by the Simons Collaboration on Arithmetic Geometry, Number Theory, and Computation (Simons Foundation grant 550029).}
\address{Department of Mathematics, 
Dartmouth College, USA} 
\email{avinash.a.kulkarni@dartmouth.edu}
\author{Antonio Lerario}
\address{SISSA, Trieste, Italy}
\email{lerario@sissa.it}
\begin{document}

\keywords{integral geometry, nonarchimedean analysis, coarea formula, Schubert varieties, Schubert calculus, 
fewnomial systems}

\subjclass[2010]{11S05, 14G20, 14N15, 32P05, 53C65, 60D05}


\maketitle


\begin{abstract}
  Let $K$ be a nonarchimedean local field of characteristic zero with
  valuation ring~$R$, for instance, $K=\mathbb{Q}_p$ and $R=\mathbb{Z}_p$. 
  We prove a general integral geometric formula for  
  $K$--analytic groups and homogeneous $K$--analytic spaces,
  analogous to the corresponding result over the reals.  
  This generalizes the $p$--adic integral geometric formula for 
  projective spaces recently discovered by Kulkarni and Lerario, 
  e.g., to the setting of Grassmannians. 
  {Based on this, we outline the construction of a  nonarchimedean probabilistic Schubert Calculus.
  For this purpose, we characterize the relative position of two subspaces of $K^n$ by a position vector, 
  a nonarchimedean analogue of the notion of principal angles,
  and we study the probability distribution of the position vector for random uniform subspaces. 
  We then use this to compute the volume of special Schubert varieties over $K$.} 
  As a second application of the general integral geometry formula, 
  we initiate the study of random fewnomial systems over nonarchimedean fields, bounding,
  and in some cases exactly determining,
  the expected number of zeros of such random systems. 
\end{abstract}

\setcounter{tocdepth}{2}
\tableofcontents

\section{Introduction}

\subsection{Nonarchimedean integral geometry}

Classical integral geometry deals with the averaging of metric properties of submanifolds of a homogeneous space under
the action of a (real or complex) Lie group. A cornerstone result of this theory is the integral geometry formula in Riemannian homogeneous spaces, 
which goes back to the work of Poincar\'e and Blaschke, proved in modern language in the monograph by Howard  \cite{howard:93} (see also \cite{santalo}). 
In the classical literature (i.e., in the real or complex setting), formulas that express metric properties of a set
as an average over a  family of \emph{linear} slices are also called Crofton formula. 
In the nonarchimedean setting a local Crofton formula in the spirit of \cite{comte} was proved in \cite[Theorem 6.2.1]{CCL}, 
and a motivic version of this formula appears in \cite{forey}. In~\cite{KL:19}, an analogue of the Riemannian integral geometry from~\cite{howard:93} 
was established for submanifolds of projective spaces over~$\Q_p$. 
    
One of the main result of this article, \cref{thm:main}, is a generalization of \cite{KL:19} over a general nonarchimedean local field~$K$ of characteristic zero
and for general homogeneous spaces.

Our motivation for this study is related to the subject of \emph{random algebraic geometry}, 
which has attracted a lot of attention over the last decades. 
When no \emph{generic} number of solutions to a system of algebraic equations is defined, 
which happens to be the case when working over a non algebraically closed field, 
one seeks for a typical behaviour by insisting that the coefficients of the equations are sampled at random from some probability distribution, 
then asking for the \emph{expectation} of the number of solutions. Over the Reals, this approach goes back to Kac and Rice and 
has been investigated, for instance, in 
\cite{ShSm1, shsm, ShSm3, EdelmanKostlan95, NazarovSodin1, Ko2000, GaWe1, GaWe2, GaWe3, Letwo, LeLu:gap,Sarnak,ET19,E21,EK22}. 
Over nonarchimedean fields the study of random system of equations was initiated by Evans \cite{Evans} 
and recently received new interest, see \cite{evans:02,KL:19,bhar-et-al:21, AEML, Caruso22, shmueli}. 
(These lists of references are by no mean complete.)

The role of the integral geometry formula in this subject has been clear already from \cite{EdelmanKostlan95}, 
where the authors relate the expectation of the number of solutions of a random polynomial system 
with the volume of the image of an appropriate Veronese embedding -- by slicing this embedding with a random uniformly distributed linear space. 
In \cite{BL:19} B\"urgisser and Lerario explained how to use the integral geometry approach to construct a probabilistic version of Schubert Calculus, 
leading to a theory of random intersections over the Reals (even with an underlying ``probabilistic intersection ring'', see \cite{BBLM, Leothesis}).

In the nonarchimedean framework, the ideas from \cite{EdelmanKostlan95} 
can be extended to the study of systems of random $p$--adic equations using the $p$--adic  integral geometry formula
{ proved by Kulkarni and Lerario in \cite{KL:19}. To go beyond the basic projective framework, one needs} 
the more general formula that we prove in this paper. In fact, as a consequence of \cref{thm:main}, 
in this work we will also explain how to set up a nonarchimedean version of the probabilistic Schubert Calculus from \cite{BL:19} (see \cref{sec:PSCintro}). 
As we will see, this will require a detour into the study of the relative position of a pair of random subspaces in the nonarchimedan Grassmannian, 
as well as the study of metric properties of nonarchimedean Schubert varieties, see 
\cref{sec:randomspacesintro} and \cref{sec:volumesintro}, respectively. 
Moreover, we will also give here a further application of the integral geometry approach, 
initiating the study of random system of fewnomial equations over a nonarchimedan field, see \cref{sec:fewnomials}.

\subsection{$R$--structure on $K$--analytic manifolds}

In order to formulate the main results, 
we have to introduce first some notation and terminology.
 Throughout the paper, $K$ denotes a nonarchimedean local field of characteristic zero with
valuation ring~$R$, maximal ideal $\frm$, and residue field 
$R/\frm \simeq\F_q$, where $q$ is a prime power. For instance, if
$K=\Q_p$, $R=\Z_p$ and $q=p$. 




In our exposition we will use the concept of an \emph{$R$--structure}.
This notion is inspired by \cite[Chapter~AG~11]{Borel} and can be seen as a nonarchimedean analogue of a Finsler structure.
Loosely speaking, an $R$--structure on a $K$--analytic manifold $X$ is a locally constant choice 
(with respect to $x\in X$) of a lattice $\Lambda_x\subseteq T_xX$, spanning $T_xX$ over $R$ (see \cref{def:R-struct-mfs}). 
The notion of an $R$--structure is very closely related to several concepts already
well studied in the literature, see for instance \cite{igusa:2000, CLT, AA}.
Whilst the introduction of $R$--structures may be redundant from an expert or specialist point of
view, our approach is meant to serve two aims: first, we wish to consider classically algebraic
or geometric concepts within a more analytic framework. Secondly, and more importantly,
we wished to streamline and single out the germane concepts in the existing theory
in a way that makes the analogies with \cite{howard:93, BL:19} transparent, particularly for
nonspecialists.

The notion of an $R$--structure endows a $K$--analytic manifold $X$ with a natural volume form~$\Omega_X$.
Using the volume form, we can integrate measurable functions $f\: X \rightarrow \C$
and, if the constant function $f\equiv 1$ is integrable, we can define the \emph{volume} of $X$ as 
\[
  |X| := \int_X \Omega_X.
\]
For $U\subseteq X$ a Borel set, setting $\mathrm{vol}(U):=\int_U \Omega_X$ defines a Borel measure on $X$ that,
in the case $X\subseteq \Z_p^n$ is an analytic subset, agrees with the standard notion of Hausdorff measure on the
set of smooth points, as in \cite{oesterle:82, CLT, AA}.  See also Sections~\ref{se:volumeX} and~\ref{sec:volumeequi}.
Moreover, if $X$ is a smooth scheme over $\Spec R$, then the set of $R$--valued points $X(R)$ is a $K$--analytic manifold
that carries a natural $R$--structure.
It turns out that 
the volume form on $X(R)$ determined by this $R$--structure
is a gauge form in the sense of \cite{weilAdeles}; in particular, the volume of this manifold
in our sense agrees with the Weil canonical volume (Proposition \ref{prop:weil}), see \cite{serre:81, weilAdeles}.
The notion of $R$--structure allows also to formulate geometric--measure statements (involving normal Jacobians) in a simple way, 
see for instance \cref{th:coarea}.

%
%

 \subsection{The nonarchimedean integral geometry formula}
In a similar style as in Howard's~\cite{howard:93} generalization of Poincar\'e's classical integral geometry formula, 
we consider the action of a general compact $K$--analytic group on a homogeneous space.

The general setting is as follows: 
let $G$ be a compact (nonarchimedean) $K$--analytic Lie group and $H$ a closed $K$--analytic subgroup. 
Then the homogeneous space $G/H$ becomes a $K$--analytic manifold~\cite[Part~II, Chap.~IV, \S 5]{serre:64}. 
If $G$ comes with a compatible $R$--structure, then we can endow $G/H$ with an induced $R$--structure 
(see \cref{se:anKgroups} and \cref{se:hom_K_analytic_spaces}). 
Assume that $Y_1, \ldots, Y_m$ are $K$--analytic submanifolds of $G/H$.  Similar to the classical integral geometry formula in~\cite{howard:93},
the nonarchimedean statement requires the notion of an \emph{average scaling factor}.
The average scaling factor is a function $\sigma_H\colon Y_1 \times \cdots \times Y_m \rightarrow \R$ that measures the relative positions
of the tangent spaces of the $Y_i$ (see Definition~\ref{def:sigma-many}). 
For example, when $G = \operatorname{GL}_{n+1}(\Z_p)$ and $H$ is the stabilizer of a non-zero vector,
then $\sigma_H$ is a constant function (which we compute explicitly in~\eqref{eq:sigmaH-pro}). 
With this notation, the nonarchimedean version of the integral geometry formula takes the following form
(see Theorem~\ref{thm:GIGF}).

\begin{thm}[Nonarchimedean integral geometry formula]\label{thm:main} 
Let $G$ denote a compact $K$--analytic group with $R$--structure  
and $H\subseteq G$ be a closed $K$--analytic subgroup. 
Let $Y_1, \ldots, Y_m$ be $K$--analytic submanifolds of $G/H$ 
such that $\sum_{i=1}^m\codim_{G/H} Y_i \leq \dim G/H$. 
Then the submanifolds $g_1Y_1, \ldots, g_mY_m$ intersect transversely
for almost all $(g_1, \ldots, g_m)\in G^{m}$, and
$$
 \E_{(g_1, \ldots, g_m)\in G^m} |g_1Y_1\cap \cdots\cap g_m Y_m|
   =\frac{1}{|G/H|^{m-1}} \int_{Y_1\times \cdots\times Y_m}\sigma_H\, \Omega_{Y_1\times\cdots \times Y_m} , 
$$
where the expectation is taken over a uniformly random 
$(g_1, \ldots, g_m)\in G\times \cdots\times G$
defined with respect to the Haar measure. 
\end{thm}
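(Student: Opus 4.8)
The plan is to reduce the statement to a single coarea computation on the incidence variety, in the style of Howard's proof over the reals. First I would treat the case $m=2$, since the general case follows by induction: once we know $\E_{g}|Y\cap gY'| = \frac{1}{|G/H|}\int_{Y\times Y'}\sigma_H\,\Omega$ for a single group element (after normalizing one of the factors away by translation invariance of the Haar measure), we can peel off one submanifold at a time, using that $g_1 Y_1 \cap \dots \cap g_{m-1}Y_{m-1}$ is again (almost surely) a $K$--analytic submanifold of the expected codimension to which we can apply the $m=2$ case against $g_m Y_m$. The transversality claim for almost all $(g_1,\dots,g_m)$ is the nonarchimedean analogue of the parametric transversality / Sard-type statement: one shows the ``bad'' set where the intersection fails to be transverse is a $K$--analytic subset of positive codimension in $G^m$, hence of measure zero with respect to the Haar measure; this uses that $G$ acts smoothly and transitively on $G/H$ so that the evaluation/action map is a submersion.

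For the $m=2$ core computation, consider the incidence set
\[
  Z := \{(y, y', g)\in Y\times Y'\times G \;:\; y \in g y'\},
\]
which, because the action of $G$ on $G/H$ is transitive with smooth orbit map, is a $K$--analytic submanifold of $Y\times Y'\times G$ of dimension $\dim Y + \dim Y' + \dim G - \dim G/H$. I would introduce the two natural projections: $\pi_{12}\colon Z\to Y\times Y'$ and $p\colon Z\to G$. The fiber $p^{-1}(g)$ is (a copy of) $Y\cap g Y'$, so integrating over $Z$ and using the coarea formula (\cref{th:coarea}) with respect to $p$ computes $\int_G |Y\cap gY'|\,\Omega_G$ up to the normal Jacobian of $p$. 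Integrating instead with respect to $\pi_{12}$: the fiber $\pi_{12}^{-1}(y,y')$ is the set of $g\in G$ carrying $y'$ to $y$, which (by transitivity and the choice of $R$--structure on $G/H$ being $G$--invariant) has volume independent of $(y,y')$ and equal to $|H|$ times a factor accounting for $|G/H|$; the normal Jacobian of $\pi_{12}$ along this fiber is precisely what the average scaling factor $\sigma_H(y,y')$ is defined to capture (Definition~\ref{def:sigma-many}) — it is the nonarchimedean measure of how the tangent spaces $T_yY$ and $T_y(gy')$ sit inside $T_y(G/H)$, averaged over the fiber. Equating the two evaluations of $\int_Z \Omega_Z$ and dividing by the appropriate normalizing volumes (which is where the $|G/H|^{m-1}$ denominator appears, after iterating) gives the formula.

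The two ingredients that make this work in the nonarchimedean setting, rather than just transcribing Howard, are: (i) the coarea formula with $R$--structures and its compatibility with the $G$--invariant volume forms on $G$, $H$, and $G/H$ — so that ``$\Omega_G$ disintegrates as $\Omega_H$ along fibers times $\Omega_{G/H}$ on the base'' holds exactly, with no extra constant, which is where the hypothesis that $G$ carries a compatible $R$--structure inducing one on $G/H$ is used; and (ii) the identification of the normal Jacobian $\NJ \pi_{12}$ with (the reciprocal of) $\sigma_H$. Step (ii) is the technical heart: one has to linearize the incidence condition $y\in gy'$ at a point of $Z$, express the differential of $\pi_{12}$ in terms of the tangent maps of the $G$-action, and recognize that the resulting Smith-normal-form invariants of the relevant lattice map are exactly those packaged into $\sigma_H$. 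I expect this identification — pinning down the normal Jacobian of the incidence projection and checking it is fiberwise constant and equals the quantity called $\sigma_H$ — to be the main obstacle; the transversality/measure-zero statement and the inductive reduction from general $m$ to $m=2$ are comparatively routine given \cref{th:coarea} and the structure theory of compact $K$--analytic groups already set up in the earlier sections.
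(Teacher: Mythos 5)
Your plan is essentially the paper's: a Howard-style coarea computation on an incidence (equivalently, graph) variety, where the normal Jacobian of the relevant projection is matched with the $\sigma$-quantity on conormal spaces, and the $m$-fold case is obtained by iteration. The one organizational difference is that the paper (following Howard~\cite{howard:93} and \cite{BL:19}) first proves a basic integral formula for submanifolds $M_1,M_2$ of the group $G$ itself (Proposition~\ref{pro:BIF}), where the incidence condition $x_1 = g x_2$ determines $g$ uniquely so that the incidence variety is just the graph of $(x_1,x_2)\mapsto x_1x_2^{-1}$ and there is no $H$-fiber to average over at that stage; the descent to $G/H$ and the appearance of $\sigma_H$ then come in a separate step. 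Your version works directly in $G/H$, where the fibers of $\pi_{12}$ are $H$-cosets, so the $H$-average has to be folded into the Jacobian bookkeeping simultaneously; this is workable but less clean. The computation you single out as the technical heart is exactly what the paper isolates as Lemma~\ref{le:auxy}: the absolute Jacobian of the subtraction map $E\times F\to V$, $(x,y)\mapsto x-y$, equals $\sigma(E^\perp,F^\perp)$ on conormal spaces, which is what plays the role of Howard's Jacobian identity in the nonarchimedean setting.
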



A special situation of interest for us, in view of the application to Schubert Calculus, is when  in Theorem~\ref{thm:main}
the group $G$ acts transitively on the tangent spaces of each submanifold $Y_i$
(see Definition~\ref{def:TA}). In this case we get the following result
(see Corollary~\ref{cor:GIGF}). 

\begin{cor}\label{cor:main}
Under the assumptions of Theorem~\ref{thm:main}, 
if moreover $G$ acts transitively on the tangent spaces to $Y_i$ of $i=1, \ldots, m$, 
then we have 
\begin{equation}\label{eq:cohomogeneous}
  \E_{(g_1, \ldots, g_m)\in G^m}|g_1Y_1\cap \cdots\cap g_m Y_m|
     =\sigma_H(y_1, \ldots, y_m)\cdot |G/H| \cdot \prod_{i=1}^m\frac{|Y_i|}{|G/H|} ,
\end{equation}
where $(y_1, \ldots, y_m)$ is any point of $Y_1\times \cdots\times Y_m$.
\end{cor}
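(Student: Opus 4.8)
The plan is to deduce this from Theorem~\ref{thm:main} by showing that, under the transitivity hypothesis, the right-hand side of the formula in Theorem~\ref{thm:main} simplifies to a product. First I would observe that the average scaling factor $\sigma_H$ is, by its very definition (Definition~\ref{def:sigma-many}), a function of the tangent spaces $T_{y_i}Y_i$ inside $T_{y_i}(G/H)$ only; since $G$ acts by $R$--structure-preserving automorphisms on $G/H$, and since $G$ acts transitively on the tangent spaces to each $Y_i$, the value $\sigma_H(y_1,\dots,y_m)$ is independent of the choice of the point $(y_1,\dots,y_m)\in Y_1\times\cdots\times Y_m$. This is the content I would isolate as a short lemma (or simply cite from the discussion around Definition~\ref{def:TA}): whenever $G$ acts transitively on the tangent spaces to $Y_i$ for each $i$, the function $\sigma_H$ is constant on $Y_1\times\cdots\times Y_m$.

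Granting that, I would substitute the constant value $\sigma_H(y_1,\dots,y_m)$ out of the integral in Theorem~\ref{thm:main}, obtaining
\[
 \E_{(g_1,\dots,g_m)\in G^m}|g_1Y_1\cap\cdots\cap g_mY_m|
 =\frac{\sigma_H(y_1,\dots,y_m)}{|G/H|^{m-1}}\int_{Y_1\times\cdots\times Y_m}\Omega_{Y_1\times\cdots\times Y_m}.
\]
The remaining step is to identify the integral as $\prod_{i=1}^m|Y_i|$. This is the multiplicativity of the volume form under products of $R$--analytic manifolds: the $R$--structure on $Y_1\times\cdots\times Y_m$ is the product $R$--structure (each tangent space splits as a direct sum of lattices), so the volume form $\Omega_{Y_1\times\cdots\times Y_m}$ is the exterior product of the pullbacks of the $\Omega_{Y_i}$, and by Fubini for the associated measures $\int\Omega_{Y_1\times\cdots\times Y_m}=\prod_i\int\Omega_{Y_i}=\prod_i|Y_i|$. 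I would reference the product-compatibility of $R$--structures and of the induced volume forms established earlier (Sections~\ref{se:volumeX} and the discussion of $R$--structures on manifolds). Putting these together and rewriting $\frac{1}{|G/H|^{m-1}}\prod_i|Y_i| = |G/H|\prod_i\frac{|Y_i|}{|G/H|}$ yields \eqref{eq:cohomogeneous}.

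The main obstacle I anticipate is not in the formal manipulation but in pinning down precisely why $\sigma_H$ is constant: one must check that the $G$--action on $G/H$ not only is transitive on tangent directions but also preserves whatever metric/lattice data enters the definition of the average scaling factor, so that $\sigma_H(g_1y_1,\dots,g_my_m)=\sigma_H(y_1,\dots,y_m)$ for all $g_i\in G$; combined with transitivity on tangent spaces this forces constancy. This should follow directly from the definition of $\sigma_H$ in terms of the $R$--structure (which $G$ preserves by hypothesis on $G$ having a compatible $R$--structure) and from Definition~\ref{def:TA}, but it is the one point where the hypotheses must be used carefully rather than cosmetically.
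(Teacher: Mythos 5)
Your proposal is correct and follows essentially the same route as the paper's proof, which simply observes that transitivity of the $G$--action on the tangent spaces makes $\sigma_H$ constant on $Y_1\times\cdots\times Y_m$, pulls that constant out of the integral in Theorem~\ref{thm:main}, and identifies the remaining integral with $\prod_i|Y_i|$. Your additional discussion of why constancy holds (invariance of the $R$--structure under the $G$--action, together with Definition~\ref{def:TA}) and of the product compatibility of volume forms fills in details the paper leaves implicit, but there is no substantive difference in approach.
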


The main result of \cite{KL:19} is obtained from Corollary~\ref{cor:main} by specializing
$K = \Q_p$, $G = \operatorname{GL}_{n+1}(\Z_p)$, and $H \subseteq G$ being
the subgroup stabilizing a one-dimensional subspace of $K^{n+1}$.

\subsection{Toward a nonarchimedean probabilistic Schubert Calculus}\label{sec:PSCintro}

Schubert Calculus is, loosely speaking, the study of the ways that Schubert cycles intersect within Grassmannians.
  Over an algebraically closed field, the enumerative properties of intersections of Schubert cycles,
  such as the number of points in a zero--dimensional intersection, remain constant outside a locus of positive codimension.
  This principle is no longer true if one is looking at Grassmannians defined
  over non--algebraically closed fields. 
  In a previous work~\cite{BL:19}, B\"urgisser and Lerario introduced a probabilistic version of Schubert Calculus
  to study the typical behaviour of intersections of Schubert cycles. 
  More specifically, their work focused on the expectation of the number of points (or volume)  
  of an intersection of randomly chosen Schubert varieties in real Grassmannians.
  Motivated by the real case,
  we consider the study of Schubert cycles over a nonarchimedean local field.
\begin{example}
  Consider the elementary enumerative problem of counting the number of lines intersecting 
   four lines $L_1, \ldots, L_4$
  in $p$--adic projective space.
  For the \emph{generic} choice of the lines $L_1, \ldots, L_4$, there are precisely two lines $S_1, S_2$ 
   intersecting all of them, 
  but $S_1, S_2$ are in general defined over a degree--two extension of $\mathbb{Q}_p$. 
  The number of $p$--adic solutions of this enumerative problem depends on
   the configuration of the given four lines. 
  One can replace the word \emph{generic} with \emph{random} and ask for the expectation of 
  the number of solutions 
  defined over $\mathbb{Q}_p$. More precisely, let 
  $G=\mathrm{GL}_4(\mathbb{Z}_p)$, $H=\mathrm{GL}_2(\mathbb{Z}_p)\times \mathrm{GL}_2(\mathbb{Z}_p)$, 
 so that $G/H$ is the Grassmannian of planes in $\mathbb{Q}_p^4$, 
 i.e., the Grassmannian  of  lines in projective three--space. 
 Denoting by $\Omega\subseteq G/H$ the set of lines meeting a fixed line, asking for 
 ``the expected number of lines intersecting four random lines in $p$--adic projective space space'' 
 means computing the expectation
\begin{equation}\label{eq:integralgrass}
   \E_{(g_1, \ldots, g_4)\in G^4}\#(g_1\Omega\cap\cdots \cap g_4\Omega).
\end{equation}
\end{example}

Integrals like \eqref{eq:integralgrass}, where each $\Omega$ is replaced by a general Schubert variety, 
can be thought of as average intersection numbers in nonarchimedean Grassmannians. 
In principle, these numbers can be computed using Corollary~\ref{cor:main} (once the hypothesis of the statement 
are verified). 
A crucial observation is that $\mathrm{GL}_n(R)$ acts transitively on the set of tangent spaces 
to Schubert varieties---we provide a proof of this statement for
codimension one
Schubert varieties, 
but remark that a similar argument works in general case. In order to compute integrals as \eqref{eq:integralgrass},
one requires therefore to know the volumes of the Schubert varieties as well as the value of the average scaling factor $\alpha_H$ on them 
(which happens now to be a constant function because of the transitive action of $\mathrm{GL}_n(R)$ on their tangent spaces). 
We discuss now these two points.

\subsubsection{The volume of special Schubert varieties}\label{sec:volumesintro}
Computing the volume of Schubert varieties seems to be a quite
difficult task, already over the Reals (over the complex numbers it is
easier, because the volume can be read from the degree). In the real
case this was done in \cite{BL:19} for codimension one Schubert
varieties. In this paper we compute volumes of \emph{all} special
Schubert varieties in the nonarchimedean framework.\footnote{By essentially the same method, we also obtain the volume of all special Schubert varieties over the Reals, which is a new result.}
In order to state the result, it turns out to be helpful to use a more symmetric notation for Grassmannians: 
let us define for $a,b\in\N$
$$
 \Gra(a,b) := G(a,a+b) := \{ E\subseteq K^{a+b} \mbox{ subspace }\mid \dim E = a\}.
$$
We will consider the special Schubert variety
$$
 \Omega_{a,a_1;b,b_1} := \{ E \in \Gra(a+a_1,b+b_1) \mid \dim  (E\cap F) \ge a\},
$$
where $F\in\Gra(a+b_1, a_1 +b)$ is fixed. Note that $\Omega_{a,a_1;b,b_1}$ has codimension $ab$ in $\Gra(a+a_1,b+b_1)$. 
(In fact, this Schubert variety 
corresponds to the rectangular Young diagram $a\times b$, 
see~\cite{eisenbud-harris:16, manivel:01}.)

With this notation, we will prove the following theorem (this is \cref{th:main-vol-schubert} below).

\begin{thm}\label{th:main-vol-schubertintro} 
For any nonnegative integers $a,a_1,b,b_1$, we have 
\begin{equation*}
  \frac{|\Omega_{a,a_1;b,b_1}|}{|G(a+a_1,b+b_1)|} = \frac{
   |\Gra(a,b)| \cdot   |\Gra(a,b_1)| \cdot  |\Gra(a_1,b)|}{
   |\Gra(a+a_1,b)| \cdot  |\Gra(b+b_1,a)|}.
\end{equation*}
\end{thm}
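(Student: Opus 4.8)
The plan is to compute $|\Omega_{a,a_1;b,b_1}|$ by slicing the generic stratum of the Schubert variety with the coarea formula (\cref{th:coarea}) along a natural $\mathrm{GL}_n(R)$-equivariant fibration, and then to evaluate the single remaining local integral using the distribution of the position vector studied in \cref{sec:randomspacesintro}. Throughout put $n=a+a_1+b+b_1$, fix $F\in\Gra(a+b_1,a_1+b)$ as in the statement, write $\Omega=\Omega_{a,a_1;b,b_1}$, and recall that $|\Gra(c,d)|=[c+d]/([c]\,[d])$, where $[m]:=\prod_{i=1}^{m}(1-q^{-i})$ (established earlier from the homogeneous presentation $\Gra(c,d)=\mathrm{GL}_{c+d}(R)/P_{c,d}$ together with multiplicativity of volumes along $\mathrm{GL}_{c+d}(R)\to\Gra(c,d)$, itself an instance of the coarea formula).

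First I would pass to the open dense stratum $\Omega^{\circ}:=\{E:\dim(E\cap F)=a\}$. Its complement in $\Omega$ is a union of Schubert varieties of strictly smaller dimension, hence of measure zero, so $|\Omega|=|\Omega^{\circ}|$; moreover $\Omega^{\circ}$ is smooth, being a single $\mathrm{GL}_n(K)$-orbit. On $\Omega^{\circ}$ consider
\[
  \psi\colon \Omega^{\circ}\longrightarrow \Gra(a,b_1)\times\Gra(a_1,b),\qquad E\longmapsto\bigl(E\cap F,\ (E+F)/F\bigr),
\]
where $E\cap F$ is read as an $a$-plane in $F$ and $(E+F)/F$ as an $a_1$-plane in $K^n/F$. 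A dimension count gives $\dim\Omega^{\circ}=ab_1+a_1b+a_1b_1$, and (using that $\mathrm{GL}_n(K)$ is transitive on pairs with $\dim(E\cap F)=a$) $\psi$ is a surjective submersion whose fiber over $(A,U)$ is the set of complements of $F/A$ inside the preimage of $U$ in $K^n/A$; hence each fiber is analytically isomorphic to the affine space $\Hom(U,F/A)\cong K^{a_1b_1}$. Since $\psi$, its source and its target are all equivariant for the parabolic $P:=\mathrm{Stab}_{\mathrm{GL}_n(R)}(F)$, and $P$ acts transitively on $\Gra(a,b_1)\times\Gra(a_1,b)$, the coarea formula applied to $\psi$ gives
\[
  |\Omega|=|\Omega^{\circ}|=|\Gra(a,b_1)|\cdot|\Gra(a_1,b)|\cdot C,\qquad C:=\int_{\psi^{-1}(A_0,U_0)}\frac{\Omega_{\psi^{-1}(A_0,U_0)}}{\NJ\,\psi},
\]
the inner integral being independent of the base point $(A_0,U_0)$ by $P$-equivariance. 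Thus the theorem is equivalent to the identity $C=|\Gra(a+a_1,b+b_1)|\cdot|\Gra(a,b)|\,/\,\bigl(|\Gra(a+a_1,b)|\cdot|\Gra(b+b_1,a)|\bigr)=[n]\,[a+b]\,/\,\bigl([a+a_1+b]\,[a+b+b_1]\bigr)$.

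The heart of the proof — and, I expect, the main obstacle — is the evaluation of $C$. I would run a local computation in a standard Grassmannian chart around a point $E_0$ with $E_0\cap F=A_0$ and $(E_0+F)/F=U_0$: there $\Omega^{\circ}$ becomes the coordinate subspace $\{X_{11}=0\}$, $\psi$ becomes the linear projection that forgets the block $X_{22}\in\Hom(K^{a_1},K^{b_1})$ parametrizing the fiber, and the $R$-structure of the ambient Grassmannian at the matrix $X$ is the lattice $\Hom_R\bigl(R^{a+a_1}\cap X^{-1}R^{b+b_1},\ R^{b+b_1}+XR^{a+a_1}\bigr)$. Restricting these lattices to $\Omega^{\circ}$ and to the fiber and taking covolumes, one finds that along the fiber $\NJ\,\psi(\phi)=q^{\,b\,d_1(\phi)+a\,d_2(\phi)}$ while $\Omega_{\psi^{-1}(A_0,U_0)}$ has density $q^{-b_1d_1(\phi)-a_1d_2(\phi)}$, where $d_1(\phi)=\val[R^{a_1}:R^{a_1}\cap\phi^{-1}R^{b_1}]$ and $d_2(\phi)=\val[R^{b_1}+\phi R^{a_1}:R^{b_1}]$ are precisely the valuations recording the position vector of the pair $(E_\phi,F)$. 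Consequently
\[
  C=\int_{\Hom(K^{a_1},K^{b_1})}q^{-(b+b_1)\,d_1(\phi)-(a+a_1)\,d_2(\phi)}\,d\phi .
\]
This integral I would evaluate by passing to the Smith normal form of $\phi$: the substitution turns it into a weighted sum over the possible position vectors (partitions), the weights being exactly the masses of the position-vector strata determined in the analysis of random uniform subspaces in \cref{sec:randomspacesintro}. Summing the resulting $q$-series yields $C=[n]\,[a+b]/([a+a_1+b]\,[a+b+b_1])$, and substituting back, together with $|\Gra(c,d)|=[c+d]/([c]\,[d])$, gives the claimed formula. (In the simplest case $a_1=b_1=1$ one gets directly $C=1+(1-q^{-1})\sum_{k\ge1}q^{-(a+b+1)k}=(1-q^{-(a+b+2)})/(1-q^{-(a+b+1)})$, which is the predicted value, providing a useful check.) I expect the two delicate points to be: (i) verifying that $\psi$ is a submersion everywhere on $\Omega^{\circ}$ and that the fibers and base carry exactly the stated $R$-structures, so that the coarea formula applies with the correct volume forms; and (ii) the bookkeeping that identifies $\NJ\,\psi$ along the fiber with the position-vector data and matches the ensuing sum with the target product. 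Over $\R$ the same scheme runs verbatim, with $d_1,d_2$ replaced by the principal angles of $(E_\phi,F)$ and the $p$-adic integral replaced by a Selberg-type Gaussian integral.
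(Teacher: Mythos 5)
Your proposal takes a genuinely different fibration from the paper, and the route is viable, but the central technical step is asserted rather than proved. The paper slices $\Omega^{\circ}$ by the single map $\Psi\colon E\mapsto E\cap F$ with fibre a dense open subset of $\Gra(a_1,b+b_1)$, then proves the Jacobian formula (Proposition~\ref{pro:Jac-Psi-m} via Lemma~\ref{le:Omega-chart}), and evaluates the resulting fibre integral as a sum over position vectors using the joint density of Theorem~\ref{th:joint-densityK} and the inheritance identity of Corollary~\ref{cor:rho-inheritance}. You instead slice by the finer map $\psi\colon E\mapsto\bigl(E\cap F,\,(E+F)/F\bigr)$, whose fibre is the affine space of complements of $F/A$ in the preimage of $U$, i.e.\ the dense cell of $\Gra(a_1,b_1)$. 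This buys you a smaller, more explicit fibre (and in fact a cleaner integrand: since $d_1(\phi)=d_2(\phi)=\sum_i\max(0,-k_i)$ for the Smith exponents $k_i$ of $\phi$, the integrand collapses to $q^{-n\,d(\phi)}$), but it costs you a Jacobian computation in exactly the same spirit as the paper's Lemma~\ref{le:Omega-chart}, which you announce (``one finds that \dots'') without carrying out. That computation is not routine: it requires tracking the $R$--structure on the Schubert cell, on the fibre and on the base in a chart where the ambient lattice is $\Hom_R\bigl(R^{a+a_1}\cap X^{-1}R^{b+b_1},\,R^{b+b_1}+XR^{a+a_1}\bigr)$, and showing precisely that the contributions split as $q^{bd_1+ad_2}$ (normal Jacobian) and $q^{-b_1d_1-a_1d_2}$ (fibre density). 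Your numerical checks ($a_1=b_1=1$) are consistent, but they do not substitute for the general verification, and a compensating pair of errors in the two exponents would pass those checks.

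The second step --- the evaluation of $C=\int_{K^{b_1\times a_1}}q^{-n\,d(\phi)}\,d\phi$ --- does close up using the paper's machinery, though you do not make the connection explicit. Rewriting $q^{-n\,d(\phi)}\,d\phi = q^{-(a+b)\,d(\phi)}\cdot q^{-(a_1+b_1)\,d(\phi)}\,d\phi$ and observing that $q^{-(a_1+b_1)\,d(\phi)}\,d\phi$ is the pullback to the affine chart of the volume form of $\Gra(a_1,b_1)$, the integral becomes $|\Gra(a_1,b_1)|\cdot\sum_{x}\rho_{a_1,b_1,a_1+b_1}(x)\,\epsilon^{(n-a_1-b_1)(x_1+\cdots+x_{a_1})}$, and Corollary~\ref{cor:rho-inheritance} with $(k,\ell,n,N)=(a_1,b_1,a_1+b_1,n)$ gives exactly $\frac{\g_n\,\g_{n-a_1-b_1}}{\g_{n-a_1}\,\g_{n-b_1}}=\frac{[n][a+b]}{[a+a_1+b][a+b+b_1]}$, which is your target. (The paper instead applies Corollary~\ref{cor:rho-inheritance} with ambient dimensions $a_1+b+b_1$ and $n$; the two uses are related but not the same instance.) So the $q$--series step is sound. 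The actual gap you need to fill is the local Jacobian/density lemma: without a proof of $\NJ\,\psi=q^{bd_1+ad_2}$ and $\Omega_{\psi^{-1}(A_0,U_0)}=q^{-b_1d_1-a_1d_2}\,d\phi$, the proposal is a plausible plan, not a proof. You correctly flag this as the main obstacle, but it is the substance of the argument and remains open in your write-up.
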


For the codimension one case (i.e., for Schubert varieties corresponding to a $1\times 1$ Young diagram)  this specializes as follows. 

\begin{cor}\label{cor:dimvol-codim-1intro}
Consider the codimension one special Schubert variety 
$\Omega := \Omega_{1, k-1;1, n-1}$ of $G(k,n)$ 
consisting of the $k$--planes nontrivially meeting an $(n-k)$--plane. 
The volume of $\Omega$ is given by:
$$
\frac{|\Omega|}{|G(k,n)|} 
 = |\proj^1| \cdot \frac{|\proj^{k-1}|}{|\proj^{k}|} \cdot \frac{|\proj^{n-k-1}|}{|\proj^{n-k}|} . 
$$
\end{cor}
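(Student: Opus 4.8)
The plan is to read this off directly from \cref{th:main-vol-schubertintro} by a change of parameters, followed by a reduction of the Grassmannian volumes involved to volumes of projective spaces. First I would set $a=b=1$, $a_1=k-1$, and $b_1=n-k-1$. With these values, $\Omega_{a,a_1;b,b_1}$ lives in $\Gra(a+a_1,b+b_1)=\Gra(k,n-k)=G(k,n)$, has codimension $ab=1$, and is defined by the condition $\dim(E\cap F)\ge 1$ for a fixed $F\in\Gra(a+b_1,a_1+b)=\Gra(n-k,k)=G(n-k,n)$, i.e.\ it is precisely the set of $k$--planes meeting a fixed $(n-k)$--plane nontrivially; this matches the $\Omega$ in the statement. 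Then \cref{th:main-vol-schubertintro} specializes to
$$
  \frac{|\Omega|}{|G(k,n)|}
   = \frac{|\Gra(1,1)|\cdot|\Gra(1,n-k-1)|\cdot|\Gra(k-1,1)|}{|\Gra(k,1)|\cdot|\Gra(n-k,1)|}.
$$

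Next I would rewrite each factor as a projective--space volume using two elementary identities. The first is duality: the orthogonal--complement map $E\mapsto E^{\perp}$ identifies $\Gra(a,b)=G(a,a+b)$ with $G(b,a+b)=\Gra(b,a)$, and being $\GL_{a+b}(R)$--equivariant it is an isometry for the induced $R$--structures, so $|\Gra(a,b)|=|\Gra(b,a)|$. The second is that $\Gra(c,1)=G(c,c+1)$, which by the same duality is isomorphic (as a $K$--analytic manifold with $R$--structure) to $G(1,c+1)=\proj^{c}$, whence $|\Gra(c,1)|=|\proj^{c}|$; in particular $|\Gra(1,1)|=|\proj^{1}|$. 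Applying these to the five factors above — $|\Gra(1,n-k-1)|=|\proj^{n-k-1}|$, $|\Gra(k-1,1)|=|\proj^{k-1}|$, $|\Gra(k,1)|=|\proj^{k}|$, $|\Gra(n-k,1)|=|\proj^{n-k}|$ — and regrouping the fractions yields
$$
  \frac{|\Omega|}{|G(k,n)|}
   = |\proj^{1}|\cdot\frac{|\proj^{k-1}|}{|\proj^{k}|}\cdot\frac{|\proj^{n-k-1}|}{|\proj^{n-k}|},
$$
which is the assertion.

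The only step that demands real care — and hence the natural candidate for ``the hard part'' — is verifying that the duality isomorphism $G(a,n)\to G(n-a,n)$ is genuinely volume--preserving for the $R$--structures we have fixed, i.e.\ that on tangent spaces it carries the chosen lattices to the chosen lattices; this requires unwinding the construction of the induced $R$--structure on a homogeneous space from \cref{se:hom_K_analytic_spaces}, but it follows from the $\GL_n(R)$--equivariance of the map and the $\GL_n(R)$--invariance of the $R$--structures. Everything else — matching the special Schubert variety to the description in the statement, and collecting fractions — is bookkeeping, and no analytic input is needed beyond what is already packaged into \cref{th:main-vol-schubertintro}.
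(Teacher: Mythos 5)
Your proof is correct and follows the same route as the paper's (implicit) argument: specialize \cref{th:main-vol-schubertintro} with $a=b=1$, $a_1=k-1$, $b_1=n-k-1$, then rewrite each of the five Grassmannian volumes as a projective-space volume using the duality isometry $E\mapsto E^{\perp}$ of \eqref{eq:goto-oc}. Note that in doing so you have tacitly corrected a typo in the statement: the subscript should read $\Omega_{1,\,k-1;\,1,\,n-k-1}$ rather than $\Omega_{1,\,k-1;\,1,\,n-1}$, as one sees from the change of variables \eqref{eq:newnotation} with $m=p=1$ and from the requirement that $\Omega$ be a hypersurface in $G(k,n)=\Gra(k,n-k)$.
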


(The volume of the projective space is computed in \cref{example:volproj}.) Remarkably, the formulas in Corollary~\ref{cor:dimvol-codim-1intro} 
are {\em identical} to the ones over $\R$ and over $\C$, which were 
obtained in~\cite{BL:19}. 

\subsubsection{Probabilistic Schubert Calculus}

We do not make an attempt at the most general construction of 
a theory of nonarchimedean probabilistic Schubert Calculus -- instead, we leave such an investigation as the subject of future work.
Here we only focus on illustrating our methods for computing the numbers \eqref{eq:integralgrass}, 
where $\Omega\subseteq G(k,n)$ is a codimension one Schubert variety. More precisely, we consider the following specific problem, which we call ``the simplest Schubert problem'':
\begin{quote}
\emph{What is the expected number of $k$--planes in $K^n$ nontrivially 
intersecting $k(n-k)$ random independent subspaces $L_1, \ldots, L_{k(n-k)}\subseteq K^n$ of dimension $n-k$?}
\end{quote}

We denote the value of the above expectation by $\eta_{k,n}(K)$. For instance, the integral in \eqref{eq:integralgrass} equals $\eta_{2,4}(\Q_p)$. An analogue quantity can be defined over $\R$ or $\C$ and in \cite{BL:19} the authors prove that
\begin{equation}\eta_{2, n+1}(\R)=\frac{8}{3 \pi^{5/2}\sqrt{n}}\left(\frac{\pi^2}{4}\right)^n(1+O(n^{-1})),\quad \textrm{as $n\to \infty$}.
\end{equation}
The asymptotics for these numbers for general $k\in \N$ fixed and $n\to \infty$, both over  $\R$ and over $\C$, are computed in \cite{LMasymptotic}. Here we prove the following result, which considers the asymptotic value of $\eta_{k,n}(K)$ as $q$ (the cardinality of the residue field) goes to infinity (see Corollary~\ref{cor:k-plane-expectation-asyptotic}). This result is a corollary of the asymptotic for the average scaling factor for special Schubert varieties (\cref{prop:alpha}) and  should be compared with similar results obtained in random $p$--adic enumerative geometry, 
for instance in \cite{KL:19} and \cite[Theorem 1]{el-manssour-lerario:20}.

\begin{cor}
As $q\to \infty$, the expected number of $k$-planes in $K^n$ intersecting 
$k(n-k)$ random independent subspaces 
$L_1, \ldots, L_{k(n-k)}\subseteq K^n$ of dimension $n-k$ converges to $1$, i.e.
$$\lim_{q\to \infty}\eta_{k,n}(K)=1.$$ 
\end{cor}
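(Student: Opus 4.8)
The plan is to evaluate $\eta_{k,n}(K)$ exactly via the cohomogeneous integral geometry formula \cref{cor:main} and then pass to the limit $q\to\infty$. Take $\GL_n(R)$ acting on the Grassmannian $G(k,n)$ of $k$--planes in $K^n$, let $H\subseteq\GL_n(R)$ be the stabilizer of a fixed $k$--plane (so $\GL_n(R)/H=G(k,n)$ with its induced $R$--structure), and set $\Omega:=\Omega_{1,k-1;1,n-1}\subseteq G(k,n)$, the codimension one special Schubert variety of $k$--planes nontrivially meeting a fixed $(n-k)$--plane $L$. For Haar--random $g\in\GL_n(R)$, the translate $g\Omega$ is the set of $k$--planes meeting the uniformly random $(n-k)$--plane $gL$, so with $m:=k(n-k)=\dim G(k,n)$ the quantity in question is
\[
  \eta_{k,n}(K)=\E_{(g_1,\dots,g_m)\in\GL_n(R)^m}\#\bigl(g_1\Omega\cap\dots\cap g_m\Omega\bigr).
\]
Since $\sum_{i=1}^m\codim\Omega=m=\dim G(k,n)$ and $\GL_n(R)$ acts transitively on the tangent spaces of the codimension one Schubert variety $\Omega$ (as recalled in \cref{sec:PSCintro}), the hypotheses of \cref{cor:main} hold; the intersection is then transverse, hence zero--dimensional, for almost all $(g_1,\dots,g_m)$, and
\[
  \eta_{k,n}(K)=\sigma_H\cdot|G(k,n)|\cdot\left(\frac{|\Omega|}{|G(k,n)|}\right)^{k(n-k)},
\]
where $\sigma_H$ is the constant (by transitivity) value of the average scaling factor on $\Omega\times\dots\times\Omega$.

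Next I would compute the two volume quantities and show each tends to $1$ as $q\to\infty$. Using $|\proj^d|=1+q^{-1}+\dots+q^{-d}=\tfrac{1-q^{-(d+1)}}{1-q^{-1}}$ (\cref{example:volproj}), \cref{cor:dimvol-codim-1intro} gives
\[
  \frac{|\Omega|}{|G(k,n)|}=(1+q^{-1})\cdot\frac{1-q^{-k}}{1-q^{-(k+1)}}\cdot\frac{1-q^{-(n-k)}}{1-q^{-(n-k+1)}},
\]
in which every factor tends to $1$; since the exponent $k(n-k)$ is fixed, $\bigl(|\Omega|/|G(k,n)|\bigr)^{k(n-k)}\to1$. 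Likewise $G(k,n)$ is smooth over $\Spec R$, so by \cref{prop:weil} its volume is the Weil canonical volume $|G(k,n)|=\#G(k,n)(\F_q)\,q^{-k(n-k)}=\binom{n}{k}_q q^{-k(n-k)}=\prod_{i=1}^{k}\frac{1-q^{-(n-k+i)}}{1-q^{-i}}$, which also tends to $1$. Finally, I would invoke \cref{prop:alpha}, which supplies the $q\to\infty$ asymptotics of the average scaling factor of special Schubert varieties (the constant $\sigma_H$ above, written $\alpha_H$ in the Schubert setting), giving $\sigma_H\to1$. Multiplying the three limits yields $\lim_{q\to\infty}\eta_{k,n}(K)=1$.

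The genuinely substantial input is \cref{prop:alpha}: determining, or at least estimating as $q\to\infty$, the average scaling factor that compares the tangent spaces at a common intersection point of $m$ independently translated Schubert divisors. This is where the real work lies---the position vector of a pair of subspaces and the distribution of the position vector for random uniform subspaces from \cref{sec:randomspacesintro}, together with the coarea formula \cref{th:coarea}. Everything else is formal: an application of \cref{cor:main}, substitution of the volumes from \cref{cor:dimvol-codim-1intro}, \cref{example:volproj} and \cref{prop:weil}, and a one--line limit of rational functions of $q$. (As a heuristic check on the answer $1$: over $\F_q$ one expects the Galois action on the solutions of the simplest Schubert problem to realize the full symmetric group, so by Burnside's lemma the expected number of $\F_q$--rational solutions of a generic configuration tends to $1$, matching the nonarchimedean count.)
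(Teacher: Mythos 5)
Your proposal is correct and follows essentially the same route as the paper: you re-derive the closed form $\eta_{k,n}(K)=\alpha_K(k,n-k)\cdot|G(k,n)|\cdot\bigl(|\Omega|/|G(k,n)|\bigr)^{k(n-k)}$ via \cref{cor:main} (this is exactly Equation~\eqref{eq:deltaex}, which the paper proves in \cref{thm:IGF}), compute the two volume factors from \cref{pro:vols} and \cref{cor:dimvol-codim-1intro}, and invoke \cref{prop:alpha} for the scaling factor. The only nuance to be aware of is that identifying the constant $\sigma_H$ with $\alpha_K(k,n-k)$ rests on the coisotropy of the codimension-one Schubert hypersurface (not merely transitivity of the $\GL_n(R)$-action on its tangent spaces), which is what lets one rewrite the conormal-space expectation as the wedge of rank-one tensors in \cref{def:alpha}; you cite the right ingredients but it is worth stating that step explicitly.
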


\subsection{The relative position of two random subspaces}\label{sec:randomspacesintro} 

The proof of Theorem \ref{th:main-vol-schubertintro}
proceeds along an avenue
of independent interest, namely, the study of the relative position of two subspaces in $K^n$, 
see Section~\ref{se:rel-pos}. 
This is done in terms of a notion which we call the \emph{(relative) position vector}, 
encoding the nonarchimedean analogue of the principal angles between two subspaces. 
Let $\overline{\N}:=\N\cup\{\infty\}$ and denote  
$\Nko := \{ x\in\bN^k \mid x_1 \le \ldots \le x_ k\}$.  
The relative position of two subspaces $E, F\subseteq V$ (here $V\simeq K^n$), of dimensions $k$ and $\ell$, 
respectively, and with $k\le \ell$,  will be an increasing list of nonnegative integers 
$(x_1,\ldots,x_k)\in \Nko$, see Definition \ref{def:PP1}. This list of
numbers completely characterizes  
the pair $(E, F)$ up to to the action of $\mathrm{GL}_n(R)$, 
as we show in \cref{cor:RPP-Char} (see also \cref{th:NF-pairs-subspaces}). 

Moving to the random side, we will be interested in describing the position vector of independent 
uniformly random subspaces $E\in G(k,n)$ and $F\in G(\ell,n)$. To this end, with loss of generality, 
we may assume $k\le\ell$ and fix $F := K^\ell\times 0$. 
Consider the map
$$
 \chi\colon G(k,n) \to \Nko, \, E \mapsto x , 
$$
which sends $E$ to the position vector~$x$ of the pair $(E,F)$. 
We denote by $\rho_{k,\ell, n}$ 
the pushforward of the uniform probability distribution on $G(k,n)$. 
This is a discrete distribution on $\Nko$ that 
describes the joint distribution of the position vector of $E$ and $F$. 
In \cref{th:joint-densityK} we provide an explicit formula for this distribution, 
which is a nonarchimedean version of~\cite[Theorem 3.2]{BL:19}.
Our formula involves an explicit parametrization of the various open
sets in the Grassmannian where the position vector is constant, and
requires the computation of the normal Jacobian of this
parametrization, see Lemma~\ref{le:Jac-psi}. 
Our results are closely related to $p$-adic random matrix theory, see \cite{vanpeski:21}. 
The corresponding classical results are of relevance in multivariate statistics~\cite{Muirhead}. 
Interesingly, unlike the classical situation, a repeated entry in $x$ does not force the 
vanishing of~$\rho_{k,\ell, n}(x)$.  
The reader may consult~\cref{re:psi-RC} for comparing our results with the ones  over~$\R$ and $\C$.

\subsection{Random systems of equations}\label{sec:fewnomials}

As we already mentioned above, 
Evans~\cite{evans:06} was the first to investigate the expected number of zeros 
in the field of $p$--adics. In a manner analogous to the classical case, it appears that nonarchimedean integral geometry 
lends insight into the study of these questions, see~\cite{KL:19,el-manssour-lerario:20}.
For instance, Evans' results~\cite{evans:06} were extended in~\cite{KL:19} using the $p$--adic projective integral geometry formula 
combined with the computation of the length of the image an appropriate Veronese map.

The recent studies of random systems of $p$-adic
equations~\cite{evans:02,caruso:20,bhar-et-al:21, el-manssour-lerario:20} only deal with random systems of polynomial equations 
whose support is dense, i.e., polynomials of a certain degree in which all monomial terms appear.
In Section~\ref{se:fewnom}, for the first time,
we study the average number of random fewnomial systems 
in a nonarchimedean local field.
This is inspired by the recent papers~\cite{BETC:19,PB:23, ETTC:23},
which bound the expected number of real zeros of fewnomial systems.

The following is a consequence of Theorem~\ref{thm:n=1R}. 
Recall that $\e = q^{-1}$ denotes the reciprocal of the cardinality $q$ of the residue field of $R$. 

\begin{cor}\label{cor:uno}
Fix integers $a_1 < a_2 < \ldots < a_t$ and consider the random fewnomial 
$f := c_1 x^{a_1} + c_2 x^{a_2} + \ldots +  c_{t} x^{a_t}$ 
with i.i.d.\ uniformly distributed coefficients~$c_{j}$ in $R$.
Then the expected number  of zeros of $f$ in 
$R\setminus\{0\}$
is at most $(1+\e)^{-1}$; 
equality holds iff $a_2-a_1= 1$. 
Moreover, the expected number  of zeros of $f$ in $K^\times$ is at most one; 
equality holds iff $a_2 - a_1= a_t - a_{t-1}=1$. 
\end{cor}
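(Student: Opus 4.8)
The plan is to derive \cref{cor:uno} from the (unstated-but-assumed) \cref{thm:n=1R} together with the nonarchimedean integral geometry formula, specialized to the univariate situation. First I would recall that the expected number of zeros of a random section of a bundle of functions is computed, via \cref{thm:main} (in the form of \cref{cor:main} applied to $G=\GL_2(R)$ acting on $\proj^1_K$), as the volume of the image of a suitable monomial (``Veronese-like'') map $v\colon K^\times \to K^t$, $x \mapsto (x^{a_1},\dots,x^{a_t})$, rescaled by $|\proj^0|/|\proj^1|$ or the appropriate factor depending on whether one counts zeros in $R\setminus\{0\}$ or in $K^\times$. Concretely, the expected number of zeros of $f$ in $R\setminus\{0\}$ is a length integral $\int_{R\setminus\{0\}} \|v'(x)\|_{\mathrm{proj}}\,\Omega$, where $\|\cdot\|_{\mathrm{proj}}$ is the normal Jacobian of the composition of $v$ with the projection to $\proj^{t-1}_K$; this is exactly the content of \cref{thm:n=1R}, which I would invoke to reduce to evaluating this integral.

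The key computation is then the length of the monomial curve. After normalizing, on the chart where $x^{a_1}$ is the largest coordinate, the map becomes $x \mapsto (x^{a_2-a_1}, x^{a_3-a_1}, \dots, x^{a_t - a_1})$, and its derivative at $x$ has entries $(a_j-a_1)x^{a_j-a_1-1}$. Writing $b_j := a_j - a_1$ for $j\ge 2$ and $b_1 := 0$, the normal Jacobian at a point of valuation $v$ is governed by $\min_j \val\big((a_j-a_1)x^{a_j-a_1-1}\big)$, which for $q$ coprime to all the relevant constants equals $(b_2-1)\val(x)$ once $\val(x)\ge 0$; the dominant contribution comes from the exponent $b_2 = a_2 - a_1$. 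Integrating $\e^{(b_2-1)v}$ against the Haar measure of $\{x : \val(x)=v\}$ (which has mass $(1-\e)\e^{v}$) over $v\ge 0$, plus summing the contribution of the points of negative valuation when counting in $K^\times$, yields a geometric series whose sum is $\tfrac{1-\e}{1-\e^{b_2}}\le \tfrac{1}{1+\e}$, with equality precisely when $b_2 = 1$, i.e.\ $a_2 - a_1 = 1$. Symmetrically, the part of $K^\times$ with $\val(x)<0$ is handled by swapping the roles of $a_1$ and $a_t$ (equivalently substituting $x \mapsto x^{-1}$), contributing a term controlled by $a_t - a_{t-1}$, and the two pieces add to at most $1$ with equality iff both gaps $a_2-a_1$ and $a_t-a_{t-1}$ equal $1$.

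The main obstacle I anticipate is the careful treatment of the normal Jacobian (i.e.\ the projective length element) when several monomials are simultaneously ``large'' — that is, at points $x$ where $\val(x^{a_i-a_1}) = \val(x^{a_j-a_1})$ for $i\ne j$, or more precisely the ultrametric analogue of the maximum being attained by several coordinates. In the archimedean case all exponents matter and one gets an honest integral over the curve; in the nonarchimedean case the ultrametric inequality causes most of these coincidences to contribute nothing, but verifying that the relevant $\min$ is governed only by the \emph{extreme} gaps $a_2-a_1$ and $a_t-a_{t-1}$, and that the characteristic-zero hypothesis on $K$ (ensuring the integers $a_j - a_1$ are units away from finitely many $q$) legitimizes dropping the derivative constants, requires a clean case analysis on $\val(x)$. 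Once that is in place, the inequality and the equality cases fall out of elementary geometric-series estimates, and the statement about $K^\times$ follows by decomposing $K^\times = (R\setminus\{0\}) \sqcup (\frm^{-1}\setminus R)^{\pm}$ and applying the $R\setminus\{0\}$ bound twice with the exponent reversal.
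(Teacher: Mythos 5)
Your overall route is the same as the paper's: the corollary is a direct algebraic consequence of \cref{thm:n=1R}, which in turn comes from \cref{th:ENU} together with the Jacobian formula of \cref{le:n=1Jpsi} and the involution $x\mapsto x^{-1}$. However, two points in your argument need repair.

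First, the inequality you write, $\tfrac{1-\e}{1-\e^{b_2}}\le\tfrac{1}{1+\e}$, is false (take $b_2=1$: the left side is $1$). The quantity $\tfrac{1-\e}{1-\e^{b_2}}$ is the value of $\int_{R\setminus\{0\}}|x|^{b_2-1}\,d\mu$, and you still have to divide by $|\proj^1|=1+\e$ as in \cref{th:ENU}. The correct chain is
\[
\E(N_{R\setminus\{0\}}(f))\ \le\ \frac{1}{1+\e}\int_{R\setminus\{0\}}|x|^{b_2-1}\,d\mu
= \frac{1}{1+\e}\cdot\frac{1-\e}{1-\e^{b_2}}\ \le\ \frac{1}{1+\e},
\]
with equality in the last step iff $b_2=a_2-a_1=1$. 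As written, your inequality would force the wrong equality locus (it would give $b_2=2$).

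Second, the restriction to ``$q$ coprime to all the relevant constants'' cannot appear: the corollary is asserted for every $K$ of characteristic zero. The point of \cref{le:n=1Jpsi} is exactly that this hedge is unnecessary --- since $a_j-a_1\in\Z\subseteq R$ one always has $|a_j-a_1|\le 1$, hence $J(\psi)(x)=\max_{j\ge 2}|a_j-a_1|\,|x|^{a_j-a_1-1}\le|x|^{a_2-a_1-1}$ unconditionally, with equality precisely when $|a_2-a_1|=1$. That condition is automatic once $a_2-a_1=1$, which is why the final equality criterion is clean. Tracking equality through \cref{thm:n=1R} also requires the term $\max_{j\ge 2}|a_j-a_1|$ in $\E(N_{R^\times}(f))$ to be $1$, but again $a_2-a_1=1$ forces this. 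Once those two points are fixed, your decomposition $K^\times=(R\setminus\{0\})\sqcup(K\setminus R)$ and the substitution $x\mapsto x^{-1}$ (passing to the reversed polynomial) give the $K^\times$ statement exactly as in the paper.
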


\begin{example}
The random fewnomial 
$$
 f(x) = c_5 x^5 + c_6 x^6 + c_{100}x^{100} + c_{999}x^{999} + c_{1001}x^{1001} \in \Z_3[x]
$$
on average has exactly one zero in $\Z_3\setminus \{0\}$ 
and at most one zero in $\Q_3^\times$. 
In fact, on average, $f$ has exactly one zero in $\Q_3^\times$: this follows from 
a refined version of Corollary~\ref{cor:uno}, stated as Theorem~\ref{thm:n=1R}.
\end{example}

We can generalize these results from sparse univariate polynomials to systems as follows. 
Let us fix a finite subset $A\subseteq\Z^n$.  
The {\em Newton polytope} $P$ of $A$ is defined as its convex hull.
We call $P$ {\em rectangular} if it is of the form 
$$
 P =\{\ell_1,\ell_1+1,\ldots,r_1\} \times\ldots\times \{\ell_n,\ell_n+1,\ldots,r_n\} ,
$$ 
for some integers $\ell_i < r_i$. 
We call $A$ \emph{gap-free rectangular at the vertex $\ell=(\ell_1,\ldots,\ell_n)$} of $P$ iff 
$A$ contains the $n$  neighbours $\ell + e_1,\ldots,\ell+e_n$ at distance~$1$ of $\ell$ 
within $P \cap \Z^n$. 
(Here $e_i$ stands for the $i$th standard basis vector.) 
A similar definition applies to all the vertices of $P$. 

We now consider a random system 
$f_1(x)=0,\ldots,f_n(x)=0$, where 
$$
 f_i(x) = \sum_{a\in A} c^{(i)}_{a} x_1^{a_1}\cdots x_n^{a_n},
$$
with i.i.d.\ uniformly distributed coefficients~$c^{(i)}_{a}$ in $R$ 
and denote by $N_U(A)$ its number of nondegenerate zeros in a subset $U\subseteq(K^\times)^n$ .
The following result generalizes Corollary~\ref{cor:uno}. 
(A more detailed statement appears in Proposition~\ref{pro:rectN}.) 
We abbreviate $\RnO^n := (R\setminus\{0\})^n$ for convenience of notation. 

\begin{cor}
We have $\E(N_{\RnO^n}(A)) \le 1$; equality holds iff 
$A$ is gap-free rectangular at $\ell$. Moreover,  
$$
 \E(N_{(K^\times)^n}(A)) \ \le\ \frac{(1-\e) (1+\e)^n}{1-\e^{n+1}} ;
$$
equality holds iff $A$ is gap-free rectangular at all its vertices. 
\end{cor}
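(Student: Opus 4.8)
The plan is to obtain this corollary as a consequence of the more detailed \cref{pro:rectN}; since the mechanism there is the same one that proves the univariate \cref{thm:n=1R}, I sketch it directly. Write $N:=|A|$ and let $\varphi_A\colon(K^\times)^n\to\proj^{N-1}(K)$ be the monomial map $x\mapsto[\,x^a:a\in A\,]$; for a measurable $U\subseteq(K^\times)^n$ set $Z_U:=\overline{\varphi_A(U)}$, a $K$--analytic subset of $\proj^{N-1}$ of dimension $\dim P$, where $P$ is the Newton polytope of $A$ (if $\dim P<n$ then $\E(N_U(A))=0$ and all bounds hold strictly). A random system with i.i.d.\ uniform coefficient vectors $c^{(1)},\dots,c^{(n)}\in R^N$ has almost surely independent $c^{(i)}$, and since the law of $M:=\Span(c^{(1)},\dots,c^{(n)})$ is $\GL_N(R)$--invariant and $\GL_N(R)$ acts transitively on the Grassmannian, $M$ is then Haar--uniform; hence $\proj(M^\perp)$ is a uniform codimension--$n$ projective subspace, i.e.\ a uniform transverse intersection of $n$ uniform hyperplanes. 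Because $x\in U$ is a nondegenerate zero of the system exactly when $\varphi_A(x)\in\proj(M^\perp)$, the nonarchimedean integral geometry formula (\cref{thm:main}) in the hyperplane/Crofton case of \cite{KL:19} gives
$$
 \E\big(N_U(A)\big)\;=\;c_{N,n,q}\cdot|Z_U|_n ,
$$
where $|Z_U|_n$ is the $n$--dimensional volume of $Z_U$, counted with the multiplicity (degree) of $\varphi_A$, and $c_{N,n,q}$ is a value of $\sigma_H$ times a ratio of projective volumes, independent of $A$. The identity holds verbatim with $U$ replaced by any annular sector $\pi^v(R^\times)^n$, $v\in\Z^n$.

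Next I would compute the volume by decomposing $(K^\times)^n=\bigsqcup_{v\in\Z^n}\pi^v(R^\times)^n$ and $\RnO^n=\bigsqcup_{v\in\N^n}\pi^v(R^\times)^n$. On the sector indexed by $v$, writing $x=\pi^v y$ with $y\in(R^\times)^n$ gives $\varphi_A(\pi^v y)=[\,\pi^{\langle v,a\rangle}y^a:a\in A\,]$; dividing by $\pi^{m(v)}$ with $m(v):=\min_{a\in A}\langle v,a\rangle$, the image lies in a small ball around the coordinate point indexed by the vertex $w(v)$ of $P$ at which the support function is attained (a single vertex when $v$ is in the interior of a maximal cone of the normal fan of $P$). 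The $n$--volume of the image of $\pi^v(R^\times)^n$ is then read off the normal Jacobian of $\varphi_A$ on that sector --- exactly the type of computation carried out in \cref{le:Jac-psi} --- and equals $(1-\e)^n$ times an explicit power of $q$ controlled by the next--to--minimal values $\langle v,a\rangle-m(v)$; it is bounded above by the corresponding quantity for the box whose corner at $w(v)$ matches, with equality iff $\varphi_A$ is injective and unramified transverse to the face of $P$ at $w(v)$, i.e.\ iff $A$ is gap--free rectangular at $w(v)$. Summing over $v$, grouped by the maximal normal cone containing $v$, produces for each vertex $w$ of $P$ a multidimensional geometric series summing to a local contribution $\lambda_w(A)$, maximal precisely when $A$ is gap--free rectangular at $w$; assembling,
$$
 \E\big(N_{(K^\times)^n}(A)\big)=\sum_{w\ \mathrm{vertex\ of\ }P}\lambda_w(A),\qquad \E\big(N_{\RnO^n}(A)\big)=\lambda_{\ell}(A) ,
$$
the second because $\N^n$ lies in the single normal cone of the bottom vertex $\ell$ of $P$.

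Finally I would assemble the bounds. The identity $\E(N_{\RnO^n}(A))=\lambda_{\ell}(A)$ yields the first inequality, with equality iff $A$ is gap--free rectangular at $\ell$. For $(K^\times)^n$, a full--dimensional $P$ has at least $2^n$ vertices, with equality iff $P$ is a box; replacing each $\lambda_w(A)$ by its maximal value and summing the resulting geometric series over the $2^n$ vertices of a box evaluates, after a short computation, to $\tfrac{(1-\e)(1+\e)^n}{1-\e^{n+1}}$, with equality iff $P$ is a box and $A$ is gap--free rectangular at every vertex. For $n=1$ the right--hand side is $1$ and the vertex condition reads $a_2-a_1=a_t-a_{t-1}=1$, so this recovers the $K^\times$--part of \cref{cor:uno}. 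The main obstacle is the core of the middle step: pinning down the normal Jacobian of the monomial map on each annular sector, and then carrying out the multidimensional geometric summation over the normal fan of $P$ to extract the closed form of $\lambda_w(A)$ --- in particular the exact characterization of its maximality by the gap--free rectangular condition, which also has to absorb the degree of $\varphi_A$; by comparison, the integral--geometric reduction and the final arithmetic are routine once the univariate analysis behind \cref{cor:uno} is in place.
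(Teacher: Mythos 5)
Your integral-geometric reduction (push $(K^\times)^n$ into projective space via the monomial map, identify zeros with hyperplane slices of the image, and apply the nonarchimedean Crofton formula) is the same opening move as the paper's \cref{th:ENU}, and the plan to read the answer off the Jacobian $J(\psi_A)$ is also right. But from there the proposal diverges from the paper and runs into concrete trouble.

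First, a genuine error: you assert that ``a full-dimensional $P$ has at least $2^n$ vertices, with equality iff $P$ is a box.'' That is false — a full-dimensional simplex in $\R^n$ has only $n+1$ vertices. Your final assembly step (sum the per-vertex contributions $\lambda_w(A)$ and compare to the $2^n$-vertex box) collapses without this claim; if $P$ had more than $2^n$ vertices the sum would overshoot the claimed bound, and if it had fewer your equality characterization would misfire. In fact the paper's \cref{pro:rectN} explicitly \emph{assumes} the Newton polytope is rectangular — the statement you are trying to prove is only asserted under that hypothesis (it is implicit in the phrase ``gap-free rectangular at $\ell$'') — and your sketch, which tries to work over an arbitrary full-dimensional $P$ via its normal fan, is attempting something stronger and not obviously true.

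Second, the paper's decomposition is much coarser and cleaner than the one you propose, and the difference matters. You slice $(K^\times)^n$ into the $\Z^n$-indexed annular sectors $\varpi^v(R^\times)^n$, compute the Jacobian on each, and then try to resum a multidimensional geometric series over the maximal cones of the normal fan — a computation you acknowledge at the end you have not done, and which is the hard part. The paper instead decomposes $(K^\times)^n = \bigsqcup_{I\subseteq[n]} U_I$ according to which coordinates lie in $K\setminus R$ versus $\RnO$ (only $2^n$ pieces), and handles $U_I$ by the involution $x_i\mapsto x_i^{-1}$ on the $I$-coordinates, which turns the support $A$ into $\tau(A)$ with the same rectangular Newton polytope. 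This reduces everything to the single estimate $\int_{\RnO^n} J(\psi_A)\,d\mu \le \e^{|I|}$, which in turn rests on the pointwise bound $J(\psi_A)(x) \le 1$ on $\RnO^n$ (\cref{le:Jle1}). That pointwise bound is obtained by expressing $J(\psi_A)(x)$ as $\max_I |\det D\varphi(x)_I|$, factoring each determinant through the monomial matrix $M$, and exploiting that the resulting powers of $|x_i|$ are $\le 1$ and the integer minors $\det M_I$ have $|\det M_I|\le 1$. The ``iff'' is then characterized exactly: $J(\psi_A)(x)=1$ at some $x\in(\mnO)^n$ forces $b=(1,\dots,1)$ and $M_I$ a permutation matrix, i.e., $A$ gap-free rectangular at $\ell$. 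Your proposal replaces this clean finite-dimensional linear-algebra step with a vague ``injective and unramified transverse to the face of $P$'' condition, which is not the characterization the proof needs.

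In short: the reduction to an integral of $J(\psi_A)$ is the right first step, but you should (a) keep the rectangularity hypothesis, (b) replace the annular/normal-fan decomposition by the sign-pattern decomposition $U_I$ together with the inversion $x\mapsto x^{-1}$, and (c) prove and use the pointwise bound $J(\psi_A)\le 1$ on $\RnO^n$ with its exact equality case; these three choices are what make the argument go through.
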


\begin{example}
For example, the support 
$A=\{0,1,99,100\}^2\subseteq\N^2$
is gap-free rectangular. 
Note that $0$ is almost surely not a zero of 
a random system with support $A$, since $0\in A$.  We thus obtain 
$\E(N_{\Z_p^2}(A)) = 1$ and 
$\E(N_{\Q_p^2}(A)) = \frac{p^2+2p+1}{p^2+p+1}$.
\end{example}

The proofs of these result use the integral geometry formula for relating the expectation of the number of solutions to the fewnomial system 
to the volume of the image of an appropriate Veronese type map.

Finally, let us mention that the study of the zeros of polynomial systems with few terms 
is a thoroughly investigated topic, closely related to toric varieties. A major result in this area 
is the Bernstein--Khovanskii--Kouchnnirenko Theorem, which expresses the generic number of zeros of a fewnomial 
system in terms of the mixed volumes of the Newton polytopes of the given supports, see~\cite{bernstein:75,BKK:76}. 
These works study the number of zeros in an algebraically closed field.
There is also a well developed theory concerned with counting the real zeros of such systems, 
for which we just refer to~\cite{khovanskii:91,sottile:11}. 
Much less is known in the nonarchimedean setting; we refer to~\cite{lenstra:97,poonen:98,rojas:04}.

\subsection*{Acknowledgments}
Peter B\"urgisser was supported by the ERC under the European Union's Horizon 2020 research and innovation programme (grant agreement no. 787840). 
Avinash Kulkarni was supported by the Simons Collaboration on Arithmetic Geometry, Number Theory, and Computation (Simons Foundation grant 550029). Antonio Lerario was supported by the Alice and Knuth Wallenberg foundation.

\section{Preliminaries}\label{se:basics-I}
We recall in this section now some known facts and establish some notation. \subsection{Nonarchimedean local fields}\label{se:na-fields}

We work in the general setting of a nonarchimedean local field $K$, 
see~\cite{lorenzII} for an introduction. 
The nonarchimedean absolute value of $K$ will be denoted by $|\ |_K$ and the standard
absolute value on $\mathbb{R}$ will be denoted by $| \ |_{\mathbb{R}}$; in this article we
are almost exclusively concerned with $| \ |_K$, and so we also denote the nonarchimedean
absolute value by $| \ |$ if there is no danger of confusion.
 Such a field~$K$ is complete with respect to the metric $d(x,y):=|x-y|$.
 We denote by $R :=\{x\in K \mid |x| \le 1\}$ the ring of integers of~$K$,
 which is known to be a discrete valuation ring
 with maximal ideal $\frm :=\{x\in K \mid |x| <1 \}$.
 Note that the group of units of $R$ is characterized by 
 $R^\times := \{x\in K \mid |x| = 1\}$.  
 We shall fix a generator $\varpi$ for the maximal ideal $\frm$, i.e., we
 fix a {\em uniformizer} for $K$. 
 The {\em residue field} of $K$ is defined as $R/\frm$, which is a finite field,
 say of of cardinality $q$. Then the residue ring 
 $R/\frm^k$ has cardinality $q^k$, where $k\in\N$.
 We shall denote $\e := q^{-1}$, and we normalize $|\ |_K$ so that $|\varpi|_K = \epsilon$.
Moreover, we normalize the \emph{valuation} $\val$ so that $|x|_K = q^{-\val(x)} =  \e^{\val(x)}$.

The typical example of a nonarchimedean local field is the field of $p$--adic numbers $\Q_p$ 
with the $p$--adic norm $|\ |_p$.
The ring of integers of $\Q_p$ is denoted by $\Z_p$, and $p$ is a uniformizer for $\Q_p$.
If $K/\Q_p$ is a finite extension, then the absolute value $|\;|_p$ extends uniquely to an absolute value of $K$.
If $n = [K:\Q_p]$, then with our chosen normalizations, we have
that $|x|_K = |x|_p^n$ for all $x \in \Q_p$. 

The classification of nonarchimedean local fields $K$ is well known, 
e.g., see~\cite{lorenzII}.  
If $\chara K=0$, then $K$ is a finite extension of 
a field~$\Q_p$ of $p$--adic numbers. 
In the case $\chara K>0$, $K$ is isomorphic to the field~$\F_q(\!(t)\!)$ of formal Laurent series in~$t$ 
over a finite field $\F_q$. The corresponding residue field again is $\F_q$. 
We note that if $K$ is a finite field extension of $\F_q(\!(t)\!)$, then 
$K \cong \F_{q'}(\!(s)\!)$, where $q | q'$.

In our work we will assume throughout that $\chara K=0$. We shall also point
out where the difficulties in positive characteristic arise.

Every nonzero element~$x$ of $K$ 
can be uniquely written as $x=\varpi^k u$ with $k\in\Z$ and $u\in R^\times$ a unit. 
This defines the discrete valuation $\val\colon K^\times \to \Z,\, x\mapsto k$.
We obtain the isomorphism 
\begin{equation}\label{eq:isoK}
 K^\times \to \Z \times R^\times,\ 
  x \mapsto (k,u) 
\end{equation}
of topological groups. We may view this as an 
analogue of the polar decomposition of nonzero complex numbers. 

\subsubsection{Norm} 

We define the {\em standard norm} on the vector space $K^n$ by 
\begin{equation}\label{eq:normdef}
  \|(x_1,\ldots,x_n)\| := \max_i |x_i| .
\end{equation}
It satisfies the ultrametric inequality:
$\|x+y\| \le \max\{\|x\|, \|y\|\}$ for $x,y \in K^n$.
Note that $\|\lambda x\| = |\lambda| \|x\|$ for 
$\lambda\in K$ and $x\in K^n$. 
Moreover, 
$R^n =\{x\in K^n \mid \|x\| \le 1 \}$. 

The {\em operator norm} of a matrix $A=[a_{ij}]\in K^{m\times n}$ is defined by 
$\|A\| := \max_{\|x\|=1} \|Ax\|$. Note that 
$\|AB\| \le \|A\| \|B\|$. 
The operator norm can be easily computed, since 
\begin{equation}\label{eq:opnorm}
  \|A\| = \max_{i,j} |a_{ij}| .
\end{equation}
Indeed, let $y=Ax$, say $y_i = \sum_j a_{ij} x_j$. Then
$|y_i| \le \max_j |a_{ij}| |x_j| \le \max_j |a_{ij}|$ 
if $\|x\|\le 1$. Hence 
$\|y\| =\max_i |y_i| \le \max_{i,j} |a_{ij}|$ 
and we see that $\|A\| \le \max_{i,j} |a_{ij}|$. 
Equality holds since 
$\|A\|\geq \|Ae_j\| = \max_i |a_{ij}|$.

Consider the group of $R$--linear automorphisms of the free module~$R^n$: 
$$
 \Gl_n(R) := \{ A\in R^{n\times n} \mid \det(A) \in R^\times \} .
$$ 
We next show that $\Gl_n(R)$
is the group of norm preserving linear 
endomorphisms of~$K^n$; thus it 
takes the role that the orthogonal group serves in the setting of 
Euclidean spaces. 

\begin{prop}\label{pro:isometry-grp}
We have 
$$
 \big\{ A\in \Gl_n(K) \mid \forall x \in K^n\ \|A x\| = \|x\| \big\} =\Gl_n(R).
$$ 
\end{prop}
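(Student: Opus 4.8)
The plan is to prove the two inclusions separately, the easier one being $\Gl_n(R) \subseteq \{A \mid \|Ax\| = \|x\|\ \forall x\}$, and the harder (but still routine) one the reverse.

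\medskip

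\emph{Step 1: $\Gl_n(R)$ preserves the norm.} Let $A \in \Gl_n(R)$, so $A \in R^{n\times n}$ and $A^{-1} \in R^{n\times n}$ as well, since $\det(A) \in R^\times$ implies the adjugate formula gives entries of $A^{-1}$ in $R$. By \eqref{eq:opnorm} we have $\|A\| = \max_{i,j}|a_{ij}| \le 1$ and likewise $\|A^{-1}\| \le 1$. Hence for every $x \in K^n$, $\|Ax\| \le \|A\|\,\|x\| \le \|x\|$, and applying the same to $A^{-1}$ and the vector $Ax$ gives $\|x\| = \|A^{-1}(Ax)\| \le \|Ax\|$. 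Therefore $\|Ax\| = \|x\|$ for all $x$.

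\medskip

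\emph{Step 2: a norm-preserving automorphism lies in $\Gl_n(R)$.} Suppose $A \in \Gl_n(K)$ satisfies $\|Ax\| = \|x\|$ for all $x \in K^n$. Applying this to the standard basis vectors $e_j$ (which have norm $1$) shows that each column $Ae_j$ has norm $1$, so in particular every entry $a_{ij}$ satisfies $|a_{ij}| \le 1$, i.e.\ $A \in R^{n\times n}$. The same argument applied to $A^{-1}$ (which also preserves the norm, being the inverse of a norm-preserving bijection) gives $A^{-1} \in R^{n\times n}$. Then $\det(A) \in R$ and $\det(A^{-1}) = \det(A)^{-1} \in R$, forcing $\det(A) \in R^\times$. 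Hence $A \in \Gl_n(R)$.

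\medskip

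There is really no serious obstacle here; the only point that requires a moment's care is the observation in Step 2 that the inverse of a norm-preserving linear bijection is again norm-preserving, which is immediate since $\|y\| = \|A A^{-1} y\| = \|A^{-1}y\|$ reversed. One could alternatively avoid invoking $A^{-1}$ in Step 2 by arguing directly that $A \in R^{n\times n}$ with $\|Ax\| = \|x\|$ forces $A$ to be surjective mod $\frm$ on $R^n/\frm R^n$ (a norm-preserving map reduces to an injective, hence bijective, $\F_q$-linear endomorphism of the residue vector space), so $\det(A) \not\equiv 0 \pmod{\frm}$; but the inverse-matrix argument is shorter given \eqref{eq:opnorm}. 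I expect the write-up to be about half a page.
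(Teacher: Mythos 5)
Your proof is correct. Step~1 matches the paper's argument essentially verbatim (map the unit ball into itself, then run the same inequality for $A^{-1}$ to upgrade to equality). In Step~2 you diverge: the paper invokes the Smith normal form decomposition $A = SDT$ with $S,T\in\Gl_n(R)$, observes that $D$ must then preserve the norm, and concludes that the diagonal of $D$ consists of units. You instead evaluate the isometry at the standard basis vectors to land $A$ in $R^{n\times n}$, note that $A^{-1}$ is likewise an isometry and so also lands in $R^{n\times n}$, and deduce $\det(A)\in R^\times$ from $\det(A),\det(A)^{-1}\in R$. Your route is more elementary and self-contained---it needs only \eqref{eq:opnorm} and nothing about Smith normal form---whereas the paper's version is a little slicker once Smith normal form is available and keeps the exposition aligned with the fact that $\Gl_n(R)\times\Gl_n(R)$ acting by $A\mapsto SAT$ is the nonarchimedean analogue of the $O(n)\times O(n)$ action underlying singular values. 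Either is fine; your alternative observation about reducing mod $\frm$ is also a valid (and perhaps conceptually illuminating) way to close Step~2 without touching $A^{-1}$.
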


\begin{proof}
Let $A\in \Gl_n(R)$. 
Since $R^n$ is the unit ball in our norm 
and $A$ maps $R^n$ into itself,
we have $\|A\| \le 1$. 
Suppose $x \in K^n$. Then,
$\|Ax\| \le \|A\| \, \|x\| \le \|x\|$. 
Applying this to $A^{-1}\in \Gl_n(R)$, 
we see that 
$\|x\| = \|A^{-1} Ax\| \le \|Ax\|$, hence  
equality holds.

Conversely, suppose that $A\in \Gl_n(K)$ preserves the norm. 
Let $A=SDT$ be the factorization coming from the 
Smith normal form (see Section \ref{se:smith}). 
Thus $S,T\in \Gl_n(R)$, and $D$ is diagonal. 
Since $S$ and $T$ are norm preserving, 
$D$ is norm preserving as well. This implies that the diagonal 
entries of $D$ are units in~$R$. Hence $A\in \Gl_n(R)$. 
\end{proof}  

We call $B(p,\e) := \{x\in K^n \mid \|x-p\| < \e\}$ 
the {\em open ball} around $p\in K^n$ with radius $\e>0$. 
A strange consequence of the ultrametric inequality is that 
for any two non-disjoint open balls $B$ and $B'$ in $K^n$, 
we have  $B\subseteq B'$ or $B'\subseteq B$, see \cite[Lemma~1.3]{schneiderp:11}. 
As a consequence, any covering by open balls of $R^n$ can be refined to a 
{\em disjoint} covering, compare~\cite[Lemma~1.4]{schneiderp:11}.

\subsubsection{Haar measure} 

Since the additive group of $K$ is a locally compact abelian group, there is a uniquely determined regular 
measure~$\mu$ on $K$, which is invariant under translations 
$K\to K, x\mapsto x + a$ for $a\in K$ and satisfies the normalizing condition $\mu(R)=1$; 
see~\cite{bourbaki-II-7-9}. This is called the {\em Haar measure} on $K$. 
The uniqueness implies that the multiplication $K\to K, x\mapsto ux$ 
with a unit $u\in R^\times$ preserves $\mu$. Therefore, 
the restriction of $\mu$ to subsets of $R^\times$ gives a Haar measure of $R^\times$; 
after dividing by $\mu(R^\times)$, this is the normalized Haar measure of the compact 
group $R^\times$. 

Restricting $\mu$ to subsets of $R$, we obtain the Haar measure on $R$, which is a probability measure. 
Its pushforward measure with respect to the canonical map 
$\pi_k\colon R\to R/\frm^k$ is the Haar measure on $R/\frm^k$, hence it equals the uniform 
measure on the finite group $R/\frm^k$ of order~$q^k$. 
This means that 
$\mu(\pi_k^{-1}(Y)) = \#Y/q^k$ for all subsets $Y$ of $R/\frm^k$.  
This leads to the following concrete description of the Haar measure. 
Let $X\subseteq R$ be a closed subset 
and define $X_k := \pi_k^{-1}(\pi_k(X))$.
One easily checks that $X_k\supseteq X_{k+1}$ and 
$X=\cap_{k\in\N}X_k$. Hence 
\begin{equation}\label{eq:mu-count}
 \mu(X) = \lim_{k\to\infty} \mu (X_k) = \lim_{k\to\infty} \#\pi_k(X)q^{-k} .
\end{equation}
This characterization implies that 
$\mu(\varpi X) = \e \mu(X)$, 
where we recall that $\varpi$ denotes a generator of $\frm$ 
and $\e =q^{-1}$. 
In particular, $\mu(\frm^k) = \e^{k}$ for $k\in\N$. 

Clearly, the product of the Haar measures $\mu$ on $K$ defines a Haar measure $\mu_n$ on $K^n$: 
we have $\mu_n(X_1\times\cdots\times X_n) = \mu(X_1)\cdot\ldots\cdot\mu(X_n)$ for Borel measurable $X_i\subseteq K$. 
This defines a probability measure on $R^n$, that we call the 
{\em uniform measure on $R^n$.}  
We remark that this measure is considered an appropriate analogue of the standard multivariate 
Gaussian distribution~\cite{evans:06,zelenov:19}.
We claim that any $A\in\Gl_n(R)$ preserves the Haar measure, that is, 
$\mu_n(A(X)) = \mu_n(X)$ for any Borel measurable $X\subseteq K^n$. 
Indeed, $\tilde{\mu}(X) := \mu_n(A(X))$ defines a translation invariant measure on $K^n$, hence 
$\tilde{\mu} = c\mu$ for some fixed $c$, by uniqueness. Taking $X=R^n$ reveals that $c=1$.

We can characterize the uniform probability measure on $R^n$ as in~\eqref{eq:mu-count}:
for $k\in\N$ we consider the canonical map 
$\pi_k\colon R^n\to (R/\frm^k)^n$. The pushforward of $\mu_n$ equals the Haar measure on the 
finite group $(R/\frm^k)^n$ of order $q^{kn}$, which is nothing but the normalized counting measure.
(Up to a scaling factor,
$\mu_n$ is the $n$--dimensional Hausdorff measure.) 
If we denote by $N_k(X)$ the cardinality of $\pi_k(X)$ for $X\subseteq R^n$, 
we conclude with the same reasoning as above that
\begin{equation}\label{eq:mu-n-count}
  \mu(X) = \lim_{k\to\infty} \pi_k(X) q^{-kn} 
\end{equation}
for any Borel measurable subset $X\subseteq R^n$ (see~\cite[Thm.~2]{oesterle:82}).

\begin{defi}\label{def:sphere}
We define the \emph{unit sphere} in $K^n$ as the subset $S(K^n) := \{x \in K^n : \|x\| = 1\}$. 
\end{defi}

Note that $S(K^n)$ has positive $n$--dimensional measure! 
Indeed, 
$S(K^n) =\{ x \in K^n \mid \|x\| = 1\}$, 
hence $S(K^n) = R^n \setminus \frm^n$ and 
$\mu_n(S(K^n)) = 1 - \e^{n}$. 

Similarly as for~\eqref{eq:isoK} we can uniquely factor any nonzero 
$x\in K^n$ in the form $x=\varpi^k u$, where $k\in\Z$ and $u\in S(K^n)$. 
For this we take $\e^k = \|x\|$ and $u =x/\|x\|$. This leads to a 
homeomorphism
\begin{equation}\label{eq:isoKn}
 K^n\setminus 0 \to \Z \times S(K^n),\, x\mapsto (k,u) . 
\end{equation}

\begin{prop}\label{prop:pushfHaarII}
  The following properties hold:
  \begin{enumerate}
  \item
    The pushforward measure of the Haar measure $\mu$ on $K^n$ 
    with respect to the map~\eqref{eq:isoKn} 
    is the product of the normalized Haar measure on $S(K^n)$ 
    with the discrete measure $\rho_n$ on~$\Z$ given by 
    $$
    \rho_n(k) = (1-\e^n) \e^{kn},\quad k\in\Z .
    $$

  \item
    For $m\in\N$, the expectation of $\|x\|^m$ over $x\in R^n$ is given by 
    $$
    \E(\|x\|^m) = (1- \e^n) (1-\e^{n+m})^{-1} .
    $$
  \end{enumerate}
\end{prop}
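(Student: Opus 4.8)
The plan is to prove part~(1) by slicing $K^n\setminus 0$ along the norm, and then to read off part~(2) in one line. Write $\mu_n$ for the Haar measure on $K^n$ and $\phi\colon K^n\setminus 0\to\Z\times S(K^n)$ for the map~\eqref{eq:isoKn}. First I would record that $\phi^{-1}(\{k\}\times S(K^n))=\{x:\|x\|=\e^k\}=\varpi^k S(K^n)$ (since $\|\varpi^k u\|=\e^k$ for $u\in S(K^n)$) and that on this set $\phi$ is simply $x\mapsto(k,\varpi^{-k}x)$. Using $\mu_n(\varpi X)=\e^n\mu_n(X)$ --- which follows from $\mu(\varpi X)=\e\mu(X)$ together with the uniqueness of translation-invariant measures, exactly as in the $\GL_n(R)$-invariance argument above --- and $\mu_n(S(K^n))=1-\e^n$, this gives $\mu_n(\varpi^k S(K^n))=(1-\e^n)\e^{kn}=\rho_n(k)$, which is the asserted $\Z$-marginal.

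To pin down the conditional measure on each slice, note that the pushforward of $\mu_n|_{\varpi^k S(K^n)}$ under $x\mapsto\varpi^{-k}x$ sends a Borel set $C\subseteq S(K^n)$ to $\mu_n(\varpi^k C)=\e^{kn}\mu_n(C)=\rho_n(k)\,\nu(C)$, where $\nu:=(1-\e^n)^{-1}\mu_n|_{S(K^n)}$ is a probability measure; as the slices are disjoint and cover $K^n\setminus 0$, we obtain $\phi_*\mu_n=\rho_n\otimes\nu$. It then remains to identify $\nu$ with the normalized Haar measure on $S(K^n)$: every $A\in\GL_n(R)$ preserves $\mu_n$ and maps $S(K^n)$ onto itself (being norm-preserving, \cref{pro:isometry-grp}), so $\nu$ is $\GL_n(R)$-invariant; since $\GL_n(R)$ is compact and acts transitively on $S(K^n)$ (any unit vector extends to an $R$-basis of $R^n$), such an invariant probability measure is unique, hence $\nu$ is it. This proves~(1).

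For part~(2) I would integrate $\|x\|^m$ over $R^n=\{\|x\|\le 1\}$ by decomposing according to the value of $\|x\|$. Since $\{0\}$ is $\mu_n$-null and, for $k\ge 0$, the set $\{\|x\|=\e^k\}$ has measure $\rho_n(k)$ by part~(1) with $\|x\|^m\equiv\e^{km}$ on it, we get
\[
 \E(\|x\|^m)=\sum_{k\ge 0}\e^{km}(1-\e^n)\e^{kn}=(1-\e^n)\sum_{k\ge 0}(\e^{n+m})^k=\frac{1-\e^n}{1-\e^{n+m}},
\]
the geometric series converging since $0<\e<1$.

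I do not anticipate a genuine obstacle here. The one place needing a little care is the precise meaning of ``normalized Haar measure on $S(K^n)$'' and why $(1-\e^n)^{-1}\mu_n|_{S(K^n)}$ coincides with it; this rests solely on transitivity of the $\GL_n(R)$-action on $S(K^n)$ and uniqueness of an invariant probability measure under a compact group action. If that measure has already been fixed earlier (e.g.\ as the $\GL_n(R)$-invariant probability measure, which for $n=1$ is the usual normalized Haar measure of $R^\times$), this step is immediate and the remainder is a short computation.
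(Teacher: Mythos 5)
Your proof is correct and follows essentially the same route as the paper's: slice $K^n\setminus 0$ by the value of the norm, compute $\mu_n(\varpi^k S(K^n))=(1-\e^n)\e^{kn}$, observe that the conditional measure on each slice pulls back to the same normalized restriction $\nu=(1-\e^n)^{-1}\mu_n|_{S(K^n)}$, and then read off part~(2) as a geometric series. The only two (minor) differences are that you establish $\mu_n(\varpi X)=\e^n\mu_n(X)$ via uniqueness of translation-invariant measures rather than via the counting characterization~\eqref{eq:mu-n-count} used in the paper, and that you spell out why $\nu$ is the normalized Haar measure on $S(K^n)$ --- via transitivity of the compact group $\GL_n(R)$ on $S(K^n)$ and uniqueness of the invariant probability --- whereas the paper simply takes $\nu=\mu/(1-\e^n)$ to be that measure; your extra step is a welcome clarification, since $S(K^n)$ is not a group for $n>1$, but it does not change the structure of the argument.
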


\begin{proof}
  \begin{enumerate}[itemindent=\dimexpr \labelwidth+\labelsep+\parindent \relax, leftmargin=0pt, itemsep=\parskip]
    \item
      Let $X\subseteq S(K)$ be closed and $k\in \Z$. As above, 
      using~\eqref{eq:mu-n-count}, one shows that 
      $\mu(\varpi^k X) = \e^{kn} \mu(X)$. 
      Hence, $\mu(\varpi^k X ) =   (1-\e^n) \e^{kn} \nu(X)$, 
      where $\nu(X) =\mu(X)/(1-\e^n)$ is the normalized Haar measure of $S(K^n)$. 
      This implies the first assertion. 

    \item
      With the first assertion, we get for $m\in\N$, 
      $$ 
      \E(\|x\|^m) = \sum_{k\in\N} \e^{km} \rho_n(k) = (1-\e^n) \sum_{k\in\N} \e^{k(m+n)}  
      = (1-\e^n) (1-\e^{m+n})^{-1}  
      $$
      and the second assertion follows. \qedhere
\end{enumerate}
\end{proof}

\subsection{Smith normal form}\label{se:smith}
In this section we recall some properties of the Smith normal form, 
which is the nonarchimedean analogue of the Singular Value Decomposition.
Let $A\in K^{m\times n}$ be of rank $r$.
The fundamental theorem on the  {\em Smith normal form} assigns to~$A$ 
uniquely determined integers $k_1\le\ldots\le k_r$ such that 
$SAT =D$ for some $S\in\Gl_m(R)$, $T\in\Gl_n(R)$, and where 
$D=[d_{ij}]$ with $d_{ij} =0$ for $i\ne j$, 
$d_{ii} = \varpi^{k_i}$ if $i\le r$ and $d_{ii} = 0$ otherwise. 
We say that such matrix $D$ is in {\em Smith normal form}.
The $\varpi^{k_1},\ldots,\varpi^{k_r}$ are the {\em invariant factors} of~$A$. 
Equivalently, these are the invariant factors of the $R$--module spanned by the columns of~$A$. 
The factorization $A=SDT$ can be seen as an 
analogue of the singular value decomposition of linear 
maps of Euclidean spaces; see~\cite{evans:02}.   
We shall call the absolute values of the diagonal elements $\s_i:=|d_{ii}|$
the \emph{singular values} of $A$, for $1\le i \le \min\{n,m\}$,
and assume that
$$\sigma_1\ge\ldots\ge \sigma_{\min\{n,m\}}\ge 0.$$

We can characterize the singular values as follows.
Let $A_{I,J}$ denote the submatrix of $A$ obtained by selecting the 
rows indexed by $I\subseteq [m]$ and $J\subseteq [n]$, respectively.
Then, given $s\le \min\{n, m\}$, 
\begin{equation}\label{eq:invar-fact-char}
  \sigma_1\cdots\sigma_s = \e^{k_1 + \ldots + k_s} =\max \{ |\det(A_{I,J})| : |I| = |J| = s\}.
\end{equation}
We shall give another characterization of singular values below, in Section~\ref{sec:minmax}, 
after having introduced the notion of $R$--structure.

In order to emphasize that the singular values are discrete parameters, 
let us encode them differently. 
Note that 
$$
 \Z \to \{|r| \mid r\in K^\times\},\, k \mapsto \e^k
$$
is a group isomorphism: 
its image in $\mathbb{R}$ is the discrete subgroup of the multiplicative group 
of positive rational numbers generated by $\e$.
It is convenient to extend this map to 
$\bZ:=\Z\cup\{\infty\}$ 
by the convention $\e^\infty := 0$. 
Setting $\s_i = \e^{x_i}$, 
the list of singular values of $A\in K^{m\times n}$ can be 
therefore uniquely encoded by an element of 
$$
 \bZ^r_o := \{ x\in\bZ^r \mid x_1 \ge \cdots \ge x_{\min\{n,m\}}\} .
$$

We define the {\em absolute determinant} of $A\in K^{m\times n}$ as 
the product of the its singular values, thus, 
\begin{equation}\label{eq:abs-det}
  N(A) : = \sigma_1 \cdot \ldots \cdot \sigma_{\min\{m, n\}} = \max\{|\det A_{I,J} | : |I| = |J| = \min\{n,m\}\}.
\end{equation}
This quantity turns out to be the appropriate substitute 
for the normal Jacobian in the coarea formula. The following observation will be extremely useful.
If the matrix $A \in K^{m\times n}$ is a row ($m=1$) 
or a column ($n=1$), then 
its absolute determinant equals
\begin{equation}\label{eq:adet-row}
 N(A) = \|A\| .
\end{equation} 

\subsubsection{A block Smith normal form decomposition}\label{se:block-SVD}

We already noted that the Smith normal form decomposition can be seen
as an analogue of the singular value decomposition. We now provide
a block version of this decomposition.
It can be seen as a nonarchimedean analogue of the CS-decomposition over $\R$ and $\C$, 
see~\cite{paige-wei:94} and \cite[Appendix]{stewart:77}. 
(While elementary, we could not locate this result within the literature.)

Let $1\le \ell \le n$ and decompose 
 \[\label{eq:A-decomp}
    A=\begin{bmatrix} A_{11} & A_{12}\\ A_{21} & A_{22}\end{bmatrix} \in \Gl_n(R) ,
  \]
where the upper left block has format $\ell \times \ell$. 
We denote by $U_{\ell,n}$
the subgroup of block-upper triangular matrices satisfying $A_{21}=0$.

\begin{thm}\label{th:BSVD}
  Let $A \in \Gl_n(R)$ and $1\le \ell \le n$. Then 
  there are $S, T \in U_{\ell,n}$ such 
  \[
    SA T= 
    \begin{bmatrix} I_\ell & 0\\ D & I_{n-\ell} \end{bmatrix}  ,
  \]
  where $D\in R^{(n-\ell)\times \ell}$ is in Smith normal form and has full rank 
  $\min\{n-\ell, \ell\}$.
\end{thm}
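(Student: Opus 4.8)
The plan is to prove this by a block version of the standard algorithm that produces the Smith normal form, controlling at each step that all the row and column operations we perform stay inside $U_{\ell,n}$. First I would observe that since $A\in\Gl_n(R)$, at least one of the four blocks contributes a unit to $\det A \bmod \frm$; more precisely, reducing mod $\frm$ gives an invertible matrix over the residue field, so after permuting rows within the first $\ell$ and within the last $n-\ell$ (and likewise for columns — all such permutations lie in $U_{\ell,n}$), I may assume the reduced matrix has a suitable pivot pattern. The key structural point is that multiplying on the left by an element $\begin{bmatrix} S_{11} & S_{12}\\ 0 & S_{22}\end{bmatrix}\in U_{\ell,n}$ with $S_{11}\in\Gl_\ell(R)$, $S_{22}\in\Gl_{n-\ell}(R)$ lets me perform arbitrary $R$-invertible row operations among the first $\ell$ rows, arbitrary ones among the last $n-\ell$ rows, and add $R$-multiples of a lower row to an upper row — but \emph{not} the reverse. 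Symmetrically on the right. So the admissible operations are: full $\Gl_\ell(R)\times\Gl_{n-\ell}(R)$ action on each diagonal block from both sides, plus a unipotent ``one-directional'' clearing of the off-diagonal blocks.

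The first step is to clear $A_{12}$ to zero and normalize $A_{11}$ to $I_\ell$. Since $A\in\Gl_n(R)$, the top $\ell$ rows $[A_{11}\mid A_{12}]$ span a rank-$\ell$ direct summand of $R^n$, so by column operations on the last $n-\ell$ columns combined with column operations on the first $\ell$ (and adding multiples of the first $\ell$ columns to the last, which is allowed) one can bring $[A_{11}\mid A_{12}]$ to $[U\mid 0]$ for some $U\in\Gl_\ell(R)$ — here I use that $A_{11}$ modulo the column span of the bottom rows is still invertible, or more cleanly that one can find an $R$-basis change exhibiting the summand; then left-multiply by $\diag(U^{-1},I)\in U_{\ell,n}$ to get $A_{11}=I_\ell$, $A_{12}=0$. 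Next I would use $A_{11}=I_\ell$ to clear $A_{22}$: right-multiply by $\begin{bmatrix} I & -A_{21}^{-1}?\end{bmatrix}$ — more carefully, since the matrix is now $\begin{bmatrix} I_\ell & 0\\ A_{21} & A_{22}\end{bmatrix}$ and is in $\Gl_n(R)$, its determinant is $\det A_{22}\in R^\times$, so $A_{22}\in\Gl_{n-\ell}(R)$; left-multiply by $\diag(I,A_{22}^{-1})\in U_{\ell,n}$ to get $A_{22}=I_{n-\ell}$. Finally, left- and right-multiply by $\diag(I,S)$ and $\diag(T,I)$ with $S\in\Gl_{n-\ell}(R)$, $T\in\Gl_\ell(R)$ — all in $U_{\ell,n}$ — to put the remaining block $A_{21}$ into Smith normal form $D=SA_{21}T$; note that these operations do not disturb the three already-normalized blocks ($I_\ell \mapsto I_\ell T = $ wait, that changes it). Here I need to be slightly more careful about order: I would first put $A_{21}$ in Smith form while $A_{11}$ is still $I_\ell$, then renormalize $A_{11}$ back to $I_\ell$ using the inverse transformations on the $\ell\times\ell$ block (which are independent, living in the $(1,1)$-block action), and similarly for $A_{22}$. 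The full-rank claim on $D$ is automatic: the whole matrix has $\det\in R^\times$, and after the reductions $\det\begin{bmatrix} I_\ell & 0\\ D & I_{n-\ell}\end{bmatrix}=1$, but the rank statement ``$\rank D = \min\{n-\ell,\ell\}$'' should follow from the original hypothesis $A\in\Gl_n(R)$ forcing $A_{21}$ to have full row or column rank mod $\frm$ \emph{after} the first normalization — indeed once $A_{11}=I_\ell, A_{12}=0, A_{22}=I_{n-\ell}$, the matrix is already invertible regardless of $D$, so actually $D$ can be \emph{anything} in $R^{(n-\ell)\times\ell}$; the genuine content of the full-rank claim must come from a dimension count on which blocks can be normalized to identity, meaning the generic/worst case, so I would instead argue that the \emph{specific} $D$ arising has full rank because the $\ell$ columns of $\begin{bmatrix}I_\ell\\ A_{21}\end{bmatrix}$ together with the standard basis vectors $e_{\ell+1},\dots,e_n$ span $R^n$ and the original $A$ was invertible — on reflection, the full-rank assertion should be re-examined; I suspect it holds because the choice of which blocks go to identity is forced by the \emph{relative position} of the two coordinate subspaces, and Theorem~\ref{th:NF-pairs-subspaces}/\eqref{eq:isoKn} machinery from the relative-position section pins down $D$ up to $U_{\ell,n}$-equivalence.

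The main obstacle I anticipate is bookkeeping the one-directional nature of the admissible unipotent operations: unlike ordinary Smith normal form where one may freely clear in both directions, here clearing $A_{12}$ is ``free'' (adding first-block columns to last-block columns) but the reverse is forbidden, so the order of operations matters and one must verify that normalizing one block does not destroy another. I would handle this by doing the reductions in the order $A_{12}\to 0$ and $A_{11}\to I_\ell$ (column ops + left $(1,1)$-normalization), then $A_{22}\to I_{n-\ell}$ (left $(2,2)$-normalization, which only touches the bottom block rows, leaving $A_{11}, A_{12}$ intact), then $A_{21}\to D$ via independent $\Gl_{n-\ell}(R)$-left and $\Gl_\ell(R)$-right actions, and finally re-fixing $A_{11}=I_\ell$ and $A_{22}=I_{n-\ell}$ with the compensating block-diagonal factors — checking that each compensating factor lies in $U_{\ell,n}$ (it does, being block diagonal) and commutes appropriately with the zero block $A_{12}=0$ (trivially). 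The uniqueness of $D$ (up to Smith form) is not asserted so I need not address it; only existence is required.
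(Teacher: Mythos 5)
Your observation that the admissible operations in $U_{\ell,n}$ are asymmetric is correct, and once $A_{11}$ has been made invertible over $R$ the rest of your plan (clear $A_{12}$, force $A_{22}=I_{n-\ell}$, Smith-form $A_{21}$ and renormalize with compensating block-diagonal factors) works as you outline. But the very first step has a genuine gap: you propose to bring $[A_{11}\mid A_{12}]$ to $[U\mid 0]$ with $U\in\Gl_\ell(R)$ using column operations alone. Any admissible right multiplication sends $[A_{11}\mid A_{12}]$ to $[A_{11}T_{11}\mid A_{11}T_{12}+A_{12}T_{22}]$, so the $(1,1)$-block becomes $A_{11}T_{11}$, which is singular modulo $\frm$ whenever $A_{11}$ is. Concretely, take $n=2$, $\ell=1$, $A=\left(\begin{smallmatrix}0&1\\1&0\end{smallmatrix}\right)$: the top row is $(0,1)$ and no column operation from $U_{1,2}$ can put a unit into position $(1,1)$. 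The appeal to ``the top $\ell$ rows span a rank-$\ell$ direct summand'' does not rescue this, because exhibiting the summand in the required coordinates needs a column swap across the block boundary, which is exactly the operation $U_{\ell,n}$ forbids.

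The missing idea -- and the place the paper's proof spends all its effort -- is a left multiplication by $\left(\begin{smallmatrix}Q_1 & E\\ 0 & Q_2\end{smallmatrix}\right)\in U_{\ell,n}$ with a nontrivial off-diagonal block $E$, i.e., adding $R$-multiples of the bottom $n-\ell$ rows to the top $\ell$ rows. This replaces $A_{11}$ (up to a column permutation) by $Q_1A_{11}+EQ_2A_{21}$, and the full rank mod $\frm$ of the first $\ell$ columns of $A$ guarantees that some $E$ makes this invertible over $R$. The paper engineers a particularly simple $E$: after separately triangularizing $A_{11}$ and $A_{21}$ modulo $\frm$, in each diagonal position at least one of the two pivots is a unit, so a $0$-$1$ diagonal matrix $E$ works. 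You listed the $E$-move in your inventory of admissible operations at the start of the proposal but never invoked it; without it the first reduction cannot begin. (You were also right to be uneasy about the asserted full rank of $D$: for $A=I_n$ one necessarily gets $D=0$ since $U_{\ell,n}$ is a group and $ST$ is block upper triangular, so that clause must be read in the generous sense of the $\varpi^\infty=0$ convention used when the theorem is applied later.)
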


\begin{proof}
Suppose $A$ is decomposed as in \eqref{eq:A-decomp}. 
We apply Gaussian elimination to the matrix~$A_{11}$, making sure that  
within each nonzero column, the pivot positions contain elements of maximal norm.
(This amount to carrying out Gaussian elimination over the residue field $R/\frm$.) 
This implies that there is $Q_1 \in \Gl_\ell(R)$ such that 
$Q_1A_{11}\bmod\frm$ is in reduced row echelon form.  
Moreover, by a column permutation, we may achieve that this 
is an upper triangular matrix. More specifically, there is a permutation matrix $P_1\in\GL_\ell(R)$ such that  
$H_1:= Q_1 A_{11} P_1 \bmod\frm$ is upper triangular.  
Similarly, working with the possibly non-square matrix~$A_{21}$, 
we show that  there are $Q_2,P_2 \in \Gl_{n-\ell}(R)$ such that 
$H_2:= Q_2 A_{21}P_2 \bmod\frm$ has zero entries below the main diagonal. 

For any matrix $E$ in $R^{\ell\times \ell}$, we have 
 \[
   \begin{bmatrix} Q_1& E\\ 0 & Q_2\end{bmatrix} 
   \begin{bmatrix} A_{11} & A_{12}\\ A_{21} & A_{22}\end{bmatrix} 
   \begin{bmatrix} P_1& 0\\ 0 & P_2\end{bmatrix}
  = \begin{bmatrix} (Q_1 A_{11} + E Q_2 A_{21})P_1& *\\ Q_2 A_{21}P_2 & *\end{bmatrix}  
  = \begin{bmatrix} H_1 + EH_2& *\\ H_2 & *\end{bmatrix} .
\]
We want to choose $E$ such that $H_1 + EH_2$ is invertible over $R$. 
Note that for $E=0$, the matrix above is invertible over $R$. 
From the upper triangular form of the $H_i\bmod\frm$, 
we see that for all $i\le \ell$, the $i$th diagonal entry $d_i$ of $H_1$ 
or the $i$th diagonal entry $d'_i$ of $H_2$ must be a unit in~$R$. 
By the ultrametric inequality, either $|d_i|=1$, or $|d_i|<1$ and $|d_i +d'_i|=1$. 
We now define $E$ to be the diagonal matrix whose $i$th diagonal entry equals $1$ if
$d_i$ is not a unit and equals zero otherwise. Then the diagonal entries of $Q:=H_1 + EH_2$
are equal to $d_i$ or $d_i +d'_i$, respectively.
In particular, $Q$ is invertible over $R$.
Thus, we arrive at a decomposition 
\[
      \begin{bmatrix}
         I_\ell & B_{12} \\ B_{21} & B_{22}
     \end{bmatrix} := 
    \begin{bmatrix} Q^{-1}& 0\\ 0 & I_{n-\ell}\end{bmatrix} 
    \begin{bmatrix} Q_1& E\\ 0 & Q_2\end{bmatrix} 
   \begin{bmatrix} A_{11} & A_{12}\\ A_{21} & A_{22}\end{bmatrix} 
   \begin{bmatrix} P_1& 0\\ 0 & P_2\end{bmatrix} .
\]
Now we use the Smith normal form of $B_{21} = T_1D S_1$,
where $T_1 \in \Gl_{n-\ell}(R)$, $S_1 \in \Gl_{\ell}(R)$
and the matrix $D$ is as in the statement of the theorem. 
Then we obtain 
\[
    \begin{bmatrix}
      S_1 & 0 \\ 0 & T_1^{-1}
    \end{bmatrix}
    \begin{bmatrix}
      I_\ell & B_{12} \\ B_{21} & B_{22}
    \end{bmatrix}
    \begin{bmatrix}
      S_1^{-1} & -B_{12} \\ 0 & I_{n-\ell}
    \end{bmatrix}
    =
    \begin{bmatrix}
      I_\ell & 0 \\ D & C_{22}
    \end{bmatrix}
\]
for some matrix $C_{22} \in \Gl_{n-\ell}(R)$, which is over $R$ by construction. 
By multiplication with a further block diagonal matrix, 
we can turn $C_{22}$ to $I_{n-\ell}$ and the assertion follows. 
\end{proof}

Suppose we are in the setting of Theorem~\ref{th:BSVD}. 
Let $A=[A_{ij}]_{1\le i,j\le 2}$ and $T=[T_{ij}]_{1\le i,j\le 2}$ be 
block decompositions of $A$ and $T$, respectively, 
where $A_{11},T_{11}$ have the format $\ell\times\ell$. 
Then 
\begin{equation}\label{eq:finding}
  S \begin{bmatrix} A_{11} \\ A_{21} \end{bmatrix}  T_{11} =
  \begin{bmatrix}
      I_{\ell} \\ D 
   \end{bmatrix} .
\end{equation}
We next generalize this finding, which 
will be our main application of Theorem~\ref{th:BSVD}.
(The above observation covers the special case $k=\ell$.) 

\begin{cor} \label{cor:rectangle-BSVD}
Let $M \in R^{n \times k}$ and denote by $U\subseteq K^n$ the $K$-span of the columns of $M$. 
We assume that the $R$-span of these columns equals $U\cap R^n$. Then, 
for any $k \leq \ell \leq n$,  
there exist $P \in U_{\ell, n}$ and $Q \in \GL_k(R)$ such that
\[
    P M Q =
    \begin{bmatrix}
      I_k \\ 0 \\ D
    \end{bmatrix} ,
\]
where $D\in R^{(n-\ell)\times k}$ is in Smith normal form and has full rank $\min\{n-\ell,k\}$ 
(the middle matrix~$0$ has format $(\ell-k)\times k$). 
\end{cor}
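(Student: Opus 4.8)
\textbf{Proof plan for Corollary~\ref{cor:rectangle-BSVD}.}
The plan is to reduce the statement to an application of Theorem~\ref{th:BSVD} after first ``completing'' the module $U\cap R^n$ to a free direct summand of $R^n$. First I would observe that, by hypothesis, the columns of $M$ form an $R$-basis of the $R$-module $L := U\cap R^n$. Since $R$ is a discrete valuation ring, $L$ is a direct summand of $R^n$ as an $R$-module: indeed $R^n/L$ embeds in $U^\perp$-type complement and is torsion-free (any $v\in R^n$ with $\varpi^a v \in L$ already lies in $U$, hence in $U\cap R^n=L$), so $R^n/L$ is free and the quotient map splits. Choosing a splitting gives a complementary free submodule $L'$ with $R^n = L\oplus L'$, i.e. there is a matrix $N\in R^{n\times(n-k)}$ whose columns together with those of $M$ form an $R$-basis of $R^n$; equivalently $[\,M \mid N\,]\in\GL_n(R)$.

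Next I would apply Theorem~\ref{th:BSVD} to the matrix $A := [\,M \mid N\,]\in\GL_n(R)$ with the block size taken to be $\ell$ (so the upper-left block is $\ell\times\ell$; note $k\le\ell\le n$ is exactly what we assume). The theorem produces $S,T\in U_{\ell,n}$ with
\[
  SAT = \begin{bmatrix} I_\ell & 0\\ D_0 & I_{n-\ell}\end{bmatrix},
\]
with $D_0\in R^{(n-\ell)\times\ell}$ in Smith normal form of full rank $\min\{n-\ell,\ell\}$. Writing $T=[T_{ij}]$ in $2\times2$ block form with $T_{11}$ of size $\ell\times\ell$ and $T_{21}=0$, the first $k$ columns of this identity single out only the first $k$ columns of $A$, which are exactly $M$: reading off the relation as in \eqref{eq:finding}, we get
\[
  S\,M\,\widetilde{T} = \begin{bmatrix} I_k\\ 0\\ D\end{bmatrix},
\]
where $\widetilde T\in\GL_k(R)$ is the top-left $k\times k$ corner of $T_{11}$ (invertible since $T\in\GL_n(R)$ is block-upper-triangular with $T_{11}$ a unit block, and the leading principal $k\times k$ block of the unit $T_{11}$ can be made a unit after reordering within the first $\ell$ coordinates — which stays inside $U_{\ell,n}$), the middle zero block has format $(\ell-k)\times k$, and $D$ is the submatrix of $D_0$ formed by its first $k$ columns, still in Smith normal form of full rank $\min\{n-\ell,k\}$. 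Setting $P:=S\in U_{\ell,n}$ and $Q:=\widetilde T\in\GL_k(R)$ finishes the argument.

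The main obstacle I anticipate is the bookkeeping in the last paragraph: extracting from Theorem~\ref{th:BSVD} a clean statement about the first $k$ columns of $A$, and in particular arranging that the relevant $k\times k$ corner of $T_{11}$ is invertible over $R$ while keeping all conjugating matrices inside $U_{\ell,n}$ and $\GL_k(R)$. This is handled by permuting coordinates within the first $\ell$-block (a permutation supported on $\{1,\dots,\ell\}$ lies in $U_{\ell,n}$) so that a unit-determinant $k\times k$ minor of $T_{11}$ moves to the leading position; since $\det T_{11}\in R^\times$, such a minor exists after at most column operations that can be absorbed into $Q$. The splitting of $L$ as a summand of $R^n$ is the only genuinely module-theoretic input, and it is standard over a DVR.
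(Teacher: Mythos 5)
Your first step (completing $M$ to a matrix $A\in\GL_n(R)$) is correct and substantively identical to the paper's use of Lemma~\ref{le:extend-basis}; your appeal to Theorem~\ref{th:BSVD} at block size $\ell$ is also the right move. The gap is in the final extraction step, and it is a genuine algebraic error, not mere bookkeeping.

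You assert that the first $k$ columns of the identity
$SAT=\left[\begin{smallmatrix} I_\ell & 0\\ D_0 & I_{n-\ell}\end{smallmatrix}\right]$
``single out'' $M$ and yield $SM\widetilde T=\left[\begin{smallmatrix} I_k\\0\\D\end{smallmatrix}\right]$ with $\widetilde T$ the top-left $k\times k$ corner of $T_{11}$. This does not follow. The $j$-th column ($j\le k$) of $SAT$ equals $S\,[A_{11};A_{21}]\,t_j$, where $t_j\in R^\ell$ is the \emph{entire} $j$-th column of $T_{11}$; its entries in positions $k+1,\dots,\ell$ need not vanish, so columns $k+1,\dots,\ell$ of $A$ enter the product nontrivially. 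Consequently the first $k$ columns of $SAT$ are not $SM$ times any $k\times k$ matrix, and the matrix $D$ in your conclusion is not the first $k$ columns of $D_0$. Permuting coordinates within the $\ell$-block, as you suggest, addresses only the invertibility of $\widetilde T$, not the fact that the wrong object is being multiplied.

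The paper's proof takes a different route to close exactly this gap. It writes $SM=\left[\begin{smallmatrix}X_1\\Y_1\end{smallmatrix}\right]$ with $X_1$ of size $\ell\times k$ and $Y_1$ of size $(n-\ell)\times k$, and multiplies on the \emph{left}: from $[X_1\;X_2]\,T_{11}=I_\ell$ one gets $T_{11}[X_1\;X_2]=I_\ell$ (since $T_{11}\in\GL_\ell(R)$), hence $T_{11}X_1=\left[\begin{smallmatrix}I_k\\0\end{smallmatrix}\right]$. Crucially, $Y_1$ is \emph{not} a submatrix of $D_0$ (it is $D_0 T_{11}^{-1}$ restricted to the first $k$ columns), so it need not be in Smith normal form; the paper therefore applies a fresh Smith normal form $UY_1Q=D$ and assembles $P$ as the product of three invertible matrices, not just $S$. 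If you adopt that left-multiplication and the extra Smith normal form on $Y_1$, your argument becomes the paper's proof.
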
 

\begin{proof}
We extend the list of columns of $M$ to an $R$--basis of $K^n$ (see 
the forthcoming Lemma~\ref{le:extend-basis}) 
and call the resulting matrix $A\in\GL_n(R)$. 
Theorem~\ref{th:BSVD} yields a factorization 
\begin{equation}\label{eq:BSDec}
  SAT= 
  \begin{bmatrix} I_\ell & 0\\ E & I_{n-\ell} \end{bmatrix} 
\end{equation}
for some $S, T \in U_{\ell,n}$ and where $E$ is in Smith normal form. 
We block decompose 
$A=[A_{ij}]_{1\le i,j\le 2}$, $S=[S_{ij}]_{1\le i,j\le 2}$, $T=[T_{ij}]_{1\le i,j\le 2}$,  
where the $(1,1)$--blocks have the format $\ell\times\ell$.
Note that $S_{21}=0$ and $T_{21}=0$.  
From \eqref{eq:BSDec} we get as for \eqref{eq:finding} that 
\begin{equation}\label{eq:BSDec1}
  S\begin{bmatrix} A_{11} \\ A_{21} \end{bmatrix} T_{11} = 
  \begin{bmatrix} I_\ell \\ E \end{bmatrix} .
\end{equation}
This is a matrix of format $n\times \ell$, that 
we further block decompose:  
\begin{equation}\label{eq:BSDec2}
  S\begin{bmatrix} A_{11} \\ A_{21} \end{bmatrix} =:
  \begin{bmatrix} X_1 & X_2\\ Y_1 & Y_2  \end{bmatrix} ,
\end{equation}
where $X_1$ has the format $\ell\times k$ and 
$Y_1$ has the format $(n-\ell)\times k$. 
In particular, restricting to the first $k$ columns, we get 
\begin{equation}\label{eq:BSDec3}
  S M =\begin{bmatrix} X_1 \\ Y_1 \end{bmatrix} .
\end{equation}
From \eqref{eq:BSDec1} and \eqref{eq:BSDec2} we obtain 
$[X_1 \ X_2 ]\; T_{11} = I_\ell$.  
This implies 
$T_{11} \; [ X_1  \ X_2]  = I_\ell $, 
and hence 
\begin{equation}\label{eq:BSDec4}
  T_{11} X_1  = 
  \begin{bmatrix} I_k \\ 0\end{bmatrix} .
\end{equation}

Now we take the Smith normal form decomposition of $Y_1$: 
there are $U \in \GL_{n-\ell}(R)$, $T \in \GL_{k}(R)$, 
and $D$ in Smith normal form such that $UY_1 T = D$.
Together with Equations~\eqref{eq:BSDec3} and \eqref{eq:BSDec4},
we conclude that 
\[
\begin{bmatrix}
      T_{11} & 0 \\
      0 & U
\end{bmatrix}
 SMT = 
\begin{bmatrix}
      T_{11}X_1 T \\
       U Y_1 T
\end{bmatrix} = 
\begin{bmatrix}
      T \\ 0 \\ D
\end{bmatrix} .
\]
Finally, we arrive at 
\[
\begin{bmatrix}
      T^{-1} & 0 & 0 \\
      0 & I_{\ell-k} & 0 \\
      0 & 0 & I_{n-\ell}
    \end{bmatrix}
\begin{bmatrix}
      T_{11} & 0 \\
      0 & U
\end{bmatrix}
 SMT = 
\begin{bmatrix}
      I_k \\ 0 \\ D
\end{bmatrix} ,
\]
which is a decomposition as desired, 
where $P$ is the product of the three invertible matrices 
over $R$ on the left, and $Q=T$.  
\end{proof}


{
\subsection{Analysis on $K$--analytic manifolds}
In this section, we recall some basic results on nonarchimedean 
analytic functions, nonarchimedean manifolds, and integration parallel
to the standard material over the reals. 
We refer to \cite{robert:00,schikhof,schneiderp:11} 
for general background on calculus on ultrametric spaces.
Our interest is in analytic functions and analytic manifolds, but we remark that 
one could consider more generally strictly differentiable maps 
and smooth ultrametric manifolds, see for instance \cite{strictC1}.

\subsubsection{$K$--analytic functions}\label{se:analytic-maps} 

Let $U\subseteq K^m$ be an open subset. 
A {\em (locally) $K$--analytic} map $\varphi\colon U \to K^n$
is a map that can be locally defined by convergent power series with coefficients in~$K$. 
When $n=m$, we define the {\em absolute Jacobian determinant} 
$J(\varphi)(x) := |\det(D\varphi(x))|$  of~$\varphi$ at $x\in U$
as the absolute value of the determinant of the Jacobian matrix
$D\varphi(x)\in K^{n\times n}$. We will say that the map $\varphi$ is {\em $K$--bianalytic} 
if $\varphi\colon U \to V$ is a bijection onto an open subset $V\subseteq K^n$ 
and $\varphi$ and its inverse are $K$--analytic.
The inverse function theorem holds for 
analytic maps, see~\cite{schneiderp:11}. 
This implies the implicit function theorem in the usual way. 

\subsubsection{Integration}

Suppose $U\subseteq K^n$ is open and $f\colon U\to [0,\infty]$ 
is (Borel) measurable. By the general theory of integration, we can integrate $f$ 
with respect to the Haar measure $\mu_n$ and obtain a well defined value 
$\int_U f d\mu_n \in [0,\infty]$, see~\cite{bourbaki-II-7-9} or \cite{popa}.
As usual, a function $f\colon U\to \R$ is called {\em integrable} iff 
the integrals of its positive and negative parts $f_+,f_-$ are finite, and one defines 
$\int_V f \, d\mu_n := \int_V f_+ \, d\mu_n - \int_V f_- d\mu_n$. 

\subsubsection{$K$--analytic manifolds}

For precise definitions and background on this section we refer to the summary~\cite{bourbaki:var-diff-analyt} 
(without proofs) and the detailed treatments in Serre~\cite{serre:64} and Schneider~\cite[Chap.~2]{schneiderp:11}.

\begin{defi}\label{def:K-mf}
A {\em $K$--analytic manifold} $X$ of dimension~$n$ is a Hausdorff topological space 
satisfying the second countability axiom, together with a {\em $K$--analytic atlas}. 
Such atlas consists of a family of {\em charts} 
$\psi_i\colon U_i \to V_i$, where the collection of $U_i$ forms an open cover of~$X$, 
the $V_i$ are open subsets of $K^n$ and the $\psi_i$ are homeomorphisms. 
Moreover, it is required that every {\em change of coordinates map} 
$\psi_i \circ \psi_j^{-1}$ is $K$--analytic on its domain of 
definition $\psi_j(U_i\cap U_j)$ (if nonempty).
\end{defi}
 
In particular, the change of coordinate maps are $K$--bianalytic.
The second axiom of countability ensures the existence of partitions of unity 
(paracompactness). We note that in the ultrametric setting, it is always possible to refine 
an atlas to arrive at pairwise disjoint domains~$U_i$ (strict paracompactness), 
see~\cite[Lemma~1.4 and Prop.~8.7]{schneiderp:11}.  

The space $K^n$ is a $K$--analytic manifold of dimension~$n$ via the atlas consisting of the 
single chart given by the identity on $K^n$. 
Clearly, an open subset $U$ of a $K$--analytic manifold $X$ of dimension~$n$ 
inherits from $X$ the structure of a $K$--analytic manifold $X$ of dimension~$n$.
Hence the open subset $R^n$ is a $K$--analytic manifold $X$ of dimension~$n$, 
which is also closed and compact. 

In fact, Serre~\cite{serre-class:65} observed that any compact 
$n$--dimensional $K$--analytic manifold~$X$ is analytically isomorphic 
to a disjoint union of finitely many copies of $R^n$, 
see also~\cite[III, Appendix~2]{serre:64}. 

The definition of $K$--analytic maps between $K$--analytic manifolds, their differentials, etc.\
carries over as in the classical framework.
Also, the concept of a submanifold is defined in the usual way as follows. 

\begin{defi}\label{def:submf}
Let $X$ be a $K$--analytic manifold. 
A subset $Y\subseteq X$ is called an {\em (embedded) $K$--analytic submanifold} of $X$ 
if for every $p\in Y$, there exists an open neighbourhood $U$ of $p$ in $X$, 
an open subset $V\subseteq K^n$, and a 
$K$--bianalytic map $\varphi\colon U \to V$ 
such that 
$\varphi(U\cap Y) = V \cap (K^m\times 0^{n-m})$.
Then $Y$ inherits from $X$ a $K$--analytic atlas, which turns $Y$
into a $K$--analytic manifold.
\end{defi}

A $K$--analytic map $\varphi\colon X \to Y$ of $K$--analytic manifolds is
called a {\em ($K$--analytic) submersion} if its derivative 
$D_x\varphi\colon T_xX \to T_{\varphi(x)}Y$ is surjective for all $x\in X$.  
The implicit function theorem implies that locally, in suitable charts, a submersion 
is a coordinate projection $K^n\to K^m$ onto the first $m$ coordinates. 
This implies that the fibres $\varphi^{-1}(y)$ are $K$--analytic submanifolds of 
dimension $\dim X -\dim Y$, or empty. 

We note that an $n$--dimensional analytic $K$--manifold has a {\em tangent bundle} $TX$. 
Moreover, for $k\le n$, we can define the bundle $\Lambda^k X$ of alternating tensors of order~$k$, 
whose fibre over $x\in X$ consists of the alternating multilinear forms $(T_xX)^k \to K$. 
By a {\em $K$--analytic $k$--form} on~$X$, we 
mean an analytic section of $\Lambda^k X$. 

\subsubsection{Sard's lemma}

Let $\varphi\colon X \to Y$ be an analytic map of $K$--analytic manifolds. 
We say that $x\in X$ is a {\em critical point} of $\varphi$ if 
the derivative $D_x\varphi:T_xX\to T_{\varphi(x)}Y$ is not surjective.
We denote by $C(\varphi)$ the set of critical points of $\varphi$ 
and we call its image $\varphi(C(\varphi))$ the set of its {\em critical values}. 

Let $X$ be a $K$--analytic manifold of dimension $n$. We say that a measurable subset $S\subseteq X$ 
has \emph{measure zero} if $\psi_i(S\cap U_i)\subseteq K^n$ has (Haar) measure zero 
for every chart $\psi_i\colon U_i \to V_i$ of the $K$--analytic atlas of~$X$.
As in the classical case, this notion is well defined: 
$K$--analytic functions are locally Lipschitz, and hence
the image of a set of measure zero under an analytic map still has measure zero.
(The well-definedness also follows from Proposition~\ref{th:TF} below.)

A version of Sard's lemma, Theorem~\ref{th:sard} below, holds for $K$--analytic manifolds. Surprisingly, we could not locate a proof of 
this result in the literature (the reference~\cite{tempered} only treats the case of a polynomial map $\varphi$ between vector spaces).
Therefore, we provide a proof in Appendix~\ref{sec:proofsard}, which is essentially the same as the classical one, see~\cite{GP:74,milnor:65}, with minor adaptations.

\begin{thm}[Sard's lemma]\label{th:sard}
Let $\varphi\colon X \to Y$ be a $K$--analytic map of $K$--analytic manifolds (and assume $\chara K=0$).  
Then the set of critical values of~$\varphi$ has measure zero. 
\end{thm}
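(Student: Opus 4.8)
The plan is to imitate the classical proof of Sard's theorem (as in Milnor~\cite{milnor:65} or Guillemin--Pollack~\cite{GP:74}), carrying out the reduction to a local statement in coordinate charts and then arguing by induction on the dimension of the source. By strict paracompactness (\cite[Lemma~1.4]{schneiderp:11}) and second countability, $X$ is covered by countably many chart domains, so it suffices to show that for each chart the image of the critical set has measure zero in $Y$; composing with a chart on $Y$, we are reduced to the case $\varphi\colon U\to K^p$ with $U\subseteq K^n$ open. Since measure zero is preserved under $K$--analytic maps (these are locally Lipschitz, as noted in the excerpt), and since a countable union of measure zero sets has measure zero, it is enough to prove the local statement. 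I would set up the stratification $C(\varphi)\supseteq C_1\supseteq C_2\supseteq\cdots$, where $C_i$ is the set of points at which all partial derivatives of $\varphi$ of order $\le i$ vanish, and prove three claims, exactly as in the classical argument: (i) $\varphi(C(\varphi)\setminus C_1)$ has measure zero; (ii) $\varphi(C_i\setminus C_{i+1})$ has measure zero for each $i\ge 1$; (iii) $\varphi(C_i)$ has measure zero for $i$ sufficiently large (depending on $n$ and $p$).

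For step (iii) the argument is the cleanest: near a point of $C_i$, a Taylor expansion with remainder (valid for $K$--analytic functions, using the ultrametric to control the remainder term) shows that $\varphi$ restricted to a small cube of side $\d$ maps into a set of diameter $O(\d^{i+1})$; subdividing the cube into $q^{kn}$ subcubes of side $q^{-k}\d$ in the style of~\eqref{eq:mu-n-count} and summing the volumes of the image cubes gives a bound of order $q^{kn}\cdot q^{-k(i+1)p}$, which tends to $0$ as $k\to\infty$ once $i+1 > n/p$, i.e. $i\ge n$. For step (i), one uses the implicit function theorem: if some first partial derivative of $\varphi$ is nonzero at $x\in C(\varphi)$, then after a bianalytic change of coordinates $\varphi$ has the form $(x_1,x')\mapsto (x_1,\psi(x_1,x'))$, so the critical points and values of $\varphi$ fibre over the $x_1$-axis as critical points and values of the maps $\psi(x_1,\cdot)\colon K^{n-1}\to K^{p-1}$; applying the inductive hypothesis fibrewise and integrating via a Fubini-type argument for the Haar measure $\mu_n = \mu_1\times\mu_{n-1}$ gives the claim (here one uses that a measurable set in $K^p$ all of whose slices $\{x_1\}\times(\cdot)$ have measure zero is itself of measure zero). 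Step (ii) is similar: if $x\in C_i\setminus C_{i+1}$, some $i$th-order partial $\partial^\alpha\varphi_j$ has a nonvanishing first derivative, so the zero set of $w:=\partial^\alpha\varphi_j$ near $x$ is a submanifold $V$ of dimension $n-1$ containing $C_i$ locally, and $\varphi|_V$ is an analytic map whose critical set contains $C_i\cap V$; the inductive hypothesis on $\dim V = n-1$ finishes it. The base case $n=0$ is trivial.

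The main obstacle, and the only place where $\chara K = 0$ is genuinely used, is ensuring that the Taylor-with-remainder estimate in step (iii) and the reductions in steps (i)--(ii) go through: one must know that $K$--analytic functions admit the relevant finite Taylor expansions with ultrametrically controlled remainders on small enough balls, and that the derivative maps behave well. In characteristic $p$ the vanishing of high-order derivatives does not force a function to be locally constant or polynomial in the naive way (divided-power issues), which is precisely why the hypothesis $\chara K=0$ appears in the statement; I would phrase the Taylor estimate so that this dependence is transparent. A secondary but routine point is the Fubini/slicing lemma for measure-zero sets with respect to the product Haar measure on $K^p = K\times K^{p-1}$, which follows from the characterization~\eqref{eq:mu-n-count} of the measure as a limit of normalized counting measures on $(R/\frm^k)^p$ together with the corresponding slicing statement for finite sets. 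With these two ingredients in hand, the classical scheme transfers verbatim, and I would present the details in the appendix as indicated.
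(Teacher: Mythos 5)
Your proposal is correct but follows the classical Milnor/Guillemin--Pollack three-step scheme, whereas the paper replaces the third (and technically heaviest) step by a shortcut that exploits analyticity. Your steps~(i) and~(ii) are essentially the same as in the paper: for $C_0\setminus C_1$ one straightens $\varphi$ by the inverse function theorem so that it preserves the first coordinate and then argues by induction on $n$ together with a Fubini slicing lemma for the Haar measure; for $C_i\setminus C_{i+1}$ ($i\geq 1$) one picks an $i$th-order partial $\psi$ vanishing on $C_i$ with nonzero first derivative, uses $h(x)=(\psi(x),x_2,\ldots,x_n)$ to send $C_i$ into a coordinate hyperplane, and invokes the inductive hypothesis on $K^{n-1}$. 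Where you diverge is in handling the remaining deep critical points: you keep Milnor's step~(iii), bounding $\varphi(C_i)$ for $i\gtrsim n/p$ by a Taylor-with-remainder estimate plus a $q$-adic cube subdivision in the style of~\eqref{eq:mu-n-count}; this is the only quantitative part of the proof and requires a uniform control of the Taylor remainder over $C_i$. The paper avoids this entirely by observing that, for a $K$-analytic map in characteristic zero, any $x\in\bigcap_k C_k$ has a neighbourhood on which $\varphi$ is constant. Setting $U'$ to be the open locus where $\varphi$ is not locally constant, the union $U'\cap C_0 = \bigcup_k U'\cap(C_k\setminus C_{k+1})$ is then exhaustive, while $\varphi(C_0\setminus U')$ is countable by second countability and hence null; no Taylor estimate is needed. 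Both arguments use $\chara K = 0$ for the same underlying reason --- the identification of Taylor coefficients with scaled partial derivatives fails in characteristic $p$ --- but you invoke it to control the remainder, while the paper invokes it to force local constancy. Your route is more portable (it is what one would do for $C^k$ rather than analytic maps over~$\R$), whereas the paper's is shorter and eliminates all estimates within the analytic category.
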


\begin{remark}
Sard's lemma generally fails over nonarchimedean local fields $K$ of positive characteristic. 
The following example taken from~\cite{tempered}  is due to Deligne.  
Let $K = \F_p(\!(t)\!)$ denote the field of formal Laurent series in $t$ over $\F_p$ 
and consider the polynomial map $f\: K^{p+1} \rightarrow K^2$ defined by
\[
  f(y, x_1, \ldots, x_p) = \Big( y, \ \textstyle \sum_{j=1}^p t^{j-1} x_j^p \Big).
\]
Notice that $D_xf$ is a $2 \times (p+1)$ matrix whose only non-zero entry is $\partial_{y} y = 1$. 
Thus the set of critical values of $f$ is all of $K^{p+1}$. 
Additionally, we see that $f$ is surjective: indeed,  
the elements $1, t, \ldots, t^{p-1}$ are a basis for $K$ over the subfield $\F_p(\!(t^p)\!)$, 
and so the second component of $f$ has image~$K$.
However, we note that in~\cite{tempered} it was shown that Sard's lemma holds in characteristic~$p$ if 
$p>\dim(X)-\dim(Y)+1$.
\end{remark}

\subsubsection{Analytic change of variables}

The following transformation formula is analogous to the known result over 
$\R$ and $\C$. Evans proved it for $K=\Q_p$ in \cite[Proposition~2.3]{evans:06}; 
it is straightforward to check that his proof extends to any nonarchimedean local 
field~$K$ of characteristic zero.

\begin{prop}\label{th:TF}
Let $U\subseteq K^n$ be open, let $\varphi\: U \rightarrow K^n$ be a $K$--analytic map, 
and let $f\:K^n \rightarrow [0, \infty)$ be measurable (and assume $\chara K=0$).  
Then
$$
  \int_U (f\circ \varphi) \cdot |\det D\varphi| \, d\mu_n  = \int_{y\in K^n} f(y)\,\#\varphi^{-1}(y)  \, d\mu_n(y) .
$$	
\end{prop}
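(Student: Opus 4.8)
\textbf{Proof plan for Proposition~\ref{th:TF}.}
The plan is to reduce the general statement to the classical change-of-variables formula on small balls where $\varphi$ is bianalytic, and then assemble the pieces using a disjoint covering. First I would handle the two extreme cases separately. If $\varphi$ is locally constant on $U$ (equivalently $D\varphi\equiv 0$), both sides vanish: the left-hand integrand is $0$ almost everywhere by hypothesis on the determinant, while on the right the image is a countable set, hence of measure zero. So we may assume $\varphi$ is not locally constant and focus on the open set where $D\varphi$ has rank $n$; by Sard's lemma (\cref{th:sard}, using $\chara K=0$) the set of critical values has measure zero, so the contribution of the critical locus $C(\varphi)$ to the right-hand side is zero, and the contribution of $C(\varphi)$ to the left-hand side is zero because $|\det D\varphi|=0$ there. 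Thus it suffices to prove the identity with $U$ replaced by the open subset $U'=U\setminus C(\varphi)$ on which $\varphi$ is a local bianalytic isomorphism.

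Next I would exploit strict paracompactness: since $\varphi$ is a local isomorphism at each point of $U'$, around every $x\in U'$ there is an open ball $B$ on which $\varphi$ restricts to a $K$--bianalytic map onto an open ball $\varphi(B)$ with $|\det D\varphi|$ \emph{constant} on $B$ (here one uses that the absolute value takes values in the discrete set $\{\e^k\}$, so the continuous function $x\mapsto|\det D\varphi(x)|$ is locally constant). Cover $U'$ by such balls; by the ultrametric refinement property of coverings by open balls (cited after \cref{pro:isometry-grp}, see~\cite[Lemma~1.4]{schneiderp:11}) we may take the cover to be countable and pairwise disjoint, say $U'=\bigsqcup_j B_j$ with $\varphi|_{B_j}$ bianalytic onto $\varphi(B_j)$ and $|\det D\varphi|\equiv c_j$ on $B_j$. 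On each $B_j$ the identity
$$
 \int_{B_j} (f\circ\varphi)\,|\det D\varphi|\,d\mu_n = \int_{\varphi(B_j)} f\,d\mu_n
$$
is exactly the bianalytic change-of-variables statement; this is the one place where genuine analytic input is needed, and it follows because a $K$--bianalytic map is locally given by a convergent power series whose linear part (the derivative) controls the measure — concretely, on a small enough ball $\varphi$ is an affine isometry composed with scaling by $c_j$, since the higher-order terms contract the ball into itself and do not affect its $\mu_n$-measure. Summing over $j$ and using that the $B_j$ are disjoint gives $\int_{U'}(f\circ\varphi)\,|\det D\varphi|\,d\mu_n=\sum_j\int_{\varphi(B_j)}f\,d\mu_n$, and the right-hand side equals $\int_{K^n} f(y)\,\#\{j: y\in\varphi(B_j)\}\,d\mu_n(y)$ by monotone convergence. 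Since for $y$ outside the (measure-zero) critical values the fibre $\varphi^{-1}(y)\cap U'$ meets each $B_j$ in at most one point and $\varphi^{-1}(y)$ is finite (a discrete set in a manifold), the multiplicity $\#\{j: y\in\varphi(B_j)\}$ agrees with $\#\varphi^{-1}(y)$ for almost all $y$, completing the proof.

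The main obstacle is the single-ball bianalytic case: making precise why a convergent power series map with invertible derivative preserves $\mu_n$ up to the scalar $|\det D\varphi|$. The clean way is to note that after an $R$--linear change of coordinates (by \cref{pro:isometry-grp}, $\GL_n(R)$ preserves $\mu_n$) and scaling, one may assume $D\varphi(x_0)=\varpi^{\val}\cdot(\text{identity})$; then on a sufficiently small ball the Taylor expansion shows $\varphi$ is a bijection onto a ball of the appropriately scaled radius that is "linear to first order", so its effect on measure is multiplication by $|\det D\varphi(x_0)| = c_j$. Since Evans~\cite[Proposition~2.3]{evans:06} carried this out in full for $K=\Q_p$ and his argument uses only the completeness, the discreteness of the value group, and the ultrametric inequality — all shared by every nonarchimedean local field of characteristic zero — I would simply invoke and lightly adapt his proof rather than reproduce it, as the statement of the proposition already indicates.
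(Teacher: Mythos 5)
Your proposal takes essentially the same approach as the paper, which gives no independent proof but simply cites Evans~\cite[Prop.~2.3]{evans:06} and asserts that his argument extends verbatim to any nonarchimedean local field of characteristic zero. Your reconstruction supplies the scaffolding around that citation: dispose of the critical locus via Sard's lemma (correctly noting this is where $\chara K=0$ enters), cover the regular locus by a countable disjoint family of balls on which $|\det D\varphi|$ is constant (a valid use of discreteness of the value group), prove the single-ball bianalytic change of variables, and reassemble via monotone convergence. The bijection between indices $j$ and preimages of a regular value $y$ is handled correctly, and it does not in fact require $\varphi^{-1}(y)$ to be finite — your parenthetical ``(a discrete set in a manifold)'' suggests finiteness, which need not hold on a noncompact $U$, but the identity $\#\{j:y\in\varphi(B_j)\}=\#\varphi^{-1}(y)$ is a genuine bijection of sets and holds whether or not both sides are finite.

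One inaccuracy worth correcting: the Smith normal form reduction does \emph{not} let you assume $D\varphi(x_0)=\varpi^{\val}\cdot\mathrm{id}$. What you get is $S\,D\varphi(x_0)\,T=\diag(\varpi^{k_1},\ldots,\varpi^{k_n})$ with $S,T\in\GL_n(R)$, and the exponents $k_i$ may differ. The correct statement is that one may assume $D\varphi(x_0)$ is diagonal with entries $\varpi^{k_i}$, after which composing with the further (non-$R$) scaling $\widetilde\varphi_j(x):=s_j^{-1}s^{-1}\varphi_j(sx)$, $s=s_1\cdots s_n$, reduces to derivative $I_n$ while keeping the power series coefficients in $R$ — exactly as in the proof of \cref{le:LC}. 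The measure-scaling factor is still $\prod_i\e^{k_i}=|\det D\varphi(x_0)|$, so your conclusion stands; only the intermediate normal form is misstated. With that repair, the plan is sound and is indeed a fleshed-out version of the route the paper points to.
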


\subsubsection{Volume measure and integration absolute value of forms}\label{se:vol-int} 

Suppose that $X$ is a $K$--analytic manifold of dimension~$n$.  
Let $\alpha$ be a $K$--analytic $n$--form on~$X$ and $h\colon X\to\R$ 
be a continuous function with compact support. 
We define the {\em integration with respect to the absolute value of the form} $\alpha$
as follows, cf.~\cite{bourbaki-II-7-9}, \cite[\S3.3]{serre:81}, \cite{popa}  and \cite{AA}.
Assume first that the support of $h$ is contained in an open chart $U\subseteq X$ 
with coordinates $t^1,\ldots,t^n$. Then we can write 
$\a = f dt^1 \wedge\ldots \wedge dt^n$
with a $K$--analytic function $f\colon U \to K$.
In this case we define
\begin{equation}\label{eq:def-int}
 \int_X h \, |\alpha| := \int_{U} h\,|f| \, d\mu_n ,
\end{equation}
where $d\mu_{K^n}$ denotes the Haar measure. This quantity is 
well defined due to the change of variable formula (Proposition~\ref{th:TF}). 
In general, we define 
$\int_X h\, |\alpha|$ using a partition of unity in the usual way.   

We will need the following elementary result on fibre integration.

\begin{lemma}\label{le:FInt}
Let $X$ and $Y$ be $K$--analytic manifolds 
and $\varphi\colon X\to Y$ be a $K$--analytic submersion.
Moreover, let $\a$ be a $K$--analytic $(n-m)$--form on $X$ and 
$\b$ be a $K$--analytic $m$--form on $Y$, where $n=\dim X$ and $m=\dim Y$. 
Moreover, suppose $h\colon X\to [0,\infty]$ is measurable. Then we have 
\begin{equation*}\label{eq:FIT}
 \int_X h\, |\a \wedge \varphi^*\b | = \int _{y\in Y} \Big(\int_{\varphi^{-1}(y)} h\, |\a| \Big)\, |\b| (y) .
\end{equation*}
\end{lemma}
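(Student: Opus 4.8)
The statement is a fibered version of the change-of-variables formula \eqref{eq:def-int}, and the natural strategy is to reduce to that local formula via a partition of unity adapted to the submersion structure. First I would use paracompactness of $X$ and $Y$ (strict paracompactness in the ultrametric setting, as recalled after \cref{def:K-mf}) to reduce to the case where $h$ is supported in a single chart of $X$ on which $\varphi$ looks like a coordinate projection: by the implicit function theorem, around any $p\in X$ there are charts $U\subseteq X$ with coordinates $(t^1,\ldots,t^n)$ and $V\subseteq Y$ with coordinates $(s^1,\ldots,s^m)$, with $\varphi(U)\subseteq V$, such that $\varphi$ reads $(t^1,\ldots,t^n)\mapsto(t^1,\ldots,t^m)$. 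Writing $h$ as a finite sum of functions each supported in such a chart domain (and invoking monotone convergence to pass from continuous to general measurable $h\colon X\to[0,\infty]$), it suffices to prove the identity in this model situation.

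\textbf{The model computation.} In the coordinates above, write $\beta = g\, ds^1\wedge\cdots\wedge ds^m$ with $g\colon V\to K$ analytic, so that $\varphi^*\beta = (g\circ\varphi)\, dt^1\wedge\cdots\wedge dt^m$, and write $\alpha = \sum_I f_I\, dt^I$ over the increasing multi-indices $I$ of length $n-m$. Then $\alpha\wedge\varphi^*\beta = \pm f_{I_0}(g\circ\varphi)\, dt^1\wedge\cdots\wedge dt^n$, where $I_0 = (m+1,\ldots,n)$ is the only multi-index whose wedge with $dt^1\wedge\cdots\wedge dt^m$ is nonzero. Hence $|\alpha\wedge\varphi^*\beta|$ has local density $|f_{I_0}|\cdot|g\circ\varphi|$ with respect to $\mu_n$. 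On the other hand, for fixed $y=(s^1,\ldots,s^m)\in V$, the fibre $\varphi^{-1}(y)$ carries the $(n-m)$-form $\alpha$ whose restriction is $f_{I_0}(y,\cdot)\, dt^{m+1}\wedge\cdots\wedge dt^n$ (the other $dt^I$ restrict to zero since $dt^1,\ldots,dt^m$ vanish on the fibre), so $\int_{\varphi^{-1}(y)} h\,|\alpha|$ equals $\int h(y,\cdot)\,|f_{I_0}(y,\cdot)|\, d\mu_{n-m}$. Plugging this into the right-hand side and writing $|g|$ outside, the claimed equality becomes exactly Fubini's theorem for the product Haar measure $\mu_n = \mu_m\otimes\mu_{n-m}$ on $K^n = K^m\times K^{n-m}$, applied to the nonnegative measurable integrand $h\cdot|f_{I_0}|\cdot|g\circ\varphi|$.

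\textbf{Well-definedness and the main obstacle.} I should check that the two sides do not depend on the choices made: for the left-hand side this is already built into the definition \eqref{eq:def-int}; for the right-hand side one must verify that $y\mapsto\int_{\varphi^{-1}(y)}h\,|\alpha|$ is a well-defined (Borel) measurable function of $y$, independent of the atlas chosen on the fibres, which again follows from Proposition~\ref{th:TF} applied chart-by-chart together with the measurability of partial integrals in Fubini's theorem. The only genuinely delicate point — and what I expect to be the main obstacle — is bookkeeping the partition of unity so that it is simultaneously subordinate to an atlas of $X$ and compatible with an atlas of $Y$ in the projection normal form; here one exploits that $Y$ too is strictly paracompact, so that one may first take a disjoint cover of $Y$ by chart domains $V_j$, then cover each $\varphi^{-1}(V_j)$ by disjoint $X$-charts of the model form, making all the sums finite on compacta and eliminating any overlap subtleties. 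Once the charts are disjoint the partition of unity is trivial, and the global identity is obtained by summing the model identity over the pieces and using countable additivity of the integral.
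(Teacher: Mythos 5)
Your proposal is correct and follows essentially the same route as the paper's proof: a partition-of-unity reduction (using strict paracompactness) to the model case where $\varphi$ is a coordinate projection, identification of the unique surviving multi-index in $\alpha\wedge\varphi^*\beta$ and in the fibre restriction of $\alpha$, and a final appeal to Fubini's theorem for the product Haar measure. The extra remarks you add about well-definedness and the disjointness of charts are correct refinements of the same argument, not a different approach.
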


\begin{proof}
We proceed as in~\cite[Appendix]{howard:93}. 
Using a partition of unity, we can reduce to the local statement, where 
$X\subseteq K^n$ and $Y\subseteq K^m$ are open subsets.  
Moreover, we may assume that 
$\varphi$ is given by the projection onto the first $m$ coordinates
(note that $\varphi\colon X\to Y$ is a submersion and apply the implicit function theorem). 
Then we can write
$\varphi^*\beta = b\, dx^1\wedge\ldots\wedge dx^m$ 
and
$\alpha = \sum_{|I|=n-m} a_I\, \wedge_{i\in I} dx^i$
with analytic functions $a_I,b$ defined on $X$. 
Setting $J=\{m+1,\ldots,n\}$, we obtain 
$$
 \a \wedge \varphi^*\b = \pm a_{J}\, b\, dx^1\wedge\ldots\wedge dx^n .
$$ 
By~\eqref{eq:def-int}, the assertion now translates to 
$$
 \int_X h\, |a_J b|\, d\mu_n = \int_{y\in Y} \Big(\int_{\varphi^{-1}(y)} h\, |a_J|\, d\mu_{n-m} \Big) |b| d\mu_m . 
$$
But this is a consequence of Fubini's theorem. 
\end{proof}

}

\section{$R$--structures} \label{se:R-structures}
In this section we introduce the notion of \emph{$R$--structure}, which encodes the properties of well known objects in the literature 
in a more analytic framework, which seems more flexible for our purposes. This notion is inspired by \cite[Chapter~AG~11]{Borel} and 
is akin to a Finsler structure on a smooth manifold. With this idea we tried to single out the relevant properties from the classical literature 
on the subject (see for instance \cite{CLT, AA}) in a way that makes it  closer to \cite{howard:93, BL:19} and ready to use also for nonspecialists. 
Throughout the exposition we will explain the connection with already existing objects, more familiar to the experts.

We assume that $K$ is a nonarchimedian local field 
of characteristic zero with discrete valuation ring $R$ 
and we adopt the notation from Section~\ref{se:basics-I}. 

\subsection{$K$--vector spaces with linear $R$--structure}

Here is the central definition.

\begin{defi}
Let $V$ be an $n$--dimensional $K$--vector space. 
A {\em linear $R$--structure} on $V$ is a finitely generated $R$--submodule $V_R$ of $V$ spanning $V$ 
as a $K$--vector space. We call $V_R$ the {\em set of points of $V$ defined over $R$}. 
\end{defi}

In other words, $V_R$ is an $R$-lattice spanning $V$. 
The paradigmatic example is $V=K^n$, which has the linear $R$--structure $R^n$; we call 
it the {\em standard $R$--structure} on $K^n$.   
The crucial observation is that a linear $R$--structure defines a {\em norm} on $V$ as follows 
(cp.~\eqref{eq:normdef}): 
\begin{equation}\label{eq:def-norm}
 \| x_1 e_1 + \ldots + x_n e_n \| : = \|(x_1,\ldots,x_n)\| = \max_i |x_i| ,
\end{equation}
where $(e_1,\ldots,e_n)$ is any $R$--basis of $V_R$. The norm does not depend on the choice 
of the basis, since any $S\in\Gl_n(R)$ preserves the standard norm of~$K^n$ 
by Proposition~\ref{pro:isometry-grp}.
By definition, we have $\|v\| \le 1$ for any $v\in V_R$. 
We shall view $R$--bases of $V_R$ as a substitute of the notion of an 
orthonormal basis in a Euclidean vector space. 

Similarly, we define the {\em measure $\mu_V$} on $V$ as the pushforward of $\mu_n$ via the 
$K$-isomorphism $K^n\to V$, which maps $x$ to $x_1 e_1 + \ldots + x_n e_n$. 
This does not depend on the choice of the basis~$(e_i)$, since the maps defined by 
$S\in\Gl_n(R)$ preserve $\mu_n$. Alternatively, we may characterize $\mu_V$ as the 
Haar measure of the locally compact group $V$ with the normalizing condition 
$\mu_V(V_R)=1$. 

\begin{remark}\leavevmode
\begin{enumerate}
	\item
	A linear $R$--structure $V_R$ on $V$ is a free $R$--module, since it is torsion-free and finitely generated.
	For any $v\in V$ there is a nonzero $r\in R$ such that $rv\in V_R$. 
	More precisely, $K\ot_R V_R \to V, \lambda\ot v \mapsto \lambda v$ 
	provides an isomorphism of $K$--vector spaces.

	\item
	We can assign to $g\in \Gl_n(K)$ the linear $R$--structure on $K^n$ given by the $R$--span of the columns of $g$. 
	This equals the standard $R$--structure on $K^n$ iff $g\in\Gl_n(R)$. Hence, 
	the linear $\R$--structures on $K^n$ are in bijective correspondence with $\Gl_n(K)/\Gl_n(R)$. 
\end{enumerate}
\end{remark}

In the following, we assume $V$ and $W$ are finite dimensional $K$--vector spaces endowed with linear $R$--structures. 
Let $\varphi\colon V\to W$ be a $K$-linear map.  
 Let $A\in K^{m\times n}$ be the matrix representing~$\varphi$ with respect to 
some $R$--bases of $V$ and $W$, respectively. 
Since a base change amounts to a transformation $SAT$ with $S\in\Gl_m(R)$ and $T\in\Gl_n(R)$, 
we see that the singular values $\s_1,\ldots,\s_{\min\{m,n\}}$ of $A$ do not depend 
on the choice of the $R$--bases, see Section~\ref{se:smith}. 
(Of course, $\varphi$ also has well defined {\em invariant factors}, $\varpi^{k_1},\ldots,\varpi^{k_r}$, 
where $r$ is the rank of $A$.) 
We define the {\em absolute determinant} of $\varphi$ as follows (see \eqref{eq:abs-det}):
\begin{equation}\label{eq:abs-det-inv}
  N(\varphi) := N(A) =  \sigma_1 \cdot \ldots \cdot \sigma_{{\min\{m, n\}}}.
\end{equation}

\begin{defi}\label{def:defoverR}
We say that a $K$-linear map $\varphi\colon V\to W$ is {\em defined over $R$} iff 
$\varphi(V_R) \subseteq W_R$. If $\varphi$ induces an $R$-isomorphism $V_R \to W_R$, 
then we say $\varphi$ is an {\em isomorphism defined over $R$}.
\end{defi}

Suppose that $\varpi^{k_1},\ldots,\varpi^{k_r}$ are the invariant factors of $\varphi$. 
It is easy to check that $\varphi$ is defined over $R$ iff  $k_i\ge 0$ for all $i$. 
Moreover, $\varphi$ is an isomorphism defined over $R$ iff 
$r=\dim V=\dim W$ and $k_1=\ldots=k_r=0$.

We show now that linear $R$--structures are naturally preserved 
when forming standard algebraic constructions. 
In particular, this shows that the finite dimensional $K$--spaces with linear $R$--structure 
form an abelian category. 
We leave the straightforward proof of the next lemma to the reader. 

\begin{lemma}\label{le:Rstruct}
  Let $V$ and $W$ be $K$--vector spaces with linear $R$--structures $V_R$ and $W_R$, respectively.
  \begin{enumerate}
  \item
    If $U \subseteq V$ is a a $K$--subspace, then 
    $U_R := U \cap V_R$ is a linear $R$--structure on $U$.
  \item Suppose $U\subseteq V$ is a $K$--subspace and consider 
    $W=V/U$ with the canonical projection $\pi\colon V\to W$. 
    Then, $W_R:=\pi(V_R)$ defines a  linear $R$--structure on $W$.
    This induces a surjective $R$-morphism $V_R\to W_R$ with kernel~$U_R$. 

  \item $(V^*)_R := \{f \in V^* \mid f(V_R) \subseteq R \}$ defines a linear $R$--structure on the dual space~$V^*$.

  \item $(V\ot W)_R := \spann_R\{v\ot w \mid v\in V_R, w \in W_R \}$ 
    defines a linear $R$--structure on $V\ot W$.  
    We have $(V\ot W)_R = V_R \ot W_R$.

  \item $(\Lambda^mV)_R := \spann_R \{ v_1\wedge\ldots\wedge v_m \mid v_i \in V_R\}$
    defines a linear $R$--structure on the  exterior power $\Lambda^mV$. 
    Similarly for the symmetric power $S^m V$. \qed
  \end{enumerate} 
\end{lemma}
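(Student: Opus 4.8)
The plan is to verify each of the five items of \cref{le:Rstruct} by unwinding the definition of a linear $R$-structure, namely that $V_R$ must be a finitely generated $R$-submodule of $V$ that spans $V$ over $K$. In each case the finite generation is immediate (submodules of finitely generated modules over the Noetherian ring $R$ are finitely generated, or one exhibits an explicit generating set), so the only point requiring a small argument is that the proposed lattice spans the relevant $K$-vector space. For (1), given $u\in U$ there is a nonzero $r\in R$ with $ru\in V_R$ (clear denominators), and $ru\in U$ as well, so $ru\in U\cap V_R=U_R$; thus $U_R$ spans $U$. For (2), $\pi(V_R)$ spans $W=V/U$ because $\pi$ is surjective and $V_R$ spans $V$; the induced sequence $0\to U_R\to V_R\to W_R\to 0$ is exact because $U_R=U\cap V_R=\ker(\pi|_{V_R})$ by construction. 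For (3), pick an $R$-basis $e_1,\dots,e_n$ of $V_R$ and let $e_1^*,\dots,e_n^*$ be the dual basis of $V^*$; then $e_i^*(V_R)\subseteq R$, so each $e_i^*\in (V^*)_R$, and conversely any $f\in(V^*)_R$ is an $R$-combination of the $e_i^*$ (its coefficients are $f(e_i)\in R$), so $(V^*)_R$ is exactly the free $R$-module on $e_1^*,\dots,e_n^*$, which spans $V^*$.

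For (4) and (5), the cleanest route is again to fix $R$-bases: if $e_1,\dots,e_n$ is an $R$-basis of $V_R$ and $f_1,\dots,f_m$ an $R$-basis of $W_R$, then the elements $e_i\ot f_j$ span $\spann_R\{v\ot w:v\in V_R,w\in W_R\}$ (by bilinearity) and are $K$-linearly independent in $V\ot W$, hence form an $R$-basis; this simultaneously proves that $(V\ot W)_R$ is a linear $R$-structure and that $(V\ot W)_R=V_R\ot W_R$, since the latter is by definition the free $R$-module on the $e_i\ot f_j$. Analogously, $\{e_{i_1}\wedge\cdots\wedge e_{i_m}: i_1<\cdots<i_m\}$ is an $R$-basis of $(\Lambda^m V)_R$ and also a $K$-basis of $\Lambda^m V$, and the monomials $e_1^{a_1}\cdots e_n^{a_n}$ with $\sum a_i=m$ serve the same role for $S^mV$. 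The one thing to be slightly careful about is that the span of the decomposable tensors $v\ot w$ (resp.\ wedges, resp.\ products) over $R$ coincides with the $R$-span of the basis monomials; this follows by expanding $v=\sum x_ie_i$, $w=\sum y_jf_j$ and using $R$-bilinearity (resp.\ $R$-multilinearity) to reduce to the monomials.

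There is really no hard step here — as the authors themselves say, the proof is straightforward and left to the reader — so the main point is just to organize the bookkeeping. If I were to flag a potential subtlety, it is the assertion that the $K$-linear independence of the monomials $e_i\ot f_j$ (etc.) is inherited from the fact that the $e_i$ and $f_j$ are $K$-bases of $V$ and $W$: this uses that the $K$-isomorphism $K^n\ot_K K^m\cong K\ot\cdots$ sends an $R$-basis of the integral model to a $K$-basis, which is the defining property of an $R$-structure. Once that is in hand, the statement about the abelian category structure is a formal consequence: the category has kernels and cokernels by (1) and (2), a $K$-linear map $\varphi\colon V\to W$ with $\varphi(V_R)\subseteq W_R$ has $\ker\varphi$ with its induced structure $\ker\varphi\cap V_R$ and $\operatorname{coker}\varphi=W/\operatorname{im}\varphi$ with the quotient structure, and one checks the canonical map from coimage to image is an isomorphism of $K$-vector spaces (though not in general an isomorphism \emph{defined over $R$} — the invariant factors record the failure), so it is an isomorphism in this category in the appropriate sense. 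I would present (1)--(5) in the order listed, each as a short paragraph, and then append the one-line remark that these closure properties give the abelian category structure.
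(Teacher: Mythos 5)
Your proof of the five items is correct and is the standard verification; the paper itself gives no proof, stating only that it is left to the reader, so there is nothing to compare against. One small caveat on your closing aside: the remark that the coimage-to-image comparison map ``is an isomorphism in this category in the appropriate sense'' glosses over a genuine issue. With morphisms taken to be the $K$-linear maps defined over $R$, that comparison map is \emph{not} in general invertible over $R$ (e.g., $\varpi\cdot\mathrm{id}\colon (K,R)\to(K,R)$ is mono and epi but not iso), so the category is not abelian in the usual sense; the right statement is only that kernels, cokernels, duals, tensor and exterior/symmetric powers all exist and behave well, which is what Lemma~\ref{le:Rstruct} actually asserts. This does not affect the correctness of your proof of the five parts.
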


We denote the 
linear $R$--structure defined on a $K$--subspace $U\subseteq V$ as in Lemma~\ref{le:Rstruct}(1)
the {\em induced linear $R$--structure} on~$U$.
Moreover, if $\pi\: V \rightarrow W$ is a surjective morphism,
we call the linear $R$--structure on~$W$ given by Lemma~\ref{le:Rstruct}(2) the 
{\em quotient linear $R$--structure}.
Additionally, 
we call a $K$--subspace $W\subseteq V$ an {\em $R$--complement} of $U$ 
if $U_R \oplus W_R = V_R$.

\begin{lemma}\label{le:extend-basis}
Any $K$--subspace $U\subseteq V$ has an $R$-complement. 
In particular, any $R$--basis of $U_R$ can be extended to an $R$--basis of $V_R$.
\end{lemma}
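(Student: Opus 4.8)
The plan is to prove this by passing to the residue field and lifting. First I would reduce the two assertions to a single one: the ``in particular'' is immediate from the first claim, since if $(e_1,\ldots,e_k)$ is an $R$-basis of $U_R$ and $W$ is an $R$-complement with $R$-basis $(e_{k+1},\ldots,e_n)$, then $U_R\oplus W_R=V_R$ forces $(e_1,\ldots,e_n)$ to be an $R$-basis of $V_R$. So it suffices to construct the $R$-complement $W$.

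The key step is to work modulo the maximal ideal $\frm$. Recall $V_R$ is a free $R$-module of rank $n$ (stated in the Remark following the definition of linear $R$-structure), and by Lemma~\ref{le:Rstruct}(1) the subspace $U$ carries the induced structure $U_R=U\cap V_R$, which is free of some rank $k\le n$. The plan is: choose an $R$-basis $(e_1,\ldots,e_k)$ of $U_R$; reduce modulo $\frm$ to get vectors $\bar e_1,\ldots,\bar e_k$ in the $\F_q$-vector space $\overline{V}:=V_R/\frm V_R$; extend these to an $\F_q$-basis $\bar e_1,\ldots,\bar e_k,\bar v_{k+1},\ldots,\bar v_n$ of $\overline{V}$; lift $\bar v_{k+1},\ldots,\bar v_n$ arbitrarily to elements $e_{k+1},\ldots,e_n\in V_R$. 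Then $(e_1,\ldots,e_n)$ reduces to a basis of $\overline V$, hence by Nakayama's lemma (or a direct determinant argument: the change-of-basis matrix from a fixed $R$-basis of $V_R$ to $(e_1,\ldots,e_n)$ has unit determinant since its reduction mod $\frm$ is invertible) it is an $R$-basis of $V_R$. Setting $W:=\spann_K(e_{k+1},\ldots,e_n)$ gives $W_R=W\cap V_R=\spann_R(e_{k+1},\ldots,e_n)$ and $U_R\oplus W_R=V_R$ as desired. One must check the first $k$ of the lifted vectors really can be taken to be the chosen basis of $U_R$: this is fine because $\bar e_1,\ldots,\bar e_k$ are $\F_q$-linearly independent in $\overline V$ — their independence follows from the fact that $U_R$ is a direct summand, equivalently $U_R=U\cap V_R$ implies $V_R/U_R$ is torsion-free hence free, so $U_R/\frm U_R\hookrightarrow V_R/\frm V_R$.

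There is really only one genuine obstacle here, namely verifying that the reduction map $U_R/\frm U_R \to V_R/\frm V_R$ is injective, i.e. that $U_R$ is an $R$-module direct summand of $V_R$. I would argue: $V_R/U_R$ embeds into $V/U$ (by the definition $U_R = U\cap V_R$), hence is torsion-free; a finitely generated torsion-free module over the principal ideal domain $R$ is free; and a short exact sequence of $R$-modules with free quotient splits. Once this is in hand, everything else is the standard Nakayama-type lifting argument and the verification that $W\cap V_R$ is exactly the $R$-span of the complementary basis vectors (which follows from $V_R = U_R\oplus W_R$ and $W\cap V_R\supseteq W_R$, with equality forced by comparing with the direct sum decomposition). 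I would keep the write-up short, citing Lemma~\ref{le:Rstruct}(1) for the induced structure on $U$ and the Remark for freeness of $V_R$.
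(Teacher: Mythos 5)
Your argument is correct, but it takes a genuinely different route from the paper. The paper applies the Smith normal form directly to the quotient map $p\colon V\to V/U$: choosing $R$--bases $(e_i)$ of $V_R$ and $(f_j)$ of $(V/U)_R$ with $p(e_i)=\varpi^{k_i}f_i$ for $i\le m$ and $p(e_i)=0$ otherwise, the span of $e_1,\ldots,e_m$ is immediately an $R$--complement. You instead argue that $V_R/U_R$ embeds in $V/U$, hence is torsion-free, hence free (f.g.\ over a PID), so the sequence $0\to U_R\to V_R\to V_R/U_R\to 0$ splits, and then you run a reduction-mod-$\frm$ plus Nakayama argument to produce the complementary basis vectors. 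Both are correct and of comparable depth --- the Smith normal form \emph{is} essentially the PID structure theorem in disguise --- but the paper's version is more economical because it reuses machinery already set up in \S\ref{se:smith}.

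One remark on your write-up: you identify the ``genuine obstacle'' as showing $U_R/\frm U_R\hookrightarrow V_R/\frm V_R$, and you resolve it by proving the sequence $0\to U_R\to V_R\to V_R/U_R\to 0$ splits. But once that splitting is in hand, you are already done: the image of the splitting map is an $R$--submodule $W'$ with $U_R\oplus W'=V_R$, and $W:=KW'$ is the sought $R$--complement (one checks $W\cap V_R=W'$ exactly as you do at the end). The residue-field lifting and the determinant/Nakayama step are therefore redundant scaffolding --- not incorrect, but they lengthen an argument that your own key lemma has already finished.
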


\begin{proof}
Consider $W:=V/U$ with the canonical projection $p\colon V\to W$. 
Put $m:=\dim W$. 
By the Smith normal form, there are $R$--bases $e_1,\ldots,e_n$ of $V_R$ and 
$f_1,\ldots,f_m$ of $W_R$, respectively, such that 
$p(e_i) = \varpi^{k_i} f_i$ for $i\le m$ and $\varphi(e_i)=0$ otherwise. 
The $K$--span of $e_1,\ldots,e_m$ is an $R$-complement of $U=\ker p$.  
\end{proof}

If $n=\dim V$, then 
$(\Lambda^n V^*)_R$ is generated by a single element $\omega_V$, 
which is uniquely determined up to multiplication by a unit of $R$.
More specifically, if $e_1,\ldots,e_n$ is an $R$--basis of $V_R$, we may 
take $\omega_V := e_1\wedge\cdots\wedge e_n$.
Then the Haar measure $\mu_V$ can be characterized using 
integration over the absolute value of $\omega_V$
(see Section~\ref{se:vol-int}):
$$
  \mu_V(U) = \int_U |\omega_V| .
$$
We thus view $ |\omega_V|$ as the {\em volume element} of the 
$K$--space $V$, endowed with the linear $R$--structure~$V_R$.

Since $\Lambda^n V$ has an induced linear $R$--structure, there is 
a well defined norm on this space. This leads to the following useful 
characterization.  

\begin{lemma}\label{le:RbasChar}
Let $v_1,\ldots,v_n \in V_R$ be a $K$--basis of $V$. Then 
$v_1,\ldots,v_n$ is an $R$--basis of $V_R$ iff 
$\|v_1\wedge\cdots\wedge v_n\|=1$.
\end{lemma}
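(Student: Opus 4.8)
The plan is to reduce the statement to the characterization of $\Gl_n(R)$ among norm-preserving maps (Proposition \ref{pro:isometry-grp}) and the behaviour of the induced $R$-structure on $\Lambda^n V$ under a base change. First I would fix an $R$-basis $(e_1,\ldots,e_n)$ of $V_R$; by definition of the $R$-structure on $\Lambda^n V$ (Lemma \ref{le:Rstruct}(5)), the element $\omega := e_1\wedge\cdots\wedge e_n$ generates $(\Lambda^n V)_R$ as an $R$-module, and hence $\|\omega\|=1$. Now write $v_j = \sum_i a_{ij} e_i$ with $a_{ij}\in R$ (this is where $v_j\in V_R$ is used), and let $A=[a_{ij}]\in R^{n\times n}$. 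A standard multilinear computation gives $v_1\wedge\cdots\wedge v_n = \det(A)\,\omega$, so by the compatibility of the norm with scalar multiplication we obtain $\|v_1\wedge\cdots\wedge v_n\| = |\det A|\cdot\|\omega\| = |\det A|$.

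With this identity the lemma becomes: the $K$-basis $v_1,\ldots,v_n$ is an $R$-basis of $V_R$ iff $|\det A|=1$, i.e.\ iff $\det A\in R^\times$, i.e.\ iff $A\in\Gl_n(R)$. For the ``if'' direction: if $A\in\Gl_n(R)$ then $A^{-1}\in R^{n\times n}$, so each $e_i$ is an $R$-linear combination of the $v_j$, hence the $v_j$ span $V_R$ over $R$; being $n$ elements spanning a free rank-$n$ module, they form an $R$-basis (alternatively, invoke that $\Gl_n(R)$ is exactly the group of base changes of $V_R$, essentially the Remark after Lemma \ref{le:extend-basis}). For the ``only if'' direction: if $v_1,\ldots,v_n$ is an $R$-basis of $V_R$, then the change-of-basis matrix $A$ between two $R$-bases of the free module $V_R$ is invertible over $R$, so $\det A\in R^\times$ and $|\det A|=1$.

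I do not expect a genuine obstacle here; the only point requiring a little care is making sure the wedge identity $v_1\wedge\cdots\wedge v_n=\det(A)\,e_1\wedge\cdots\wedge e_n$ is invoked at the level of $R$-modules (so that $\|\cdot\|$ on $\Lambda^nV$, defined via the $R$-structure $(\Lambda^nV)_R$, can be applied) rather than merely over $K$ — but since $\omega$ generates $(\Lambda^nV)_R$ and $\det A\in R$ in the relevant direction, this is immediate. The mild subtlety worth a sentence is that $\|\omega\|=1$: this holds because $\omega$ is by construction a generator of the rank-one free $R$-module $(\Lambda^nV)_R$, and the norm on a line attached to an $R$-structure takes the value $1$ exactly on generators of the lattice.
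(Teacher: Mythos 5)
Your proof is correct. The route is slightly different from the paper's: you compute $\|v_1\wedge\cdots\wedge v_n\|=|\det A|$ directly, where $A\in R^{n\times n}$ is the change-of-basis matrix from a fixed $R$--basis $(e_i)$ of $V_R$ to the $v_j$, and then invoke the fact that $\GL_n(R)=\{A\in R^{n\times n} : \det A\in R^\times\}$. The paper instead applies the Smith normal form (elementary divisor theorem) to the sublattice of $V_R$ spanned by the $v_i$: there is an $R$--basis $(e_i)$ of $V_R$ and scalars $d_i\in R$ such that $d_1e_1,\ldots,d_ne_n$ is a basis of that sublattice, whence $\|v_1\wedge\cdots\wedge v_n\|=|d_1\cdots d_n|$, and the $v_i$ form an $R$--basis of $V_R$ precisely when all $d_i\in R^\times$. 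Both arguments reduce to the same determinantal identity (indeed $|\det A|=\prod_i|d_i|$) and to the characterization of $\GL_n(R)$ by unit determinant; yours is marginally more elementary in that it avoids the Smith normal form, and you correctly flag the two small points requiring care, namely that the wedge identity is an identity of elements in $(\Lambda^nV)_R$ and that $\|\omega\|=1$ because $\omega$ is a generator of the rank-one lattice $(\Lambda^nV)_R$.
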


\begin{proof}
Recall from the definition of the norm on $V$ that $\|v\| \leq 1$ for
all $v \in V_R$. By the Smith normal form (see Section~\ref{se:smith}), 
there is an $R$--basis $e_1,\ldots,e_n$ of $V_R$ and 
there are $d_i\in R$ 
such that $v_i=d_i e_i$ for all $i$. Therefore, 
$\|v_1\wedge\cdots\wedge v_n\|=|d_1\cdots d_n|$. 
We now see that $v_1,\ldots,v_n$ is a basis of $V_R$ iff 
all $d_i \in R^\times$. 
\end{proof}

We already defined in~\eqref{eq:abs-det-inv} 
the absolute determinant $N(\varphi)$ of a $K$-linear map 
$\varphi\colon V\to W$. 
We can now characterize it in a different, coordinate-free way. 

\begin{prop}
The absolute determinant $N(\varphi)$ equals the norm of the 
induced $K$-linear map 
$\Lambda^r \varphi\colon \Lambda^r V \to \Lambda^r W$,
where $r=\rk(\varphi)$. 
\end{prop}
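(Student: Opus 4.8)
The plan is to reduce the statement to the characterization of singular values via maximal minors already recorded in~\eqref{eq:invar-fact-char} and~\eqref{eq:abs-det}, combined with the definition of the norm coming from a linear $R$--structure. Write $\varphi\colon V\to W$ with $\dim V = n$, $\dim W = m$, and $r = \rk(\varphi)$. Fix $R$--bases $(e_1,\ldots,e_n)$ of $V_R$ and $(f_1,\ldots,f_m)$ of $W_R$, and let $A\in K^{m\times n}$ be the matrix of $\varphi$ with respect to these bases. By Lemma~\ref{le:Rstruct}(5), $(\Lambda^r V)_R$ has the $R$--basis $\{e_I := e_{i_1}\wedge\cdots\wedge e_{i_r} : I = \{i_1<\cdots<i_r\}\subseteq [n]\}$, and similarly $(\Lambda^r W)_R$ has the $R$--basis $\{f_J : J\subseteq[m], |J|=r\}$. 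With respect to these bases, the matrix of $\Lambda^r\varphi$ is the $r$th compound matrix of $A$, whose $(J,I)$ entry is $\det(A_{J,I})$.

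First I would invoke~\eqref{eq:opnorm}: the operator norm of any matrix equals the maximum of the absolute values of its entries. Applying this to the compound matrix of $A$ gives
$$
  \|\Lambda^r\varphi\| = \max\{ |\det(A_{J,I})| : |I| = |J| = r \}.
$$
Then I would appeal to~\eqref{eq:invar-fact-char} with $s = r$: since $A$ has rank exactly $r$, we have $\sigma_1\cdots\sigma_r = \max\{|\det(A_{I,J})| : |I|=|J|=r\}$ (and $\sigma_{r+1} = \cdots = 0$, so no larger minors contribute). But $\sigma_1\cdots\sigma_r = \sigma_1\cdots\sigma_{\min\{m,n\}} = N(A) = N(\varphi)$ by the definition~\eqref{eq:abs-det-inv} of the absolute determinant. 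Combining the two displayed identities yields $\|\Lambda^r\varphi\| = N(\varphi)$, which is exactly the claim.

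The only point requiring a little care is to confirm that the norm on $\Lambda^r V$ induced by the linear $R$--structure of Lemma~\ref{le:Rstruct}(5) is indeed the coordinate norm in the basis $\{e_I\}$; this is immediate from the definition~\eqref{eq:def-norm} of the norm attached to an $R$--structure, since $\{e_I\}$ is an $R$--basis of $(\Lambda^r V)_R$, and the same applies on the target side. I expect the main (minor) obstacle to be bookkeeping: making sure that the entries of the representing matrix of $\Lambda^r\varphi$ in these $R$--bases are exactly the $r\times r$ minors $\det(A_{J,I})$ (with the standard sign conventions, which do not affect absolute values), and that the rank hypothesis is used to discard minors of size larger than $r$. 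Once this is in place, the proof is a direct chain of equalities and no further analytic input is needed.
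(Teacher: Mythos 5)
Your proposal is correct and coincides with the paper's own proof: both represent $\varphi$ by a matrix $A$ in $R$--bases, observe that $\Lambda^r\varphi$ has the $r\times r$ minors of $A$ as its matrix entries in the induced $R$--bases, and then combine~\eqref{eq:opnorm} with the minor characterization of $\sigma_1\cdots\sigma_r$ in~\eqref{eq:invar-fact-char}/\eqref{eq:abs-det}. You merely spell out a few more intermediate steps than the paper does.
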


\begin{proof}
Let $A$ be the matrix representing $\varphi$ with respect to chosen 
bases of $V_R$ and $W_R$. Then the matrix 
entries of $\Lambda^r\varphi$ in the induced bases of $\Lambda^r V$ 
and $\Lambda^r W$ are given by the minors $\det(A_{I,J})$, 
where $I,J$ have cardinality~$r$. We conclude that 
\[
 N(\varphi) = N(A) = \max_{I,J} |\det(A_{I,J}) | = \|\Lambda^r \varphi\|. \qedhere
\]
\end{proof}

\subsection{$R$--structures on $K$--analytic manifolds}\label{se:mf-R-struct}


The idea of an $R$--structure on a $K$--analytic manifold~$X$ is to assign 
a linear $R$--structure to each tangent space of~$X$, which ``varies analytically''. 
In order to discuss this concept precisely, we require some preparations.

\subsubsection{Some integrality properties of $K$--analytic maps}

\begin{lemma}\label{le:RK-ana}
  Suppose $f$ is a $K$--analytic function defined by a power series in a neighborhood of~$0$ 
  such that $f(0)=0$. 
  Then there is a nonzero $r\in R$ such that the function
  $g(x):=f(r x)$ is given by a power series with coefficients in $R$,
  in a neighborhood of $0$.
\end{lemma}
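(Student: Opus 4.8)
The plan is to write the power series $f(x) = \sum_{\alpha} c_\alpha x^\alpha$ (multi-index notation, with the sum over $\alpha \in \N^m$ and $\alpha \ne 0$ since $f(0)=0$), which converges on some polydisc $\{\|x\| \le \varrho\}$ with $\varrho = \e^N$ for some $N \in \Z$. Convergence of the power series on this polydisc means precisely that $|c_\alpha| \varrho^{|\alpha|} \to 0$ as $|\alpha| \to \infty$, where $|\alpha| = \alpha_1 + \cdots + \alpha_m$; in particular the quantities $|c_\alpha| \varrho^{|\alpha|}$ are bounded, say by $\e^{-M}$ for some $M \in \Z$. First I would record these two facts. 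Then I would substitute $x = rx'$ to obtain $g(x') = f(rx') = \sum_\alpha c_\alpha r^{|\alpha|} x'^\alpha$, so that the coefficient of $x'^\alpha$ in $g$ is $c_\alpha r^{|\alpha|}$, and I need to choose $r \in R \setminus \{0\}$ so that $|c_\alpha r^{|\alpha|}| = |c_\alpha| \cdot |r|^{|\alpha|} \le 1$ for every $\alpha \ne 0$.

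The key computation is to combine the bound $|c_\alpha| \le \e^{-M} \varrho^{-|\alpha|} = \e^{-M} \e^{-N|\alpha|}$ with the choice $|r| = \e^s$ for a suitable $s \in \N$: then $|c_\alpha| |r|^{|\alpha|} \le \e^{-M} \e^{(s-N)|\alpha|}$. Since $|\alpha| \ge 1$, if I pick $s$ large enough that $s - N \ge 0$ and moreover $-M + (s-N) \ge 0$, i.e.\ $s \ge N + \max\{M, 0\}$ (and $s \ge 1$ to ensure $r \in \frm \subseteq R$ is nonzero), then $\e^{-M}\e^{(s-N)|\alpha|} \le \e^{-M+(s-N)} \le 1$ for all $\alpha \ne 0$, so every coefficient of $g$ lies in $R$. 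I would then take $r := \varpi^s$ with this $s$; since $g$ has coefficients in $R$ and $R$ is complete, the series for $g$ converges on the unit polydisc, so $g$ is a genuine $K$--analytic function with coefficients in $R$ in a neighbourhood of $0$ (indeed on all of $R^m$).

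There is essentially no serious obstacle here; the only point requiring a little care is getting the bookkeeping of the exponents right, in particular using the hypothesis $f(0)=0$ (equivalently $c_0 = 0$, so $|\alpha| \ge 1$ throughout) to convert the uniform bound $-M + (s-N) \ge 0$ into the needed $-M + (s-N)|\alpha| \ge 0$ for all relevant $\alpha$ — without the hypothesis $f(0)=0$, the constant term $c_0$ need not lie in $R$ and no rescaling of the variable could fix it. One should also note that the statement is purely formal/analytic and uses nothing about $\chara K$, so no characteristic-zero assumption is needed for this lemma.
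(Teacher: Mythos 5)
Your proof is correct and follows essentially the same route as the paper's: both bound $|c_\alpha|\rho^{|\alpha|}$ uniformly, then rescale $x\mapsto rx$ with $|r|$ small enough that the single leftover factor (using $|\alpha|\ge 1$, which is where $f(0)=0$ enters) pushes every coefficient into $R$. Your closing remark that no characteristic-zero hypothesis is needed, and that convergence on all of $R^m$ actually comes for free since $|c_\alpha r^{|\alpha|}|\le|c_\alpha|\varrho^{|\alpha|}\to 0$, are both correct observations not made explicit in the paper.
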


\begin{proof}
Let $f(x)=\sum_{\a\in\N^n} c_\a x^\a$ with coefficients $c_\a\in K$ and $c_0=0$.
Write $|\a| := \sum_i \a_i$. The power series has a positive radius of convergence, 
so there is $\rho>0$ such that $|c_\a| \rho^{|\a|}$ goes to $0$ for $|\a|\to \infty$.
Hence 
$\sup_{\a} |c_\a| \rho^{|\a|}$ 
is bounded, say by $B\ge 1$.
Choose $r\in R$ such $0 <|r| \le \rho/B$. 
Then $g(x) := f(rx)=\sum_{\a\in\N^n} d_\a x^\a$ with coefficients
$d_\a = c_\a r^{\a}$. 
We have $d_\a \in R$ since $d_0=0$ and 
for all $\a\ne0$, 
$$
 |d_\a| = |c_\a| |r|^{|\a|} \le  \frac{|c_\a| |\rho|^{|\a|}}{B^{|\a|}} 
  \le \frac{B}{B^{|\a|}} \le 1 .
$$ 
The assertion follows.
\end{proof}

\begin{lemma}\label{le:LC}
Suppose that $\psi\colon U\to V$ is a $K$--bianalytic map of open subsets of $K^n$. 
Then $x\mapsto (D_x\psi)(R^n)$ is a locally constant assignment of points to lattices. 
\end{lemma}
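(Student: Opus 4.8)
\textbf{Proof plan for Lemma~\ref{le:LC}.}

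The plan is to reduce the statement to the previous lemma (Lemma~\ref{le:RK-ana}) by localizing and rescaling, exploiting that $\Gl_n(R)$ is precisely the stabilizer of the standard lattice $R^n$ (Proposition~\ref{pro:isometry-grp}). First I would fix $x_0\in U$ and argue that it suffices to show $(D_x\psi)(R^n) = (D_{x_0}\psi)(R^n)$ for all $x$ in a small enough neighbourhood of~$x_0$; local constancy is exactly this assertion for every~$x_0$. Writing $J(x) := D_x\psi \in \Gl_n(K)$ for the Jacobian matrix (invertible since $\psi$ is bianalytic), the claim becomes: $J(x)J(x_0)^{-1} \in \Gl_n(R)$ for $x$ near $x_0$, because then $J(x)(R^n) = J(x_0)(R^n)$, and conversely the condition $J(x)(R^n)=J(x_0)(R^n)$ forces $J(x)J(x_0)^{-1}$ to preserve $R^n$, hence to lie in $\Gl_n(R)$.

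Next I would set $\Phi(x) := J(x)\,J(x_0)^{-1}$, an analytic matrix-valued function near $x_0$ with $\Phi(x_0) = I_n$. After an affine change of coordinates centred at $x_0$ we may assume $x_0 = 0$ and $\Phi(0) = I$. The entries of $\Phi(x) - I$ are $K$--analytic functions vanishing at the origin, so by Lemma~\ref{le:RK-ana} (applied coordinate-wise, taking a common rescaling parameter $r\in R\setminus\{0\}$) the entries of $\Phi(rx) - I$ are given by power series with coefficients in $R$. In particular, on the polydisc $\frm^n$ we have $\|\Phi(y) - I\| < 1$ for $y$ near $0$: indeed each entry of $\Phi(y)-I$ is a convergent $R$--power series with zero constant term, hence has absolute value $<1$ whenever $\|y\|<1$ is small enough for convergence, so by~\eqref{eq:opnorm} the operator norm of $\Phi(y)-I$ is $<1$. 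Consequently $\Phi(y) \in I + \frm\cdot R^{n\times n} \subseteq \Gl_n(R)$, since such a matrix has entries in $R$ and determinant in $1 + \frm \subseteq R^\times$. Translating back, this gives $J(x)J(x_0)^{-1} \in \Gl_n(R)$ for all $x$ in a neighbourhood of $x_0$, which is what we wanted.

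The only mild subtlety — and the step I would be most careful about — is making the quantitative passage from ``power series with $R$--coefficients and no constant term'' to ``operator norm strictly less than~$1$'' uniform in a genuine neighbourhood: one must shrink the neighbourhood of $x_0$ (equivalently, absorb the rescaling $r$ of Lemma~\ref{le:RK-ana} into the size of the neighbourhood) so that not just each $|d_\a y^\a|\le 1$ but in fact $\|y\|$ is small enough that the sum $\sum_{\a\ne 0} d_\a y^\a$ has norm $<1$ by the ultrametric inequality; since the tail $\sum_{|\a|\ge N}$ is automatically small for $\|y\|<1$ by convergence and the finitely many low-order terms each carry a factor $\|y\|^{|\a|}<1$, this is routine but should be spelled out. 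Everything else is bookkeeping with the identifications in Section~\ref{se:smith} and Proposition~\ref{pro:isometry-grp}.
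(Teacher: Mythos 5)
Your core reduction contains an order-of-multiplication error that breaks the argument as written. You set $\Phi(x) := J(x)\,J(x_0)^{-1}$ and claim that $\Phi(x)\in\Gl_n(R)$ implies $J(x)(R^n) = J(x_0)(R^n)$. This is false in general: if $\Phi(x)\in\Gl_n(R)$ then $J(x) = \Phi(x)J(x_0)$, so $J(x)(R^n) = \Phi(x)\big(J(x_0)(R^n)\big)$, and a matrix in $\Gl_n(R)$ stabilises the \emph{standard} lattice $R^n$ but not an arbitrary lattice $L := J(x_0)(R^n)$. For a concrete counterexample in $K^2$, take $J(x_0) = \diag(\varpi, 1)$ and $J(x) = \left(\begin{smallmatrix}0&\varpi\\1&0\end{smallmatrix}\right)$; both map $R^2$ to $\varpi R\oplus R$, yet $J(x)J(x_0)^{-1} = \left(\begin{smallmatrix}0&\varpi\\\varpi^{-1}&0\end{smallmatrix}\right)\notin\Gl_2(R)$, while $J(x_0)^{-1}J(x) = \left(\begin{smallmatrix}0&1\\1&0\end{smallmatrix}\right)\in\Gl_2(R)$. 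So the asserted ``converse'' direction is wrong for the same reason. The correct normalisation is $\Phi(x) := J(x_0)^{-1}J(x)$: then $\Phi(x)\in\Gl_n(R)$ is genuinely equivalent to $J(x)(R^n) = J(x_0)(R^n)$ (apply Proposition~\ref{pro:isometry-grp} to $J(x_0)^{-1}J(x)$, which fixes $R^n$ iff $J(x)(R^n)=J(x_0)(R^n)$). With that single change the remainder of your argument works: $\Phi(0)=I$, the entries of $\Phi - I$ are analytic and vanish at $0$, Lemma~\ref{le:RK-ana} applied entrywise with a common $r$ yields $R$--coefficient power series with zero constant term, hence $\|\Phi(rx)-I\|<1$ by~\eqref{eq:opnorm} for small $\|x\|$, hence $\Phi(rx)\in I + \frm R^{n\times n}\subseteq\Gl_n(R)$.

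After this fix, your route is genuinely different from and in fact leaner than the paper's. The paper applies Lemma~\ref{le:RK-ana} to $\psi$ itself so that $D_x\psi$ takes values in $R^{n\times n}$, then diagonalises $D_0\psi$ via the Smith normal form, and finally performs a delicate rescaling $\widetilde{\psi}_j(x):=s_j^{-1}s^{-1}\psi_j(sx)$ engineered so that $D_0\widetilde{\psi}=I_n$ while the power-series coefficients stay in $R$; the conclusion comes from continuity of the determinant. You bypass both the Smith normal form and the rescaling by normalising the Jacobian directly to $\Phi = J(x_0)^{-1}J$ and applying Lemma~\ref{le:RK-ana} to $\Phi - I$ rather than to $\psi$. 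That is shorter and, once the order of multiplication is corrected, just as rigorous; the only thing you give up is the auxiliary observation that $D\psi$ itself is $R$-valued in the rescaled chart, which you do not need.
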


\begin{proof}
By a shift we may assume that $0\in U$ and $\psi(0)=0$. 
We shall apply a sequence of transformations to~$\psi$ to show that 
$x\mapsto (D_x\psi)(R^n)$ is locally constant around $0$. 
First, by Lemma~\ref{le:RK-ana}, we may assume that 
each component of $\psi$, locally around~$0$, is given by a power series 
with coefficients in $R$.  
Then $D_x\psi \in R^{n\times n}$ for all $x$ sufficiently close to $0$. 

We now proceed similarly as in \cite[Prop.~20]{KL:19}.
By the Smith normal form, there are $S,T\in\Gl_n(R)$ such that 
$S D_{0} \psi  T = \mathrm{diag}(s_1,\ldots,s_n)$ with  $s_i \in R$ nonzero.  
Thus we may assume that $D_{0}  \psi= \mathrm{diag}(s_1,\ldots,s_n)$ without loss of generality.  
We put $s:= s_1\cdots s_n$ and transform $\psi_j$ as follows: 
$$
 \widetilde{\psi}_j (x) := s_j^{-1}s^{-1} \psi_j(s x) .
$$
Then we have $\widetilde{\psi}(0)=0$ and $D_{0} \psi  = I_n$. 
Moreover, $\widetilde{\psi}_j$ is still given by a power series with coefficients in $R$ (convergent 
in a sufficiently small neighborhood of~$0$). For instance, the coefficients of the quadratic terms 
of $\widetilde{\psi_j}$ are obtained from those of $\psi_j$ by multiplication with 
$s_j^{-1}s^{-1}s^2 = s_j^{-1}s = \prod_{k\ne j} s_k$, which lies in $R$.

So we have $D_x\widetilde{\psi} \in R^{n\times n}$ for all $x$ sufficiently close to $0$ 
and $\det(D_0\widetilde{\psi}) = 1$. 
By continuity, we have $\det(D_x\widetilde{\psi}) \in R^{\times}$ for all $x$ in a neighbourhood  of $0$. 
This implies that 
$(D_x\psi)(R^n)=R^n$ for all~$x$ in a neighborhood  of $0$, and finishes the proof. 
\end{proof}

\subsubsection{$R$--structures on manifolds}

Throughout, $X$ stands for an $n$--dimensional $K$--analytic manifold. 

\begin{defi}\label{def:R-struct-mfs}
By an \emph{$R$--structure on $X$} we understand an assignment 
of a linear $R$--structure $\Lambda_x$ of $T_xX$ to each point $x\in X$ 
such that $x\mapsto \Lambda_x$ is locally constant, in the following sense:
for each $K$--analytic chart 
$\psi\colon U\to V$, with $U\subseteq X$ and $V\subseteq K^n$ being open subsets, 
and each $p\in U$, there is a linear $R$--structure $F_p$ of~$K^n$ such that 
$D_x\psi(\Lambda_x) =F_p$ for all $x$ in a neighborhood of~$p$.
By Lemma~\ref{le:LC}, it suffices to test the above property for the 
charts of a $K$--analytic atlas. 
\end{defi}

A similar notion appears in arithmetic geometry. If $k$ is a
  subfield of an algebraically closed field $K$, one can view a $k$-scheme
  as a $K$-scheme with a $k$-structure. See~\cite[Chapter~AG~11]{Borel}.
\begin{remark}One can define more generally the notion of $R$--structure on an analytic vector bundle $\pi:E\to X$. This will be an  an assignment 
of a linear $R$--structure $\Lambda_x$ of $E_x:=\pi^{-1}(x)$ to each point $x\in X$ 
such that $x\mapsto \Lambda_x$ is locally constant, in the following sense:
for each vector bundle $K$--analytic chart
$\varphi:E|_{U}\to U\times K^r$ and for each $p\in U$, there is a linear $R$--structure $L_p$ of~$K^r$ such that 
$\varphi|_{E_x}(\Lambda_x) =\{x\}\times F_p$ for all $x$ in a neighborhood of~$p$. (In the case of $E=TX$, this reduces to the above definition.) With this notation, an $R$--structure on a line bundle $\pi:E\to X$ gives a metric in the sense of \cite[Section 2.1.3]{CLT}. In \cite{CLT} the authors use the notion of metric on $\Lambda^n(T^*X)$ (the line bundle of top forms) to choose a volume form on $X$, which is essentially what we will do in \cref{se:volumeX} below.
\end{remark}

The  {\em standard $R$--structure} on $K^n$ 
(as a manifold, as opposed to as a vector space) is defined by taking $R^n$ 
as the linear $R$--structure in each tangent space $T_xK^n =K^n$.
The standard atlas consisting of the single chart with the identity map  
shows that the standard structure indeed defines an $R$--structure on $K^n$. 
More generally, any linear $R$--structure $V_R$ on a $K$-vector space~$V$ defines
an $R$--structure on~$V$, considered as a manifold, by the constant assignment $x \mapsto V_R$.

There is an equivalent way of defining $R$--structures on manifolds. 
For this we introduce the following. 

\begin{defi}\label{def:R-atlas}
  Let $\{(\psi_i,U_i)\}_{i\in I}$ be a $K$--analytic atlas of $X$  (see Definition~\ref{def:K-mf}).
  We say that the \emph{atlas $\{(\psi_i,U_i)\}_{i\in I}$ is $R$--compatible} if for all $i,j\in I$ and all 
  $x\in U_i\cap U_j$ the derivative of the transition map satisfies 
  $D_x(\psi_i \circ \psi_j^{-1}) \in \Gl_n(R)$.
\end{defi}

An $R$--compatible atlas distinguishes on each tangent space $T_xX$
an $R$--structure $\Lambda_x$ of $T_xX$ with the property that the linear maps
$D_x\varphi_i\colon T_xX \to K^n$ 
are defined over $R$, when $K^n$ is endowed with the
standard $R$--structure. Moreover, using  Lemma~\ref{le:LC}, 
we see that the assignment
$x\mapsto \Lambda_x$ is locally constant. 

\begin{lemma}\label{le:R-struct-atlas}
  Every $R$--structure on $X$ arises from an $R$--compatible atlas on $X$.
\end{lemma}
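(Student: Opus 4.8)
The plan is to start with a given $R$--structure $x \mapsto \Lambda_x$ on $X$ and to manufacture, out of any $K$--analytic atlas, a new atlas that is $R$--compatible and induces exactly this $R$--structure. The key observation is that at a single point the local constancy lets us ``straighten'' a chart by a fixed linear map. Concretely, fix a $K$--analytic atlas $\{(\psi_i, U_i)\}_{i\in I}$ of $X$. By passing to a refinement (using strict paracompactness, cf.~\cite[Lemma~1.4, Prop.~8.7]{schneiderp:11}) we may assume each $U_i$ is connected, or at least small enough that the locally constant $R$--structure appearing in Definition~\ref{def:R-struct-mfs} is genuinely constant on $U_i$: there is a single linear $R$--structure $F_i \subseteq K^n$ with $D_x\psi_i(\Lambda_x) = F_i$ for all $x \in U_i$. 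Now $F_i$ is an $R$--lattice in $K^n$, so there exists $g_i \in \GL_n(K)$ with $g_i(R^n) = F_i$, hence $g_i^{-1}(F_i) = R^n$. Replace $\psi_i$ by $\tilde\psi_i := g_i^{-1} \circ \psi_i$; this is still a $K$--bianalytic chart onto an open subset of $K^n$, and now $D_x\tilde\psi_i(\Lambda_x) = g_i^{-1}(F_i) = R^n$ for all $x \in U_i$.

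The second step is to check that this new atlas $\{(\tilde\psi_i, U_i)\}_{i\in I}$ is $R$--compatible in the sense of Definition~\ref{def:R-atlas} and that the distinguished tangent-space structure it carries is the original $\Lambda_x$. For the first point, take $x \in U_i \cap U_j$ and let $\tau := \tilde\psi_i \circ \tilde\psi_j^{-1}$ be the transition map near $\tilde\psi_j(x)$. Then $D_x\tau = D_x\tilde\psi_i \circ (D_x\tilde\psi_j)^{-1}$ as maps $K^n \to K^n$, and applying this to $R^n$ gives
\[
  D_x\tau(R^n) = D_x\tilde\psi_i\big((D_x\tilde\psi_j)^{-1}(R^n)\big) = D_x\tilde\psi_i(\Lambda_x) = R^n,
\]
using that $(D_x\tilde\psi_j)^{-1}(R^n) = \Lambda_x$ by construction. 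A $K$--linear automorphism of $K^n$ that preserves the lattice $R^n$ lies in $\GL_n(R)$, so $D_x\tau \in \GL_n(R)$, which is exactly $R$--compatibility. For the second point, the $R$--structure attached to an $R$--compatible atlas is defined (in the paragraph preceding the lemma) as the unique one making each $D_x\tilde\psi_i$ defined over $R$ with respect to the standard $R$--structure on $K^n$; since $D_x\tilde\psi_i(\Lambda_x) = R^n$, the lattice $\Lambda_x$ does map onto $R^n$, so it is this $R$--structure. Finally, $\{(\tilde\psi_i, U_i)\}$ has the same open cover $\{U_i\}$ and the transition maps $\tilde\psi_i \circ \tilde\psi_j^{-1} = g_i^{-1} \circ (\psi_i\circ\psi_j^{-1}) \circ g_j$ are $K$--analytic, so it is indeed a $K$--analytic atlas of $X$, completing the proof.

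I do not expect a serious obstacle here: the statement is essentially a bookkeeping exercise translating the ``locally constant assignment of lattices'' description into the ``atlas with $\GL_n(R)$--valued transition derivatives'' description, and the only technical input is the freedom to refine the atlas so that the locally constant lattice is literally constant on each chart domain — which is available by strict paracompactness — together with the elementary fact (Proposition~\ref{pro:isometry-grp}) that $\GL_n(R)$ is the stabilizer of the standard lattice $R^n$. The one point that deserves a careful sentence is that after shrinking and straightening, the transition maps are still defined on (possibly smaller) open sets and remain $K$--bianalytic; this is automatic since composing with fixed linear isomorphisms $g_i \in \GL_n(K)$ and restricting to open subsets preserves $K$--bianalyticity.
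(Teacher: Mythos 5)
Your proof is correct and takes a genuinely different (and, in one respect, more careful) route than the paper's. Both arguments start from the local constancy of the $R$--structure, which for each point gives a chart neighborhood on which $D_x\psi(\Lambda_x)$ is a fixed lattice $F$. The paper then invokes strict paracompactness to refine to a \emph{disjoint} covering, after which $R$--compatibility holds vacuously because the only transition map is the identity. You instead normalize each chart by post-composing with a fixed linear isomorphism $g_i^{-1}$ taking $F_i$ to $R^n$, and then verify $R$--compatibility directly from $D_x\tau(R^n)=R^n$ together with Proposition~\ref{pro:isometry-grp}. Your route buys an extra bit of precision that the paper leaves implicit: after disjoint refinement the paper still has $D_x\psi(\Lambda_x)=F$ for a lattice $F$ that need not be $R^n$, so the atlas as constructed distinguishes $(D_x\psi)^{-1}(R^n)$, not $\Lambda_x$; the normalization step you carry out is exactly what is needed to recover the original $R$--structure. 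Thus your argument fills in an omission in the paper's proof while using the same ingredients (local constancy plus the characterization of $\GL_n(R)$ as the stabilizer of $R^n$). One small remark: in a totally disconnected space like $X$ there is no gain in asking the $U_i$ to be connected; the relevant condition is the one you give second, that the $U_i$ be small enough for the lattice assignment to be constant on each of them.
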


\begin{proof}
Let $x\mapsto \Lambda_x$ be an $R$--structure on $X$. 
By definition,
for each $K$--analytic chart 
$\psi\colon U\to V$, with $U\subseteq X$ and $V\subseteq K^n$ being open subsets, 
and each $p\in U$, there is an open neighborhood $U'\subseteq U$ of~$p$ such that 
$x\mapsto D_x\psi(\Lambda_x) $ is constant.
According to \cite[Lemma~1.4, Prop.~8.7]{schneiderp:11},  
it is possible to select a subcollection of the $(\psi,U')$ to arrive at 
a $K$--analytic atlas with pairwise disjoint domains $U'$. 
The resulting atlas is $R$-compatible since there is only one transition map (the identity). 
\end{proof}

\begin{remark}
Along the same lines, we could define the notion of an $R$--structure 
on a $K$--analytic vector bundle.
\end{remark}

We note that if $X$ is a  $K$--analytic manifold endowed with an $R$--structure, 
then every open subset $U$ of $X$ inherits from $X$ an $R$--structure 
in the obvious way. 

\begin{remark}\label{re:R-an-mf} 
There are many different $R$--structures on $K^n$. 
For instance, fix an integer $m\in \mathbb{Z}$ and consider the partition of $K^n$ into the open subsets
$U_i:= \{x\in K^n \mid \e^{i+1} <\|x\| \le \e^{i}\}$, for $i< m$, and we set 
$U_m:= \{x\in K^n \mid \|x\| \le \e^{m}\}$, 
with the charts 
$\psi_i\colon U_i \to (R^\times)^n\, x\mapsto \varpi^{-i} x$
(recall $|\varpi| = \e$). 
This is a $K$--analytic atlas which is $R$--compatible
(there are no change of coordinate maps).
It distinguishes at the points $x\in U_i$ the $R$--structure $\varpi^i R^n$.
\end{remark}

Suppose now $Y\subseteq X$ is a submanifold of $X$, see Definition~\ref{def:submf}.
The $R$--structure on $X$ induces an $R$--structure on every tangent space $T_xX$
and hence an $R$--structure $(T_xY)_R$ on every tangent space $T_xY$, for $x\in Y$.
It is obvious that this assignment is locally constant.

\begin{example}[Projective spaces]\label{ex:proj-space}
We endow the projective space $\proj^{n}(K)$ with an $R$--structure.
Hereby, the distinguished linear $R$--structure induced on the tangent space
$T_{[x]}\proj^n(K) = K^{n+1}/Kx$ coincides with the linear quotient $R$--structure (see Lemma~\ref{le:Rstruct}).
An equivalent description of this $R$--structure is as follows. 
We cover $\proj^{n}(K)$ with the open subsets
$U_i := \{[x_0:\ldots:x_n] \in \proj^{n}(K) \mid |x_i| = \max _j |x_j|\}$, 
for $0\le i \le n$, and use the charts $\psi_i\colon U_i \to R^n$, 
where, e.g., 
$$
 \psi_0\colon U_0 \to R^n, [x_0:\ldots:x_n] \mapsto (x_1/x_0,\ldots,x_n/x_0) .
$$
Clearly, $\varphi_0$ is a homeomorphism with inverse 
$\psi_0^{-1}\colon R^n \to U_0,\, (t_1,\ldots,t_n) \mapsto [1:t_1:\ldots:t_n]$.
To verify that the above atlas is $R$--compatible,
consider the change of coordinate map $\psi_1\circ \psi_0^{-1}$
on $\psi_0(U_0 \cap U_1) = R^\times \times R^{n-1}$ given by 
$$
 R^\times \times R^{n-1} \to R^\times \times R^{n-1},\, 
 (t_1,t_2,\ldots,t_n) \mapsto (1/t_1, t_2/t_1,\ldots,t_n/t_1) .
$$
Its derivatives indeed are linear transformations defined over $R$.
Let us finally show that the distinguished $R$--structure induced on the tangent space
$T_{[x]}\proj^n(K) = K^{n+1}/Kx$ coincides with the quotient $R$--structure. 
For this, consider the derivative 
$$
 D_t\psi_0^{-1}\colon K^n \to K^{n+1}/K(1,t_1,\ldots,t_n),\, 
 (\dot{t}_1,\ldots,\dot{t}_n) \mapsto [0,\dot{t}_1\ldots,\dot{t}_n] .
$$
Its image equals the image of $R^{n+1}$ under the canonical projection 
$K^{n+1} \to K^{n+1}/K(1,t_1,\ldots,t_n)$, since 
$0\times K^n$ is an $R$--complement of $K(1,t_1,\ldots,t_n)$. 
\end{example}

\begin{remark}\label{cor:alg-sets-R-struct} 
Let $X$ be an algebraic subset of $K^n$ or $\proj^n(K)$. 
We denote by $\Reg(X)$ its set of regular points.
The Implicit Function Theorem implies that $\Reg(X)$ is a 
$K$--analytic submanifold of $K^n$ or $\proj^n(K)$, respectively. 
We thus see that $\Reg(X)$ has a well defined induced $R$--structure.
\end{remark}
\begin{example}[Grassmannians]\label{ex:Grass}

Consider the Grassmann manifold $\Gr(r,n)$ of $r$--dimensional $K$--subspaces of $K^n$. 
Similarly to Example~\ref{ex:proj-space}, we shall endow $\Gr(r,n)$ with a
$K$--analytic atlas, which is $R$--compatible. 
For this, we first note that 
the matrix inversion map $\Gl_n(R)\to \Gl_n(R),\, M \to M^{-1}$
locally around $M_0\in \Gl_n(R)$ is given by power series with coefficients in~$R$. 
Indeed, 
\begin{equation}\label{eq:matrix-inversion}
 (M_0 - Z)^{-1} = M_0^{-1}  (1 -  ZM_0^{-1})^{-1}  
  =  M_0^{-1} \sum_{i\ge 0} (ZM_0^{-1})^i .
\end{equation}

We endow $\Gr(r,n)$ with an 
$R$--compatible atlas as follows. 
For $A\in R^{n\times r}$ and $I\subseteq [n]$ of cardinality~$r$, 
we denote by $A_I$ the square submatrix obtained from $A$ by selecting the rows indexed by $I$. 
We consider the closed and open subsets
$$
 \hat{U}_I := \big\{ A\in R^{n\times r} \mid  |\det(A_I)| = 1 \} 
$$
of $R^{n\times r}$ and the map 
$\hat{\psi}_I \colon \hat{U}_I \to R^{(n-r)\times r}$, 
which sends $A$ to $(A A_I^{-1})_{I^c}$, where $I^c$ denotes the complement of $I$ in $[n]$. 
For instance, for $I=[r]$, we have  
$$
 \hat{\psi}_I \colon \hat{U}_I \to R^{(n-r)\times r},\, 
  \begin{bmatrix} A_I \\ A_{I^c} \end{bmatrix}\mapsto A_{I^c} A_I^{-1} .
$$
Note that $A_{I^c} A_I^{-1}$ indeed has entries in $R$, since $A_I\in\Gl_r(R)$ 
by the definition of $\hat{U}_I$. 
Let $U_I$ denote the image of $\hat{U}_I$ under the canonical map~$\pi$, 
which sends $A$ to its column span. 
Then $\hat{\psi}_I$ factors via a map 
$\psi_I\colon U_I \to R^{(n-r)\times r}$. 
Moreover, $\psi_I$ is bijective: e.g., if $I=[r]$, the inverse of $\hat{U}_I$ is given by 
$$
 R^{(n-r)\times r} \to U_I,\, B \mapsto \mathrm{colspan} \begin{bmatrix} I_r \\ B \end{bmatrix} .
$$ 
It is easily checked that the usual topology on $\Gr(r,n)$ equals the quotient topology 
with respect to the map $\pi$, that the $U_I$ are open in $\Gr(r,n)$, and that 
$\psi_I$ is a homeomorphism. 

We claim that the family of $\psi_I$ forms an $R$--compatible 
$K$--analytic atlas for $\Gr(r,n)$. 
Indeed, matrix inversion $\Gl_r(R)\to \Gl_r(R),\, M \to M^{-1}$ is 
locally given by power series with coefficients in $R$; 
see~\eqref{eq:matrix-inversion}.
This implies that 
the change of coordinate map on $\psi_I(U_I \cap U_J)$ is given 
as well by power series with coefficients in $R$. 
It remains to show that the sets $U_I$ cover $\Gr(r,n)$. 
This follows from the claim below.
\end{example}

\begin{claim}
For $A\in R^{(r+n)\times r}$ of rank~$r$ there is a permutation matrix~$P$ 
and $Q \in\Gl_r(K)$ such that 
$$
 P A Q = \begin{bmatrix} I_r \\ B\end{bmatrix} ,
$$ 
where $B\in R^{n\times r}$ and $I_r$ denotes the unit matrix of format~$r$. 
\end{claim}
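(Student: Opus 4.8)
The plan is to choose the permutation $P$ so as to move to the top an $r$-element subset $I\subseteq[r+n]$ of the rows of $A$ for which the $r\times r$ minor has maximal absolute value: concretely, pick $I$ with $|\det A_I| = \max\{|\det A_{I'}| : I'\subseteq[r+n],\ |I'|=r\}$, which by \eqref{eq:abs-det} is nothing but $N(A)$. Since $A$ has rank $r$ this maximum is positive, so $A_I\in\GL_r(K)$. Let $P$ be any permutation matrix with $PA=\begin{bmatrix} A_I\\ A_{I^c}\end{bmatrix}$ and set $Q:=A_I^{-1}\in\GL_r(K)$. Then
\[
  PAQ=\begin{bmatrix} A_I Q\\ A_{I^c}Q\end{bmatrix}=\begin{bmatrix} I_r\\ B\end{bmatrix},\qquad B:=A_{I^c}A_I^{-1}\in K^{n\times r},
\]
and it only remains to prove that $B$ has all its entries in $R$.

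For this the key step is Cramer's rule. Fix $k\in\{1,\ldots,n\}$ and $j\in\{1,\ldots,r\}$, and let $M$ denote the matrix obtained from $A_I$ by replacing its $j$-th row with the $k$-th row of $A_{I^c}$. Using $A_I^{-1}=(\det A_I)^{-1}\operatorname{adj}(A_I)$ and expanding $\det M$ along its $j$-th row, one gets
\[
  B_{kj}=(A_{I^c}A_I^{-1})_{kj}=\frac{\det M}{\det A_I}.
\]
Now $M$ has, up to a reordering of its rows, the same rows as the submatrix $A_{I'}$ of $A$, where $I'$ is the $r$-element set obtained from $I$ by removing its $j$-th element and adjoining the row index in $I^c$ corresponding to $k$; hence $|\det M|=|\det A_{I'}|\le|\det A_I|$ by the maximality in the choice of $I$. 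Therefore $|B_{kj}|\le 1$, i.e.\ $B_{kj}\in R$, as required.

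The only delicate point is the bookkeeping in this last step --- checking that $\det M$ equals, up to sign, a genuine $r\times r$ minor of $A$, so that the maximality of $|\det A_I|$ over all such minors applies. Everything else is immediate: the rank hypothesis guarantees a nonzero minor, and the block identity for $PAQ$ is a direct computation. We note that this argument is purely linear-algebraic over $R$ and in particular does not use $\chara K=0$.
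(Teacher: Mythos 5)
Your proof is correct, and it takes a genuinely different route from the paper's. The paper establishes the claim via the Smith normal form: it factors $SAT=[D\ 0]^T$, observes that $ATD^{-1}=S^{-1}[I_r\ 0]^T$ has entries in $R$ and some maximal minor a unit, chooses the permutation to put that unit block on top, and then takes $Q=D^{-1}X^{-1}$. You bypass Smith normal form entirely: you pick the permutation by choosing the $r\times r$ minor of $A$ of maximal absolute value (a ``complete pivoting'' strategy), set $Q=A_I^{-1}$, and then control the entries of $B=A_{I^c}A_I^{-1}$ by Cramer's rule together with the maximality, since each $\det M$ is (up to sign) another $r\times r$ minor $\det A_{I'}$ of $A$, so $|B_{kj}|=|\det A_{I'}|/|\det A_I|\le 1$. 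Both arguments hinge on selecting a pivot submatrix whose $r\times r$ determinant is as large as possible in absolute value, but yours is more elementary (no invocation of the structure theory of modules over a DVR) and makes the construction of $P$ and $Q$ fully explicit; the paper's is shorter if one already has the Smith normal form available, which it does, having developed it in the preceding section. You are also right that neither argument uses $\chara K=0$; the Smith normal form holds over any discrete valuation ring, and your computation is purely algebraic. Your Cramer's-rule bookkeeping — that $\det M$ agrees up to sign with the minor $\det A_{I'}$, where $I'$ replaces one index of $I$ by one of $I^c$ — is exactly the observation needed and is stated correctly.
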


\begin{proof}
Let $S AT  = [D \ 0]^T$ be in Smith normal form, 
where $S \in \GL_{r+n}(R)$, $T\in \GL_r(R)$, and 
$D$~is invertible over $K$ since $A$ has full rank. 
The matrix $AT D^{-1} = S^{-1} (S AT) D^{-1} = S^{-1} [I_r\ 0]^T$ has its entries
in $R$ and at least one of its maximal minors is a unit. Thus there is a permutation matrix~$P$ 
such that 
$PAT D^{-1} = [X \ Y]^T$ with $X \in \GL_{r}(R)$ and $Y$ having entries in $R$. 
Then the factorization 
$P A Q  = [I_r\  B]^T$ 
with $Q= D^{-1}X^{-1}$ and $B= YX^{-1}$ is as desired.
\end{proof}

Finally, one can check that the $R$--structure on the tangent spaces of~$\Gr(r,n)$
resulting from the $R$--analytic structure is the expected one. 
Namely, for $A\in\Gr(r,n)$, 
$T_A \Gr(r,n) =\mathrm{Hom}_K(A,K^{n}/A)$
obtains the same $R$--structure as induced 
via tensor product from the induced $R$--structures on the subspace $A$ and 
the quotient space $K^{n}/A$, cf.~Lemma~\ref{le:Rstruct}.

\subsubsection{Connection to algebraic theory}

We now comment on the connection of our analytic theory of $R$--structures 
to the algebraic theory of schemes over $\Spec R$. 
To do so, we consider a refinement of
the definition of a $K$--analytic function by requiring the 
defining power series to have coefficients in $R$;
compare~\S\ref{se:analytic-maps}. 
The material in this subsection is not needed for the rest of the paper.

\begin{defi}\label{def:R-analytic}
  Let $f\colon U\to R^m$ be a function defined on an open subset 
  $U\subseteq R^n$. We call the function~$f$ {\em $R$--analytic} if, for every $x_0 \in U$, 
  there is a ball $B_{x_0}$ around $x_0$ of positive radius such that 
  each component of the function $f$ can be represented on $B_{x_0}$ 
  by a convergent power series with coefficients in $R$.
\end{defi}

\begin{example}\label{ex:matrix-inversion}
Let us verify that the matrix inversion map $\Gl_n(R)\to \Gl_n(R),\, M \to M^{-1}$ is $R$--analytic.
Indeed, for fixed $M_0\in \Gl_n(R)$, the power series expansion 
$$
\mbox{$ (M_0 - Z)^{-1} = M_0^{-1}  (1 -  ZM_0^{-1})^{-1}  
  =  M_0^{-1} \sum_{i\ge 0} (ZM_0^{-1})^i$} 
$$ 
has coefficients in $R$.
\end{example}

We now refine the definition of a $K$--analytic manifold.
This is achieved by the following modification of Definition~\ref{def:K-mf}: 
we require the images of the charts $\varphi_i$ to be open subsets of $R^n$ 
(instead of $K^n$) and we require the change of coordinates maps 
$\varphi_i \circ \varphi_j^{-1}$ to be $R$--analytic (instead of $K$--analytic) 
on their domain of definition $\varphi_j(U_i\cap U_j)$.

\begin{defi}\label{def:K-mf-R}
  An {\em $R$--analytic manifold $X$} of dimension~$n$ is a 
  Hausdorff topological space satisfying the second countability axiom, together with 
  an {\em $R$--analytic atlas}. 
  Such atlas consists of a family of {\em charts} 
  $\varphi_i\colon U_i \to V_i$, where the collection of $U_i$ forms an open cover of~$X$, 
  the $V_i$ are open subsets of $R^n$, and the $\varphi_i$ are homeomorphisms. 
  Moreover, it is required that every {\em change of coordinates map} 
  $\varphi_i \circ \varphi_j^{-1}$ is $R$--analytic on its domain of 
  definition $\varphi_j(U_i\cap U_j)$.
\end{defi}

Clearly, an $R$--analytic manifold is automatically a $K$--analytic manifold with an $R$--structure; 
the change of coordinate functions define an $R$-compatible atlas and each chart $V_i \subseteq R^n$ 
carries the standard $R$--structure. Note however that the converse is not necessarily true.

We now explain the relation of smooth schemes over $\Spec R$ to $R$--analytic manifolds.

\begin{prop} \label{prop: algebraic R-structure}
Let $X$ be a smooth equidimensional scheme over $\Spec R$ of finite type. 
Then the set of points $X(R)$ is canonically endowed with the structure of an $R$--analytic manifold. 
In particular, $X(R)$ is a $K$--analytic manifold with $R$--structure. 
\end{prop}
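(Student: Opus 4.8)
The plan is to construct the $R$-analytic atlas on $X(R)$ directly from the defining equations of $X$, using smoothness to produce, near every point, a local coordinate system that is an $R$-analytic isomorphism onto an open ball. First I would reduce to an affine situation: cover $X$ by affine open subschemes $\Spec B$, where $B = R[t_1,\ldots,t_N]/(f_1,\ldots,f_c)$ with the $f_j\in R[t_1,\ldots,t_N]$, and observe that $X(R)$ is covered by the sets $(\Spec B)(R)\subseteq R^N$, since an $R$-point of $X$ automatically has coordinates in $R$ (a point of a finite-type $R$-scheme valued in $R$ factors through an affine chart). So it suffices to put an $R$-analytic manifold structure on $Z := \{t\in R^N \mid f_1(t)=\cdots=f_c(t)=0\}$, of pure dimension $n = N-c$, and to check independence of the chosen affine cover.

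The key step is the $R$-analytic implicit function theorem. Fix $p\in Z$. Smoothness of $X$ over $\Spec R$ at $p$ means the Jacobian matrix $\big[\partial f_j/\partial t_i(p)\big]$ has a $c\times c$ minor that is a unit in $R$ (this is the Jacobian criterion for smoothness over a base; one must remember that smoothness is a condition on the fibres, but for $X$ smooth over $\Spec R$ at an $R$-point, reduction mod $\frm$ shows some $c\times c$ minor is a unit in $R/\frm$, hence in $R$). After permuting coordinates, write $t = (x,y)$ with $x\in R^n$, $y\in R^c$, so that $\partial f/\partial y(p)\in\GL_c(R)$. I would then invoke the inverse/implicit function theorem for $K$-analytic maps cited in \S\ref{se:analytic-maps}, but upgraded to track integrality: solving $f(x,y)=0$ for $y = h(x)$ near $p$ produces $h$ given by a convergent power series, and because $\partial f/\partial y(p)$ is \emph{invertible over $R$} (not merely over $K$) and the $f_j$ have coefficients in $R$, the standard Newton/Banach fixed-point construction of $h$ stays within $R[[x]]$ — concretely, the matrix inversion step is $R$-analytic by Example~\ref{ex:matrix-inversion}, and composing $R$-valued power series with $R$-analytic maps preserves $R$-integrality. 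This yields a chart $\psi_p\colon Z\cap(B\times B')\to B_x$, $(x,y)\mapsto x$, with $R$-analytic inverse $x\mapsto (x,h(x))$, onto a ball in $R^n$.

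Next I would check $R$-compatibility of the transition maps: if $\psi_p$ uses the splitting $t=(x,y)$ and $\psi_q$ uses another splitting $t=(x',y')$, then the change of coordinates expresses the free coordinates $x'$ as an $R$-analytic function of $x$, because it is obtained by composing the $R$-analytic inverse $x\mapsto (x,h(x))$ with a coordinate projection — both operations preserve $R$-analyticity. One also checks, via the same argument applied to the identity map, that refining the affine cover $\Spec B$ (or passing to a distinguished open $\Spec B_g$) gives an equivalent $R$-analytic atlas, so the structure is canonical. Finally, the last sentence of the proposition is immediate from the remark following Definition~\ref{def:K-mf-R}: an $R$-analytic manifold is a $K$-analytic manifold whose $R$-analytic atlas is in particular an $R$-compatible $K$-analytic atlas, hence carries the induced $R$-structure of Lemma~\ref{le:R-struct-atlas}.

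The main obstacle is the integrality bookkeeping in the implicit function theorem: the version quoted in the text is only asserted over $K$, so I would need to either re-run its proof keeping all estimates at the level of the norm $\|\cdot\|\le 1$ (i.e.\ show the fixed-point iteration $h^{(m+1)}(x) = h^{(m)}(x) - (\partial f/\partial y)(x,h^{(m)}(x))^{-1} f(x,h^{(m)}(x))$ converges $\frm$-adically with all iterates in $R[[x]]$, using that $\partial f/\partial y\equiv$ a unit mod $\frm$), or cite a Hensel-type lemma for systems over the complete local ring $R[[x]]$. This is routine but is the one place where "over $R$" genuinely does work beyond "over $K$", and it is where the smoothness hypothesis (as opposed to mere flatness or regularity of fibres) is used in its sharp form.
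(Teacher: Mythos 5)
Your proposal is correct in outline but takes a noticeably different route from the paper. The paper does not re-prove an integral implicit function theorem: it cites \cite[Prop.~20]{KL:19} to get the $K$--analytic manifold structure on $X(R)$ and then works with the \emph{scheme-theoretic} gluing data directly. Concretely, given a biregular map $f\colon U'\to V$ between affine charts over $R$ and an $R$--point $p$ of $U'$, the paper writes each component of $f$ as $h_j/g_j$ with $h_j,g_j\in R[x_1,\ldots,x_n]$; since $p$ is a section of the structure map, $g_j$ is a unit in $\mathcal O_{U',p(\frm)}$, so $p^*g_j\in R^\times$ and $1/g_j$ expands as an $R$--power series near $p$. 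This is the load-bearing integrality step, replacing your Newton/Hensel iteration. Your approach — building the charts from scratch via an $R$--integral IFT — is a fine alternative and is arguably more self-contained; you correctly flag that the implicit function theorem quoted in the text is stated only over $K$ and that upgrading it to $R$ is the real work, which a Hensel-type lemma supplies. What your sketch underplays is the transition \emph{between different affine charts} of $X$: your argument (compose $x\mapsto(x,h(x))$ with a projection) only treats two splittings of coordinates within one ambient $R^N$. Passing between affine opens requires composing with the scheme's gluing map, which typically introduces denominators; showing that map is $R$--analytic at an $R$--point is exactly the $1/g$--expansion argument above, and your phrase ``the same argument applied to the identity map'' does not quite cover it. If you add that step (or, equivalently, note that the overlap of two affines is covered by distinguished opens $\Spec B_g$, and adjoining the coordinate $1/g$ is $R$--analytic because $g(p)\in R^\times$), the proof is complete.
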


\begin{proof}
Let $x \in X(R)$; the case $X(R) =\emptyset$ being trivial.
Because $X$ is a smooth scheme over $R$, the tangent bundle at $x$ 
has rank equal to the local dimension $\dim_x X$ at $x$ and 
there is an affine neighbourhood $U \subseteq X$ containing $x$, 
where $X$ is locally defined by a complete intersection. 	
Since $U$ is an affine scheme over $R$ of finite type, there is a polynomial ring $R[x_1, \ldots, x_n]$ 
such that the coordinate ring $\mathcal{O}_U(U)$ is the localization of a quotient of $R$. 
Furthermore, $U$ is equipped with the structure morphism $U \rightarrow \Spec R$, 
so there is a canonical inclusion $R \hookrightarrow \mathcal{O}_U(U)$.
Let $F$ be a choice of $n - \dim_x X$ local defining equations for $X \cap U$. 
By smoothness over $R$, the rank of $D_x F$ modulo $\varpi$ equals $\dim_x X$; in other words, 
the singular values of $D_xF$ are all equal to $1$.

Each connected component of $X$ has the same dimension,
so we see that $X(R)$ carries the structure of a $K$--analytic manifold (see~\cite[Prop.~20]{KL:19}). 
We show that the transition functions defining the scheme $X$ define an $R$--analytic structure for $X(R)$. 
	
Let $U' \subseteq U$ be an affine subscheme of $U$ over $R$ and let $f\: U' \rightarrow V$ be a biregular gluing map 
between two affine charts over $R$. For any $p \in U'(R)$, which we think of as a section $p\: \Spec R \rightarrow U'$ 
of the structure map, we have that $f$ is regular along the image of $p$, so we may represent the components of $f$ by
\[
  f_j = \frac{h_j(x)}{g_j(x)} \in \mathcal{O}_{U', p} ,
\]
where $g_j, h_j \in R[x_1, \ldots, x_n]$ such that $g_j(x)$ is regular and nonvanishing along $p$. 
In particular, $g_j(x)$ does not vanish at the closed point $p(\mathfrak{m})$, 
where $\mathfrak{m}$ is the maximal ideal of $R$, so the image of $g_j(x)$ 
in the residue field of $\mathcal{O}_{U', p(\mathfrak{m})}$ is a unit. 
Thus, $p^*g_j$ is a unit in $R$, and so $g_j(x)^{-1}$ can be represented by a power series with coefficients in $R$ 
with positive radius of convergence near $p$. In other words, the transition function $f$ is locally given 
by $R$--analytic functions. Furthermore, because $f$ is a biregular morphism over $R$, 
we see that $f$ is (locally) $R$-bianalytic. 
\end{proof}

\subsubsection{The Weil canonical volume}

In \cite{weilAdeles}, Weil showed how to construct a canonical volume for a scheme $X$ that is smooth over $R$. 
First, assume that $X$ is affine and that $X$ carries a form $\omega \in \operatorname{H}^0(X, \Omega_{X/R}^n)$ 
such that $\omega$ is nonvanishing at every point of $X$. Such a differential form is called a \emph{gauge form}. 

Let $p \in X(R)$.
We may express $\omega = f(x) dx_1 \wedge \ldots \wedge dx_d$ locally near $p$. 
Since $\omega$ is nonvanishing at each point of $X$, including the closed points, we have that $f(p) \in R^\times$.  
Next, we consider a $K$--analytic isomorphism $\psi\: D \rightarrow U$ from a neighbourhood of the origin 
to an analytic open subset $U$ of $X$ containing $p$.
Then $\psi^*\omega =  (f\circ\psi) \cdot |J\psi| \cdot y_1\wedge \ldots \wedge dy_d$. 
We may further assume that $|J\psi| = 1$ on $D$. 
Note that $f(U) \subseteq R^\times$. 
The \emph{Weil canonical measure} is defined to be the measure defined by these local differentials on the local disks $D$. 
(One must check that the measure is independent of the choice of local coordinates near $p$: e.g.,  
this is verified in \cite{weilAdeles}.) 
Thus, 
\[
    \int_D dy_1 \wedge \ldots \wedge dy_d = \int_D \psi^*\omega  = \int_U \omega.
\]
The change of variables theorem shows that a different choice of analytic chart $\psi'\: D' \rightarrow U$ 
does not change the value of the integral. 

If $X$ is a smooth scheme over $R$, 
then Proposition~\ref{prop: algebraic R-structure} shows that $X(R)$ is a $K$--analytic manifold with an $R$--structure. 
In particular, this $R$--structure locally defines a gauge form on~$X(R)$.
We have shown the following.

\begin{prop}\label{prop:weil}
  Let $X$ be a smooth equidimensional scheme of finite type over $R$. Then the Weil canonical volume of $X$ is equal to the volume of $X(R)$ 
  as a $K$--manifold with $R$--structure.
\end{prop}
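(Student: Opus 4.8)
The plan is to reduce the claim to a local, chart-by-chart comparison and then to recognise that the gauge form appearing in Weil's construction is, up to a unit-valued function, exactly the volume element attached to the $R$--structure on the tangent spaces.

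First I would note that both quantities are built locally. The volume $|X(R)| = \int_{X(R)}\Omega_X$ is computed via a partition of unity subordinate to an $R$--compatible atlas (which may be taken with pairwise disjoint domains, by strict paracompactness), and the Weil canonical measure is likewise assembled from the measures on affine pieces carrying a gauge form, glued along transition maps. By Proposition~\ref{prop: algebraic R-structure}, $X(R)$ is an $R$--analytic manifold whose $R$--compatible atlas comes from the biregular gluing maps of affine charts of $X$ over $R$; hence it suffices to fix one affine open $U=\Spec A\subseteq X$ on which $\Omega^{d}_{U/R}$ (with $d=\dim X$) is free, choose a generator $\omega$ — a local gauge form, nonvanishing at every point of $U$ including the closed ones by smoothness over $R$, and unique up to a unit of $\mathcal O_U(U)$, whose absolute value is $1$ at every point of $U(R)$ — and prove that $\Omega_X=|\omega|$ on $U(R)$, in the sense of Section~\ref{se:vol-int}.

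For this comparison I would, near a fixed $p\in U(R)$, present $X\cap U$ as a complete intersection $F=0$ inside $\A^{n}_R$ and select $d$ of the ambient coordinates $x_{i_1},\ldots,x_{i_d}$ whose differentials trivialise $\Omega^{d}_{U/R}$ near $p$; after the standard Jacobian normalisation $|J\psi|=1$ (available by the Smith-normal-form argument of Lemma~\ref{le:LC} and of the proof of Proposition~\ref{prop: algebraic R-structure}), these restrict to local analytic coordinates around $p$. Writing $\omega = g\,dx_{i_1}\wedge\cdots\wedge dx_{i_d}$, smoothness over $R$ forces $g(p)\in R^\times$, so after shrinking $U$ we may assume $g$ is $R^\times$--valued; the same normalisation shows that the coordinate differentials $dx_{i_1},\ldots,dx_{i_d}$ form an $R$--basis of the distinguished lattice in $\Lambda^d T_x^*X$ at every $x$ near $p$. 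Hence $|\omega| = |g|\cdot|dx_{i_1}\wedge\cdots\wedge dx_{i_d}| = \Omega_X$ pointwise; integrating over $U(R)$ and reassembling with the partition of unity yields $|X(R)|$ equal to the Weil canonical volume, the chart-independence being guaranteed by the change-of-variables formula (Proposition~\ref{th:TF}).

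The main obstacle is the bookkeeping in the middle step: one must keep track simultaneously of (i) the algebraic local coordinates coming from the scheme structure, (ii) the analytic chart together with its Jacobian normalisation, and (iii) the induced $R$--structure on the tangent spaces, and check that the gauge-form condition $g\in R^\times$ is exactly equivalent to the coordinate frame being an $R$--basis of the distinguished lattice. Once this dictionary is in place — which is essentially the content of the proof of Proposition~\ref{prop: algebraic R-structure} — the identity $\Omega_X=|\omega|$ is immediate and the remainder is formal.
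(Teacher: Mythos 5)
Your proposal is correct and takes essentially the same route as the paper: both reduce to a local comparison near an $R$--point $p$, observe that smoothness over $R$ forces the gauge coefficient $g$ to be $R^\times$--valued near $p$, and identify the coordinate frame coming from the $R$--analytic atlas of Proposition~\ref{prop: algebraic R-structure} with an $R$--basis of the distinguished lattice, so that $|\omega|$ and $\Omega_X$ coincide pointwise. You merely spell out the lattice--basis dictionary more explicitly than the paper's phrase that ``this $R$--structure locally defines a gauge form,'' and both arguments rest on the same Jacobian normalisation and the change-of-variables formula for chart-independence.
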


{
\subsection{Volume form, absolute Jacobian and the coarea formula}\label{se:volumeX}

Suppose that $X$ is a $K$--analytic manifold of dimension~$n$ 
endowed with an $R$--structure.
Then the space of alternating $n$ forms on $T_xX$ is one dimensional and 
has an induced $R$--structure (see Lemma~\ref{le:Rstruct}); let $\omega_X(x)$ denote a generator.  

\begin{defi}
For a $K$--analytic manifold $X$ with $R$--structure, we define the 
associated {\em volume form} $\Omega_X(x)$ as the absolute value of $\omega_X(x)$. 
\end{defi}

Note that  $\Omega_X$ is well defined since $\omega_X(x)$ is determined up to a multiplication by 
a unit of~$R$. Moreover, $\Omega_X$ depends continuously on $x$. 
We can also express it as 
\begin{equation}\label{eq:defOmega}
 \Omega_X = |\sigma_1\wedge \ldots \wedge \sigma_n| ,
\end{equation}
where $\sigma_1,\ldots,\sigma_n$ is an $R$--basis of the dual space $(T_x^*X)_R$. 
Thus we have a well defined integral $\int_X f \Omega_X$ for 
a measurable function $f\colon X\to [0,\infty]$, see Section~\ref{se:vol-int}.
We define the volume of $X$  by the integral
\begin{equation}\label{eq:defvol}
 |X| : = \int_X \Omega_X.
\end{equation}

If $X$ is a projective algebraic set, 
then $\Reg(X)$ is a $K$--analytic manifold endowed with an $R$--structure,
see Remark~\ref{cor:alg-sets-R-struct}. 
We then define $\int_X f\, \Omega_X$ as the integral 
of the restriction of $f$ over $\Reg(X)$. 
In particular, if $X$ is compact, then it has a well defined volume $|X|$ 
(note that $\Reg(X)$ is compact as well). 

In \cite{KL:19}, a different way of defining the volumes of 
algebraic subsets $X$ of $R^n$ and $\proj^n(K)$ was used 
(for $R=\Z_p$ and $K=\Q_p$).  
However, it turns out that this leads to the same values as here, 
see Proposition~\ref{pro:vol-consistent} below. 

Let $X$ and $Y$ be $K$--analytic manifolds endowed with $R$--structures
and let $\varphi\colon X\to Y$ be a $K$--analytic map.
We define the {\em absolute Jacobian} of $\varphi$ at $x\in X$, 
\begin{equation}\label{eq:AJ-nl}
  J (\varphi)(x) := N(D_x\varphi) ,
\end{equation}
as the absolute determinant of the $K$-linear map 
$D_x\varphi\colon T_x X \to T_{\varphi(x)}$. 

\subsubsection{Coarea formula}

Suppose now $n=\dim X\ge m=\dim Y$ and let 
$C(\varphi) := \{ x \in X \mid \rk (D_x\varphi) < n\}$  
denote the set of critical points of $\varphi$.
Theorem~\ref{th:sard} (Sard's lemma) states that 
the set $\varphi(C(\varphi))$ of critical values has measure zero. 
Suppose now that $y$ is a regular value of $\varphi$, i.e., 
$y\not\in \varphi(C(\varphi))$. 
If the fiber $\varphi^{-1}(y)$ is nonempty, then it is an 
$K$--analytic submanifold of $X$ having dimension $n-m$. 
It is endowed with the $R$--structure inherited from~$X$.
Then the integral 
$\int_{\varphi^{-1}(y)} h \Omega_{\varphi^{-1}(y)}$ 
is well defined for a measurable function $h\colon X\to [0,\infty]$. 
If $\varphi^{-1}(y) = \varnothing$ we interpret the integral as $0$.

The coarea formula is a crucial tool of geometric measure theory going back 
to Federer~\cite{federer:59}, 
see \cite[Thm.~III.5.2, p.~138]{chavel:06} for a comprehensive account and 
\cite[p.~159]{chavel:06} for its smooth version. 
A version of the coarea formula also holds 
in the nonarchimedean setting. The following is a variation of \cite[Theorem 3.4]{AA}. (We give a proof of this result using our framework  in the appendix, see Section \ref{sec:proofcoarea}.)

\begin{thm}\label{th:coarea}
Let $X$ and $Y$ be $K$--analytic manifolds  of dimensions~$n\ge m$, 
endowed with $R$--structures, 
and let $\varphi\colon X\to Y$ be a $K$--analytic map.
Further, assume 
$h\colon X\to [0,\infty]$ is measurable. Then
$$
  \int_X h\, J(\varphi)\, \Omega_X 
  = \int_{y\in Y} \Big(\int_{\varphi^{-1}(y)} h\, \Omega_{\varphi^{-1}(y)} \Big) \Omega_Y,
$$
where each $\varphi^{-1}(y)$ carries the $R$--structure induced from $X$. 
By Sard's lemma, the right hand integral can be thought to be 
over the regular values of $\varphi$ only.
\end{thm}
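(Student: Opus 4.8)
The plan is to deduce the coarea formula from fibre integration (\cref{le:FInt}), after excising the critical set and passing to well-chosen local coordinates; the genuinely nonarchimedean point is a single Cramer-rule estimate forcing the absolute determinant $N(D_x\varphi)$ to be the correct Jacobian weight. First I would reduce to the case that $\varphi$ is a submersion. On the critical set $C(\varphi)$ the matrix $D_x\varphi$ is rank-deficient, so one of its singular values vanishes and $J(\varphi)(x)=N(D_x\varphi)=0$; hence the left-hand integral is unchanged if $X$ is replaced by $X\setminus C(\varphi)$. On the other hand, by \cref{th:sard} the set of critical values has measure zero in $Y$, and for any value $y$ outside it the fibre $\varphi^{-1}(y)$ is a submanifold disjoint from $C(\varphi)$; so the right-hand side is likewise unchanged. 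Thus we may assume $\varphi$ is a $K$-analytic submersion.

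Next I would localize. Both sides are additive and monotone in $h\ge0$, so using a partition of unity on $X$ and shrinking chart domains we may assume $h$ is supported in the domain of an $R$-compatible chart of $X$ (these exist by \cref{le:R-struct-atlas}) whose $\varphi$-image lies in the domain of an $R$-compatible chart of $Y$. Transporting through such charts — which carry the $R$-structures, hence the volume forms $\Omega_X,\Omega_Y$, the induced fibre volume forms, and $J(\varphi)=N(D\varphi)$, to the standard ones — reduces everything to the model case $X\subseteq K^n$, $Y\subseteq K^m$ open with the standard $R$-structures, $\varphi$ a submersion, $\Omega_X=\mu_n$, $\Omega_Y=\mu_m$. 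In this model, for $J\subseteq[n]$ with $|J|=m$ let $B_J(x)$ be the $m\times m$ submatrix of $D_x\varphi$ on the columns~$J$; by \eqref{eq:abs-det} the closed sets $\{x:|\det B_J(x)|=N(D_x\varphi)\}$ cover $X$, so after intersecting $h$ with a finite Borel partition refining this cover and permuting the coordinates of $K^n$ (an element of $\Gl_n(R)$), we may assume $h$ vanishes off $\{x:|\det B(x)|=N(D_x\varphi)\}$, where $B(x)$ is the leading $m\times m$ block of $D_x\varphi$. On the open set $\{x:\det B(x)\ne0\}$ containing $\{h\ne0\}$ the map $\Phi(x',x'')=(\varphi(x',x''),x'')$, with $x=(x',x'')$ and $x'=(x_1,\dots,x_m)$, has $\det D_x\Phi=\det B(x)\ne0$ and is therefore locally $K$-bianalytic; refining into a disjoint cover by balls on which $\Phi$ is bianalytic (possible in the ultrametric setting), we may finally assume $h$ is supported in one such ball $B_0$, with $\varphi=\pi\circ\Phi$ for the projection $\pi$ onto the first $m$ coordinates.

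On $B_0$, since $N(D_x\varphi)=|\det B(x)|=|\det D_x\Phi|$ wherever $h\ne0$, \cref{th:TF} (with the injective map $\Phi$) together with Fubini yields
\[
 \int_{B_0}h\,N(D\varphi)\,d\mu_n=\int_{\Phi(B_0)}(h\circ\Phi^{-1})\,d\mu_n=\int_{y}\Bigl(\int_{\{z\,:\,(y,z)\in\Phi(B_0)\}}h\bigl(\gamma(y,z),z\bigr)\,d\mu_{n-m}(z)\Bigr)\,d\mu_m(y),
\]
where $\Phi^{-1}(y,z)=(\gamma(y,z),z)$ and $\gamma$ is the implicit function solving $\varphi(\gamma(y,z),z)=y$. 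It remains to identify the inner integral with $\int_{\varphi^{-1}(y)}h\,\Omega_{\varphi^{-1}(y)}$, and this is the crux. The fibre inside $B_0$ is parametrized by the chart $\sigma_y(z)=(\gamma(y,z),z)$, whose Jacobian matrix is $\begin{bmatrix}-B^{-1}C\\ I_{n-m}\end{bmatrix}$ with $B=D_{x'}\varphi$ and $C=D_{x''}\varphi$ evaluated on the fibre. The key linear-algebra fact is: if $B\in\Gl_m(K)$ and $C\in K^{m\times(n-m)}$ satisfy $|\det B|=\max_{|S|=m}\bigl|\det[B\mid C]_{\cdot,S}\bigr|$, then $B^{-1}C\in R^{m\times(n-m)}$ and the columns of $\begin{bmatrix}-B^{-1}C\\ I_{n-m}\end{bmatrix}$ form an $R$-basis of $\ker[B\mid C]\cap R^n$ — indeed each entry of $B^{-1}C$ is, by Cramer's rule, an $m\times m$ minor of $[B\mid C]$ divided by $\det B$, hence lies in $R$, while any $v\in\ker[B\mid C]$ equals $\begin{bmatrix}-B^{-1}C\\ I_{n-m}\end{bmatrix}v''$ for $v''$ its last $n-m$ coordinates, so $v\in R^n$ forces $v''\in R^{n-m}$. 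Applying this at every point of the fibre where $|\det B|=N(D\varphi)$ (in particular wherever $h\ne0$) shows that there $\sigma_y$ is an $R$-compatible chart of $\varphi^{-1}(y)$ for the $R$-structure induced from $X$, whence $\sigma_y^*\Omega_{\varphi^{-1}(y)}=\mu_{n-m}$ and the inner integral equals $\int_{\varphi^{-1}(y)}h\,\Omega_{\varphi^{-1}(y)}$.

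Summing over the finitely many blocks $J$, over the disjoint balls, and over the partition of unity (all series of nonnegative terms, so monotone convergence applies) and recalling the reduction to submersions, we obtain the stated identity; measurability of $y\mapsto\int_{\varphi^{-1}(y)}h\,\Omega_{\varphi^{-1}(y)}$ follows from Fubini in the local models. The main obstacle is making the graph parametrization of the fibre $R$-compatible: this is what dictates the ``winning minor'' partition of the previous step and, with it, makes $N(D_x\varphi)$ — rather than the absolute value of an arbitrarily chosen maximal minor — the Jacobian that correctly weights the integral.
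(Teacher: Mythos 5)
Your proof is correct, but it follows a route that differs from the paper's in two real ways, and the comparison is worth recording. Both proofs open the same way: excise the critical set (using $J(\varphi)=0$ on $C(\varphi)$ for the left side and Sard's lemma for the right), and localize to a model $X\subseteq R^n$, $Y\subseteq R^m$ with the standard $R$-structures. From there the paper relies on Lemma~\ref{pro:section_complement}, which is proved by putting $D_{x_0}\varphi$ into Smith normal form at a fixed base point and estimating $\|B^{-1}\|$; this produces, on a small enough neighbourhood, a rational-function $R$-basis of $\ker D_x\varphi$ and a single $R$-complement $L$. You instead introduce a global Borel partition of $X$ indexed by the $m\times m$ minor of $D_x\varphi$ that attains the maximum defining $N(D_x\varphi)$, permute coordinates so the leading block wins, and use Cramer's rule to show that on that piece the graph parametrization $z\mapsto(\gamma(y,z),z)$ has Jacobian matrix $\begin{bmatrix}-B^{-1}C\\ I_{n-m}\end{bmatrix}$ with entries in $R$ and columns spanning $\ker D_x\varphi\cap R^n$ over $R$. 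The underlying linear algebra is the same — both arguments come down to recognising when the graph basis of $\ker[B\mid C]$ is an $R$-basis — but your Cramer-rule identity (each entry of $B^{-1}C$ is a maximal minor of $[B\mid C]$ over $\det B$) delivers it more transparently and uniformly, at the modest bookkeeping cost of the Borel refinement. The second divergence is at the integration step: the paper builds analytic $(n-m)$- and $m$-forms $\omega_2,\eta$, shows $J(\varphi)\,\Omega_X=|\omega_2\wedge\varphi^*\eta|$, and invokes the fibre-integration Lemma~\ref{le:FInt}; you bypass the form calculus by applying the change-of-variables formula (Proposition~\ref{th:TF}) through the auxiliary bianalytic map $\Phi(x',x'')=(\varphi(x),x'')$ and then Fubini, which effectively inlines the content of Lemma~\ref{le:FInt}. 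Net effect: a somewhat more hands-on, coordinate-explicit proof of the same result, with the ``winning minor'' partition and Cramer's rule doing the job that the paper delegates to the Smith normal form and its $\|B^{-1}\|$ estimate.
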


\begin{example}[The volume of projective spaces]\label{example:volproj}As an application, let us determine the volume of projective spaces with the quotient $R$--structure.
Consider 
$\varphi\colon R^{n+1}\setminus 0 \to \proj^n(K),\, x\mapsto [x]$. 
We claim that $J(\varphi)(x) = \|x\|^{-n}$. To see this, assume
without loss of generality 
$x_0=(r,0,\ldots,0)$ and note that the derivative of $\varphi$ at $x_0$ 
is given by $R^{n+1}\to R^n,\, (v_0,v_1,\ldots,v_n) \mapsto |r|^{-1}(v_1,\ldots,v_n)$,
when identifying $T_{x_0}R^{n+1}=R^{n+1}$ and $T_{\varphi(x_0)}\proj^n(K)=R^n$. 
Also note that the fiber over $\varphi(x_0)$ is isomorphic to $R \backslash \{0\}$. 
Applying Theorem~\ref{th:coarea} with $h(x) := \|x\|^n$ thus yields 
$$
 1 = |R^{n+1} \setminus \{0\}| =  |\proj^n(K)| \int_{R \backslash \{0\}}|r|^n d\mu(r)
  = |\proj^n(K)|\cdot \frac{1-\e}{1-\e^{n+1}} ,
$$
where we used Proposition~\ref{prop:pushfHaarII} for the last equality. 
We obtain
\begin{equation}\label{eq:vol-proj}
 |\proj^n(K)| = \frac{1-\e^{n+1}} {1-\e} .
\end{equation}
Alternatively, we may look at the restriction $\psi\colon S(K^{n+1}) \to \proj^n(K)$ 
to the unit sphere $S(K^{n+1})$, see Definition~\ref{def:sphere}. 
Using that $J(\psi)=1$, we obtain from Theorem~\ref{th:coarea} that 
$\mu(\varphi^{-1}(A)) = \mu(S^1)\, \mu(A)$
for a measurable subset $A\subseteq \proj^n$.
This is exactly the definition of the measure on $\proj^n$ provided in~\cite{KL:19}.
\end{example}

We complement the coarea formula with a useful result on computing 
the volume of images.

\begin{thm}\label{th:vol-image}
  Let $X$ and $Z$ be $K$--analytic manifolds endowed with $R$--structures, such that 
  $\dim(X)\leq \dim(Z)$ and let 
  $\varphi\colon X\to Z$ be a $K$--analytic map.
  Further, 
  we assume $Y\subseteq Z$ is a
  $K$--analytic submanifold
  of dimension $\dim(X)$
  such that $\varphi(X)\subseteq Y$. 
  Then for any measurable subset $U\subseteq X$, 
  we have
  $$
  \int_{y\in Y} \# \big( U \cap\varphi^{-1}(y)\big)\, \Omega_Y(y) = \int_U J(\varphi)\, \Omega_X .
  $$
  In particular, if $\varphi$ is injective, then the volume of the image $\varphi(X)$ in $Y$ satisfies 
  $$
  |\varphi(X)|  = \int_X J(\varphi)\, \Omega_X .
  $$
\end{thm}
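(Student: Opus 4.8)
The plan is to reduce the statement to the coarea formula (\cref{th:coarea}) in the equidimensional case. Set $m:=\dim X=\dim Y$. First I would corestrict $\varphi$ to a $K$--analytic map $\bar\varphi\colon X\to Y$. This is legitimate because $Y\subseteq Z$ is an embedded submanifold: near any point $p=\varphi(x)$ there is a chart of $Z$ carrying $Y$ onto a coordinate subspace (\cref{def:submf}), and since $\varphi(X)\subseteq Y$, composing $\varphi$ with such a chart visibly gives values in that coordinate subspace, so $\bar\varphi$ is analytic. The submanifold $Y$ inherits an $R$--structure from $Z$, so $J(\bar\varphi)$ is defined, and $\bar\varphi^{-1}(y)=\varphi^{-1}(y)$ for all $y\in Y$.

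The key preliminary point I would establish is that $J(\bar\varphi)(x)=J(\varphi)(x)$ for every $x\in X$. Put $r:=\rk D_x\varphi\le m$. By \cref{le:Rstruct}(1) the induced $R$--structure on $T_pY$ is the saturated sublattice $(T_pY)_R=T_pY\cap(T_pZ)_R$, so by \cref{le:extend-basis} an $R$--basis of $(T_pY)_R$ extends to an $R$--basis of $(T_pZ)_R$. Since $\Lambda^r D_x\varphi$ has image in $\Lambda^r T_pY\subseteq\Lambda^r T_pZ$, and a decomposable basis vector of $\Lambda^r T_pY$ has the same coordinates whether expanded in the basis of $\Lambda^r T_pY$ or of $\Lambda^r T_pZ$, the norm of $\Lambda^r D_x\varphi$ is computed consistently in both spaces. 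As $N(D_x\varphi)$ equals exactly this norm (the coordinate--free description of the absolute determinant via $r$-th exterior powers), the two absolute Jacobians agree.

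Now I would apply \cref{th:coarea} to $\bar\varphi\colon X\to Y$ with $h=\ind{U}$. Since $\dim X=\dim Y$, the fibre $\bar\varphi^{-1}(y)$ over a regular value $y$ is a $0$--dimensional $K$--analytic submanifold of $X$; on such a manifold the volume form is the counting measure, so $\int_{\bar\varphi^{-1}(y)}\ind{U}\,\Omega_{\bar\varphi^{-1}(y)}=\#(U\cap\varphi^{-1}(y))$. Sard's lemma (\cref{th:sard}) guarantees the critical values form a set of $\Omega_Y$--measure zero, so replacing the inner integral by this cardinality on all of $Y$ does not change the outer integral; the coarea identity then reads $\int_{y\in Y}\#(U\cap\varphi^{-1}(y))\,\Omega_Y(y)=\int_U J(\varphi)\,\Omega_X$, which is the first formula. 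For the last assertion, if $\varphi$ is injective then $\#(U\cap\varphi^{-1}(y))\in\{0,1\}$ and equals $1$ exactly on $\varphi(U)$; taking $U=X$, and noting that $\varphi(X)$ is Borel (a countable union of compacta, since $X$ is locally compact and second countable), the left--hand side becomes $|\varphi(X)|$.

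The step I expect to require the most care is the second one: verifying that corestricting the target from $Z$ to $Y$ leaves the absolute Jacobian unchanged. This hinges on the fact that the $R$--structure on a subspace induced by \cref{le:Rstruct}(1) is saturated, so that norms of exterior powers are computed the same way in the subspace and in the ambient space; everything else is a routine invocation of the coarea formula together with the convention that integration over a $0$--dimensional manifold is summation over its points.
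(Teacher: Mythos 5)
Your proof is correct and follows essentially the same route as the paper: corestrict $\varphi$ to a map $\widetilde\varphi\colon X\to Y$, observe that $J(\widetilde\varphi)=J(\varphi)$, and apply the coarea formula (\cref{th:coarea}) with $h$ the indicator function of $U$. The paper dismisses the Jacobian equality as immediate from the definition of the absolute determinant, whereas you spell out the justification via the saturated induced $R$--structure on $T_{\varphi(x)}Y$ and the norm of the $r$-th exterior power; this extra detail is sound, not a deviation from the paper's strategy.
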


\begin{proof}
Clearly, $\widetilde{\varphi}\colon X\to Y,\, x\mapsto\varphi(x)$ 
is a $K$--analytic map. Moreover, by the definition of the absolute Jacobian 
(see \eqref{eq:abs-det-inv} and \eqref{eq:AJ-nl}),  
we have $J(\varphi)= J(\widetilde{\varphi})$. 
Applying Theorem~\ref{th:coarea} to $\widetilde{\varphi}$ 
and taking for $h$ the indicator function of $U$ yields the assertion.
\end{proof}

The following corollary will be useful for the computation of the volume of Schubert varieties, but it has an independent interest.
\begin{cor}\label{cor:vol-graph} .
Let $X$ and $Y$ be $K$--analytic manifolds endowed with $R$--structures, 
and let $\varphi\colon X\to Y$ be a $K$--analytic map with the property that its 
derivatives are defined over $R$. Endow $X\times Y$ with the product $R$--structure.
Then the graph of $\varphi$ has  
the same volume as $X$. 
\end{cor}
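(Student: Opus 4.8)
The plan is to exhibit the graph as the image of an explicit injective $K$--analytic map and then invoke \cref{th:vol-image}, so that everything comes down to computing a single absolute Jacobian. Write $\gamma\colon X\to X\times Y$, $x\mapsto (x,\varphi(x))$, for the graph map; it is $K$--analytic and injective, and its image is the graph $\Gamma_\varphi$. First I would check that $\Gamma_\varphi$ is an embedded $K$--analytic submanifold of $X\times Y$ of dimension $\dim X$: in suitable charts $\Gamma_\varphi$ is the preimage of a coordinate subspace under the $K$--bianalytic map $(x,y)\mapsto\bigl(x,\,y-\varphi(x)\bigr)$ (equivalently, $\gamma$ is an injective immersion admitting the restriction of the projection $X\times Y\to X$ as a continuous inverse, hence an embedding, using the nonarchimedean inverse function theorem). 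Endowing $X\times Y$ with the product $R$--structure, the submanifold $\Gamma_\varphi$ then carries the induced $R$--structure.

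Next I would apply \cref{th:vol-image} with $Z:=X\times Y$, with the target submanifold taken to be $\Gamma_\varphi$ itself, and with the injective map $\gamma$. This yields $|\Gamma_\varphi| = \int_X J(\gamma)\,\Omega_X$, so it remains only to show that $J(\gamma)(x)=N(D_x\gamma)=1$ for every $x\in X$. Fix $x$, choose an $R$--basis $e_1,\dots,e_n$ of $(T_xX)_R$ and an $R$--basis $f_1,\dots,f_m$ of $(T_{\varphi(x)}Y)_R$; by construction of the product $R$--structure, the concatenated list $(e_1,0),\dots,(e_n,0),(0,f_1),\dots,(0,f_m)$ is an $R$--basis of $\bigl(T_{\gamma(x)}(X\times Y)\bigr)_R$. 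In these bases the matrix of $D_x\gamma\colon v\mapsto (v,D_x\varphi(v))$ is $\begin{bmatrix} I_n\\ A\end{bmatrix}$, where $A\in K^{m\times n}$ represents $D_x\varphi$; since $D_x\varphi$ is defined over $R$ we have $A\in R^{m\times n}$. By the characterization \eqref{eq:abs-det} of the absolute determinant as the maximum of the absolute values of the maximal minors, selecting the top $n$ rows gives the minor $\det I_n$ of absolute value $1$, while every $n\times n$ minor of $\begin{bmatrix} I_n\\ A\end{bmatrix}$ has absolute value at most $1$ because all its entries lie in $R$. Hence $N(D_x\gamma)=1$, i.e.\ $J(\gamma)\equiv 1$, and therefore $|\Gamma_\varphi|=\int_X\Omega_X=|X|$.

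The only slightly delicate point is the first step --- verifying that $\Gamma_\varphi$ is genuinely an embedded submanifold of $X\times Y$ so that \cref{th:vol-image} is applicable; this is handled exactly as over $\R$ via the implicit/inverse function theorem available in the $K$--analytic setting. Everything after that is the one-line matrix computation above, which uses nothing beyond the defining property $D_x\varphi\bigl((T_xX)_R\bigr)\subseteq (T_{\varphi(x)}Y)_R$ and formula \eqref{eq:abs-det}.
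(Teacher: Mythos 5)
Your proof is correct and takes essentially the same route as the paper: both exhibit the graph as the image of the map $x\mapsto(x,\varphi(x))$, apply the second part of \cref{th:vol-image}, and reduce to showing the absolute Jacobian is identically $1$. The only difference is that you make explicit the matrix computation $N\bigl(\begin{smallmatrix}I_n\\ A\end{smallmatrix}\bigr)=1$ for $A\in R^{m\times n}$, which the paper leaves as an immediate consequence of $D_x\varphi$ being defined over $R$.
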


\begin{proof}
Denoting by $\mathcal{G}$ the graph of $\varphi$, we see that $\mathcal{G}$ is a $K$--analytic submanifold of $X\times Y$. 
Consider 
the $K$--bianalytic map 
$\psi\colon X\to \mathcal{G}, \; x\mapsto (x,\varphi(x))$. Its 
derivatives are given by 
$$
 D_x\psi \colon T _xX \to T_{(x,\varphi(x))}\mathcal{G}, \; \dot{x}\mapsto (\dot{x},D_x\varphi(\dot{x})) .
$$ 
Since the linear map $D_x\varphi\colon T_xX \to T_{\varphi(x)}$ is defined over $R$, 
it follows that $J(\psi)=1$. Therefore, 
the second part of Theorem~\ref{th:vol-image}
implies that $|\mathcal{G}| =|X|$. 
\end{proof}

}


\subsubsection{Some volumes and $q$-binomial coefficients}
We show here that the volume of $\GL_n(R)$ and 
the Grassmannians $G(k,n)$ can be expressed in terms of 
familiar quantities from combinatorics. 
For a parameter $q$ one defines the
{\em $q$-bracket},
the
{\em $q$-factorial}, and the
{\em $q$-binomial coefficient (or Gaussian binomial) coefficient} 
as follows:  
\[
  [i]_q := \sum_{j=0}^{i-1} q^j,
  \quad
  [k]_q! := \prod_{i=1}^k [i]_q,
  \quad
  \begin{bmatrix} n \\k\end{bmatrix}_q :=
  \frac{[n]_q!}{[k]_q! \ [n-k]_q!} . 
\]
It is well known that 
$\left[\begin{smallmatrix} n \\k\end{smallmatrix} \right]_q$
counts the number of $k$-dimensional subspaces of $\F_q^n$, 
when $q$ is a prime power. We also note the asymptotics
$[i]_q \to i$, 
$[k]_q! \to k!$, and  
$\left[\begin{smallmatrix} n \\k\end{smallmatrix} \right]_q \to {n \choose k}$ 
for $q\to 1$.

Recall that in our setting, the prime power $q$ denotes 
the cardinality of the residue class field 
$R/\frm\simeq\F_q$ and we have set $\e=q^{-1}$.

\begin{prop}\label{pro:vols}
  We have
  \[
    \begin{gathered} 
      \gamma_n :=|\Gl_{n}(R)| = \prod_{i=1}^n (1-\e^i) = (q-1)^n q^{-\frac{n(n+1)}{2}}\; [n]_q!, \quad \text{and} \\
      |G(k,n)| = \frac{\g_n}{\g_k \g_{n-k}} = q^{-k(n-k)} \; \begin{bmatrix} n \\k\end{bmatrix}_q .
    \end{gathered}
  \]
\end{prop}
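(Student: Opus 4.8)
The plan is to compute $\gamma_n = |\GL_n(R)|$ directly via the counting characterization of the Haar measure from~\eqref{eq:mu-n-count}, and then to deduce the formula for $|G(k,n)|$ from the coarea formula applied to a suitable fibration. First I would observe that $\GL_n(R)$ is an open subset of $R^{n\times n} \simeq R^{n^2}$, on which the $R$--structure is standard, so $|\GL_n(R)| = \mu_{n^2}(\GL_n(R))$. Using~\eqref{eq:mu-n-count} with $k=1$, this measure equals $\lim$ (here the limit is already achieved at $k=1$ since $\GL_n(R)$ is the preimage under $\pi_1\colon R^{n\times n}\to (R/\frm)^{n\times n} = \F_q^{n\times n}$ of $\GL_n(\F_q)$): indeed $A\in R^{n\times n}$ lies in $\GL_n(R)$ iff $\det(A)\in R^\times$ iff $\det(A\bmod\frm)\ne 0$ in $\F_q$. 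Hence $|\GL_n(R)| = q^{-n^2}\,\#\GL_n(\F_q)$. Now $\#\GL_n(\F_q) = \prod_{i=0}^{n-1}(q^n - q^i) = q^{n^2}\prod_{i=1}^{n}(1-q^{-i}) = q^{n^2}\prod_{i=1}^n (1-\e^i)$, which gives $\gamma_n = \prod_{i=1}^n (1-\e^i)$. The two alternative closed forms follow by pulling $q^{-i}$ factors out of each bracket: $1-\e^i = (q-1)q^{-i}[i]_q$, and multiplying over $i=1,\dots,n$ collects the prefactor $(q-1)^n q^{-n(n+1)/2}$ and the product $[n]_q!$.

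Next I would establish the formula $|G(k,n)| = \gamma_n/(\gamma_k\gamma_{n-k})$. The cleanest route is to exhibit a $K$--analytic surjection $p\colon \GL_n(R)\to G(k,n)$ whose fibers are (up to a coset translation) copies of a fixed subgroup, and apply the coarea formula (Theorem~\ref{th:coarea}). Concretely, let $p$ send $A\in\GL_n(R)$ to the $K$--span of its first $k$ columns; this is a submersion, its image is all of $G(k,n)$ (by the Claim in Example~\ref{ex:Grass}, every point of $G(k,n)$ is $\mathrm{colspan}[I_k; B]^T$ for some $B\in R^{(n-k)\times k}$, and such a matrix extends to an element of $\GL_n(R)$), and the fiber over a point $E$ is a coset of the stabilizer $H := \{A\in\GL_n(R) : \text{first }k\text{ columns span }K^k\times 0\}$, which is the block-triangular group isomorphic to $\GL_k(R)\times \GL_{n-k}(R)\times R^{k\times(n-k)}$. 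Since $\GL_n(R)$ acts by norm-preserving maps (Proposition~\ref{pro:isometry-grp}) it preserves both the $R$--structure and the volume form, so $J(p)$ and the fiber volumes are constant; Theorem~\ref{th:coarea} then gives $\gamma_n = |G(k,n)|\cdot c$ where $c$ is the (constant) volume of a fiber times the Jacobian factor. Computing $c$: the fiber is a translate of $H$, and $|H| = \gamma_k\,\gamma_{n-k}$ (the off-diagonal block $R^{k\times(n-k)}$ contributes volume~$1$), and a short check in the chart $\psi_{[k]}$ of Example~\ref{ex:Grass} shows the Jacobian contribution is~$1$; hence $|G(k,n)| = \gamma_n/(\gamma_k\gamma_{n-k})$. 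Substituting $\gamma_m = \prod_{i=1}^m(1-\e^i)$ and simplifying yields $\prod_{i=1}^n(1-\e^i)\big/\big(\prod_{i=1}^k(1-\e^i)\prod_{i=1}^{n-k}(1-\e^i)\big)$, which after extracting the powers of $q$ from each bracket equals $q^{-k(n-k)}\left[\begin{smallmatrix}n\\k\end{smallmatrix}\right]_q$.

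Alternatively — and perhaps more in keeping with the combinatorial flavor of the statement — one can avoid the fibration argument entirely and compute $|G(k,n)|$ by the same counting recipe used for $\gamma_n$: cover $G(k,n)$ by the charts $U_I \cong R^{(n-k)\times k}$ of Example~\ref{ex:Grass}, note the transition maps are $R$--bianalytic so each chart is volume-isometric to $R^{k(n-k)}$ of volume~$1$, and then use~\eqref{eq:mu-n-count} together with the fact that $\pi_1$ reduces the chart decomposition of $G(k,n)$ to the Schubert cell decomposition of $G(k,n)(\F_q)$; this recovers $|G(k,n)| = q^{-k(n-k)}\cdot\#G(k,n)(\F_q) = q^{-k(n-k)}\left[\begin{smallmatrix}n\\k\end{smallmatrix}\right]_q$ by the well-known count of $k$--subspaces of $\F_q^n$, and then $=\gamma_n/(\gamma_k\gamma_{n-k})$ by the algebraic identity $\left[\begin{smallmatrix}n\\k\end{smallmatrix}\right]_q = [n]_q!/([k]_q![n-k]_q!)$ combined with the first part.

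The main obstacle is bookkeeping rather than conceptual: in the fibration approach one must verify carefully that the absolute Jacobian $J(p)$ and the fiber volume are genuinely constant and compute their product correctly (getting the power of $q$ right), while in the direct-counting approach one must make precise the claim that the chart cover of $G(k,n)$ reduces modulo $\frm$ exactly to the cell structure over $\F_q$ with no overcounting — i.e.\ that the covering by the $U_I$ can be refined to a disjoint one compatible with reduction, which is where the ultrametric refinement property (\cite[Lemma~1.4]{schneiderp:11}) is used. Either way the remaining work is the routine $q$--arithmetic of converting between the product form $\prod(1-\e^i)$ and the $q$--binomial form.
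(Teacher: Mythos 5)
Your proof is correct, but your route to $\gamma_n$ differs from the paper's. The paper computes $\gamma_n$ recursively: it applies the coarea formula to the transitive action of $\GL_{n+1}(R)$ on $\proj^n(K)$, computes the stabilizer volume, and solves the resulting recursion $|\GL_{n+1}(R)| = |\proj^n(K)|\,(1-\e)\,|\GL_n(R)|$ using the already-established formula~\eqref{eq:vol-proj} for $|\proj^n(K)|$. You instead compute $\gamma_n$ directly from the Haar-measure counting characterization~\eqref{eq:mu-n-count}, observing that $\GL_n(R) = \pi_1^{-1}(\GL_n(\F_q))$ so the limit stabilizes at $k=1$, giving $\gamma_n = q^{-n^2}\#\GL_n(\F_q)$, and then plugging in the standard count of $\GL_n(\F_q)$. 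The paper actually records exactly this relation $|\GL_n(R)| = q^{-n^2}\#\GL_n(\F_q)$ in the remark following the proposition, but uses it in the reverse direction — deriving the finite-field count as a consequence rather than as an input. Your approach is more self-contained in that it needs no prior knowledge of projective-space volume, at the cost of taking the formula for $\#\GL_n(\F_q)$ as known (which is standard). For the Grassmannian, your primary argument — coarea applied to the orbit map $\GL_n(R)\to G(k,n)$ with stabilizer of volume $\gamma_k\gamma_{n-k}$ and Jacobian~$1$ — is essentially identical to the paper's, and your alternative direct-counting route parallels the paper's second remark. One small point worth tightening in a final write-up: the claim that $J(p)$ equals~$1$ deserves at least one sentence of verification in the chart, as the paper does via the construction of the quotient $R$--structure on $G/H$; but the argument is sound.
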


\begin{proof}
The group $\Gl_{n+1}(R)$ acts transitively on $\proj^n(K)$. 
Let $x_0 :=[1:0\ldots:0]$, and note that the fibre of the orbit map 
$\varphi\colon\Gl_{n+1}(R) \times \{x_0\} \to \proj^n(K)$ is given by the stabilizer group
\begin{equation*}\label{eq:fiberproj}
 \varphi^{-1}(x_0) = \Big\{
  \begin{pmatrix} r & b \\0 & A \end{pmatrix} \mid r\in R^\times, A\in\Gl_n(R), b\in R^n \Big\} ,
\end{equation*}
which has the volume $(1-\e)|\Gl_n(R)|$. 
It is easily checked that $J(\varphi)=1$. Theorem~\ref{th:coarea} thus implies the recursive formula
$$
 |\Gl_{n+1}(R)| = |\proj^n(K)| \cdot (1-\e) \cdot |\Gl_n(R)| .
$$
Resolving this with Equation~\eqref{eq:vol-proj} yields 
$|\Gl_{n}(R)| = \prod_{i=1}^n (1-\e^i)$.  
The stated expression in terms of $[n]_q!$ is straightforward to verify.

As for $\Gr(k,n)$, 
we have a transitive action of $\Gl_n(R)$ on $\Gr(k,n)$. Its stabilizer at 
$K^k \times 0^{n-k}$ is given by 
$$
 \Big\{ \begin{pmatrix} A & C\\0 & B \end{pmatrix} \mid A\in\Gl_k(R), B\in \Gl_{n-k}(R), C\in R^{k\times (n-k)}\Big\} ,
$$
which has the volume $|\Gl_k(R)| \cdot |\Gl_{n-k}(R)|$. 
The absolute Jacobians of the orbit maps are easily checked to be one. 
Theorem~\ref{th:coarea} and part one imply the claim.
\end{proof}

\begin{remark}
The reduction 
$R^{n\times n} \to \F_q^{n\times n}$ modulo $\frm$  
maps $\GL_n(R)$ to $\GL_{n}(\F_q)$.  
Since the pushforward of the uniform distribution on $R^{n\times n}$ 
under the reduction equals the uniform distribution on $\F_q^{n\times n}$, 
we get 
\begin{equation*}\label{eq:gn-komb}
 |\Gl_n(R)| =  q^{-n^2}\#G_{n}(\F_q) . 
\end{equation*}
The resulting formula for $\#G_{n}(\F_q)$ following from Proposition~\ref{pro:vols}
is well known. 

Let 
$G_{\F_q}(k,n)$ denote the Grassmannian 
defined over the finite field $\F_q$. 
Based on the reduction map 
$G(k,n) \to G_{\F_q}(k,n)$,  
one shows that 
\begin{equation*}\label{eq:Grass-komb}
 |G(k,n)| = 
  q^{-k(n-k)} \# G_{\F_q}(k,n) . 
\end{equation*}
Together with Proposition~\ref{pro:vols}, 
this implies the well known fact 
$\# G_{\F_q}(k,n) =\left[\begin{smallmatrix} n \\k\end{smallmatrix} \right]_q$.
\end{remark}
\subsubsection{Equivalent definitions of the volume}\label{sec:volumeequi}
We outline how to show that our definition~\eqref{eq:defvol}
of volume is consistent with the one 
from~\cite{KL:19, oesterle:82}, which was given in the case $K=\mathbb{Q}_p$ 
for an $m$--dimensional algebraic subset $X$ of $R^n$.
Let us first recall the definition of volume from~\cite{KL:19}.
For $k\in\N$, we consider the reduction homomorphism  
$\pi_k\colon R^n \to (R/\frm^k)^n$ and recall that $(R/\frm^k)^n$ 
has cardinality $q^{kn}$. 
The following definition of the $m$--dimensional volume of an open subset~$X'$ of~$X$
was used in~\cite{KL:19}: 
\begin{equation}\label{eq:def-KL-vol}
 \vol_m(X') := \lim_{k\to\infty} \frac{\# \pi_k(X')}{q^{km}} .
\end{equation}
The next result shows that $U\mapsto \mathrm{vol}_m(U)$ indeed
defines a Borel measure on $X$, which agrees with $U\mapsto \int_U\Omega_X$.

\begin{prop}\label{pro:vol-consistent}
We have $\vol_m(X') = |X'|$.
\end{prop}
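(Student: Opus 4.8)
The plan is to reduce the claim to a purely local statement about a single analytic chart and then invoke the concrete description of the Haar measure via reduction mod $\frm^k$ together with the change-of-variables formula. First I would observe that both $X'\mapsto\vol_m(X')$ and $X'\mapsto|X'|=\int_{X'}\Omega_X$ are measures on the $K$-analytic manifold $\Reg(X)$ (the former is a measure by the argument in~\cite{KL:19}, and the latter is a measure because $\Omega_X$ is a nonnegative volume form), so it suffices to check that they agree on a basis of open sets. Using that any $K$-analytic manifold admits an atlas with pairwise disjoint domains (strict paracompactness, see~\cite[Lemma~1.4, Prop.~8.7]{schneiderp:11}), I reduce to the case where $X'$ is the image of an open set $V\subseteq R^m$ under a single chart $\varphi\colon V\to X'\subseteq\Reg(X)\subseteq R^n$, i.e.\ an analytic parametrization of a piece of~$X$.

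Next I would unwind what $\Omega_X$ is in this chart. The $R$-structure on $X$ restricted to $\Reg(X)$ induces at each point $x=\varphi(t)$ a lattice in $T_xX$; pulling back along $D_t\varphi$ identifies $T_xX$ with $K^m$, and the volume form becomes $\Omega_X=N(D_t\varphi)\,d\mu_m(t)$, where $N(D_t\varphi)$ is the absolute determinant (product of singular values) of the derivative viewed as a map from the standard lattice $R^m$ to the lattice $\Lambda_x\subseteq T_xX$. Concretely, composing with a chart of $X$ realizing the $R$-structure, $N(D_t\varphi)=\max\{|\det(J_\varphi(t))_{I}|: |I|=m\}$ where $J_\varphi(t)$ is the $n\times m$ Jacobian matrix. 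Thus the claim becomes
$$
 \int_V \max_{|I|=m}\bigl|\det (J_\varphi(t))_I\bigr|\, d\mu_m(t) \;=\; \lim_{k\to\infty} \frac{\#\pi_k(\varphi(V))}{q^{km}} .
$$

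The core step is then to estimate $\#\pi_k(\varphi(V))$, the number of residues mod $\frm^k$ hit by $\varphi$ on $V$. By subdividing $V$ into small balls on which $\varphi$ is, up to an $R$-bianalytic change of coordinates in the source and an $\GL_n(R)$-change in the target, an affine map of the form $t\mapsto (t, L(t))$ with $L$ having derivative over $R$ — this is where the Smith normal form of $D_t\varphi$ and Corollary~\ref{cor:vol-graph} enter — one reduces to counting the residues in the graph of such a map, which is exactly $q^{km}$ times the measure of the corresponding ball, since the projection to the first $m$ coordinates is a bijection on residues. Summing over the pieces of the subdivision, passing to the limit $k\to\infty$, and matching with $\int_V N(D_t\varphi)\,d\mu_m$ via~\eqref{eq:mu-n-count} gives the identity. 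I expect the main obstacle to be the bookkeeping in this last step: controlling uniformly (in $k$) the error in $\#\pi_k$ coming from the boundary between subdivision pieces and from the fact that $\varphi$ is only analytic (not affine) on each small ball, so that one must choose the radius of the balls small enough relative to $k$ and use that analytic maps are locally Lipschitz — essentially the argument already present in~\cite[Thm.~2]{oesterle:82}, which one can either cite directly or reproduce in our framework. Once the local equality is established, the disjointness of chart domains makes the global statement immediate.
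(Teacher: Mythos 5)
Your approach is genuinely different from the paper's. The paper fixes $p\in\Reg(X)$, takes an $R$-complement $L$ of $(T_pX)_R$ in $R^n$, and projects a small neighbourhood $U\subseteq X$ of $p$ onto $T:=(T_pX)_R$ along $L$; the cited~\cite[Prop.~20]{KL:19} guarantees this projection is a local \emph{isometry}. Because the Jacobian of that particular chart is identically $1$, the coarea formula gives $|U|=|V|$ for free, and the residue count $\vol_m(U)=\vol_m(V)$ holds because isometries preserve congruence mod~$\frm^k$. The whole proof then reduces to the linear case with no further bookkeeping. You instead parametrize ``from below'' by an arbitrary chart inverse $\varphi\colon V\to X'$, which forces you to reconcile a nontrivial Jacobian $N(D_t\varphi)$ with the residue count. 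That program can be completed --- it is essentially Oesterl\'e's theorem, and citing~\cite[Thm.~2]{oesterle:82} directly would finish your proof --- but your sketched re-derivation has a gap.

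The gap is in the reduction: ``up to an $R$-bianalytic change of coordinates in the source and a $\GL_n(R)$-change in the target, an affine map of the form $t\mapsto(t,L(t))$ with $L$ having derivative over $R$.'' An $R$-bianalytic source change has derivative in $\GL_m(R)$ and a $\GL_n(R)$ target change is norm-preserving, so the pair $(S,T)$ can only bring $D_t\varphi$ to its Smith normal form $\left[\begin{smallmatrix}D_x\\0\end{smallmatrix}\right]$ with $D_x=\diag(\varpi^{k_1},\ldots,\varpi^{k_m})$; the $k_i$ are invariants. Your claimed form corresponds exactly to $k_1=\cdots=k_m=0$, which need not hold, since a chart inverse is not automatically an isometry. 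Consider $X=K\times\{0\}\subseteq K^2$ with $\varphi(t)=(\varpi t,0)$ on $V=R$: then $N(D_t\varphi)=\e$, the image is $\varpi R\times 0$, and $\#\pi_k(\varphi(R))=q^{k-1}$, so $\vol_1(\varphi(R))=\e$. This matches $\int_V N(D_t\varphi)\,d\mu_1=\e$, but not your counting, which after the purported reduction to $(t,L(t))$ would give $q^{km}\mu(\text{ball})/q^{km}=1$. The residue count of a ``thin'' piece of the image is suppressed by precisely the factor $\prod_i\e^{k_i}=N(D_t\varphi)$, and your sketch drops it. To repair the argument one must keep the scaled normal form $t\mapsto(D_xt,L(t))$ and observe that the number of residues hit in the first $m$ coordinates is $q^{km}\prod_i\e^{k_i}\cdot\mu(\text{ball})$; then the two sides match. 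What the paper's proof buys, by using the specific isometric projection chart, is exactly that all the $k_i$ vanish, so none of this bookkeeping is needed; what your route would buy is independence from~\cite[Prop.~20]{KL:19}, at the cost of essentially reproving Oesterl\'e.
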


\begin{proof} (Sketch)
We first note that the assertion is obvious in the linear case, where $X=R^n$. 
The strategy is to reduce to the linear case.

By Remark~\ref{cor:alg-sets-R-struct},
$\Reg(X)$ is a $R$--analytic submanifold 
of $R^n$. Removing lower dimensional strata neither affects the value of $\vol_m(X)$ 
nor of $|X|$, so that we may replace $X$ by $\Reg(X)$.
It suffices to show that for every $p\in X$, there is a sufficiently small neighborhood~$U$ of $p$ in $X$, 
such that \eqref{eq:def-KL-vol} holds for $U$. The reason is that it is known that 
every covering of $R^n$ by open subsets can be refined to a {\em disjoint} 
covering of $R^n$ by open subsets; see \cite[Lemma~1.4]{schneiderp:11}. 

Let $L$ be an $R$--complement of $T:=(T_pX)_R$ in $R^n$ 
and consider the projection  
$\psi\colon R^n \to T$ along $L$.
By \cite[Prop.~20]{KL:19}, there is an open neighborhood $U$ of $p$ in $X$ 
such that $\psi$ maps $U$ isometrically onto an open neighborhood $V$ of $0$ in $T$. 
We apply the coarea formula (Theorem~\ref{th:coarea}) to the 
restriction $\widetilde{\psi}\colon U \to V$ of $\psi$. 
Note that $N(\widetilde{\psi})=1$ since we project along an $R$--complement.
The coarea formula implies $|X| = |V|$. 

On the other hand, it is easy to check from 
the counting definition~\eqref{eq:def-KL-vol} that 
$\vol_m(U) = \vol_m(V)$. 
We therefore have reduced the claim to the case of an open subset 
of $(T_pX)_R\simeq R^m$ and we are done.
\end{proof}

\begin{remark}
Proposition~\ref{pro:vol-consistent} also follows, rather indirectly, from the
two integral geometric formulas proved in~\cite[Prop.~36]{KL:19}
and in this paper (Corollary~\ref{cor:KL}). More specifically, 
let $X$ be an open subset of $\proj^m(K)$. Then these integral 
geometry formulas express the $m$--dimensional volumes $\vol_m(X)$ and $|X|$, respectively, 
in terms of the expected number of intersection points of $X$ with a random projective linear 
subspace of complementary dimension.
\end{remark}

\section{Relative position between two subspaces}\label{se:rel-pos}

We first state the minimax characterization of singular values. 
Then we use this to define the position vector of a pair of subspaces of $K^n$, 
which is an analogue of the notion of principal angles of two real subspaces of $\R^n$
(or complex subspaces of $\C^n$), which characterizes their relative position.
A quantity derived from this 
is an indispensable ingredient for the general integral geometry
formula that we will discuss later.

In Section~\ref{sec:jointrandom}, we determine the probability distribution 
of the position vector of a uniformly random pair of subspaces.
This will be essential for the determination of the volume of special Schubert varieties 
in Section~\ref{se:Vol-Schubert}.

Throughout we denote by $V$ an $n$-dimensional $K$-vector space endowed with a linear $R$--structure.

\subsection{Minimax characterization of singular values}\label{sec:minmax}

The well known characterization of the singular values 
by minimax properties~\cite{golub-van-loan:13} over $\R$ or $\C$ carries 
over to the nonarchimedean setting.
This appears to be a little-known piece of folklore: the only
  reference in this direction we could locate is \cite[\S 4.4]{kedlaya:22}.
Since we will need it later on, we give a proof of this fact of independent interest.

Let $V$ and $W$ be $K$-vector spaces endowed with a linear $R$--structure 
and $\varphi\colon V\to W$ be a $K$-linear map. 
Put $m :=\dim V$ and $n := \dim W$.   

\begin{prop}\label{pro:minimax}
  Let $\varphi\colon V\to W$ be a $K$-linear map 
  of $K$-vector spaces endowed with a linear $R$--structure.  
  The singular values $\s_i$ of $\varphi$ satisfy for all $k$:
  \begin{equation*}
    \arraycolsep=0pt
    \begin{array}{rccl}
      \sigma_k  \ &= & {\displaystyle \max_{U\subseteq V \atop \dim U = k}}  &\min_{v\in U, \|v\|=1} \|\varphi(v)\| , \\[3ex]
      \sigma_k  \ &= & {\displaystyle \min_{U\subseteq V \atop \codim U= k-1}}  &\max_{v\in U, \|v\|=1} \|\varphi(v)\| .
    \end{array}
  \end{equation*}
\end{prop}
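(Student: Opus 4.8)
The plan is to reduce to the diagonal case via the Smith normal form and then verify the two minimax identities by exhibiting explicit extremal subspaces. First I would fix $R$-bases of $V_R$ and $W_R$ and represent $\varphi$ by a matrix $A\in K^{n\times m}$. By the Smith normal form (Section~\ref{se:smith}), there are $S\in\Gl_n(R)$, $T\in\Gl_m(R)$ with $SAT=D$, where $D$ is diagonal with entries $\varpi^{k_i}$, i.e.\ $|d_{ii}|=\s_i$ for $i\le r=\rk(\varphi)$ and $d_{ii}=0$ for $i>r$, and $\s_1\ge\cdots\ge\s_{\min\{m,n\}}\ge 0$. Since $S$ and $T$ lie in $\Gl_n(R)$ and $\Gl_m(R)$, they preserve the standard norms on $K^n$ and $K^m$ by Proposition~\ref{pro:isometry-grp}; moreover $T$ induces a norm-preserving bijection on the set of $k$-dimensional subspaces of $V$ and on the set of subspaces of a given codimension. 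Hence for any $k$-dimensional $U\subseteq V$ we have $\min_{v\in U,\|v\|=1}\|\varphi(v)\|=\min_{w\in TU,\|w\|=1}\|Dw\|$, and similarly for the $\max$; so it suffices to prove both identities for the diagonal map $D$, where $V=K^m$, $W=K^n$ carry their standard $R$-structures.

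Next I would treat the diagonal case. Write $e_1,\dots,e_m$ for the standard basis; $\|De_i\|=\s_i$ (with $\s_i=0$ for $i>r$). For the first identity, taking $U_0:=\spann(e_1,\dots,e_k)$ gives, by the ultrametric inequality and homogeneity of the norm, $\min_{v\in U_0,\|v\|=1}\|Dv\|\ge\s_k$ — indeed if $v=\sum_{i\le k}x_ie_i$ with $\max_i|x_i|=1$, then $\|Dv\|=\max_{i\le k}|x_i|\s_i\ge|x_{j}|\s_j$ for an index $j$ attaining $|x_j|=1$, and this is $\ge\s_k$ since $\s_j\ge\s_k$. Hence the sup is $\ge\s_k$. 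For the reverse inequality, let $U$ be any $k$-dimensional subspace; the projection $\pi\colon K^m\to\spann(e_k,e_{k+1},\dots,e_m)$ (forgetting the first $k-1$ coordinates) has kernel of dimension $k-1$, so its restriction to $U$ has nontrivial kernel intersection only if $\dim U>k$; since $\dim U=k$ actually $\ker\pi\cap U$ could be nonzero, so instead I use the complementary projection $p\colon K^m\to\spann(e_k,\dots,e_m)$ with $\ker p=\spann(e_1,\dots,e_{k-1})$ of dimension $k-1$, whence there exists $0\ne v\in U$ with $v\in\spann(e_k,\dots,e_m)$ — wait, this needs $\dim U+ (m-k+1)>m$, i.e.\ $\dim U>k-1$, which holds. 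Normalizing $\|v\|=1$ and writing $v=\sum_{i\ge k}x_ie_i$, we get $\|Dv\|=\max_{i\ge k}|x_i|\s_i\le\s_k\max_i|x_i|=\s_k$ because $\s_i\le\s_k$ for $i\ge k$. Thus $\min_{v\in U,\|v\|=1}\|Dv\|\le\s_k$ for every such $U$, giving the $\max\le\s_k$ and hence equality. The second identity is dual: for $U_0:=\spann(e_k,e_{k+1},\dots,e_m)$ (codimension $k-1$) the same computation gives $\max_{v\in U_0,\|v\|=1}\|Dv\|=\s_k$, so the $\min$ over codimension-$(k-1)$ subspaces is $\le\s_k$; conversely any $U$ of codimension $k-1$ meets $\spann(e_1,\dots,e_k)$ nontrivially (dimensions add up to $>m$), and on that intersection a unit-norm vector $v=\sum_{i\le k}x_ie_i$ has $\|Dv\|=\max_{i\le k}|x_i|\s_i\ge\s_k$ by the argument above, so $\max_{v\in U,\|v\|=1}\|Dv\|\ge\s_k$, yielding $\min\ge\s_k$ and equality.

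Two small points must be handled with care, and the intersection-dimension bookkeeping is where I expect the only real friction. One is the degenerate range of $k$: when $k>r$ both sides are $0$ (the min over any $k$-dimensional $U$ picks up a vector in the kernel-related coordinates, or $\s_k=0$ by convention), and the extreme cases $k=1$ and $k=m$ are checked directly; I would state these as immediate. The second is the systematic use of two facts about ultrametric norms on $K^m$: $\|\lambda v\|=|\lambda|\,\|v\|$, and for $v=\sum x_ie_i$ one has $\|v\|=\max_i|x_i|$ and $\|Dv\|=\max_i|x_i|\,\s_i$ — so all the "min/max of a diagonal form on a coordinate subspace" computations are exact, not just inequalities, which is cleaner than the Euclidean case. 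Assembling these, the reduction via $S,T$ together with the diagonal computations gives both formulas in full generality, and the proof is complete.
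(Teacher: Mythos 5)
Your proof is correct and follows essentially the same approach as the paper: reduce to the diagonal case via the Smith normal form (using that $S,T$ preserve norms by Proposition~\ref{pro:isometry-grp}), then prove one inequality by exhibiting the coordinate subspace $\spann(e_1,\dots,e_k)$ (resp.\ $\spann(e_k,\dots,e_m)$) and the other by the dimension-count argument that any $k$-dimensional $U$ (resp.\ any codimension-$(k-1)$ $U$) must intersect $\spann(e_k,\dots,e_m)$ (resp.\ $\spann(e_1,\dots,e_k)$) nontrivially. The momentary hesitation about which projection to use resolves itself correctly, and the dimension bookkeeping checks out.
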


\begin{proof}
The proof is completely analogous to the one over~$\R$. 
Without loss of generality we may assume that 
$V=K^m, W=K^n$ and $\varphi(v)=(s_1 v_1,\ldots)$ 
where $\s_i=|s_i|$ and the list is appended by zeros if $m < n$. 

For the first assertion, let $U\subseteq V$ with $\dim U =k$. Then 
$U\cap (0\times K^{n-k+1}) \ne 0$. Hence there exists 
$v'\in U\cap (0\times K^{n-k+1})$ with $\|v'\|=1$, and we have 
$$
  \min_{v\in U, \|v\|=1} \|\varphi(v)\| \le \|\varphi(v')\| = \max_{i \ge k} |s_i| |v'_i| \le  \s_k .
$$ 
Since $U$ was arbitrary, we get 
$$
 \max_{U}    \min_{v\in U, \|v\|=1} \|\varphi(v)\| \le \s_k .
$$
Choosing $U= K^k$ shows that equality holds.

For the second assertion, let $U\subseteq V$ with $\codim (U) =k-1$. Then 
$U\cap (K^{k} \times 0) \ne 0$. 
Take 
$v'\in U\cap (K^{k} \times 0)$, then
\[
  \|\varphi(v')\| = \max_{i \le k} |\s_i| |v'_i| \ge  \s_k \max_i |v'_i| = \s_k \, \|v'\|.
\]
Thus,
$\max\{ \|\varphi(v)\| \ |\ v\in U, \|v\|=1 \} \geq \s_k$. 

Since $U$ was arbitrary, we get 
$$
 \min_{U}    \max_{v\in U, \|v\|=1} \|\varphi(v)\| \ge \s_k .
$$
Choosing $U= 0\times K^{n-k+1}$ shows that equality holds.
\end{proof}

As a consequence, we obtain the interlacing properties 
for singular values, analogous to the situation over $\R$ and $\C$,
see \cite{thompson:72}.

\begin{prop}\label{prop:interlacing}
  Let $\varphi\colon V\to W$ be as in Proposition~\ref{pro:minimax}, 
  $V_1\subseteq V$ be a subspace of codimension~$d$ and  
  $W_1\subseteq W$ be a subspace of dimension~$d$. 
  Let $\s'_k$ denote the singular values of the restriction 
  of $\varphi$ to $V_1$, and let $\s''_k$ denote the 
  singular values of the composition of $\varphi$ 
  with the canonical projection $W\to W/W_1$.  
  Then $\s_k'$ and $\s_k''$ satisfy the interlacing property
  $$
  \s_k \ge \s'_k \ge \s_{k+d} ,\quad \s_k \ge \s''_k\ge \s_{k+d} .
  $$
\end{prop}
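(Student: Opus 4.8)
The plan is to derive all four inequalities from the minimax characterization of Proposition~\ref{pro:minimax}, applied to the two $K$--linear maps $\varphi|_{V_1}\colon V_1\to W$ and $\pi\circ\varphi\colon V\to W/W_1$, where $\pi\colon W\to W/W_1$ is the canonical projection. Here $V_1$ carries the induced linear $R$--structure $V_1\cap V_R$ and $W/W_1$ carries the quotient linear $R$--structure $\pi(W_R)$; one first records the routine facts that the associated norms are, respectively, the restriction to $V_1$ of the norm of $V$ and the quotient norm on $W/W_1$, so that Proposition~\ref{pro:minimax} can be applied to these two maps compatibly with the ambient norms. For an index $j$ outside the range $1,\dots,\min\{\dim V,\dim W\}$ we adopt the convention $\sigma_j:=0$; the corresponding inequalities are then trivial, so in what follows we may assume all indices are in range.

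For the two upper bounds: to prove $\sigma'_k\le\sigma_k$ I would use the $\max$--$\min$ formula of Proposition~\ref{pro:minimax}, noting that the maximum defining $\sigma'_k$ ranges only over the $k$--dimensional subspaces of $V_1$, which form a subfamily of the $k$--dimensional subspaces of $V$; hence $\sigma'_k\le\sigma_k$. To prove $\sigma''_k\le\sigma_k$ I would use the $\min$--$\max$ formula together with the fact that $\pi$ is defined over $R$ and therefore norm-nonincreasing (if $w\in\varpi^j W_R$ then $\pi(w)\in\varpi^j(W/W_1)_R$), so that for every subspace $U\subseteq V$ of codimension $k-1$ one has $\max_{v\in U,\,\|v\|=1}\|\pi\varphi(v)\|\le\max_{v\in U,\,\|v\|=1}\|\varphi(v)\|$; taking the minimum over all such $U$ gives $\sigma''_k\le\sigma_k$.

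For the lower bound $\sigma'_k\ge\sigma_{k+d}$ I would again use the $\min$--$\max$ formula, via the observation that a subspace $U\subseteq V_1$ with $\codim_{V_1}U=k-1$ satisfies $\codim_V U=k+d-1$; thus the minimum defining $\sigma'_k$ is taken over a subfamily of the codimension-$(k+d-1)$ subspaces of $V$, which yields $\sigma'_k\ge\sigma_{k+d}$.

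The remaining inequality $\sigma''_k\ge\sigma_{k+d}$ is the only one where the nonarchimedean setting genuinely departs from the real or complex one, and I expect it to be the main obstacle: the classical argument projects onto the orthogonal complement $W_1^{\perp}$, which has no counterpart here. The substitute is an $R$--complement. By Lemma~\ref{le:extend-basis}, $W_1$ admits an $R$--complement $W_1'$ in $W$, so that $W=W_1\oplus W_1'$ and the restriction $\pi|_{W_1'}\colon W_1'\to W/W_1$ is an isomorphism defined over $R$, hence norm-preserving. Next I would choose a subspace $U'\subseteq V$ of dimension $k+d$ attaining the maximum in the $\max$--$\min$ formula for $\sigma_{k+d}$ (the maximum is attained, as the proof of Proposition~\ref{pro:minimax} shows). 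Since $\varphi^{-1}(W_1')$ has codimension at most $\dim W_1=d$ in $V$, the intersection $U'\cap\varphi^{-1}(W_1')$ has dimension at least $k$, and I would pick a $k$--dimensional subspace $\widetilde U$ inside it. For every $v\in\widetilde U$ we have $\varphi(v)\in W_1'$, hence $\|\pi\varphi(v)\|=\|\varphi(v)\|\ge\sigma_{k+d}\|v\|$ using $v\in U'$; feeding $\widetilde U$ into the $\max$--$\min$ formula for $\pi\circ\varphi$ then gives $\sigma''_k\ge\sigma_{k+d}$, completing the proof.
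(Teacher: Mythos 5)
Your proof is correct, but it follows a genuinely different route from the paper's. The paper dispatches the two $\sigma_k''$ inequalities in a single line by duality: identifying $(W/W_1)^*$ with the codimension-$d$ subspace $W_1^\perp\subseteq W^*$, the composition $\pi\circ\varphi$ dualizes to the restriction of $\varphi^*$ to $W_1^\perp$, so the second claim reduces to the first applied to $\varphi^*$ (whose singular values coincide with those of $\varphi$). It then proves $\sigma_k\ge\sigma_k'$ as you do, and $\sigma_k'\ge\sigma_{k+d}$ by picking a $(k+d)$-dimensional $U$ attaining the $\max$–$\min$ for $\sigma_{k+d}$, noting $\dim(U\cap V_1)\ge k$, restricting to a $k$-dimensional $U_1\subseteq U\cap V_1$, and chaining $\sigma_{k+d}\le\min_{v\in U_1}\|\varphi(v)\|\le\sigma_k'$ — a restriction argument rather than your codimension/subfamily argument (both work). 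Your direct proof of $\sigma_k''\ge\sigma_{k+d}$ via an $R$-complement $W_1'$ of $W_1$, pulling back to $\varphi^{-1}(W_1')$ and using that $\pi|_{W_1'}$ is an $R$-isomorphism hence norm-preserving, is a correct substitute for the Euclidean orthogonal-complement projection, and it is instructive because it exhibits $R$-complements as the nonarchimedean stand-in for orthogonal complements; but it is the longest part of your argument, and it is exactly what the paper's duality reduction renders unnecessary. The trade-off: the duality approach is shorter and more structural, while yours is more elementary and self-contained and does not require the observation that dualizing preserves singular values.
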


\begin{proof}
By duality, it suffices to prove the first claim. 
The claim $\s_k \ge \s'_k$ follows immediately from 
the first characterization of Proposition~\ref{prop:interlacing}. 
In order to prove that $\sigma_k'\geq \sigma_{k+d}$, let $U\subseteq V$ with $\dim U = k+d$ be such that 
$\s_{k+d} = \min_{v\in U, \|v\|=1} \|\varphi(v)\|$. 
Then $\dim(U\cap V_1) \ge k$, hence there is a $k$-dimensional subspace 
$U_1 \subseteq U\cap V_1$. Now note 
$\s_{k+d} \le \min_{v\in U_1, \|v\|=1} \|\varphi(v)\| \le \s'_k$.
\end{proof}

A more familiar way to express the interlacing property is the following immediate consequence.

\begin{cor}\label{cor:interlacing}
Let $A\in K^{m\times n}$ with $m\le n$ and 
$B$ be a submatrix of $A$ obtained by deleting a row and a column. Then 
singular values $\tau_i$ of $B$ satisfy the interlacing property
$$
 \s_1 \ge \tau_1 \ge \s_2 \ge \tau_2\ge \ldots\ge \s_{m-1} \ge \tau_{m-1} \ge \s_m .
$$
\end{cor}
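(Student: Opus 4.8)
The plan is to obtain this as a direct consequence of the interlacing inequalities of \cref{prop:interlacing}, taken with $d=1$. Let $\varphi\colon K^n\to K^m$ be the $K$-linear map whose matrix, with respect to the standard $R$--structures $R^n$ on the domain and $R^m$ on the codomain, equals $A$; its singular values are then exactly $\s_1\ge\dots\ge\s_m$. The guiding observation is that passing from $A$ to a submatrix of the stated kind amounts, at the level of linear maps, either to restricting $\varphi$ to a codimension--one coordinate subspace of the domain or to composing $\varphi$ with a codimension--one coordinate quotient of the codomain.

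Concretely, deleting row~$i$ of~$A$ corresponds to composing $\varphi$ with the canonical projection $\pi\colon K^m\to K^m/Ke_i$: by \cref{le:Rstruct}(2), the quotient $R$--structure on $K^m/Ke_i$ is, in the $R$--basis $\{\pi(e_j):j\neq i\}$, the standard $R$--structure on $K^{m-1}$, and in this basis the matrix of $\pi\circ\varphi$ is precisely $B$. Hence the singular values $\tau_k$ of $B$ coincide with the numbers $\s''_k$ of \cref{prop:interlacing}, and that proposition (with $d=1$) gives $\s_k\ge\tau_k\ge\s_{k+1}$ for every admissible~$k$. Deleting a column is symmetric: deleting column~$j$ corresponds to restricting $\varphi$ to the coordinate subspace $V_1:=\{x\in K^n:x_j=0\}$, which by \cref{le:Rstruct}(1) carries the standard $R$--structure on $K^{n-1}$, so the first inequality of \cref{prop:interlacing} (again with $d=1$) yields $\s_k\ge\tau_k\ge\s_{k+1}$ as well. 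Concatenating the inequalities $\s_k\ge\tau_k$ and $\tau_k\ge\s_{k+1}$ for $k=1,\dots,m-1$ produces the asserted chain $\s_1\ge\tau_1\ge\s_2\ge\dots\ge\s_{m-1}\ge\tau_{m-1}\ge\s_m$, without needing to invoke the monotonicity of the two lists separately.

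I do not anticipate a genuine obstacle; \cref{prop:interlacing} already carries the whole analytic content, and the corollary is essentially a translation of it into the language of matrices. The one point deserving a little care is the bookkeeping identifying the submatrix $B$ with the restricted (respectively composed) $K$-linear map, together with the verification, via \cref{le:Rstruct}, that the induced $R$--structure on the coordinate subspace (respectively the quotient $R$--structure on the coordinate quotient) is literally the standard one on $K^{n-1}$ (respectively $K^{m-1}$), so that ``singular values of the matrix $B$'' and ``singular values of the associated map'' denote the same list of numbers.
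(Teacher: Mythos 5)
Your argument correctly derives from \cref{prop:interlacing} with $d=1$ that deleting a \emph{single} row, or a \emph{single} column, from $A$ gives a submatrix whose singular values interlace those of $A$ with gap one, and the bookkeeping via \cref{le:Rstruct} to identify the induced and quotient $R$--structures with the standard ones is exactly what is needed there.

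The difficulty is that the corollary says $B$ is obtained by deleting a row \emph{and} a column, which is most naturally read as a simultaneous deletion producing an $(m-1)\times(n-1)$ matrix. Your proof silently reinterprets the conjunction as a disjunction: it handles the row case and the column case separately, never the simultaneous one, and the final sentence concatenates $\s_k\ge\tau_k$ and $\tau_k\ge\s_{k+1}$ as though the two families of $\tau_k$ belonged to one and the same matrix $B$, which they do not. In fact, for a simultaneous deletion the stated chain is false. Take $A=\diag(1,\varpi,\varpi^2)$, so $\s_1=1$, $\s_2=\e$, $\s_3=\e^2$, and delete the first row and second column to get $B=\begin{pmatrix}0&0\\0&\varpi^2\end{pmatrix}$, with singular values $\tau_1=\e^2$, $\tau_2=0$. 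Then $\tau_1=\e^2<\e=\s_2$, so $\tau_1\ge\s_2$ fails. What \cref{prop:interlacing} does give for a simultaneous row-and-column deletion (apply it twice, or once with $d=2$) is only $\s_k\ge\tau_k\ge\s_{k+2}$, a gap of two, which is the correct Thompson-type bound. So either the corollary should be read as ``a row \emph{or} a column,'' in which case your proof is essentially complete (noting only that when $m<n$ a single column deletion produces $m$ singular values while the statement quotes just the first $m-1$), or the simultaneous reading is intended, the conclusion must be weakened to the gap-two chain, and your argument does not reach it.
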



\subsection{The position vector of a pair of subspaces}

We begin by defining an analogue of the notion of principal angles of subspaces. 
The development is parallel to the situation over $\R$ and $\C$; surprisingly
we were unable to locate any reference in the literature.
We start with the following lemma.

\begin{lemma}\label{le:sv-addmap}
Let $E,F\subseteq V$ be $K$-subspaces of dimensions $k$ and $\ell$, 
respectively, such that $k+\ell \le n$, assume $k\le \ell$, and put $d:=\dim(E\cap F)$. 
We consider the addition map 
$\varphi\colon E\times F \to V,\, (e,f)\mapsto e+f$. 
Then the decreasing list of singular values of $\varphi$ has the shape 
$$(\sigma_1, \ldots, \sigma_{\ell+k})=(1, \ldots, 1, s_1, \ldots, s_k)=(1,\ldots,1,s_1,\ldots,s_{k-d},0,\ldots,0),$$
i.e., it starts with $\ell$ occurrences of $1$ and ends with $d$ occurrences of~$0$. 
\end{lemma}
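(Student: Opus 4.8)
The plan is to analyze the addition map $\varphi\colon E\times F\to V$, $(e,f)\mapsto e+f$, by choosing adapted $R$-bases of $E$, $F$, and $V$, so that the matrix of $\varphi$ takes a simple shape whose singular values can be read off. First I would handle the image: since $E+F$ is the span of $\varphi$, the number of nonzero singular values of $\varphi$ equals $\rk\varphi = \dim(E+F) = k+\ell-d$, which immediately forces the last $d$ entries of the decreasing list $(\sigma_1,\dots,\sigma_{k+\ell})$ to be $0$ — this accounts for the trailing zeros. The heart of the matter is the other end of the list: producing $\ell$ ones at the top.

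The key construction is a simultaneous normal form for the pair $(E,F)$. Using \cref{le:extend-basis} and the Smith normal form, I would choose an $R$-basis $f_1,\dots,f_\ell$ of $F_R = F\cap V_R$, extend it to an $R$-basis of $V_R$, and then express $E_R = E\cap V_R$ in a way compatible with the decomposition $V = F \oplus F'$, where $F'$ is an $R$-complement of $F$. Concretely, \cref{cor:rectangle-BSVD} (applied to a matrix $M\in R^{n\times k}$ whose columns are an $R$-basis of $E_R$, with $\ell$ in the role of $\ell$) yields $P\in U_{\ell,n}$ and $Q\in\GL_k(R)$ such that
\[
  PMQ = \begin{bmatrix} I_k \\ 0 \\ D\end{bmatrix},
\]
with $D\in R^{(n-\ell)\times k}$ in Smith normal form of full rank $\min\{n-\ell,k\}=k$ (here $k\le\ell\le n-\ell$ need not hold, but full rank $k$ does hold when $k+\ell\le n$ since then $n-\ell\ge k$). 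Since $P$ is block-upper-triangular with blocks of size $\ell$ and $n-\ell$, and $F = K^\ell\times 0$ in these coordinates is $P$-invariant, this change of basis is an isometry respecting both $F$ and $V_R$. Reading off, $E$ has an $R$-basis of the form $(e_i + \varpi^{a_i} f'_i)$ for $i=1,\dots,k$, where $e_1,\dots,e_k$ lie in the first $\ell$ coordinates, $f'_1,\dots,f'_{k}$ lie in the complementary block, and $0\le a_1\le\cdots\le a_k\le\infty$ are the "positions"; note $a_i=0$ exactly when $e_i\in E\cap F$-direction collapses, contributing to the $d$-dimensional intersection, while $a_i=\infty$ corresponds to a column of $D$ being zero, which cannot happen here since $D$ has full rank $k$ — so actually $d = \#\{i : e_i \text{ is parallel to } F\}$ must be extracted more carefully; I would argue $d = \dim(E\cap F)$ equals the number of $i$ with the $E$-direction already contained in $F$, which after the normalization corresponds to certain rows of $D$ vanishing. (This bookkeeping is the one place I'd be careful.)

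With this normal form in hand, the matrix of $\varphi\colon E\times F\to V$ with respect to the chosen $R$-bases $(e_i+\varpi^{a_i}f'_i)_i$ on $E$, $(f_j)_j$ on $F$, and the $R$-basis of $V_R$ splits: the $F$-block maps $f_j$ to the $j$-th basis vector (an isometric embedding onto the first $\ell$ coordinates), while the $E$-block has the $I_k$ part landing in the $F$-span and the $\varpi^{a_i}$ part landing in the complement. By column operations over $R$ (subtracting the $F$-columns, which is an element of $\GL_{k+\ell}(R)$) one reduces the matrix of $\varphi$ to a block form $\begin{bmatrix} I_\ell & 0 \\ 0 & D'\end{bmatrix}$ up to $\GL(R)$ on both sides, where $D'$ is the diagonal matrix $\diag(\varpi^{a_1},\dots,\varpi^{a_k})$ (truncated appropriately). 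Hence the singular values of $\varphi$ are exactly $\ell$ copies of $1$ together with $|\varpi^{a_1}|\ge\cdots\ge|\varpi^{a_k}|$, i.e. $s_i := \e^{a_{k+1-i}}$ after reordering, and the number of zeros among the $s_i$ is $d$. This gives precisely the claimed shape $(1,\dots,1,s_1,\dots,s_{k-d},0,\dots,0)$.

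The main obstacle I anticipate is not the linear algebra of the addition map itself — once the normal form is set up, the factorization is routine — but rather establishing the simultaneous normal form cleanly: i.e., verifying that \cref{cor:rectangle-BSVD} can be applied with the hypothesis "$R$-span of the columns of $M$ equals $U\cap R^n$", which requires choosing an $R$-basis of $E_R$ rather than an arbitrary $K$-basis, and then tracking that the ambient isometry $P\in U_{\ell,n}$ genuinely preserves both $F$ and the $R$-structure on $V$. Once that is pinned down, identifying $d$ with the number of trailing zeros is immediate from the rank computation $\rk\varphi = \dim(E+F) = k+\ell-d$, and the count of leading ones follows since the $F$-summand alone already contributes $\ell$ singular values equal to $1$ (by interlacing, \cref{prop:interlacing}, the remaining singular values are $\le 1$, so these $\ell$ ones are indeed the top of the list).
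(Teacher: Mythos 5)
Your proof takes a genuinely different route from the paper's. The paper applies the minimax characterization of singular values (\cref{pro:minimax}) and finishes in a few lines: the first characterization, applied with $U = \{0\}\times F$, gives $\sigma_\ell \ge 1$, and since $\varphi$ maps $E_R\times F_R$ into $V_R$ we have $\|\varphi\|\le 1$, so the leading $\ell$ values are exactly $1$; the second characterization, applied with $U = \{(x,-x): x\in E\cap F\}\subseteq\ker\varphi$ of codimension $(k+\ell)-d$, kills the trailing $d$ values. You instead construct a simultaneous $R$-normal form for $(E,F)$ via the block Smith decomposition (\cref{cor:rectangle-BSVD}) and read off the singular values from the explicit matrix of $\varphi$; this is essentially the content of \cref{le:NF-PP} combined with \cref{th:NF-pairs-subspaces}, which the paper only proves \emph{after} \cref{le:sv-addmap}. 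Your route is not circular — those results depend only on \cref{cor:rectangle-BSVD}, which lives in the preliminaries, and not on \cref{le:sv-addmap} itself — and it delivers a stronger by-product (the explicit normal form), at the cost of substantially more work. The minimax proof is the shorter, more conceptual one.

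The one place where your argument needs repair is exactly the step you flag. You invoke the clause in \cref{cor:rectangle-BSVD} that $D$ has full rank $\min\{n-\ell,k\}=k$, and then observe that this seems to preclude $a_i=\infty$, which would force $E\cap F=0$ — incompatible with $d>0$. The resolution is that this full-rank clause should simply not be used: it already fails in the most degenerate cases (e.g.\ $M$ the first standard basis vector in $R^3$, $k=\ell=1$, where every factorization has $D=0$), and the paper itself later applies the corollary in \cref{th:NF-pairs-subspaces} with the explicit convention $\varpi^\infty=0$, i.e.\ allowing zero diagonal entries in $D$. Once you permit $D=\diag(\varpi^{x_1},\ldots,\varpi^{x_k})$ with some $x_i=\infty$, the count of zero diagonal entries equals $d=\dim(E\cap F)$ (this is the dimension computation done in \cref{le:NF-PP}: $\dim(E+F)=\ell+\#\{i: \varpi^{x_i}\neq 0\}$). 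With that correction your matrix computation goes through: in the adapted bases, the matrix of $\varphi$ block-reduces over $\GL(R)$ to $\mathrm{diag}(I_\ell, D)$, and the claimed pattern of $\ell$ ones, then $s_1,\ldots,s_{k-d}$, then $d$ zeros follows directly.
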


\begin{proof}
  
We apply the minimax characterizations in Proposition~\ref{pro:minimax} to~$\varphi$.
That the $\ell$ largest singular values equal $1$ follows from the first characterization  
noting that the restriction of $\varphi$ defines an isometric map $\{0\}\times F \to V$.
That the list of singular values ends with $d$ zeros follows from the second characterization 
noting that $\varphi$ vanishes on $U=\{(x,-x) \mid x \in E\cap F \}$. 
\end{proof}

We shall see that the relative position of the subspaces $E, F\subseteq V$ 
is described by the values~$s_i\in [0,1]$ from
Lemma~\ref{le:sv-addmap}
(i.e., the singular values of the addition map, excluding the first $\ell$ of them).
In order to emphasize that these are discrete parameters, we encode 
$s_i = \e^{x_i}$ with $x_i\in\bN$: then $(x_1,\ldots,x_k)$ is an element of 
$$
 \Nko := \{ x\in\bN^k \mid x_1 \le \ldots \le x_ k\} .
$$
We recall that $\varpi$ denotes a generator of the maximal ideal $\frm$ of $R$ 
and $\e:=|\varpi|$.  

\begin{defi}\label{def:PP1}
Let $E, F\subseteq V$ be subspaces of dimensions $k$ and $\ell$, 
respectively, such that  $k+\ell \le n$ and assume $k\le \ell$. 
The {\em (relative) position vector} of the pair $(E, F)$ 
is defined as the increasing list of nonnegative integers 
$(x_1,\ldots,x_k)=(x_1,\ldots, x_{d-k}, \infty, \ldots, \infty) \in\Nko$, 
where the $s_i=\e^{x_i}$ are the last $k$ decreasing singular values from Lemma~\ref{le:sv-addmap}.
\end{defi}

Later on, in Definition~\ref{def:PP2}, 
we will generalize this, dropping the assumption $k+\ell \le n$. 
We call the components of the position vector the 
{\em position parameters} of $(E,F)$ and view them
as an analogue of the notion of principal angles of two real or complex subspaces. 
The justification for this will be given in Theorem~\ref{th:NF-pairs-subspaces}. 
Let us first compute the position vector in a particular situation. 

\begin{lemma}\label{le:NF-PP}
Assume $k\le\ell$ and $k+\ell \le n$. 
Further, let 
$$
 e_1,\ldots,e_\ell, f_1,\ldots,f_k, g_{1},\ldots,g_{n-k-\ell} 
$$   
be an $R$-basis of $K^n$ and let $(x_1,\ldots,x_k) \in \bN^k_o$. 
Then the pair of subspaces 
$$
 E= \spann\{ e_1+ \varpi^{x_1} f_1,\ldots, e_k + \varpi^{x_k} f_k\}, \quad
 F=\spann\{e_1,\ldots,e_{\ell}\}
$$ 
has the position vector $(x_1,\ldots,x_k)$.
Moreover, 
$\dim (E\cap F) =\#\{ i \mid x_i =\infty\}$.
\end{lemma}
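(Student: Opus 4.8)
The plan is to compute the decreasing list of singular values of the addition map $\varphi\colon E\times F\to K^n$, $(e,f)\mapsto e+f$, with respect to explicitly chosen $R$--bases, and then to read the position vector off directly from \cref{def:PP1} (and, for the last assertion, \cref{le:sv-addmap}). Write the given $R$--basis of $R^n$ as $\mathcal{B}=(e_1,\dots,e_\ell,f_1,\dots,f_k,g_1,\dots,g_{n-k-\ell})$ and set $v_i:=e_i+\varpi^{x_i}f_i$ for $1\le i\le k$, with the convention $\varpi^\infty=0$, so that $v_i=e_i$ when $x_i=\infty$.

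\textbf{Producing the $R$--bases.} First I would check that the displayed generators are genuine $R$--bases of the induced lattices. Replacing $e_1,\dots,e_k$ in $\mathcal{B}$ by $v_1,\dots,v_k$ amounts to multiplying by a change-of-basis matrix that is unipotent upper triangular with off-diagonal entries among the $\varpi^{x_i}\in R$; hence $\mathcal{B}':=(v_1,\dots,v_k,e_{k+1},\dots,e_\ell,f_1,\dots,f_k,g_1,\dots,g_{n-k-\ell})$ is again an $R$--basis of $R^n$. Since $E=\spann_K\{v_1,\dots,v_k\}$ is the span of the first $k$ elements of this $R$--basis, uniqueness of $R$--coordinates gives $E\cap R^n=\spann_R\{v_1,\dots,v_k\}$, and likewise $F\cap R^n=\spann_R\{e_1,\dots,e_\ell\}$; compare \cref{le:Rstruct}(1) and \cref{le:extend-basis}. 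Thus $(v_1,\dots,v_k,e_1,\dots,e_\ell)$ is an $R$--basis of $E\times F$ with its product $R$--structure $E_R\times F_R$.

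\textbf{Computing the singular values.} In this $R$--basis of $E\times F$ and the basis $\mathcal{B}$ of $K^n$, the matrix $M\in R^{n\times(k+\ell)}$ of $\varphi$ has, in the $v_i$--column, a $1$ in the $e_i$--row and $\varpi^{x_i}$ in the $f_i$--row; in the $e_j$--column, a $1$ in the $e_j$--row; and zeros in all $g$--rows. Subtracting the $e_i$--column from the $v_i$--column for $i=1,\dots,k$ (an $R$--invertible column operation) leaves a matrix whose nonzero entries form, in pairwise disjoint sets of rows and columns, an $\ell\times\ell$ identity block (from the $e_j$--columns) and the diagonal block $\diag(\varpi^{x_1},\dots,\varpi^{x_k})$ (from the $v_i$--columns, landing in the $f_i$--rows). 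After a permutation of rows and columns this is already in Smith normal form, since $1\mid\varpi^{x_1}\mid\dots\mid\varpi^{x_k}$ because $0\le x_1\le\dots\le x_k$. Hence the invariant factors of $\varphi$ are $\ell$ copies of $1$ together with $\varpi^{x_1},\dots,\varpi^{x_k}$, so the decreasing list of singular values of $\varphi$ is $(1,\dots,1,\e^{x_1},\dots,\e^{x_k})$ with $\ell$ leading ones (again $x_1\le\dots\le x_k$ shows the list is sorted, and $\e^\infty=0$).

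\textbf{Conclusion, and the main point of care.} By \cref{def:PP1} the last $k$ of these singular values, $\e^{x_1}\ge\dots\ge\e^{x_k}$, are the $s_i$, so the position vector of $(E,F)$ is $(x_1,\dots,x_k)$. The number of zero singular values equals $\#\{i\mid x_i=\infty\}$, which by \cref{le:sv-addmap} is $\dim(E\cap F)$; one also checks directly that $E\cap F=\spann\{v_i\mid x_i=\infty\}$. There is no genuine obstacle here: the one step that needs attention is verifying the $R$--basis claim for $E_R$ and $F_R$ (this is where unipotence of the transition matrix and the convention $\varpi^\infty=0$ are used), and the only other thing to keep straight is the row/column bookkeeping for $M$, so that the $\ell$ unit invariant factors coming from $F$ are cleanly separated from the $k$ invariant factors $\varpi^{x_i}$.
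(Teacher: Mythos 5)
Your proof is correct and follows essentially the same route as the paper: write the addition map $\varphi\colon E\times F\to K^n$ as a matrix with respect to the obvious generating sets, reduce it to Smith normal form by $R$-invertible row and column operations, and read off the singular values. You add a useful explicit check, which the paper leaves implicit, that $v_1,\dots,v_k$ really is an $R$-basis of $E_R$ and $e_1,\dots,e_\ell$ of $F_R$; one small slip is that the transition matrix you describe is lower (not upper) unitriangular with off-diagonal entries $\varpi^{x_i}$ in positions $(\ell+i,i)$, but this does not affect the argument since unipotence alone gives $R$-invertibility. For the last assertion the paper computes $\dim(E+F)=\ell+\#\{i\mid x_i<\infty\}$ directly and subtracts, whereas you count zero singular values and invoke Lemma~\ref{le:sv-addmap}; both are valid (the number of zero singular values of $\varphi$ equals $\dim\ker\varphi=\dim(E\cap F)$).
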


\begin{proof}
The addition map 
$\varphi\colon E\times F \to V$ 
has the following matrix representation with respect to the given bases:
$$
 \begin{bmatrix} I_k & 0  & I_k \\
                           0 &  I_{\ell-k} & 0\\
                         D_x& 0  & 0 \\
                          0 & 0 & 0\end{bmatrix} ,
$$ 
where $D= \diag(\varpi^{x_1},\ldots,\varpi^{x_k})$.
Via elementary column and row transformation, this can be 
brought to the block diagonal form
$$
 \begin{bmatrix} I_k & 0   & I_k \\
                           0 & I_{\ell-k} & 0  \\
                         0 & 0  & D_x \\ 
                          0 & 0 & 0\end{bmatrix} ,
$$ 
hence its list of singular values indeed consists 
of $\ell$ times $1$ and the entries of~$D_x$. 
This proves the first assertion. 

For the second assertion note that 
$\dim(E+F) = \ell + \#\{ i \mid \varpi^{x_i} \ne 0\}$, 
which indeed implies 
$\dim(E\cap F) = k+\ell - \dim(E+F) = \#\{ i \mid \varpi^{x_i} =0\}$.
\end{proof}

\begin{example} 
If $E$ and $F$ are two lines in $V\simeq K^2$ spanned by unit norm vectors $v$ and $w$, 
then the position parameter $x\in\bN$ of $(E,F)$ is given by 
$\e^x= \s= \|v \wedge w\|$. 
Note that $E=F$ iff $\s=0$ iff $x=\infty$. 
The other extremal case is 
$\s=1$, which corresponds to $x=0$: this means that $E_R+F_R =V_R$. 
Over $\R$ (or $\C$), this case corresponds to $E$ and $F$ being orthogonal.
More generally, in the situation of two real lines in $\R^n$ meeting at angle $\theta$,
the quantity $\s$ corresponds to $\sin\theta$.
We may thus interpret $\s$ (and its encoding $x$) as a notion 
of angle between the lines. 
\end{example}

The following important result shows that the situation of Lemma~\ref{le:NF-PP} 
actually is general: any pair of subspaces of $V$ arises this way. 
The proof relies on the block Smith normal form decomposition of Theorem~\ref{th:BSVD},
more specifically, on its consequence stated as Corollary~\ref{cor:rectangle-BSVD}.

\begin{thm}\label{th:NF-pairs-subspaces}
  Let $E,F\subseteq V$ be $K$-subspaces of dimensions $k$ and $\ell$, 
  respectively, such that 
  $k\le\ell$ and $k+\ell \le n$. 
  Then there exists $(x_1,\ldots,x_k)\in \Nko$ 
  and there is an $R$-basis of $K^n$,
  $$
  e_1,\ldots,e_\ell,f_1,\ldots,f_k, g_{1},\ldots,g_{n-k-\ell} 
  $$ 
  such that 
  $F=\spann\{e_1,\ldots,e_{\ell}\}$ 
  and 
  $$
  E= \spann\{ e_1+\varpi^{x_1} f_1,\ldots, e_k + \varpi^{x_k} f_k\}.
  $$ 
  The pair $(E,F)$ has the position vector~$x$. 
\end{thm}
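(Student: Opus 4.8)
Here the plan is to first choose coordinates adapted to $F$, then use the block Smith normal form decomposition to bring $E$ into shape without disturbing $F$, and finally invoke Lemma~\ref{le:NF-PP} to identify the position vector.

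First I would normalize $F$. By Lemma~\ref{le:Rstruct}(1) the intersection $F_R := F\cap V_R$ is a linear $R$--structure on $F$, hence a free $R$--module of rank~$\ell$ spanning $F$; fix an $R$--basis of $F_R$ and extend it to an $R$--basis of $V_R$ via Lemma~\ref{le:extend-basis}. The associated $K$--linear isomorphism $V\to K^n$ sending these basis vectors to the standard ones is an isomorphism defined over $R$ and carries $F$ to $K^\ell\times 0^{n-\ell}$; since the addition map $E\times F\to V$ and its singular values transform equivariantly under such an isomorphism, the position vector is unchanged. We may therefore assume $V=K^n$ with its standard $R$--structure and $F=K^\ell\times 0^{n-\ell}$.

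Next I would encode $E$ and apply Corollary~\ref{cor:rectangle-BSVD}. Again by Lemma~\ref{le:Rstruct}(1), $E_R:=E\cap R^n$ is free of rank~$k$; let $M\in R^{n\times k}$ be a matrix whose columns form an $R$--basis of $E_R$, so the $R$--span of its columns equals $E\cap R^n$ and the hypotheses of Corollary~\ref{cor:rectangle-BSVD} are met. Since $k\le\ell\le n$, that corollary yields $P\in U_{\ell,n}$ and $Q\in\GL_k(R)$ with
\[
 PMQ = \begin{bmatrix} I_k\\ 0\\ D\end{bmatrix}, \qquad D\in R^{(n-\ell)\times k}\ \text{in Smith normal form},
\]
the middle block having format $(\ell-k)\times k$. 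The crucial point is that $P\in\GL_n(R)$ is block--upper--triangular for the splitting $K^n=K^\ell\oplus K^{n-\ell}$, so $P(K^\ell\times 0)=K^\ell\times 0$; thus $P$ fixes $F$, while it carries $E=\operatorname{colspan}(M)$ to $\operatorname{colspan}(PMQ)=\{(v,0_{\ell-k},Dv):v\in K^k\}$. Because $k+\ell\le n$ we have $n-\ell\ge k$, so $D$ is a $k\times k$ diagonal block $\diag(\varpi^{x_1},\ldots,\varpi^{x_k})$ over a zero block, where $x_1\le\cdots\le x_k$ lie in $\bN=\N\cup\{\infty\}$ once the vanishing diagonal entries are absorbed via the convention $\varpi^{\infty}=0$. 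Writing $b_1,\ldots,b_n$ for the standard basis, this gives $P(F)=\spann\{b_1,\ldots,b_\ell\}$ and $P(E)=\spann\{\,b_i+\varpi^{x_i}b_{\ell+i}\mid 1\le i\le k\,\}$. Pulling back by the $R$--automorphism $P^{-1}$, the vectors $e_i:=P^{-1}b_i$ $(i\le\ell)$, $f_i:=P^{-1}b_{\ell+i}$ $(i\le k)$, $g_j:=P^{-1}b_{\ell+k+j}$ $(j\le n-k-\ell)$ form an $R$--basis of $R^n$ realizing exactly the normal form in the statement, and Lemma~\ref{le:NF-PP} then shows that $(E,F)$ has position vector $(x_1,\ldots,x_k)$.

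The only thing one really has to get right is the order of the two reductions: $F$ must be normalized to $K^\ell\times 0$ \emph{before} applying Corollary~\ref{cor:rectangle-BSVD}, so that the transformation $P$, which lies in $U_{\ell,n}$, automatically preserves $F$; after that the argument is pure bookkeeping — sorting the invariant factors of $D$ into nondecreasing order and recording the zeros as the symbol $\infty$. In particular one does not need the full-rank assertion of Corollary~\ref{cor:rectangle-BSVD}: the vanishing diagonal entries of $D$ are precisely what produce the entries $x_i=\infty$, and hence, through Lemma~\ref{le:NF-PP}, the correct value $\dim(E\cap F)=\#\{i:x_i=\infty\}$.
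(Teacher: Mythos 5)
Your proof is correct and follows essentially the same route as the paper's: normalize $F$ to $K^\ell\times 0$, apply Corollary~\ref{cor:rectangle-BSVD} (whose $P\in U_{\ell,n}$ automatically stabilizes $F$), read off the desired $R$-basis from the columns of $P^{-1}$, and conclude with Lemma~\ref{le:NF-PP}. Your observation that the full-rank claim on $D$ in Corollary~\ref{cor:rectangle-BSVD} is both unnecessary and not in general true (it would incorrectly force $E\cap F=0$) is correct; the paper's own proof silently drops it by allowing $\varpi^{\infty}=0$.
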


\begin{proof}
  Without loss of generality, we may change coordinates so that $F = K^\ell \times 0$.
  Let $M \in R^{n \times k}$ be a matrix whose columns form an $R$--basis of the subspace~$E$.
  By Corollary~\ref{cor:rectangle-BSVD} on the block Smith normal form decomposition,
  there are $P\in U_{\ell,n}$ and $Q \in \GL_k(R)$ such that 
  \begin{equation} \label{eq:new-basis}
    MQ =  P 
    \begin{bmatrix} I_k \\ 0 \\ D \end{bmatrix} ,
  \end{equation} 
  where $D \in R^{(n-\ell) \times k}$ is in Smith normal form. 
  Thus the diagonal entries of~$D$ are of the form 
  $\varpi^{x_1},\ldots,\varpi^{x_k}$, where $(x_1,\ldots,x_k)\in \Nko$ 
  (with the convention $\varpi^{\infty}=0$).
  The column span of $M$ and hence of $MQ$ equals $E$ by construction. 

  We decompose $P=[P_{ij}]_{i,j=1,2}$ with the matrix $P_{11}$ of format $\ell\times\ell$ 
  and note that $P_{21}=0$. 
  Now we define  
  $e_1,\ldots,e_\ell$ to be the columns of $\begin{bmatrix} P_{11}\\ 0 \end{bmatrix}$.
  The span of these vectors indeed equals $F=K^\ell\times 0$, since $P$ is block upper triangular.
  Moreover, we define 
  $f_1, \ldots, f_k, g_1, \ldots, g_{n-k-\ell}$
  to be the columns of 
  $\begin{bmatrix} P_{12} \\ P_{22} \end{bmatrix}$. 
  The total list of vectors  
  $e_1,\ldots,e_\ell, f_1, \ldots, f_k, g_1, \ldots, g_{n-k-\ell}$
  form an $R$-basis of~$K^n$, since $P \in \GL_n(R)$. 
  In addition, 
  Equation~\eqref{eq:new-basis} reveals that the columns of $MQ$ 
  are given by 
  $e_1 + \varpi^{x_1} f_1,\ldots,e_k +  \varpi^{x_k} f_k$. 
  Finally, Lemma~\ref{le:NF-PP} shows that the pair $E,F$ has the position vector~$x$. 
\end{proof}

So far we only defined the position vector for pairs of subspaces  
satisfying $k\le\ell$ and $k+\ell\le n$. 
We now get rid of these constraints. 

\begin{defi}\label{def:PP2}
Let $E,F\subseteq V$ be $K$-subspaces and 
$\bar{E}$ and $\bar{F}$ denote the images of $E$ and $F$ 
under the canonical projection $V\to V/(E \cap F)$, respectively. 
Put $d:=\dim(E\cap F)$. 
We define the {\em position vector} of the pair $E, F$ 
as that of the pair $\bar{E}, \bar{F}$, appended with $d$ many $\infty$.
\end{defi}

Lemma~\ref{le:sv-addmap} reveals that this definition is 
consistent with Definition~\ref{def:PP1}.
We now show that the position vector characterizes the 
relative position of two subspaces, justifying the naming.

\begin{cor}\label{cor:RPP-Char}
Let $E,E'\subseteq V$ be $K$-subspaces of dimension $k$ 
and $F,F'\subseteq V$ be $K$-subspaces of dimension $\ell$. 
Then 
$$
 \exists g\in\Gl_n(R)\quad g(E)= E', \ g(F)=F'
$$ 
iff the pairs of subspaces $(E,F)$ and $(E',F')$ have the same position vector.
\end{cor}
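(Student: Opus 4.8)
The plan is to prove both implications of \cref{cor:RPP-Char}, with the ``only if'' direction being essentially routine and the ``if'' direction being the substantive part, where \cref{th:NF-pairs-subspaces} does the heavy lifting.

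\textbf{The forward direction.} Suppose $g\in\Gl_n(R)$ satisfies $g(E)=E'$ and $g(F)=F'$. By \cref{pro:isometry-grp}, $g$ preserves the standard norm on $K^n$, and it also preserves the induced $R$--structures on subspaces (since it maps $R^n$ to $R^n$, it maps $E\cap R^n$ to $E'\cap R^n$, etc.). Consequently, if $\varphi\colon E\times F\to V$ is the addition map of the pair $(E,F)$ and $\varphi'\colon E'\times F'\to V$ that of $(E',F')$, then $\varphi' = g\circ\varphi\circ(g^{-1}\times g^{-1})$, where the outer and inner maps are isomorphisms defined over $R$. Hence $\varphi$ and $\varphi'$ have the same singular values, so the pairs have the same position vector. (When $k+\ell>n$, one first passes to the quotient by $E\cap F$: since $g(E\cap F)=E'\cap F'$, $g$ descends to an isometry $V/(E\cap F)\to V/(E'\cap F')$ carrying $\bar E,\bar F$ to $\bar E',\bar F'$, and $\dim(E\cap F)=\dim(E'\cap F')$, so \cref{def:PP2} gives equality of the full vectors.)

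\textbf{The converse direction.} Assume $(E,F)$ and $(E',F')$ have the same position vector $x$. First reduce to the case $k\le\ell$ and $k+\ell\le n$: if $k>\ell$ swap the roles of $E$ and $F$ (the position vector is symmetric by \cref{le:sv-addmap}); if $k+\ell>n$, pass to the quotient $V/(E\cap F)$, noting that an element of $\Gl_n(R)$ inducing the desired map on the quotient and sending $E\cap F$ to $E'\cap F'$ (which have equal dimension, by \cref{le:NF-PP} applied to the quotient data and the bookkeeping in \cref{def:PP2}) can be lifted using \cref{le:extend-basis} to extend an $R$--basis of $(E\cap F)_R$ compatibly. In the base case, apply \cref{th:NF-pairs-subspaces} to $(E,F)$ to obtain an $R$--basis $e_1,\ldots,e_\ell,f_1,\ldots,f_k,g_1,\ldots,g_{n-k-\ell}$ of $K^n$ with $F=\spann\{e_1,\ldots,e_\ell\}$ and $E=\spann\{e_1+\varpi^{x_1}f_1,\ldots,e_k+\varpi^{x_k}f_k\}$; apply it again to $(E',F')$ to get an $R$--basis $e_1',\ldots,e_\ell',f_1',\ldots,f_k',g_1',\ldots,g_{n-k-\ell}'$ with the analogous description and the \emph{same} vector $x$ (this is where equality of position vectors is used). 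Define $g\in\Gl_n(K)$ to be the $K$--linear map sending each basis vector of the first basis to the corresponding vector of the second. Since $g$ carries one $R$--basis of $K^n$ onto another, $g\in\Gl_n(R)$ by \cref{le:RbasChar} (or directly by \cref{pro:isometry-grp}). By construction $g(e_i)=e_i'$, so $g(F)=F'$; and $g(e_i+\varpi^{x_i}f_i)=e_i'+\varpi^{x_i}f_i'$, so $g(E)=E'$. This completes the proof.

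\textbf{Main obstacle.} The only genuinely delicate point is the reduction from the general case to the normalized case $k\le\ell$, $k+\ell\le n$ when passing to the quotient $V/(E\cap F)$: one must check that equality of position vectors in the sense of \cref{def:PP2} really does reduce to equality of the quotient position vectors \emph{together with} equality of $\dim(E\cap F)$, and that an $R$--automorphism of the quotient matching the quotient data lifts to an $R$--automorphism of $V$ matching the original data. Both facts follow from \cref{le:Rstruct}(2) and \cref{le:extend-basis}, but they require a careful, if routine, bookkeeping argument that I would spell out explicitly; the rest is a direct application of \cref{th:NF-pairs-subspaces}.
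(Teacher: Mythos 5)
Your proof is correct and follows essentially the same route as the paper's. The base case ($k\le\ell$, $k+\ell\le n$) is identical: both you and the paper apply Theorem~\ref{th:NF-pairs-subspaces} to each pair and compose the two $R$--basis changes. For the reduction from the general case, the only difference is cosmetic: the paper first normalizes so that $E\cap F = E'\cap F' =: U$ (possible since the position vectors force $\dim(E\cap F)=\dim(E'\cap F')$), picks an $R$--complement $W$ of $U$, projects onto $W$ along $U$, and assembles the final automorphism as $\mathrm{id}_U\oplus h$; you pass to the abstract quotient $V/(E\cap F)$ and invoke Lemma~\ref{le:extend-basis} to lift. These are the same argument — $W$ is just a concrete model of the quotient — but the paper's version sidesteps the lifting step that you correctly flag as the delicate bookkeeping point: working inside $V$ with a chosen $R$--complement gives $\mathrm{id}_U\oplus h\in\Gl_n(R)$ by inspection. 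You also write out the ``only if'' direction (invariance of singular values of the addition map under $\Gl_n(R)$), which the paper subsumes into ``follows immediately from Theorem~\ref{th:NF-pairs-subspaces}''. No gap.
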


\begin{proof}
In the case $k\le\ell$ and $k+\ell\le n$, the assertion follows immediately from 
Theorem~\ref{th:NF-pairs-subspaces}.  We now reduce the general case to this. Assume that 
$E,F$ and $E',F'$ have the same position vector~$x$.
Then $\dim(E\cap F) = \#\{i\mid x_i= \infty\} = \dim(E'\cap F')$. 
After applying a transformation in $\Gl_n(R)$ we may assume that 
$U:=E\cap F = E'\cap F'$. 
Let $W$ be an $R$-complement of $U$ in $V$ and 
$\pi\colon V\to W$ the projection along $U$. 
Let $\bar{E}$ denote the image of $E$ under $\pi$.
Then $E=\bar{E} \oplus U$.
Analogous statements holds for the images 
$\bar{F},\bar{E'},\bar{F'}$ of 
$F,E',F'$ under $\pi$.
By Definition~\ref{def:PP2}, $\bar{E},\bar{F}$ and $\bar{E'},\bar{F'}$ 
have the same position vector.  
We are now in the situation already covered: hence 
there is $h\in\Gl(W)$ defined over~$R$ 
such that $h(\bar{E})= \bar{E'}$ and $h(\bar{F})=\bar{F'}$. 
Then $g=\mathrm{id}_U\oplus h$ does the job.
\end{proof}

Let $V^*$ denote the dual space of $V$ and 
$E^\perp\subseteq V^*$ denote the orthogonal complement of $E$.
We show now that the position parameters behave nicely when passing to 
orthogonal complements.

\begin{prop}\label{pro:PPdual}
Let $E,F\subseteq V$ be $K$-subspaces and $E^\perp$ and 
$F^\perp$ be their orthogonal complements in the dual space $V^*$. 
Then, after removing the components equal to $\infty$, 
the pairs $(E^\perp,F^\perp)$ and $(E,F)$ have the same position vector.
\end{prop}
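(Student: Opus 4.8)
The plan is to reduce immediately to the case $E\cap F=0$ and then to read everything off from the normal form of \cref{th:NF-pairs-subspaces}. Throughout we may assume $\dim E\le\dim F$, since both the position vector and the operation of taking orthogonal complements are symmetric in $E,F$. For the reduction, let $\pi\colon V\to V/(E\cap F)$ be the quotient and $\bar E:=\pi(E)$, $\bar F:=\pi(F)$, which satisfy $\bar E\cap\bar F=0$. By \cref{def:PP2} the position vector of $(E,F)$ is that of $(\bar E,\bar F)$ with $\dim(E\cap F)$ extra entries $\infty$, and by \cref{le:sv-addmap} a pair of subspaces with zero intersection has a position vector free of $\infty$; hence deleting the $\infty$-entries from the position vector of $(E,F)$ returns precisely the position vector of $(\bar E,\bar F)$, and similarly for $(E^\perp,F^\perp)$. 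I would then use the canonical map $\iota\colon (V/(E\cap F))^*\to (E\cap F)^\perp\subseteq V^*$, $\lambda\mapsto\lambda\circ\pi$, noting that it is an isometry (it carries the $R$--structure of the left-hand space onto the $R$--structure induced on $(E\cap F)^\perp$ from $V^*$, by \cref{le:Rstruct}) and that $\iota(\bar E^{\perp})=E^\perp$, $\iota(\bar F^{\perp})=F^\perp$, the complements on the left being taken inside $(V/(E\cap F))^*$. Combined with the elementary remark that the position vector of a pair of subspaces is unchanged when the ambient space is isometrically enlarged (appending zero rows to a matrix does not change its Smith normal form), this replaces $(E^\perp,F^\perp)$ by $(\bar E^{\perp},\bar F^{\perp})$, so that the whole statement reduces to the case $E\cap F=0$ applied to $(\bar E,\bar F)$.

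In the case $E\cap F=0$ we have $\dim E+\dim F=\dim(E+F)\le n$, so \cref{th:NF-pairs-subspaces} gives an $R$--basis $e_1,\ldots,e_\ell,f_1,\ldots,f_k,g_1,\ldots,g_{n-k-\ell}$ of $V$ with $F=\spann\{e_1,\ldots,e_\ell\}$, $E=\spann\{e_i+\varpi^{x_i}f_i:1\le i\le k\}$, and $x=(x_1,\ldots,x_k)$ the position vector of $(E,F)$, all $x_i$ finite. Passing to the dual $R$--basis $e_i^*,f_i^*,g_j^*$ of $V^*$, a direct computation gives $F^\perp=\spann\{f_1^*,\ldots,f_k^*,g_1^*,\ldots,g_{n-k-\ell}^*\}$ and $E^\perp=\spann\{f_i^*-\varpi^{x_i}e_i^*:1\le i\le k\}\oplus\spann\{e_{k+1}^*,\ldots,e_\ell^*\}\oplus\spann\{g_1^*,\ldots,g_{n-k-\ell}^*\}$, whence $E^\perp\cap F^\perp=(E+F)^\perp=\spann\{g_1^*,\ldots,g_{n-k-\ell}^*\}$. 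In the quotient $V^*/(E+F)^\perp$, with residual basis $\bar e_i^*,\bar f_i^*$, the triangular $R$--invertible change of basis $\bar f_i^*\mapsto \bar f_i^*-\varpi^{x_i}\bar e_i^*$ (for $1\le i\le k$, the others fixed) brings the images of $E^\perp$ and $F^\perp$ into exactly the shape of \cref{le:NF-PP}, with the larger subspace being the image of $E^\perp$ (whose basis is $\bar f_i^*-\varpi^{x_i}\bar e_i^*$ together with $\bar e_{k+1}^*,\ldots,\bar e_\ell^*$) and the image of $F^\perp$ recovered as $\spann\{(\bar f_i^*-\varpi^{x_i}\bar e_i^*)+\varpi^{x_i}\bar e_i^*\}$. \cref{le:NF-PP} then yields position vector $x$ for $(\bar E^{\perp},\bar F^{\perp})$, and \cref{def:PP2} appends $\dim(E+F)^\perp=n-k-\ell$ entries $\infty$ to obtain the position vector of $(E^\perp,F^\perp)$. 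Deleting the $\infty$-entries, the position vectors of $(E,F)$ and of $(E^\perp,F^\perp)$ both equal $(x_1,\ldots,x_k)$.

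I expect the main obstacle to be purely the bookkeeping rather than any conceptual point: one must keep track of which of $E^\perp,F^\perp$ plays the role of the smaller subspace, check that on the dual side the number of finite position parameters plus the number of $\infty$-entries sums to the correct total $\min(\dim E^\perp,\dim F^\perp)$, and verify that the change-of-basis matrix really lies in $\GL(R)$ so that the transported basis is again an $R$--basis. Conceptually the content is only that passing to orthogonal complements transposes the normal-form matrix while leaving the exponents $\varpi^{x_i}$ intact.
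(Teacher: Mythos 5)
Your proof is correct and follows essentially the same route as the paper: use the normal form of \cref{th:NF-pairs-subspaces}, pass to the dual basis, and read off that the exponents $\varpi^{x_i}$ are preserved, appending the appropriate number of $\infty$ entries afterward via \cref{def:PP2}. The only (minor) difference is that you first explicitly reduce to the case $E\cap F=0$ using the isometry $(V/(E\cap F))^*\cong(E\cap F)^\perp\subseteq V^*$, whereas the paper works directly with a position vector allowed to contain $\infty$ and splits off an $R$-complement of $U=\spann\{g_1^*,\ldots,g_{n-k-\ell}^*\}$ inside $E^\perp$ and $F^\perp$; both variants track the same bookkeeping.
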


\begin{proof}
We may assume that $\dim E + \dim F \le \dim V$, 
otherwise we consider $E^\perp,F^\perp$ instead of $E,F$. 
As before, we write $k:=\dim E$, $\ell := \dim F$, $n:= \dim V$ 
and we assume without loss of generality that $k\le \ell$. 

By Theorem~\ref{th:NF-pairs-subspaces} there is 
an $R$-basis of $V$,
$$
 e_1,\ldots,e_\ell,f_1,\ldots,f_k, g_{1},\ldots,g_{n-k-\ell},
$$  
such that 
$E= \spann\{ e_1+\varpi^{x_1} f_1,\ldots, e_k + \varpi^{x_k} f_k\}$
and 
$F=\spann\{e_1,\ldots,e_{\ell}\}$.
Consider the dual basis of $V^*$,
$$
 e_1^*,\ldots,e_\ell^*,f_1^*,\ldots,f_k^*, g_{1}^*,\ldots,g_{n-k-\ell}^* . 
$$ 
Then 
\begin{align*}
 E^\perp &= \spann\{ \varpi^{x_1}e_1^* - f_1^*,\ldots,  \varpi^{x_k} e_k^* - f_k^*, e_{k+1}^*,\ldots,e_{\ell}^*,
   g_{1}^*,\ldots,g_{n-k-\ell}^*\}
  , \\ 
 F^\perp &= \spann\{  f_1^*,\ldots,f_k^*, g_{1}^*,\ldots,g_{n-k-\ell}^*\} .
\end{align*}
Note that $E^\perp\cap F^\perp$ contains the space 
$U := \spann\{g_{1}^*,\ldots,g_{n-k-\ell}^*\}$.
Take the following $R$-complement $E'$ of $U$ in $E^\perp$ and $F'$ of $U$  in $F^\perp$: 
$$
 E' := \spann\{\varpi^{x_1}e_1^* - f_1^*,\ldots,  \varpi{x_k} e_k^* - f_k^*, e_{k+1}^*,\ldots,e_{\ell}^*\},
 F' := \spann\{f_1^*,\ldots,f_k^*\}.
$$
We see that $(x_1\ldots,x_k)$ is the position vector of $E',F'$. 
Hence the position vector of $E^\perp,F^\perp$ is given by 
$(x_1\ldots,x_k,\infty,\ldots,\infty)$, where $\infty$ occurs with multiplicity 
$n-k-\ell$.
\end{proof}

\subsection{A quantity derived from the position parameter between subspaces} \label{sec:relative-position-general}

We generalize here a volume like quantity appearing in \cite[Section~3.5]{BL:19}
from the reals to the nonarchimedean setting. 
The resulting quantity is an indispensable ingredient for the general integral geometry formula
to be developed.

Again, $V$ denotes an $n$--dimensional $K$--space with linear $R$--structure, 
but now we consider several $K$-linear subspaces $V_1,\ldots,V_s$, 
of dimensions $m_1,\ldots,m_s$,
respectively, such that $\sum_{j=1}^s m_j \le n$. 
By Lemma~\ref{le:Rstruct} we have an induced linear $R$--structure on the exterior powers 
$\Lambda^{m}V$ and hence a well defined norm. 
The following real number in $[0,1]$ measures the relative position of the subspaces
$V_1,\ldots,V_s$:
\begin{equation} \label{eq:def-sigma-many}
 \s(V_1,\ldots,V_s) :=\|v_{11}\wedge\ldots \wedge v_{1m_1}\wedge\ldots
     \wedge v_{s1}\wedge\ldots\wedge v_{sm_s} \| ,
\end{equation}
where $(v_{j1},\ldots,v_{jm_j})$ is an $R$--basis of $(V_j)_R$ 
(recall that the linear $R$--structure on the exterior algebra is defined 
in Lemma \ref{le:Rstruct}). 

\begin{lemma}
The quantity $\s(V_1,\ldots,V_s)$ is well defined (i.e., independent of the 
choice of the $R$--bases). It is 
positive iff $V_1+\ldots + V_s$ is a direct sum.
Moreover, we have $\s(V_1,\ldots,V_s) =1$ iff 
$(V_1)_R \oplus \ldots \oplus (V_s)_R = (V_1+\ldots + V_s)_R$.
\end{lemma}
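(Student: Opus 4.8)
The plan is to reduce all three claims to one computation: the norm in $\Lambda^m V$ of the decomposable vector $\omega := v_{11}\wedge\cdots\wedge v_{1m_1}\wedge\cdots\wedge v_{s1}\wedge\cdots\wedge v_{sm_s}$, where $m := m_1+\cdots+m_s$. For well-definedness I would note that replacing one $R$--basis of $(V_j)_R$ by another is effected by a matrix $g_j\in\Gl_{m_j}(R)$, so $v_{j1}\wedge\cdots\wedge v_{jm_j}$ is multiplied by $\det(g_j)\in R^\times$; hence $\omega$ is multiplied by the unit $\prod_j\det(g_j)$, and since $\|\lambda\eta\|=|\lambda|\,\|\eta\|$ on $\Lambda^m V$ and units have absolute value $1$, the value $\s(V_1,\dots,V_s)$ is independent of the chosen bases. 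For positivity I would use that $\omega\neq 0$ iff the $v_{ji}$ are $K$--linearly independent; since these $m$ vectors span $W:=V_1+\cdots+V_s$, independence holds iff $\dim W=m$, i.e.\ iff the sum $V_1+\cdots+V_s$ is direct. Hence $\s(V_1,\dots,V_s)>0$ precisely in that case.

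For the third claim, suppose first that $V_1+\cdots+V_s$ is \emph{not} direct. Then $\s(V_1,\dots,V_s)=0\neq 1$, and the equality $(V_1)_R\oplus\cdots\oplus(V_s)_R=(V_1+\cdots+V_s)_R$ cannot hold either: concatenating $R$--bases of the summands would produce an $R$--basis of $(V_1+\cdots+V_s)_R$, hence a $K$--basis of $V_1+\cdots+V_s$, forcing that sum to be direct. So we may assume the sum is direct. Then $\dim W=m$, the $v_{ji}$ form a $K$--basis of $W$, and each $v_{ji}\in (V_j)_R=V_j\cap V_R\subseteq W\cap V_R=:W_R$.

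Next I would fix an $R$--basis $w_1,\dots,w_m$ of $W_R$ and let $B\in R^{m\times m}$ be the matrix expressing the list $(v_{ji})$ in terms of $(w_t)$; its entries lie in $R$ since $v_{ji}\in W_R=\spann_R(w_1,\dots,w_m)$, and $\omega=\det(B)\cdot(w_1\wedge\cdots\wedge w_m)$. Extending $w_1,\dots,w_m$ to an $R$--basis $w_1,\dots,w_n$ of $V_R$ by \cref{le:extend-basis}, the vectors $w_{i_1}\wedge\cdots\wedge w_{i_m}$ (with $i_1<\cdots<i_m$) form an $R$--basis of $(\Lambda^m V)_R$ (this follows from part (5) of \cref{le:Rstruct} together with $K$--linear independence), so $\|w_1\wedge\cdots\wedge w_m\|=1$ and $\s(V_1,\dots,V_s)=|\det B|\le 1$. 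Thus $\s(V_1,\dots,V_s)=1$ iff $\det B\in R^\times$, i.e.\ iff $B\in\Gl_m(R)$, i.e.\ iff $v_{11},\dots,v_{sm_s}$ is itself an $R$--basis of $W_R$. Since $\spann_R\{v_{ji}\}=(V_1)_R+\cdots+(V_s)_R$ and, in the direct case, this $R$--sum is automatically direct, the last condition is exactly $(V_1)_R\oplus\cdots\oplus(V_s)_R=(V_1+\cdots+V_s)_R$, which finishes the argument.

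The one step I expect to need a little care is the norm computation in the third paragraph: a priori $\|\omega\|$ is taken with respect to the $R$--structure on $\Lambda^m V$, and one has to be sure this matches the intrinsic norm of $\Lambda^m W$. Extending a basis of $W_R$ to one of $V_R$ resolves this cleanly, because it realizes $w_1\wedge\cdots\wedge w_m$ as one of the standard $R$--basis vectors of $(\Lambda^m V)_R$, forcing its norm to be $1$; from there the scaling computation is immediate. The remaining steps are routine unwinding of the definitions of the induced $R$--structures and of $\s(V_1,\dots,V_s)$.
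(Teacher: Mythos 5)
Your proof is correct and takes essentially the same route as the paper, which dismisses the first two claims as clear and derives the third from Lemma~\ref{le:RbasChar} (the unit-wedge-norm criterion for $R$--bases). Your extra care in extending an $R$--basis of $W_R$ to one of $V_R$ supplies the detail the paper's one-line citation implicitly relies on, namely that the norm of the decomposable $m$-vector computed in $\Lambda^m V$ agrees with the intrinsic norm in $\Lambda^m W$ for $W := V_1+\cdots+V_s$ with its induced $R$--structure.
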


\begin{proof}
The first two assertions are clear. The third one follows from 
Lemma~\ref{le:RbasChar}. 
\end{proof}

This lemma again indicates that we may view $R$--bases as a substitute 
for the notion of orthonormal bases in euclidean spaces. 

In the case of just two subspaces $E,F$ of $V$, the 
quantity $\sigma(E,F)$ is easily computed from the 
position vector of a pair $(E, F)$ as follows.

\begin{lemma}
For subspaces $E, F\subseteq V$ with the position vector 
$(x_1, \ldots, x_k)$ we have 
\begin{equation*}\label{eq:sigmaPP}
  \sigma(E,F) = \e^{x_1}\cdots \e^{x_k} = \e^{x_1+\cdots+ x_k} .
\end{equation*}
\end{lemma}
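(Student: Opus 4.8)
The plan is to reduce directly to the normal form provided by Theorem~\ref{th:NF-pairs-subspaces} and then compute the norm of a wedge of explicit vectors. First I would assume without loss of generality that $k\le\ell$ and $k+\ell\le n$; the general case follows because both sides of the claimed identity are unchanged when we replace $(E,F)$ by $(E^\perp,F^\perp)$ (for $\sigma$ this is the statement that $\s(E,F)=\s(E^\perp,F^\perp)$, which holds since an $R$--basis of $(E)_R\oplus(F)_R$ extended to an $R$--basis of $V_R$ dualizes to an $R$--basis realizing $(E^\perp)_R\oplus(F^\perp)_R$; alternatively one invokes Proposition~\ref{pro:PPdual} together with the fact that appended $\infty$'s contribute factors $\e^\infty=0$ on both sides, matching the convention in the statement, so the reduction is clean once one treats the intersection separately as in Definition~\ref{def:PP2}).

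Next I would apply Theorem~\ref{th:NF-pairs-subspaces}: there is an $R$--basis $e_1,\ldots,e_\ell,f_1,\ldots,f_k,g_1,\ldots,g_{n-k-\ell}$ of $K^n$ with $F=\spann\{e_1,\ldots,e_\ell\}$ and $E=\spann\{e_1+\varpi^{x_1}f_1,\ldots,e_k+\varpi^{x_k}f_k\}$, where $x=(x_1,\ldots,x_k)$ is the position vector of $(E,F)$. Now $E_R$ and $F_R$ are spanned over $R$ by these listed generators (these are genuine $R$--bases of the respective induced $R$--structures, again by Theorem~\ref{th:NF-pairs-subspaces}), so by the definition~\eqref{eq:def-sigma-many} of $\sigma$ with $s=2$,
\[
  \sigma(E,F) = \bigl\| (e_1+\varpi^{x_1}f_1)\wedge\cdots\wedge(e_k+\varpi^{x_k}f_k)\wedge e_1\wedge\cdots\wedge e_\ell \bigr\| .
\]

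The final step is to expand this wedge multilinearly. In the product $\bigwedge_{i=1}^k(e_i+\varpi^{x_i}f_i)$ wedged against $e_1\wedge\cdots\wedge e_\ell$, every term containing some $e_i$ from one of the first $k$ factors repeats the $e_i$ already present in $e_1\wedge\cdots\wedge e_\ell$ and hence vanishes; only the single term $\varpi^{x_1}f_1\wedge\cdots\wedge\varpi^{x_k}f_k\wedge e_1\wedge\cdots\wedge e_\ell = \pm\,\varpi^{x_1+\cdots+x_k}\, f_1\wedge\cdots\wedge f_k\wedge e_1\wedge\cdots\wedge e_\ell$ survives. Since $e_1,\ldots,e_\ell,f_1,\ldots,f_k$ are part of an $R$--basis of $V_R$, Lemma~\ref{le:RbasChar} gives $\|f_1\wedge\cdots\wedge f_k\wedge e_1\wedge\cdots\wedge e_\ell\|=1$, and therefore $\sigma(E,F)=|\varpi^{x_1+\cdots+x_k}|=\e^{x_1+\cdots+x_k}=\e^{x_1}\cdots\e^{x_k}$, as claimed. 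I do not expect any serious obstacle here; the only mild subtlety is bookkeeping the convention $\varpi^\infty=0$, $\e^\infty=0$ so that the formula remains valid (trivially, both sides vanishing) when $E\cap F\ne 0$, which is exactly why appending $\infty$'s in Definition~\ref{def:PP2} is consistent with the displayed identity.
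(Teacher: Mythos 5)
Your core argument is exactly the paper's: apply Theorem~\ref{th:NF-pairs-subspaces} to obtain the normal form, expand the wedge multilinearly so that only $\varpi^{x_1}f_1\wedge\cdots\wedge\varpi^{x_k}f_k\wedge e_1\wedge\cdots\wedge e_\ell$ survives, and conclude via Lemma~\ref{le:RbasChar} that the remaining wedge has unit norm. That part is correct and matches the paper's proof line by line.

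Your opening reduction step, however, is both unnecessary and incorrectly justified, and you should cut it. It is unnecessary because $\sigma(E,F)$ is only defined (Section~\ref{sec:relative-position-general}) when $\dim E+\dim F\le\dim V$, so $k+\ell\le n$ is automatic, and $k\le\ell$ is harmless since $\sigma(E,F)=\sigma(F,E)$ by symmetry of the norm under permuting wedge factors. It is incorrectly justified because the claim that ``an $R$--basis of $(E)_R\oplus(F)_R$ extends to an $R$--basis of $V_R$'' is false in general: such an extension exists precisely when $E_R\oplus F_R$ is a direct summand of $V_R$, i.e.\ precisely when $\sigma(E,F)=1$, which is the quantity you are trying to compute. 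Your fallback of invoking $\sigma(E^\perp,F^\perp)=\sigma(E,F)$ is circular, since that identity is Corollary~\ref{cor:PPdual}, which the paper derives \emph{from} this lemma. Dropping the preamble leaves a clean, correct proof identical to the paper's.
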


\begin{proof}
Let $(e_1,\ldots,e_\ell,f_1,\ldots,f_k, g_{1},\ldots,g_{n-k-\ell})$ be
an $R$-basis of $K^n$  
as in Theorem \ref{th:NF-pairs-subspaces} such that 
$F=\spann\{e_1,\ldots,e_{\ell}\}$ and 
$E= \spann\{ e_1+\varpi^{x_1} f_1,\ldots, e_k + \varpi^{x_k} f_k\}$.
Using the definition~\eqref{eq:def-sigma-many}, we get 
\begin{align*}
\sigma(E, F)&=\|(e_1+\varpi^{x_1} f_1)\wedge\cdots\wedge(e_k+\varpi^{x_1} f_k)\wedge e_1\wedge \cdots \wedge e_\ell\|\\
                 &=\|\varpi^{x_1} f_1\wedge\cdots\wedge \varpi^{x_1} f_k\wedge e_1\wedge \cdots \wedge e_\ell\|\\
                 &=|\varpi^{x_1}\cdots \varpi^{x_k}|\cdot \|f_1\wedge\cdots \wedge f_k\wedge e_1\wedge \cdots \wedge e_\ell\|\\
                 &=\e^{x_1}\cdots \e^{x_k}. \qedhere
\end{align*}
\end{proof}

\begin{remark}
In the analogous situation of two subspaces $E,F$ of a Euclidean space~$V$, 
we have $\s(E,F) = \prod_j |\sin\theta_j|$, where the $\theta_j$ denote the 
principal angles between $E$ and $F$; see \cite[Lemma~3.9]{BL:19}. 
\end{remark}

\begin{cor}\label{cor:PPdual}
If $E,F$ are subspaces of $V$ such that $V=E\oplus F$, then 
$\sigma(E^\perp,F^\perp)=\sigma(E,F)$. 
\end{cor}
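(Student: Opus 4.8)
The plan is to combine Proposition~\ref{pro:PPdual} with the lemma immediately preceding this corollary, which computes $\sigma$ in terms of the position vector. The crucial observation is that the hypothesis $V=E\oplus F$ forces the position vector of $(E,F)$ to have no $\infty$ entries, so that the ``after removing the components equal to $\infty$'' caveat in Proposition~\ref{pro:PPdual} becomes vacuous on both sides.

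First I would set $k:=\dim E$ and $\ell:=\dim F$ and, without loss of generality (the position vector and the quantity $\sigma$ being symmetric in the two subspaces), assume $k\le\ell$. From $V=E\oplus F$ we get $k+\ell=n$ and $E\cap F=0$. By Definition~\ref{def:PP1} together with Lemma~\ref{le:sv-addmap} (equivalently Lemma~\ref{le:NF-PP}), the number of $\infty$ entries in the position vector $(x_1,\ldots,x_k)$ of $(E,F)$ equals $\dim(E\cap F)=0$, so all the $x_i$ are finite. In particular $\sigma(E,F)$ is defined, since $k+\ell=n\le n$.

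Next I would pass to $(E^\perp,F^\perp)$ inside $V^*$, endowed with the dual linear $R$--structure from Lemma~\ref{le:Rstruct}(3). Here $\dim E^\perp=n-k=\ell$ and $\dim F^\perp=n-\ell=k$, so $\dim E^\perp+\dim F^\perp=n$ and the position vector of this pair is a list of $\min\{\ell,k\}=k$ entries of $\Nko$. By Proposition~\ref{pro:PPdual}, deleting the $\infty$ entries of this list yields the position vector of $(E,F)$, namely $(x_1,\ldots,x_k)$ --- a list that already has $k$ entries. Hence no entry was deleted, and the position vector of $(E^\perp,F^\perp)$ equals $(x_1,\ldots,x_k)$ as well. (Alternatively, one reads off directly from the proof of Proposition~\ref{pro:PPdual} that the appended block of $\infty$'s has multiplicity $n-k-\ell=0$.) Finally, applying the lemma that evaluates $\sigma$ on a pair with a given position vector to each of $(E,F)$ and $(E^\perp,F^\perp)$ gives $\sigma(E,F)=\e^{x_1+\cdots+x_k}=\sigma(E^\perp,F^\perp)$, which is the claim.

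This is a short deduction from results already in place, so I do not expect a genuine obstacle; the only point requiring care is the dimension bookkeeping --- verifying $\dim E^\perp+\dim F^\perp=n$ so that $\sigma(E^\perp,F^\perp)$ is defined and the position-vector lemmas apply, and checking that the multiplicity $n-k-\ell$ of the $\infty$-block attached to the dual position vector in Proposition~\ref{pro:PPdual} is indeed zero under the hypothesis $V=E\oplus F$.
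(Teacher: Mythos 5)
Your proof is correct and follows essentially the same route as the paper: observe that $E\cap F=0$ forces all entries of the position vector to be finite, invoke Proposition~\ref{pro:PPdual} to transfer the position vector to $(E^\perp,F^\perp)$, and apply the lemma expressing $\sigma$ as $\e^{x_1+\cdots+x_k}$. Your extra dimension bookkeeping (checking $\dim E^\perp+\dim F^\perp=n$ and that the appended $\infty$-block in Proposition~\ref{pro:PPdual} has multiplicity $n-k-\ell=0$) is a welcome unpacking of a step the paper leaves implicit.
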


\begin{proof}
Assume $k=\dim E \le \dim F =\ell$ and let $(x_1\ldots,x_k)$ denote the 
position vector of $E,F$. Then $x_i\ne \infty$ for all~$i$ since $E\cap F=0$.  
Proposition~\ref{pro:PPdual} tells us that the pair $(E^\perp,F^\perp)$ also has the 
position vector~$x$. Hence Lemma~\ref{eq:sigmaPP} implies 
$\s(E,F) = \e^{x_1+\ldots +x_k} =\s(E^\perp,F^\perp)$.
\end{proof}



We proceed with a result that will be essential in the proof of the basic integration formula in Proposition~\ref{pro:BIF}. 

\begin{lemma}\label{le:auxy}
Let $V$ be a $K$--vector space with linear $R$--structure and $E,F$ be $K$--subspaces of~$V$ 
such that $E+F=V$. Then the subtraction map
$$
 \psi\colon E\times F \to V,\, (x,y) \mapsto x- y 
$$
satisfies $N(\psi) = \s(E^\perp,F^\perp)$.
\end{lemma}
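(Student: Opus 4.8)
The plan is to reduce to an explicit normal form for the pair $(E,F)$ and then compute both sides of the identity directly. First I would observe that both $N(\psi)$ and $\sigma(E^\perp,F^\perp)$ depend on $(E,F)$ only through its $\Gl_n(R)$-orbit: for $g\in\Gl_n(R)$ the subtraction map attached to $(gE,gF)$ equals $g\circ\psi\circ(g^{-1}\times g^{-1})$, a composite of $\psi$ with isomorphisms defined over $R$, hence has the same absolute determinant; and $\sigma$ is invariant under the ($R$-defined) contragredient action of $g$ on $V^*$, which sends $E^\perp,F^\perp$ to $(gE)^\perp,(gF)^\perp$. By \cref{cor:RPP-Char} it therefore suffices to check the equality for one convenient representative of each orbit.

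To build such a representative, set $k:=\dim E$, $\ell:=\dim F$, and assume $k\le\ell$ (otherwise swap $E$ and $F$, using $\psi(x,y)=-(y-x)$ and the symmetry of $\sigma$). Since $E+F=V$ we have $E^\perp\cap F^\perp=(E+F)^\perp=0$, and $\dim F^\perp=n-\ell\le n-k=\dim E^\perp$ with $(n-\ell)+(n-k)\le n$. Applying \cref{th:NF-pairs-subspaces} to the pair $(F^\perp,E^\perp)$ inside $V^*$ produces an $R$-basis $e_1,\ldots,e_{n-k},f_1,\ldots,f_{n-\ell},g_1,\ldots,g_d$ of $V^*$, where $d=k+\ell-n=\dim(E\cap F)$, such that $E^\perp=\spann\{e_1,\ldots,e_{n-k}\}$ and $F^\perp=\spann\{e_i+\varpi^{x_i}f_i:1\le i\le n-\ell\}$, with position vector $(x_1,\ldots,x_{n-\ell})$; all the $x_i$ are finite because $E^\perp\cap F^\perp=0$. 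Passing to the dual $R$-basis $\bar e_i,\bar f_i,\bar g_i$ of $V$ and reading off the orthogonality relations yields $E=\spann\{\bar f_1,\ldots,\bar f_{n-\ell},\bar g_1,\ldots,\bar g_d\}$ and $F=\spann\{\bar f_i-\varpi^{x_i}\bar e_i:1\le i\le n-\ell\}+\spann\{\bar e_{n-\ell+1},\ldots,\bar e_{n-k}\}+\spann\{\bar g_1,\ldots,\bar g_d\}$; one checks via \cref{le:RbasChar} and \cref{le:extend-basis} that the displayed generators are genuine $R$-bases of the lattices $E_R$ and $F_R$.

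I would then finish by a brief matrix calculation. Writing the matrix of $\psi$ in these $R$-bases of $E\times F$ and of $V$, the column operations ``add the column coming from $(\bar f_i,0)$ to the one coming from $(0,\bar f_i-\varpi^{x_i}\bar e_i)$'' and ``add the column coming from $(\bar g_j,0)$ to the one coming from $(0,\bar g_j)$'', which are given by right multiplication by elements of $\Gl(R)$ and hence do not change the singular values, transform the matrix (after a column permutation) into $[\,\Delta\mid 0\,]$, where $\Delta$ is the $n\times n$ diagonal matrix whose entries are $\varpi^{x_1},\ldots,\varpi^{x_{n-\ell}}$ together with $\ell$ units. Hence the singular values of $\psi$ are $\e^{x_1},\ldots,\e^{x_{n-\ell}}$ and $1$ with multiplicity $\ell$; as $\psi$ is surjective, its absolute determinant is the product of all $n$ of them, so $N(\psi)=\e^{x_1+\ldots+x_{n-\ell}}$. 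On the other side, using the symmetry of $\sigma$ and the identity expressing $\sigma$ of a pair through its position vector (see \ref{eq:sigmaPP}), $\sigma(E^\perp,F^\perp)=\sigma(F^\perp,E^\perp)=\e^{x_1+\ldots+x_{n-\ell}}$, which agrees with $N(\psi)$.

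The only genuinely delicate step is the bookkeeping in the middle paragraph: dualizing the normal form of $(F^\perp,E^\perp)$ correctly to recover $E$ and $F$, and confirming that the exhibited spanning families are $R$-bases of $E_R$ and $F_R$ (so that the matrix written in these bases really does compute the singular values of $\psi$). Everything else is routine linear algebra over the discrete valuation ring~$R$.
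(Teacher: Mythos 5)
Your argument is correct, but it takes a genuinely different route from the paper's. The paper first treats the case $E\cap F=0$: there $\psi$ is bijective, so $N(\psi)=J(\psi)=\|v_1\wedge\cdots\wedge v_k\wedge w_1\wedge\cdots\wedge w_{n-k}\|=\sigma(E,F)$ directly from the definitions, and one passes to $\sigma(E^\perp,F^\perp)$ via the duality identity of \cref{cor:PPdual}; the general case is then reduced to this one by choosing $R$-complements $E',F'$ of $U:=E\cap F$ inside $E$ and $F$, splitting off $\ker\psi=\{(u,u):u\in U\}$ by an $R$-complement, and observing that $\psi$ restricted to that complement decomposes as $\mathrm{id}_U\oplus\psi'$ with $\psi'\colon E'\times F'\to E'\oplus F'$ falling under the first case. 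You instead reduce to a single normal form using $\Gl_n(R)$-invariance of both sides together with \cref{cor:RPP-Char}, apply \cref{th:NF-pairs-subspaces} to the dual pair $(F^\perp,E^\perp)$, dualize to obtain explicit $R$-bases for $E_R$ and $F_R$, and diagonalize the resulting $n\times(k+\ell)$ matrix of $\psi$ by integral column operations to read off the singular values $\e^{x_1},\ldots,\e^{x_{n-\ell}},1,\ldots,1$; the comparison with $\sigma(E^\perp,F^\perp)$ then comes from Lemma~\ref{eq:sigmaPP} and the symmetry of $\sigma$, with no appeal to \cref{cor:PPdual} or to a complement-splitting reduction. The paper's proof is shorter and coordinate-free; yours is longer and more computational, but it treats $E\cap F\neq 0$ uniformly, and exhibiting the actual singular values of the subtraction map is useful extra information. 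Both arguments are valid.
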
 

\begin{proof}
Assume first that $E \cap F=0$. Then $\psi$ is invertible and we have 
$N(\psi) = J(\psi)$. 
If $v_1,\ldots,v_k$ and $w_1,\ldots,w_{n-k}$ are $R$--bases of $(E)_R$ and $(F)_R$, respectively, then 
we can express
$$ 
  J(\psi) = \|v_1\wedge\ldots\wedge v_k\wedge w_1\wedge\ldots\wedge w_{n-k}\| 
  = \s(E,F) = \s(E^\perp,F^\perp) .
$$
where the last equality is due to Corollary \ref{cor:PPdual}. 

For the general case, we put $U :=E \cap F$ and 
choose $R$-complements $E',F'$ 
so that 
$U \oplus E' = E$ and $U \oplus F' = F$.
Then we have 
$E' \cap F' =0$, and $U\oplus V' =V$, 
where $V' := E'\oplus F'$. 
Note that $D :=\ker\psi = \{ (x,x) \mid x \in U\}$. 
Clearly, $U\times 0$ is an $R$-complement of $D$ in $U\times U$. Hence 
the decomposition of $K$--spaces,
$$
 E \times F = D \oplus (U\times 0) \oplus V' , 
$$
leads to a direct decomposition of the corresponding $R$-modules. 
We restrict the map $\psi$ to $(U\times 0) \oplus V' \to U \oplus V'$ 
and observe that it decomposes as $\mathrm{id}_U \oplus \psi'$, 
where $\psi'\colon V' \to V',\, (x,y) \mapsto x-y$. 
Therefore, 
$$ 
 N(\psi) = J(\mathrm{id}_U \oplus \psi') = J(\psi')  = \s(E',F') ,
$$
where the last equality is due to the above analysis of the special case. 
We finally note that the orthogonal complement of $E$ in $V$ can be identified with 
the orthogonal complement of $E'$ in~$V'$ 
and similarly for $F'$.  
The assertion follows now with Corollary \ref{cor:PPdual}. 
\end{proof}

\subsection{Joint density of position parameters of random subspaces}\label{sec:jointrandom}
We again fix $k\le\ell$ such that $k+\ell \le n$. 
For $x\in\Nko$ we denote by $\rho_{k,\ell, n}(x)$ the probability that the pair $(E,F)$ 
of independent uniformly random subspaces $E\in G(k,n)$ and $F\in G(\ell,n)$ 
has the position vector~$x$.
The goal of this section to derive an explicit formula for the discrete distribution $\rho_{k,\ell, n}$.
For this purpose, we adopt the method in~\cite[D.3]{amelunxen-diss} from the real to the nonarchimedean setting. 



The idea is to use Theorem~\ref{th:NF-pairs-subspaces} to
parameterize subspaces $E\in G(k,n)$ 
in terms of their relative position with the fixed subspace 
$F:=K^\ell \times 0\subseteq K^n$.
The parameterization will be done in terms of a position vector $x\in \Nko$ 
and in terms of the invertible matrix $g\in \Gl_n(R)$, 
with columns 
$e_1,\ldots,e_{\ell}, f_1,\ldots,f_k,g_1,\ldots,g_{n-k-\ell}$,  
such that $e_1,\ldots,e_\ell$ span $F$ and 
$e_1+\varpi^{x_1} f_1,\ldots,e_k+ \varpi^{x_k} f_k$ span~$E$. 

We consider the following subgroup of $\Gl_n(R)$, 
which already appeared in Section~\ref{se:block-SVD}: 
\begin{equation}\label{eq:def-U}
 U := U_{\ell,n} := \left\{
\begin{bmatrix} A & B \\ 0 & C \end{bmatrix} 
  \Big| \; A\in \Gl_\ell(R), C\in \Gl_{n-\ell}(R), B \in R^{\ell\times (n-\ell)} \right\} .
\end{equation}
Moreover, we define the collection of maps  $\psi_x$, 
parameterized by $x\in\Nko$:  
\begin{equation}\label{eq:def-psi}
 \psi_x := \psi_{x;k,\ell, n} \colon U_{\ell, n} \to G(k,n),\, g\mapsto E
\end{equation}
which sends $g \in U$ to the 
span $E$ of the first $k$ columns of the matrix product
\begin{equation}\label{eq:def-psi1}
  g \cdot
  \begin{bmatrix}
    I_k & 0 & 0 & 0\\ 
    0 & I_{\ell-k} & 0 & 0 \\
    D_x & 0 & I_k & 0 \\
    0   &  0 & 0    & I_{n-k-\ell}
  \end{bmatrix} ,
\end{equation}
where $D_x := \diag(\varpi^{x_1},\ldots,\varpi^{x_k})$. 


\begin{lemma}\label{le:psi}
The image of $\psi_x$ consists of the subspaces $E\in G(k,n)$ having with 
$F=K^\ell \times 0$ the position vector~$x$.
\end{lemma}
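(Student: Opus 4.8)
The plan is to verify both inclusions directly, using Theorem~\ref{th:NF-pairs-subspaces} and Lemma~\ref{le:NF-PP}, after first unwinding the definition of $\psi_x$ by a one-line matrix computation. Concretely, for $g\in U_{\ell,n}$ I would label the columns of $g$ as $e_1,\ldots,e_\ell,f_1,\ldots,f_k,g_1,\ldots,g_{n-k-\ell}$ (the first $\ell$ columns, the next $k$, and the last $n-k-\ell$), and then observe that the $j$-th column of the block matrix in~\eqref{eq:def-psi1}, for $1\le j\le k$, is the vector with a $1$ in position $j$, the entry $\varpi^{x_j}$ in position $\ell+j$, and zeros elsewhere. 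Multiplying by $g$ on the left, the $j$-th column of the product in~\eqref{eq:def-psi1} is $e_j+\varpi^{x_j}f_j$, so $\psi_x(g)=\spann\{e_1+\varpi^{x_1}f_1,\ldots,e_k+\varpi^{x_k}f_k\}$.

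For the inclusion $\operatorname{im}\psi_x\subseteq\{E : \text{position vector of }(E,F)\text{ is }x\}$, I would note that since $g\in U_{\ell,n}$ its first $\ell$ columns $e_1,\ldots,e_\ell$ lie in $K^\ell\times 0=F$, and that the upper-left $\ell\times\ell$ block of $g$ is invertible over $R$ (as $g\in\GL_n(R)$ and the block is block-triangular), so $e_1,\ldots,e_\ell$ is an $R$-basis of $F_R=R^\ell\times 0$; moreover the full list $e_1,\ldots,e_\ell,f_1,\ldots,f_k,g_1,\ldots,g_{n-k-\ell}$ is an $R$-basis of $K^n$, again because $g\in\GL_n(R)$. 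Now Lemma~\ref{le:NF-PP} (whose hypotheses $k\le\ell$, $k+\ell\le n$ hold by assumption) applies verbatim and tells us that $(\psi_x(g),F)$ has position vector $x$.

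For the reverse inclusion, I would take $E\in G(k,n)$ such that $(E,F)$ has position vector $x$ and apply Theorem~\ref{th:NF-pairs-subspaces} to the pair $(E,F)$: this produces an $R$-basis $e_1,\ldots,e_\ell,f_1,\ldots,f_k,g_1,\ldots,g_{n-k-\ell}$ of $K^n$ with $F=\spann\{e_1,\ldots,e_\ell\}$ and $E=\spann\{e_1+\varpi^{x'_1}f_1,\ldots,e_k+\varpi^{x'_k}f_k\}$ for some $x'\in\Nko$; since the position vector of $(E,F)$ is well defined and equals $x$ by hypothesis, $x'=x$. Assembling these vectors (in this order) into the columns of a matrix $g$ gives $g\in\GL_n(R)$, and because $e_1,\ldots,e_\ell$ span $F=K^\ell\times 0$ the first $\ell$ columns of $g$ vanish in rows $\ell+1,\ldots,n$, so $g\in U_{\ell,n}$. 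By the column computation above, $\psi_x(g)=E$, finishing the proof. The only real point requiring care — and thus the "main obstacle", though it is mild — is matching the column ordering prescribed in~\eqref{eq:def-psi1} with the ordering of the normal-form basis in Theorem~\ref{th:NF-pairs-subspaces} and confirming membership $g\in U_{\ell,n}$; everything else is bookkeeping already carried out in Lemma~\ref{le:NF-PP} and Theorem~\ref{th:NF-pairs-subspaces}.
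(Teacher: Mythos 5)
Your proof is correct and follows essentially the same route as the paper's: you compute that the $j$-th column of the product in~\eqref{eq:def-psi1} equals $e_j+\varpi^{x_j}f_j$ (where $e$'s and $f$'s are the appropriate columns of~$g$), invoke Lemma~\ref{le:NF-PP} to read off the position vector, and use Theorem~\ref{th:NF-pairs-subspaces} for surjectivity onto the fiber. The only detail left slightly implicit — that the block-triangular $g$ you build in the converse direction lies in $U_{\ell,n}$ and not merely has vanishing lower-left block, i.e., that its diagonal blocks are in $\GL_\ell(R)$ and $\GL_{n-\ell}(R)$ — follows immediately from $g\in\GL_n(R)$ together with $\det(g)=\det(A)\det(C)$.
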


\begin{proof}
      We decompose the $n\times n$ matrix $g$ into 16 blocks 
      according to $n=k+(\ell-k)+k+(n-k-\ell)$, which yields
      \begin{equation}\label{eq:decomp-g}
        \begin{bmatrix}
          A^{11} & A^{12} & B^{11} & B^{12}\\ 
          A^{21} & A^{22} & B^{21} & B^{22} \\
          0  & 0 & C^{11} & C^{12} \\ 
          0  &  0 & C^{21} & C^{22}
        \end{bmatrix} 
        \cdot 
        \begin{bmatrix}
          I_k & 0 & 0 & 0\\ 
          0 & I_{\ell-k} & 0 & 0 \\
          D_x & 0 & I_k & 0 \\
          0   &  0 & 0    & I_{n-k-\ell}
        \end{bmatrix} 
        = 
        \begin{bmatrix}
          A^{11} + B^{11} D_x & * & * & *\\
          A^{21} + B^{21} D_x & * & * & * \\
          C^{11} D_x & * & *  & * \\
          C^{21} D_x   &  * & *  & *
        \end{bmatrix} .
      \end{equation}
      Let us denote by  
      $e_1,\ldots,e_k; e_{k+1},\ldots,e_\ell; f_1,\ldots,f_k; g_1,\ldots,g_{n-k-\ell}$ 
      the columns of the matrix $g$ on left-hand side. They form 
      an $R$-basis of $K^n$ iff $g\in\Gl_n(R)$. In this case,
      $e_1,\ldots,e_k; e_{k+1},\ldots,e_\ell$ span $F=K^{\ell}\times 0$.
      Note that $E=\psi_x(g)$ is the column span of the matrix 
      \begin{equation}\label{eq:psi-lift}
        \begin{bmatrix}
          A^{11} + B^{11} D_x \\
          A^{21} + B^{21} D_x \\
          C^{11} D_x         \\
          C^{21} D_x
        \end{bmatrix} 
        =
        \begin{bmatrix}
          A^{11} \\
          A^{21} \\
          0      \\
          0
        \end{bmatrix} 
        + 
        \begin{bmatrix}
          B^{11}  \\
          B^{21}  \\
          C^{11}  \\
          C^{21}
        \end{bmatrix} 
        \cdot D_x ,
      \end{equation}
      which has the columns 
      $e_1 + \varpi^{x_1} f_1,\ldots,e_k + \varpi^{x_k} f_k$. 
      This shows that the pair of subspaces $E$, $F$ indeed 
      has the position vector~$x$.
      By Theorem~\ref{th:NF-pairs-subspaces}, any such $E$ is of the form $\psi_x(g)$ 
      for some $g\in U$. 
\end{proof}

Clearly, the group $U$ acts transitively and isometrically by left multiplication on $U$.
Moreover, $\psi_x$ is $U$-equivariant. 
Hence,  
in order to analyze~$\psi_x$,
is sufficient to focus at the point~$I$.
We postpone the technical proof of the next lemma to Section~\ref{se:tech}.


\begin{lemma}\label{le:Jac-psi}
Let $x\in\Nko$ and put $r_i := \e^{x_i}$. 
The absolute Jacobian of $\psi_x$ at $I$ is given by 
$$
 J(\psi_x)(I) = \prod_{i=1}^k |r_i|^{n+1-k-\ell} \; \prod_{i< j} |r_j|^2 .
$$
\end{lemma}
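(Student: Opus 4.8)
The plan is to compute the derivative $D_I\psi_x \colon T_I U_{\ell,n} \to T_E G(k,n)$ explicitly in suitable $R$-bases and then read off $N(D_I\psi_x)$ via the minor characterization \eqref{eq:abs-det}. First I would identify the tangent spaces. The Lie algebra $\mathfrak{u} := T_I U_{\ell,n}$ consists of block matrices $\xi = \begin{bmatrix} a & b \\ 0 & c\end{bmatrix}$ with $a \in K^{\ell\times\ell}$, $c\in K^{(n-\ell)\times(n-\ell)}$, $b\in K^{\ell\times(n-\ell)}$, carrying the obvious $R$-structure (all entries in $R$). On the target side, writing $E_0 := \psi_x(I)$ with lifted basis matrix $M_x := \begin{bmatrix} I_k\\ 0\\ D_x\\ 0\end{bmatrix}$ (the first $k$ columns of \eqref{eq:def-psi1} at $g=I$), we have $T_{E_0}G(k,n) = \Hom_K(E_0, K^n/E_0)$, and by Example~\ref{ex:Grass} its $R$-structure is the one induced from the subspace $E_0$ and the quotient $K^n/E_0$.

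Next I would differentiate. Since $\psi_x(g)$ is the column span of $g M_x$, the derivative sends $\xi \in \mathfrak u$ to the class of $\xi M_x$ in $(K^n/E_0)\otimes E_0^*$ — precisely, $D_I\psi_x(\xi)$ is the homomorphism $E_0 \to K^n/E_0$ that on the $j$-th basis column $e_j + \varpi^{x_j} f_j$ of $E_0$ returns $\pi(\xi\,(\text{$j$-th column of } M_x))$, where $\pi\colon K^n \to K^n/E_0$. Writing $\xi$ in the $4\times 4$ block form matching $n = k+(\ell-k)+k+(n-k-\ell)$ exactly as in \eqref{eq:decomp-g}, the product $\xi M_x$ is the matrix on the right of \eqref{eq:psi-lift} with $A^{11},A^{21}$ etc.\ replaced by the blocks of $\xi$. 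The key bookkeeping point: the blocks $a^{21}, a^{22}$ (lower part of $a$) and $c^{21}, c^{22}$ that move columns back into $E_0$ or along $E_0$ contribute trivially or redundantly; after quotienting by $E_0$, the effective free parameters are the entries of $a^{11}, a^{21}$ (the $\ell\times k$ left block of $a$) paired against $E_0$, and the entries of $b^{11}, b^{21}, c^{11}, c^{21}$ (the $n\times k$ left block of $\begin{bmatrix} b\\ c\end{bmatrix}$) which get multiplied on the right by $D_x$ before landing in the quotient.

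Then I would compute $N$ as the product of the singular values. The derivative, once the kernel is split off, becomes block-diagonal: one block coming from the ``$a$-part'' and one from the ``$(b,c)$-part scaled by $D_x$''. Concretely I expect the following contributions. The part of $\Hom(E_0,K^n/E_0)$ recording images landing in the directions of $f_1,\ldots,f_k$ (i.e. the $k\times k$ coupling) is fed by $C^{11}D_x$ acting on the basis $e_j+\varpi^{x_j}f_j$: combining the $\varpi^{x_i}$ on the left (choice of target direction $f_i$) with the $\varpi^{x_j}$ built into the source basis vector gives, for the $(i,j)$ entry, a scaling $|r_i r_j|$ when $i\ne j$, and $|r_i|$ on the diagonal after accounting that the $a$-block also reaches the $f_i$ direction. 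The part recording images in the remaining $n - k - \ell$ complementary directions $g_1,\ldots,g_{n-k-\ell}$ is fed by $C^{21}D_x$, contributing a factor $|r_i|$ per source index $i$, repeated $n-k-\ell$ times. The part recording images in the $\ell - k$ directions $e_{k+1},\ldots,e_\ell$ inside $F$ is fed by the $a$-block directly, contributing a factor $1$ per source index, repeated $\ell-k$ times. Multiplying: each $|r_i|$ appears once from the ``$f$-block'' diagonal, $(\ell-k)$ times with exponent $0$ from the ``$e$-block'', $(n-k-\ell)$ times from the ``$g$-block'', plus the off-diagonal couplings $\prod_{i\ne j}|r_j|$ in the $f$-block, which over the upper/lower split collapses to $\prod_{i<j}|r_j|^2$. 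This yields $J(\psi_x)(I) = \prod_{i=1}^k |r_i|^{\,1 + (n-k-\ell)} \cdot \prod_{i<j}|r_j|^2 = \prod_{i=1}^k |r_i|^{\,n+1-k-\ell}\prod_{i<j}|r_j|^2$, as claimed — the exponent $n+1-k-\ell$ being exactly $1$ (the diagonal $f$-coupling) plus $n-k-\ell$ (the $g$-directions), with the $\ell-k$ directions inside $F$ contributing nothing.

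The main obstacle I anticipate is the careful identification of the kernel of $D_I\psi_x$ and the honest choice of $R$-bases on source and target so that the resulting matrix of $D_I\psi_x$ is genuinely in (block) Smith normal form — in particular, verifying that no ``mixing'' between the $a$-block and the $(b,c)$-block spoils the diagonality, and that the induced $R$-structure on $\Hom_K(E_0,K^n/E_0)$ is correctly computed from the tensor-product $R$-structure of Example~\ref{ex:Grass}. Once the matrix of $D_I\psi_x$ is exhibited in the form $\operatorname{diag}$ of the scalars above (times an $R$-invertible change of basis), \eqref{eq:abs-det-inv} gives $N(D_I\psi_x)$ immediately as the product of absolute values of the diagonal entries, and the exponent count above finishes the proof. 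The full technical verification is deferred to Section~\ref{se:tech}.
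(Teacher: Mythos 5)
Your high-level strategy matches the paper's proof: express the derivative explicitly in a chart centered at $\psi_x(I)$, observe that it factors as a direct product of small linear maps, and multiply their absolute determinants. However, your accounting of the $f$-block is incorrect. Expanding $\psi_x$ to second order in the chart of Example~\ref{ex:Grass} around $\psi_x(I)$, the linearization at $I$ sends the $(1,1)$-blocks $(\dot{A}^{11},\dot{B}^{11},\dot{C}^{11})$ to $\dot{C}^{11}D_x - D_x\dot{A}^{11} - D_x\dot{B}^{11}D_x$. Thus the $(i,j)$ entry of the $f$-component of $D_I\psi_x$ is the linear form $(\dot a_{ij},\dot b_{ij},\dot c_{ij})\mapsto r_j\dot c_{ij} - r_i\dot a_{ij} - r_ir_j\dot b_{ij}$ from $K^3$ to $K$. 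By \eqref{eq:adet-row} its absolute determinant equals $\|(r_j,-r_i,-r_ir_j)\| = \max\{|r_i|,|r_j|\}$, \emph{not} $|r_ir_j|$ as you claim for $i\ne j$; and the $\dot{A}^{11}$-block contributes with coefficient $-r_i$ at \emph{every} position $(i,j)$, not only on the diagonal, so the phrase ``the $a$-block also reaches the $f_i$ direction'' applies uniformly, not just when $i=j$. This matters quantitatively: the product of your purported off-diagonal scalings $\prod_{i\ne j}|r_ir_j| = \prod_{\iota=1}^k|r_\iota|^{2(k-1)}$ is not equal to $\prod_{i<j}|r_j|^2 = \prod_{\iota=1}^k|r_\iota|^{2(\iota-1)}$, so the ``collapse over the upper/lower split'' you invoke in the final step does not actually hold.

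The fix is to use the correct per-entry absolute determinants. The $f$-block contributes $\prod_{i,j=1}^k\max\{|r_i|,|r_j|\}$; the $e$-block contributes $1$ (each entry is $(\dot a_{ij},\dot b_{ij})\mapsto\dot a_{ij}+r_j\dot b_{ij}$, with absolute determinant $\max\{1,|r_j|\}=1$); the $g$-block contributes $\prod_{i=1}^k|r_i|^{n-k-\ell}$. Multiplying, and splitting the $f$-block product as $\prod_i|r_i|\cdot\prod_{i<j}\max\{|r_i|,|r_j|\}^2$, gives $J(\psi_x)(I) = \prod_i|r_i|^{n+1-k-\ell}\prod_{i<j}\max\{|r_i|,|r_j|\}^2$, which is the claimed formula. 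The point you were missing is that each entry of the $f$-block is a linear map from a $3$-dimensional source, so its contribution to the Jacobian is a \emph{maximum} of coefficient norms, not the product $|r_i r_j|$ of two of them.
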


\begin{remark}\label{re:non-repellent}
Unlike the situation over $\R$ and $\C$, see Remark~\ref{re:psi-RC}, 
Lemma~\ref{le:Jac-psi} reveals that the normal Jacobian $J(\psi_x)$
does not vanish if the position vector $x$ has a repeated entry. 
This distinct feature is a consequence of the ultrametric triangle inequality.
In the ultrametric setting, the singular values of random matrices 
do not repell each other!
\end{remark}







For describing the fiber of $\psi_x$ over $\psi_x(I)$, 
we restrict ourselves to the case $x\in\N^k_o$, that is,  
we assume that $x_i\ne\infty$ for all~$i$. This 
amounts to requiring that $r_i\ne 0$ for all~$i$. 
This assumption will guarantee that 
the image of $\psi_x$ is full dimensional, i.e., 
of dimension $\dim G(k,n)$. 
The fiber volume will be expressed in terms of the quantity
$\g_n = |\Gl_n(R)|= \prod_{i=1}^n (1-\e^i)$, 
see Proposition~\ref{pro:vols}. 
We note that 
\begin{equation}\label{eq:volume-UG}
 |U_{\ell, n}| = \g_\ell \, \g_{n-\ell}  . 
\end{equation}

We introduce some notation to partition $[k]$ according to the multiplicities of the entries of $x$. 
Let $\xi_1>\ldots > \xi_t\ge 0$ denote the different values 
of the map $[k]\to\N,\,i\mapsto x_i$.
We denote by $\nu$ the number of times $0$ occurs 
among the components of $x$. We partition 
$[k]=S_1\cup\ldots\cup S_t$ into the index sets  
$$
 S_\tau := \{ i\in [k] \mid x_i = \xi_\tau \}
$$
and define $\mu_\tau:= \# S_\tau$. 
Note that  $\mu_1+\ldots + \mu_t = k$ and  
$\nu = \mu_t$ if $\xi_t=0$, and $\nu=0$ otherwise.

\begin{lemma}\label{le:fibers}
Let $x\in\N^k_o$. Then:  
\begin{enumerate}
  \item
    The fiber $\psi_x^{-1}(\psi_x(I))$ 
    is a $K$--analytic submanifold of $U_{\ell, n}$ of 
    codimension $k(n-k)$. 
  \item
    The image $\im\psi_x$ is of full dimension $k(n-k)$.
  \item
    The volume of the fiber $\psi_x^{-1}(\psi_x(I))$
    is given by 
    \[
      |\psi_x^{-1}(\psi_x(I))|  = \g_{n-k-\ell} \cdot F_{k,\ell}(x) ,
      \quad \text{where} \ F_{k,\ell}(x) := \g_{\nu + \ell -k}\; \prod_{\tau=1}^t\g_{\mu_\tau}.
    \]
    The factor $F_{k, \ell}(x)$ 
    does not depend on the ambient dimension~$n$. 

  \end{enumerate}
\end{lemma}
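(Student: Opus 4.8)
The plan is to describe the fiber $\psi_x^{-1}(\psi_x(I))$ explicitly as a subgroup (or a coset-like subset) of $U_{\ell,n}$, then compute its volume directly. The starting point is the block decomposition \eqref{eq:decomp-g}: writing $g\in U_{\ell,n}$ in the $16$-block form with block sizes $k,\ell-k,k,n-k-\ell$, the subspace $\psi_x(g)$ is the column span of the matrix in \eqref{eq:psi-lift}. Thus $\psi_x(g)=\psi_x(I)$ iff the column span of \eqref{eq:psi-lift} equals $E_0:=\operatorname{span}\{e_1+\varpi^{x_1}e_{k+\ell'+1},\dots\}$ (the span for $g=I$). First I would set up coordinates: since $r_i\ne 0$ for all $i$, the matrix $\begin{bmatrix}C^{11}D_x\\ C^{21}D_x\end{bmatrix}$ has full column rank, and one sees that $\psi_x(g)=\psi_x(I)$ forces, after right-multiplying by an element of $\GL_k(R)$, a system of congruence conditions on the blocks $A^{ij}, B^{ij}, C^{ij}$ of $g$. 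The key point is to read off exactly which blocks are free and which are constrained, and to what precision (i.e., modulo which power of $\varpi$), keeping track of the multiplicity pattern $S_1,\dots,S_t$ of $x$.

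Concretely, I expect the analysis to show the following. The condition $\psi_x(g) = \psi_x(I)$ is equivalent to requiring that the matrix $\begin{bmatrix}A^{11}+B^{11}D_x\\ A^{21}+B^{21}D_x\\ C^{11}D_x\\ C^{21}D_x\end{bmatrix}$ and its value at $g=I$ have the same column span; normalizing the $(3,1)$-block (which is $C^{11}D_x$, hence a full-rank $k\times k$ matrix since $C\in\GL_{n-\ell}(R)$ — wait, we need $C^{11}\in\GL_k(R)$, which holds after a suitable left $\GL_{n-\ell}(R)$ action already absorbed; more carefully one uses that $D_x$ is invertible over $K$ and $C$ is invertible over $R$). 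After this normalization the span condition becomes: the remaining three blocks equal specified integer-matrix values $\bmod$ the relevant powers of $\varpi$ dictated by $D_x$. The upshot is a description of $\psi_x^{-1}(\psi_x(I))$ as the preimage under reduction maps of a subgroup of a finite group, whose order is a product of orders of $\GL$'s over residue rings. This yields the $\mu_\tau$-block structure: the permutation freedom among equal position parameters $x_i=\xi_\tau$ contributes a factor $\#\GL_{\mu_\tau}(\F_q)$ appropriately scaled, i.e., $\g_{\mu_\tau}$ in the normalized measure; the freedom in the $(\ell-k)$-directions of $F$ combined with the $\nu$ zero-entries contributes $\g_{\nu+\ell-k}$; and the block $C^{22}$ (acting on the $g_i$'s, which are untouched) contributes $\g_{n-k-\ell}$.

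For part (1), once the fiber is realized as a disjoint union of cosets of a $K$-analytic subgroup of $U_{\ell,n}$ — or, more directly, as a $K$-analytic submanifold via the implicit function theorem applied to the $K$-analytic map $\psi_x$ at a regular point — the codimension is $k(n-k)$ precisely when $\im\psi_x$ is full-dimensional, i.e., when $\dim\im\psi_x=\dim G(k,n)=k(n-k)$. So (1) and (2) are really the same statement; I would prove (2) first by checking that $D_{I}\psi_x$ is surjective (its normal Jacobian $J(\psi_x)(I)=\prod_i|r_i|^{n+1-k-\ell}\prod_{i<j}|r_j|^2$ from Lemma~\ref{le:Jac-psi} is nonzero exactly because all $r_i\ne 0$), hence $\psi_x$ is a submersion on a neighborhood of $I$, hence by $U_{\ell,n}$-equivariance a submersion everywhere on $U_{\ell,n}$; then the fiber is a submanifold of the stated codimension by the implicit function theorem, and $\im\psi_x$ is open, hence full-dimensional. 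For part (3), I would compute the volume either (a) combinatorially, by the counting characterization \eqref{eq:mu-count} of the Haar measure applied to the finite-group description of the fiber, carefully matching the normalization $|U_{\ell,n}|=\g_\ell\g_{n-\ell}$; or (b) via the coarea formula (Theorem~\ref{th:coarea}) applied to $\psi_x$, which gives $\g_\ell\g_{n-\ell}=|U_{\ell,n}|=\int_{G(k,n)}|\psi_x^{-1}(y)|\,\Omega$, but this needs the fiber volume to be constant over the image, which follows from equivariance — then one would still need an independent evaluation, so (a) is cleaner.

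\textbf{Main obstacle.} The hard part will be the bookkeeping in the explicit fiber description: precisely tracking which entries of the blocks $A^{ij},B^{ij},C^{ij}$ are free and which are pinned down modulo $\varpi^{\xi_\tau}$ (with the subtlety that repeated values $\xi_\tau$ create $\GL_{\mu_\tau}$-symmetry rather than forcing degeneracy, per Remark~\ref{re:non-repellent}), and then assembling the order count so that the factors reorganize exactly into $\g_{n-k-\ell}\cdot\g_{\nu+\ell-k}\cdot\prod_\tau\g_{\mu_\tau}$ — in particular verifying that the $n$-dependence cancels out of $F_{k,\ell}(x)$ as claimed. A clean way to organize this is to right-multiply the lifted matrix \eqref{eq:psi-lift} by an element of $\GL_k(R)$ to bring $C^{11}D_x$ to Smith-like normal form, reducing to the claim that the stabilizer, in $U_{\ell,n}$ acting on $G(k,n)$, of the standard position-$x$ subspace has the asserted volume; this stabilizer then factors through reduction mod the largest power of $\varpi$ appearing, and the count becomes a finite-group computation over $R/\varpi^{\xi_1}R$ that one can do by downward induction on $t$.
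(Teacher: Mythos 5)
Your plan is pointed in the same direction as the paper's proof: write $g\in U_{\ell,n}$ in the $16$-block form, express $\psi_x(g)=\psi_x(I)$ as the condition that the $n\times k$ matrix from~\eqref{eq:psi-lift} has the same column span as $[I_k;0;D_x;0]$, and extract linear constraints on the blocks $A^{ij},B^{ij},C^{ij}$. Your route to parts (1) and (2) via Lemma~\ref{le:Jac-psi} — $J(\psi_x)(I)\ne 0$ because all $r_i\ne 0$, hence $\psi_x$ is a submersion near $I$, hence everywhere by $U_{\ell,n}$-equivariance, and the implicit function theorem does the rest — is a valid and arguably cleaner alternative to the paper, which instead reads off the codimension directly from the explicit linear conditions on the fiber.

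The gap is in part (3), which is the substance of the lemma. Your sketch stops at exactly the point you flag as the ``main obstacle,'' and the two concrete things you do say there need repair. First, the assertion that $C^{11}\in\GL_k(R)$ ``holds after a suitable left $\GL_{n-\ell}(R)$ action already absorbed'' is not an argument: left-multiplying $g$ by a block matrix changes $g$, not the fiber membership condition, and one cannot normalize $C^{11}$ without changing $\psi_x(g)$. In fact $C^{11}\in\GL_k(R)$ \emph{is} forced for $g$ in the fiber, but only after one establishes (as the paper does, by reducing~\eqref{eq:fiber-cond} to~\eqref{eq:fiber-cond-I} entrywise and using $\varpi^{x_i-x_j}a^{11}_{ij}\in R$) that $A^{11}\bmod\frm$ is block lower triangular, $C^{11}\bmod\frm$ is block upper triangular, and $A^{11}\equiv C^{11}\bmod\frm$ when $\nu=0$, whence invertibility of $A$ and $C$ over $R$ reduces to invertibility of the diagonal blocks. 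Second, your proposed ``finite-group computation over $R/\varpi^{\xi_1}R$ by downward induction on $t$'' is not how the paper, or any reasonably clean argument, closes the deal: the paper instead applies Corollary~\ref{cor:vol-graph} twice — once to the graph $A^{21}=-B^{21}D_x$, $C^{21}=0$ (volume~$1$), and once to realize the constrained set of $(A^{11},B^{11},C^{11})$ as the graph of an $R$-linear map whose domain has volume $\prod_\tau\gamma_{\mu_\tau}$ — and it is exactly this graph-volume lemma that makes the $n$-dependence drop out of $F_{k,\ell}(x)$ as claimed. Without identifying that tool, I think you would in fact get stuck where you predict: you would correctly guess the answer $\g_{\nu+\ell-k}\prod_\tau\g_{\mu_\tau}$ (and your heuristic for how the $\nu$ and $\ell-k$ directions combine is the right intuition), but the bookkeeping would not close. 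Even the paper defers the case $\nu>0$, which is precisely where the combination $\g_{\nu+\ell-k}$ (rather than $\g_\nu\g_{\ell-k}$) has to be justified.
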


The proof is postponed to Section~\ref{se:tech}. 
{By contrast with the situation over $\R$ and $\C$, the fibers of $\psi_x$ have large dimension, 
see Remark~\ref{re:psi-RC}.} 




We can finally state the main result of this section, which 
is a nonarchimedean version of~\cite[Theorem~3.2]{BL:19}.

\begin{thm}\label{th:joint-densityK}
The joint distribution $\rho_{k,\ell, n}$ of the position vector of 
uniformly random subspaces $E\in G(k,n)$ and $F\in G(\ell,n)$ is given by 
$$
 \rho_{k,\ell, n}(x) 
 = c_{k,\ell,n} \cdot  \frac{J(\psi_{x;k,\ell ,n})(I,x)}{F_{k,\ell}(x)}  \quad \mbox{ for $x\in\Nko$,}
$$
where 
the constant $c_{k,\ell,n}$ is given by 
$$
 c_{k,\ell,n} = \frac{|U_{\ell, n}|}{|G(k,n)|} \frac{1}{\gamma_{n-k-\ell}} = 
\frac{\g_k \g_{n-k} \g_\ell \g_{n-\ell}}{\g_{n-k-\ell} \g_n} .
$$ 
and $F_{k,\ell}$ is defined in Lemma~\ref{le:fibers}. 
\end{thm}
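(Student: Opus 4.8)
The plan is to apply the coarea formula (\cref{th:coarea}) to the map $\psi_x$ for each fixed position vector $x\in\N^k_o$, and then sum over $x$. First I would fix $x$ and note that, by \cref{le:psi}, the image $\im\psi_x$ is precisely the locus $W_x\subseteq G(k,n)$ of subspaces $E$ whose position vector with the fixed $F=K^\ell\times 0$ equals $x$; by \cref{le:fibers}(2) this is a full-dimensional open (indeed locally closed) subset, and the $W_x$ partition $G(k,n)$ up to a measure zero set (the subspaces whose position vector has an $\infty$-entry, i.e.\ $E\cap F\neq 0$, form a proper analytic subset). Hence the uniform probability $\rho_{k,\ell,n}(x)$ equals $|W_x|/|G(k,n)|$.

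Next I would compute $|W_x|$ by integrating the constant function $1$ over $U_{\ell,n}$ against $J(\psi_x)$ using the coarea formula. Since $U_{\ell,n}$ acts transitively and isometrically on itself by left multiplication and $\psi_x$ is $U_{\ell,n}$-equivariant, the absolute Jacobian $J(\psi_x)$ is constant on $U_{\ell,n}$, equal to its value $J(\psi_x)(I)$ at the identity (computed in \cref{le:Jac-psi}); likewise all fibers of $\psi_x$ are translates of $\psi_x^{-1}(\psi_x(I))$ and hence have the same volume, computed in \cref{le:fibers}(3) as $\gamma_{n-k-\ell}\cdot F_{k,\ell}(x)$. The coarea formula then reads
\[
  \int_{U_{\ell,n}} J(\psi_x)\,\Omega_{U_{\ell,n}}
   = \int_{E\in W_x}\bigl|\psi_x^{-1}(E)\bigr|\,\Omega_{G(k,n)}(E)
   = |W_x|\cdot \gamma_{n-k-\ell}\,F_{k,\ell}(x),
\]
while the left side equals $|U_{\ell,n}|\cdot J(\psi_x)(I) = \gamma_\ell\,\gamma_{n-\ell}\,J(\psi_x)(I)$ by \eqref{eq:volume-UG}. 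Solving for $|W_x|$ and dividing by $|G(k,n)|$ gives
\[
  \rho_{k,\ell,n}(x) = \frac{|W_x|}{|G(k,n)|}
    = \frac{|U_{\ell,n}|}{|G(k,n)|\,\gamma_{n-k-\ell}}\cdot\frac{J(\psi_x)(I)}{F_{k,\ell}(x)}
    = c_{k,\ell,n}\cdot\frac{J(\psi_x)(I)}{F_{k,\ell}(x)},
\]
with $c_{k,\ell,n} = \dfrac{\gamma_\ell\,\gamma_{n-\ell}}{\gamma_{n-k-\ell}}\cdot\dfrac{1}{|G(k,n)|} = \dfrac{\gamma_k\,\gamma_{n-k}\,\gamma_\ell\,\gamma_{n-\ell}}{\gamma_{n-k-\ell}\,\gamma_n}$, using $|G(k,n)| = \gamma_n/(\gamma_k\gamma_{n-k})$ from \cref{pro:vols}.

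The routine bookkeeping here is the algebra of the $\gamma$'s and matching the notation $J(\psi_{x;k,\ell,n})(I,x)$ in the statement to $J(\psi_x)(I)$. The genuine content — and the main obstacle — is not in this proof at all but in the two lemmas it invokes: the Jacobian computation \cref{le:Jac-psi} and the fiber-volume computation \cref{le:fibers}, both deferred to Section~\ref{se:tech}. The only subtle point internal to this argument is the justification that $\{W_x\}_{x\in\N^k_o}$ covers $G(k,n)$ up to measure zero and that each $W_x$ is genuinely open of full dimension; for this I would invoke \cref{th:NF-pairs-subspaces} (every $E$ has some position vector) together with \cref{le:fibers}(2), and observe that the exceptional set $\{E : \dim(E\cap F)>0\}$ is cut out by the vanishing of appropriate maximal minors, hence is a proper Zariski-closed, and a fortiori measure-zero, subset by \eqref{eq:mu-n-count}. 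One should also check that $\psi_x$ restricted to a suitable open subset of $U_{\ell,n}$ is a submersion onto $W_x$ so that the coarea formula applies with $h\equiv 1$; this follows since $J(\psi_x)(I)\neq 0$ when all $r_i\neq 0$, by \cref{le:Jac-psi}, and equivariance propagates this to all of $U_{\ell,n}$.
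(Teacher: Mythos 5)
Your proof is correct and follows essentially the same strategy as the paper: apply the coarea formula to $\psi_x$, use $U_{\ell,n}$-equivariance to reduce the absolute Jacobian and fiber volume to their values at the identity, and plug in \cref{le:Jac-psi} and \cref{le:fibers}. The only cosmetic difference is that you take $h\equiv 1$ and then divide by the (constant) fiber volume, whereas the paper takes $h$ to be the reciprocal of the fiber volume from the start; these are algebraically identical once constancy is established.
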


\begin{proof} 
Put $F:=K^\ell\times 0$ and 
consider the map
$$
 \chi\colon G(k,n) \to \Nko, \, E \mapsto x , 
$$
which sends $E\in G(k,n)$ to the position vector~$x$ of the pair $(E,F)$. 
Recall that $\rho:=\rho_{k,\ell, n}$ is the pushforward of the uniform probability distribution on $G(k,n)$. 
By definition, $\rho(x)$ is the relative volume of the fiber of $\chi$ over $x \in \Nko$, i.e., 
  $$
  \rho(x) = \frac{|\chi^{-1}(x)|}{|G(k,n)|}.
  $$ 
  Lemma~\ref{le:psi} implies $\chi^{-1}(x) = \im \psi_x$. 
  Applying the coarea formula  (Theorem~\ref{th:coarea}) 
  to $\psi_x$ gives 
  $$
  \int_{U} \frac{J(\psi_x)}{|\psi_x^{-1}(\psi_x)|} \, dU
  = \int_{\chi^{-1}(x)} \, dG(k,n)  = |\chi^{-1}(x)| .
  $$
  The invariance properties of the absolute Jacobian and fiber volumes of $\psi_x$
  imply that the integrand of the the left-hand integral is 
  constant. Therefore, 
  $$
  |U| \cdot \frac{J(\psi_x)(I)}{|\psi_x^{-1}(\psi_x(I))|} = |\chi^{-1}(x)| .
  $$
  Using the notation introduced in Lemma~\ref{le:fibers}, we arrive at 
  $$
  \rho_{k,\ell, n}(x) = \frac{|\chi^{-1}(x)|}{|G(k,n)|} = 
  \frac{|U|}{|G(k,n)|} \frac{1}{\gamma_{n-k-\ell}} \cdot 
  \frac{J(\psi_x)(I,x)}{F_{k,\ell}(x)} .
  $$
  where the left-hand constant is denoted by $c_{k,\ell,n}$.
  Plugging in equation~\eqref{eq:volume-UG} into this yields 
  the stated formula for $c_{k,\ell,n}$.
\end{proof}

The following consequence of Theorem~\ref{th:joint-densityK} 
will be essential later on:
let $m\le k$ and put $p := n -k-\ell +m$. Then we have 
\begin{equation}\label{eq:c-quot}
\begin{split}
 \frac{c_{k-m,\ell-m,n-m}}{c_{k-m,\ell-m,n}}  
   &= \frac{|U_{\ell-m, n-m}|}{|U_{\ell-m, n}|} \frac{|G(k-m,n)|}{|G(k-m,n-m)|} \frac{\g_{p +m}}{\g_{p}}   \\
   &= \frac{\g_{n-\ell}}{\g_{n-\ell+m}} \;\frac{|G(k-m,n)|}{|G(k-m,n-m)|} \;\frac{\g_{p +m}}{\g_{p}} . 
\end{split}
\end{equation}

The way the distribution $\rho_{k,\ell,n}$ depends on the ambient 
dimension~$n$ allows us to derive explicit formulas for the 
expectations of the functions 
$ x \mapsto \e^{s(x_1+\ldots + x_k)}$ 
with respect to  the distribution $\rho_{k,\ell,n}$, 
where $s\in\N$.
This will be important for the computation 
of the volume of special Schubert varieties. 

\begin{cor}\label{cor:rho-inheritance}
Let $k\le\ell$ be such that $k+\ell \le n$ and $n\le N$. Then 
we have for $x\in\Nko$ that  
$$
\rho_{k,\ell,n}(x) \e^{(N-n)(x_1+\ldots + x_k)} 
 = \frac{c_{k,\ell,n}}{c_{k,\ell,N}} \; \rho_{k,\ell,N}(x) .
$$
In particular,
$$
 \sum_{x\in\Nko} \rho_{k,\ell, n}(x) \e^{(N-n)(x_1+\ldots + x_k)} = \frac{c_{k,\ell,n}}{c_{k,\ell,N}} .
$$ 
\end{cor}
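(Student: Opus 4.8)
The plan is to exploit the fact that $\rho_{k,\ell,n}$ depends on the ambient dimension $n$ only through a single exponential factor, which can be isolated by combining the explicit formula of \cref{th:joint-densityK} with the precise shape of the Jacobian in \cref{le:Jac-psi}. First I would reduce to the case $x\in\N^k_o$, i.e. all components of $x$ finite. For $x\in\Nko$ with some component equal to $\infty$ we have $\rho_{k,\ell,n}(x)=\rho_{k,\ell,N}(x)=0$: since $k+\ell\le n\le N$, a uniformly random $E\in G(k,n)$ satisfies $E\cap F=0$ almost surely, because the locus $\{\dim(E\cap F)\ge 1\}$ is a Schubert variety of positive codimension and hence of measure zero. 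So both claimed identities hold trivially off $\N^k_o$, and the sum over $\Nko$ is effectively a sum over $\N^k_o$.

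Next I would unwind \cref{th:joint-densityK}. Writing $r_i=\e^{x_i}$, so that $|r_i|=\e^{x_i}$, \cref{le:Jac-psi} gives
\[
  J(\psi_{x;k,\ell,n})(I)=\e^{(n+1-k-\ell)(x_1+\cdots+x_k)}\cdot A(x),\qquad A(x):=\prod_{i<j}\e^{2x_j},
\]
where $A(x)$ — just like the fiber factor $F_{k,\ell}(x)$ from \cref{le:fibers} — does not involve $n$. Substituting this into the formula of \cref{th:joint-densityK} yields
\[
  \frac{\rho_{k,\ell,n}(x)}{c_{k,\ell,n}}=\e^{(n+1-k-\ell)(x_1+\cdots+x_k)}\cdot\frac{A(x)}{F_{k,\ell}(x)},
\]
and the same identity with $n$ replaced by $N$.

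Dividing the two relations, the $n$-free quantity $A(x)/F_{k,\ell}(x)$ cancels and I obtain
\[
  \frac{\rho_{k,\ell,n}(x)}{c_{k,\ell,n}}\,\e^{(N-n)(x_1+\cdots+x_k)}=\frac{\rho_{k,\ell,N}(x)}{c_{k,\ell,N}},
\]
which is precisely the first assertion after multiplying through by $c_{k,\ell,n}$. For the second assertion I would sum this over $x\in\N^k_o$: since $N\ge n$ we have $\e^{(N-n)(x_1+\cdots+x_k)}\le 1$, so the left-hand series is dominated by $\sum_x\rho_{k,\ell,n}(x)=1$ and converges, while summing the right-hand side uses only that $\rho_{k,\ell,N}$ is a probability distribution on $\Nko$ (\cref{th:joint-densityK}), giving $\sum_x\rho_{k,\ell,n}(x)\,\e^{(N-n)(x_1+\cdots+x_k)}=\frac{c_{k,\ell,n}}{c_{k,\ell,N}}$.

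There is no real obstacle: the argument is pure bookkeeping around two facts already established — that $J(\psi_{x;k,\ell,n})(I)$ depends on $n$ only through the factor $\e^{(n+1-k-\ell)(x_1+\cdots+x_k)}$ (\cref{le:Jac-psi}), and that the fiber volume factor $F_{k,\ell}(x)$ is $n$-independent (\cref{le:fibers}). The only point needing a word is the legitimacy of summing the reweighted series, which is exactly why the hypothesis $n\le N$ is imposed.
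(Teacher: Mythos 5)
Your proof is correct and follows the same route as the paper: substituting the Jacobian from Lemma~\ref{le:Jac-psi} into the formula of Theorem~\ref{th:joint-densityK}, isolating the $n$-dependence in the single factor $\e^{(n+1-k-\ell)(x_1+\cdots+x_k)}$, dividing the two resulting identities, and summing using $\sum_x\rho_{k,\ell,N}(x)=1$. Your extra care on the $x_i=\infty$ case and on convergence of the reweighted series is a small refinement over the paper's terser phrasing but not a different argument.
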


\begin{proof}
  Note that the statement of this corollary is trivially true when some of the entries of $x$ are equal to $\infty$,
  because these occur with zero density. 
  We again write $r_i=\varpi^{x_i}$. By inserting the Jacobian of $\psi_{k,\ell,n}$ from Lemma~\ref{le:Jac-psi}
  into Theorem~\ref{th:joint-densityK}, we obtain
  $$
  \rho_{k,\ell, n}(x) 
  = c_{k,\ell,n} \;\prod_{i=1}^k |r_i|^{n+1-k-\ell} \cdot \mathrm{Factor}_{k,\ell}(x) ,
  $$
  where the second factor
  $$
  \mathrm{Factor}_{k,\ell}(x) := \frac{1}{F_{k,\ell}(x)}  \prod_{i< j} |r_j|^2 
  $$ 
  does not depend on~$n$. Therefore, 
  $$
  \rho_{k,\ell, n}(x) \prod_{i=1}^k |r_i|^{N-n} = c_{k,\ell,n} \;\prod_{i=1}^k |r_i|^{N+1- k-\ell} \cdot 
  \mathrm{Factor}_{k,\ell}(x) ,
  $$
  and we indeed get 
  $$
  \rho_{k,\ell, n}(x) \prod_{i=1}^k |r_i|^{N-n} = \frac{c_{k,\ell,n}}{c_{k,\ell,N}}  \rho_{k,\ell, N}(x) .
  $$
  Finally, we use that $\sum_x \rho_{k,\ell, N}(x) =1$.
\end{proof}

\begin{remark}\label{re:psi-RC}
It is illuminating to compare our results with 
the corresponding situation over~$\R$ and $\C$.
There, one defines parameterizations
$\psi_\R\colon O(n) \times [0,\pi/2]^k \to G_\R(k,n)$
and 
$\psi_\C\colon U_n \times [0,\pi/2]^k \to G_\C(k,n)$
via the orthogonal group $O(n)$ and unitary group $U_n$ 
in a similar way; see
\cite[Appendix D.3]{amelunxen-diss} and 
\cite[\S 5.1]{buerg-intersect}. 
In stark contrast with the nonarchimedean~$\psi$, the generic fibers of $\psi_\R$ and $\psi_\C$ 
are very small: those of~$\psi_\R$ are finite with cardinality~$2^k$ 
(bases are uniquely determined up to sign), while  
the generic fibers of~$\psi_\C$ are $k$-fold products of unit circles. 
The normal Jacobians are given by 
\begin{align*}
 J(\psi_\R) &= \prod_{i=1}^k \sin(\theta_i)^{n-k-\ell} \cos(\theta_i)^{\ell-k}  \cdot 
  \prod_{1\le j<i\le k} |\sin^2(\theta_i) \cos^2(\theta_j) - \cos^2(\theta_i) \sin^2(\theta_j)| , \\
 J(\psi_\C) &= 2^{\frac{k}{2}} \prod_{i=1}^k \sin(\theta_i)^{2(n-k-\ell) +1} \cos(\theta_i)^{2(\ell-k)+1}  \cdot 
  \prod_{1\le j<i\le k} \big(\sin^2(\theta_i) \cos^2(\theta_j) - \cos^2(\theta_i) \sin^2(\theta_j)\big)^2 .
\end{align*}
where $\theta_1,\ldots,\theta_k$ denote the principal angles; 
see~\cite[Prop.~D.3.1]{amelunxen-diss} over~$\R$ and 
\cite[Thm.~5.3]{buerg-intersect} over~$\C$.
These normal Jacobians vanish if two principal angles coincide, 
which is not the case in the ultrametric setting, as already pointed out in Remark~\ref{re:non-repellent}.
The joint density of principal angles over $\R$ satisfies 
$$
 \rho^\R_{k,\ell,n}(\theta_1, \ldots, \theta_k) = C_{k, l, n}^\R\prod_{j=1}^k(\cos \theta_j)^{l-k}(\sin \theta_j)^{n-l-k}\prod_{i<j}
    \left((\cos \theta_i)^2-(\cos \theta_j)^2\right) .
$$
with an explicit constant $C_{k, l, n}^\R$, see \cite[Appendix D.3.1]{amelunxen-diss}.
This follows from the determination of the Jacobian and the fibers of the parameterization 
maps $\psi_\R$.
Over $\C$, the joint density of principal angles can be derived analogously, 
compare \cite[\S 5.1]{buerg-intersect}. 

The reader may note that the exponent $n-k-\ell$ of $\sin(\theta_i)$ 
over $\R$ slightly differs from the exponent $n+1-k-\ell$ of $|r_i|$ in the situation over $K$. 
The explanation for this is as follows: consider the related parameterization
$U_{\ell,n}\times R^k \to G(k,n)$ sending $g\in U_{\ell,n}$ to the span of 
the first $k$ columns of the product~\eqref{eq:def-psi1}, 
where instead of 
$D_x = \diag(\varpi^{x_1},\ldots,\varpi^{x_k})$ 
we take $\diag(r_1,\ldots,r_k)$ with $r_i \in R$, which 
are also allowed to vary. 
It turns out that its absolute Jacobian is given by 
$\prod_{i=1}^k |r_i|^{n-k-\ell} \; \prod_{i< j} |r_j|^2$, which has 
same exponent $n-k-\ell$ as over~$\R$. 
\end{remark}

\subsection{Proof of two technical lemmas}\label{se:tech}


\begin{proof}[Proof of Lemma~\ref{le:Jac-psi} (Absolute Jacobian of $\psi_x$)]
We fix $x\in\Nko$. 
From \eqref{eq:psi-lift} we see that 
$\psi_x(I)$ equals the column span of the matrix
$[I_k \,0 \,D_x\, 0]^T\in R^{n\times k}$.
Hence, for computing the derivative of $\psi_x$ at $I$, 
we can use the following chart from Example~\ref{ex:Grass}: 
$$
 \hat{U} \to R^{n\times k},\, 
  \mathrm{column span}\left(\begin{bmatrix} X\\ Y \end{bmatrix}\right)\mapsto Y X^{-1} ,
$$
where $X\in R^{k\times k}$, $Y\in R^{(n-k)\times k}$ and the subset 
$U\subseteq G(k,n)$ is characterized by $|\det(X)|=1$. 
In this chart, the second order approximation of $\psi_x$ at $I$ 
is the following bilinear map
(use~\eqref{eq:psi-lift} to see this) 
\begin{equation*} 
  \begin{bmatrix}
    \dot{A}^{11} & \dot{A}^{12} & \dot{B}^{11} & \dot{B}^{12}\\ 
    \dot{A}^{21} & \dot{A}^{22} & \dot{B}^{21} & \dot{B}^{22} \\
    0  & 0 & \dot{C}^{11} & \dot{C}^{12} \\ 
    0  &  0 & \dot{C}^{21} & \dot{C}^{22}
  \end{bmatrix} 
  \mapsto
  \begin{bmatrix}
    \dot{A}^{21} + \dot{B}^{21} D_x \\
    (I_k +\dot{C}^{11}) D_x  \\
    \dot{C}^{21} D_x 
  \end{bmatrix}
  \big(I_k - \dot{A}^{11} - \dot{B}^{11}D_x \big) .
\end{equation*} 
Therefore, the derivative $D_I\psi_x$ is given by 
\begin{equation*} 
  T_I U \to R^{(n-k)\times k}, \;
  \begin{bmatrix}
    \dot{A}^{11} & \dot{A}^{12} & \dot{B}^{11} & \dot{B}^{12}\\ 
    \dot{A}^{21} & \dot{A}^{22} & \dot{B}^{21} & \dot{B}^{22} \\
    0  & 0 & \dot{C}^{11} & \dot{C}^{12} \\ 
    0  &  0 & \dot{C}^{21} & \dot{C}^{22}
  \end{bmatrix} 
  \mapsto
  \begin{bmatrix}
    \dot{A}^{21} + \dot{B}^{21} D_x \\
    \dot{C}^{11}  D_x  - D_x \dot{A}^{11} - D_x \dot{B}^{11} D_x \\
    \dot{C}^{21} D_x  \\
  \end{bmatrix} .
\end{equation*} 
Note that the values of this map do not depend on the second and fourth column 
of the input, 
so we may ignore the $\dot{A}^{i2}, \dot{B}^{i2},\dot{C}^{i2}$ for $i=1,2$. 
The resulting map is a direct product of three linear maps
\[
  \begin{array}{llll}
    \phi_1 &\colon (K^{(\ell-k)\times k})^2 \to  K^{(\ell-k)\times k} , 
    &\; (\dot{A}^{21}, \dot{B}^{21}) &\mapsto \dot{A}^{21} + \dot{B}^{21} D_x ,\\
    \phi_2 &\colon (K^{k\times k})^3 \to   K^{k \times k} ,
    &\;(\dot{A}^{11}, \dot{B}^{11}, \dot{C}^{11}) &\mapsto \dot{C}^{11}  D_x  - D_x \dot{A}^{11} - D_x \dot{B}^{11} D_x , \\
    \phi_3 &\colon K^{(n-k-\ell)\times k} \to   K^{(n-k-\ell)\times k}, 
    &\; \dot{C}^{21} &\mapsto \dot{C}^{21} D_x  .
  \end{array}
\]
In fact, each of these maps $\phi_i$ is a product of smaller linear maps, e.g., 
$\phi_1$ is a product of $(\ell-k)k$ many maps 
$K\times K \to K$ of the form $(\dot{a},\dot{b}) \mapsto \dot{a} + d\dot{b}$, 
where $d\in R$. By Equation~\eqref{eq:adet-row}, 
such maps have the absolute determinant $\|(1,d)\|= 1$.
Therefore, $N(\phi_1)=1$. 
Similarly, one sees that 
$N(\phi_3)= |r_1 \cdots r_k|^{n-k-\ell}$, 
when writing $D=\diag(r_1,\ldots,r_k)$.  
Finally, for analyzing $\phi_2$, we observe that $\phi_2$ is the product of the 
linear maps ($1\le i,j\le k$): 
$$
  \lambda_{ij}\colon K^3 \to K, \;(\dot{a}_{ij},\dot{b}_{ij},\dot{c}_{ij}) \mapsto 
  \dot{c}_{ij} r_j - \dot{a}_{ij} r_i - \dot{b}_{ij} r_i r_j .
$$
According to Equation~\eqref{eq:adet-row}, we have 
$$
 N(\lambda_{ij}) = \|(r_j, -r_i, -r_i r_j)\| = \max\{|r_i|, |r_j|\} .
$$
Altogether, we arrive at 
$$
 J(\psi_x)(I) = N(\phi_1) N(\phi_2) N(\phi_3) = |r_1 \cdots r_k|^{n-k-\ell} \prod_{i,j} \max\{|r_i|, |r_j|\} 
 = |r_1 \cdots r_k|^{n+1-k-\ell} \prod_{i<j}|r_j|^2 
$$ 
as claimed.
\end{proof}

\begin{proof}[Proof of Lemma~\ref{le:fibers} (Fibers of $\psi_x$)]
We fix $x\in\N^k_o$.  
Let $g\in U$ be given by matrices 
$A\in \Gl_\ell(R), C\in \Gl_{n-\ell}(R), B \in R^{\ell\times (n-\ell)}$ 
as in \eqref{eq:def-U}. 
We further decompose $A,B,C$ into smaller matrices 
$A^{pq},B^{pq},C^{pq}$, $p=1,2$, 
as in~\eqref{eq:decomp-g}. 
Then we have $\psi_x(g)=\psi_x(I)$ iff 
$$
\mathrm{column span}\left(\begin{bmatrix} 
                        A^{11} + B^{11} D_x \\
                        A^{21} + B^{21} D_x \\
                        C^{11} D_x         \\
                        C^{21} D_x         \end{bmatrix}\right)
= \mathrm{column span}\left(\begin{bmatrix} I_k\\ 0 \\D_r \\ 0 \end{bmatrix}\right) . 
$$
This means that there exists $H\in\Gl_\ell(K)$ such that 
\begin{align*}
  A^{11} + B^{11} D_x &= H \\
  A^{21} + B^{21} D_x &= 0 \\
  C^{11} D_x          &= D_x H \\
  C^{21} D_x          &= 0 .
\end{align*}
This implies $ C^{21}=0$ since $D_x$ is invertible 
(here we use $x_i\ne\infty$). Moreover, the third equation implies 
$H= D_x^{-1} C^{11} D_x$, so that we can rewrite the first equation as 
\begin{equation}\label{eq:fiber-cond}
 D_x A^{11} D_x^{-1} + D_x B^{11} = C^{11} .
\end{equation}
Summarizing, we arrive at the following explicit characterization of the fiber:  
\begin{equation*}\begin{split}
 \psi^{-1}_x(\psi_x(I)) = \Big\{
 (A,B,C) = ([A^{pq}], [B^{pq}], [C^{pq}]) \mid  A^{21} + B^{21} D_x  = 0,\; C^{21} = 0, 
  \mbox{ \eqref{eq:fiber-cond} holds},\\
  \; A\in \Gl_\ell(R), \; C \in \Gl_{n-\ell}(R) \Big\}
\end{split}\end{equation*}
The conditions in the first line describe a linear subspace of $K^{n\times n}$; 
the codimension is easily checked to be 
$k(\ell -k) + k(n-k-\ell) + k^2 = k(n-k)$. 
The fiber is obtained by intersecting this subspace with $\Gl_n(R)$.
This implies that the codimension of the fiber indeed equals $k(n-k)$. 
Hence, we have $\dim\im\psi_x = k(n-k)$ as claimed. 

We now determine the volume of the fiber. 
First notice that the conditions on $(A,B,C)$ being in the fiber 
do not involve the second columns $A^{12},A^{22}$, $B^{12},B^{22}$, $C^{12},C^{22}$
of $A=[A^{pq}]$,  $B=[B^{pq}]$ and $C=[C^{pq}]$, respectively,   
except for the requirement that $A$ and $C$ should be invertible over $R$.

We will see in a moment that in this case, we can guarantee 
that $A$ and $C$ are invertible over~$R$ by  requiring that 
$A^{22}, C^{22}$ are invertible over~$R$ and 
{\em independently} that 
$A^{11}, C^{11}$ are invertible over $R$.
This shows that the volume of the set of 
$(A^{12},A^{22}$, $B^{12},B^{22}$, $C^{12},C^{22})$ 
being part of $(A,B,C)$ in the fiber equals 
$$
 |\Gl_{\ell -k}(R)| \cdot |\Gl_{n-k-\ell}
 (R)| = \g_{\ell -k} \g_{n-k-\ell} .
$$

We study now the conditions on the first columns 
$A^{11},A^{21}$, $B^{11},B^{21}$, $C^{11},C^{21}$ 
of $A,B,C$, , respectively.  
We have the independent conditions 
$A^{21} + B^{21} D_x = 0$ and $C^{21}=0$ on 
$A^{21}, B^{21}, C^{21}$, 
and the constraint \eqref{eq:fiber-cond} on 
$A^{11},B^{11},C^{11}$.

The set of $(A^{21}, B^{21}, C^{21})$ satisfying 
$A^{21} + B^{21} D_x = 0$ and $C^{21}=0$ 
is the graph of the linear map 
$R^{(\ell -k )\times k} \to R^{(\ell -k )\times k},\; B^{21}\mapsto - B^{21} D_x=A^{21}$
which is defined over $R$. Therefore the graph 
has the volume~$1$, see Corollary~\ref{cor:vol-graph} .

To analyze the set $\mathcal{G}$ of $(A^{11},B^{11},C^{11})$ arising in the fiber,  
recall the partition $[k]=S_1\cup\ldots\cup S_t$. 
We think of the $k\times k$ matrices $A^{11},B^{11}, C^{11}$ as being 
each decomposed into $t^2$ blocks accordingly,
where the blocks are indexed by elements in the sets $S_\tau$.
Let us rewrite \eqref{eq:fiber-cond} explicitly as 
\begin{equation}\label{eq:fiber-cond-I}
 \varpi^{x_i-x_j} a^{11}_{ij}  + \varpi^{x_i} b^{11}_{ij} = c^{11}_{ij} ,\quad 
  1\le i,j \le k .
\end{equation}
This implies $\varpi^{x_i-x_j} a^{11}_{ij} \in R$, hence 
$|a^{11}_{ij}| \le \e^{x_j-x_i}$. 
Therefore, when $i > j$, the exponent is nonnegative, i.e, $a_{ij}$ is zero modulo $\frm$. 
Thus, we see that $A^{11} \mod \frm$ is block lower triangular.
Similarly, $C^{11} \bmod \frm$ is block upper triangular.
In this situation,  
$A=[A^{pq}]$ is invertible over $R$ iff the diagonal blocks of $A$ are 
invertible over $R$.
The same characterization holds for $C$.
Note that Equation~\eqref{eq:fiber-cond-I} determines $C^{11}$ 
from $A^{11}$ and $B^{11}$. Moreover, 
\eqref{eq:fiber-cond-I} states for $i=j$ that 
$$
 a^{11}_{ii}  + \varpi^{x_i} b^{11}_{ii} = c^{11}_{ii}  .
$$
This implies $A^{11} \equiv C^{11} \bmod \frm$ if
$x_i >0$ for all $i$. 
The latter assumption means $\nu=0$.

So let us assume now that $\nu=0$ for simplicity.
Then we see that if all diagonal blocks of $A^{11}$ are 
invertible over $R$ and $C^{11}$ is computed via \eqref{eq:fiber-cond-I},
then $C$ will be invertible over $R$. 

We can view $\mathcal{G}$ as the graph of 
the following linear map $\phi$ defined over $R$.
It takes as input any $B^{11}\in R^{k\times k}$, 
as the diagonal blocks of $A$ any element of 
$\Gl_{\mu_1}(R)\times\cdots\times \Gl_{\mu_t}(R)$, 
as the lower triangular block of $A$ 
any element of $\prod_{\sigma>\tau} R^{\mu_\sigma\times\mu_\tau}$
(whose components are 
$\varpi^{x_i-x_j} a^{11}_{ij}\in R$ for $i>j$), 
and moreover, for the upper triangular part, 
an element of $\prod_{\sigma <\tau} R^{\mu_\sigma\times\mu_\tau}$
(whose components are $a^{11}_{ij}\in R$ for $i<j$).
Then $\phi$ computes the lower triangular blocks of $A$ by multiplying 
$$
 a^{11}_{ij} = \varpi^{x_j-x_i} \cdot (\varpi^{x_i-x_j}  a^{11}_{ij}) \mbox{ with $\varpi^{x_j-x_i} \in R$ for $i>j$ }, 
$$ 
and $\phi$ computes $C^{11}$ 
using \eqref{eq:fiber-cond-I}. 
Corollary~\ref{cor:vol-graph} now implies that 
$|\mathrm{graph}(\mathcal{G})| = \prod_{\tau=1}^t |\Gl_{\mu_\tau}|$. 
This proves that 
$$
|\psi_x^{-1}(\psi_x(I))|  = \g_{\ell -k} \g_{n-k-\ell} \cdot \prod_{\tau=1}^t |\Gl_{\mu_\tau} |
$$
in the situation where $\nu=0$.

In the case $\nu >0$ an additional reasoning, similar to the previous one is required.
We leave this to the reader.
\end{proof}


\section{Nonarchimedean integral geometry}
In the next two sections we endow $K$--analytic groups and $K$--analytic homogeneous 
spaces with $R$--structures. 
We then proceed by proving a general integral geometry formula 
in the setting of nonarchimedean homogeneous spaces: 
the main result is Theorem~\ref{thm:GIGF}. 
Our development is parallel to Howard~\cite{howard:93} who carried this out over the reals. 
(For general background on integral geometry we refer to~\cite{klain-rota:97,santalo:04}.)
As a special case we obtain a new proof of the integral geometric formula in \cite{KL:19} 
for projective spaces over $\Q_p$.  
We also start to investigate the situation in Grassmannians along the lines of~\cite{BL:19}.

\subsection{$K$--analytic groups with $R$--structures}\label{se:anKgroups}

A {\em $K$--analytic group} is a group~$G$ with the structure of a $K$--analytic manifold, 
such that the multiplication $G\times G\to G,\,(g,h) \mapsto gh$ 
and the inversion $G\to G,\, g \mapsto g^{-1}$ are $K$--analytic maps; 
see~\cite[Part II]{serre:64} and \cite{schneiderp:11}. 
For instance $\Gl_n(K)$ is $K$--analytic group. 

We shall endow a $K$--analytic group~$G$ with an $R$--structure 
that is compatible with the group operations. 
For $g,h\in G$ we denote by 
$L_g\colon G \to G,\, x \to gx$ 
the left-multiplication with~$g$
and by $R_h\colon G \to G,\, x \to xh$
the right-multiplication with~$h$. 

\begin{defi}
We call an $R$--structure on a $K$--analytic group $G$ 
\emph{compatible} if, for all $g,h\in G$, 
the derivatives of the left-multiplication $L_g$ and the right-multiplication $R_h$ 
are isomorphisms defined over $R$. 
\end{defi}

The group $\Gl_n(R)$ is an open subset of $K^{n\times n}$ 
and thus carries an induced $R$--structure.
It is immediate to check that this $R$--structure is compatible. 

In the following, we assume that if $G$ is a $K$--analytic group with a compatible $R$--structure.
Thus the tangent spaces of $G$ carry an induced norm, 
which is invariant under left and right multiplications.
Moreover, there is an induced volume form $\Omega_G$ on $G$, which is 
invariant under left and right multiplications. 
If $G$ is compact, then it has a finite volume that we denote by 
$|G| := \int_G \Omega_G$.
(Compare~\cite[Chap.~II]{weil:41}.) 
Notice that if $H$ is an open subgroup of $G$, then $H$ inherits from $G$ a compatible $R$--structure.

\subsection{Homogeneous $K$--analytic spaces with $R$--structure}
\label{se:hom_K_analytic_spaces}

Suppose $G$ is a $K$--analytic group and 
$H$ is a $K$--analytic submanifold of $G$, which is also a closed subgroup of $G$.
It is known that if 
we form the quotient $G/H = \{gH \mid g\in G\}$ with the quotient topology, 
then $G/H$ gets the unique structure of a $K$--analytic manifold such that 
the canonical projection $p\colon G\to G/H$ is a $K$--analytic submersion; 
see~\cite[Part~II, Chap.~IV, \S 5]{serre:64}. 

If $G$ comes with a compatible $R$--structure, then we can endow 
the tangent spaces of $G/H$ with the quotient structure (see Lemma~\ref{le:Rstruct}(2)) 
induced from the $R$--structures on the tangent space of $G$ 
via the derivatives $D_gp\colon T_g G\to T_{p(g) }G/H$. 
We leave it to the reader to verify that the resulting $R$--structure 
on $G/H$ is compatible.
We call $G/H$ a {\em homogeneous $K$--analytic space with $R$--structure}. 

If $G$ is compact, then $G/H$  has a finite volume that we denote by $|G/H|$. 
The coarea formula implies that $|G/H| =|G|/|H|$ since $J(p)=1$ 
by construction.

\begin{example}\label{ex:parabolic}
For instance, consider 
the subgroup $H$ 
of $\Gl_n(R)$ consisting of the upper triangular matrices, or more generally, 
the (parabolic) subgroups of upper triangular matrices of a fixed block structure.
This means that flag varieties, notably Grassmannians and projective spaces, always
carry a canonical $R$--structure compatible with the action of $\Gl_n(R)$. 
\end{example}

\begin{example}\label{ex:hom-spaces}
We can construct projective spaces as homogeneous spaces as follows. 
Consider the standard action of $G=\Gl_{n}(R)$ on $R^{n}$ 
and let $H$ denote the subgroup of $G$ leaving the line $Ke_1$ invariant. 
Note that $H$ is a closed $K$--analytic subgroup of $G$. 
Then $G/H$ becomes a homogeneous $K$--analytic space with $R$--structure. 
It is straightforward to check that the canonical isomorphism of $G/H$ with 
the projective space $\proj^{n-1}(K)$ preserves the $R$--structure.
Similarly, by taking for $H$ the subgroup of $G$ leaving invariant 
the $K$--subspace spanned by $e_1,\ldots,e_r$, one obtains the 
Grassmann manifold $G/H\simeq \Gr(r,n)$ as a homogeneous 
$K$--analytic space with $R$--structure; see Example~\ref{ex:Grass}.   
\end{example}

\subsection{A nonarchimedean Poincar\'e formula}\label{se:poincare}

We have now developed all the necessary concepts and tools for proving integral geometry formulas 
in the setting of nonarchimedean homogeneous spaces. 

Let $G$ denote a compact $K$--analytic group with $R$--structure 
and $H\subseteq G$ be a closed $K$--analytic subgroup. 
Then $G/H$ is a homogeneous $K$--analytic space with $R$--structure; 
we denote by $p\colon G\to G/H$ the quotient map. 
We note that the natural action of $G$ on $G/H$ preserves the volume form $\Omega_G$. 
If $g\in G$ and $y\in G/H$, we denote by $g y$ the result of the action. 
Further, we denote by $y_0:=p(I)$ the projection of the identity element.
The multiplication with an element $h\in H$ fixes the point $y_0$; as a consequence, 
we have the derivative 
$h_*\colon T_{y_0}G/H\to T_{y_0}G/H$, 
so that we have an induced action of~$H$ on $T_{y_0}G/H$. 

  In Equation~\eqref{eq:def-sigma-many} we defined a quantity for capturing the relative position of 
  linear subspaces of a $K$--vector space with $R$--structure.
  We can extend this notion to describe the relative position of a collection of $K$-linear 
  subspaces $\{V_i\subseteq T_{g_i}^*G \ | \ 1 \leq i \leq m\}$ of the cotangent spaces 
  given by a collection of points 
  $g_1, \ldots, g_m\in G$, assuming $\sum_i\dim V_i \le \dim G$.
This is done by left-translating the~$g_i$ to the identity $I$: 
let $g_i^*\colon T_{g_i}^*G \to T_I^*G$ denote the dual map of the derivative 
$(g_i)_*\colon T_IG \to T_{g_i}G$ of the left-multiplication with $g_i$.
(By our assumption, $g_i^*$ is an isomorphism defined over $R$.) 
We now define 
\begin{equation}\label{eq:def-sigma}
 \sigma(V_1, \ldots, V_m) := \sigma\big( g_1^*V_1,\ldots,g_m^* V_m\big) .
\end{equation}

Let $Y$ be a $K$--analytic submanifold of $G/H$ and $y\in Y$.
We define the {\em conormal space} $N_y Y$ of $Y$ at $y\in Y$ as the orthogonal 
complement of the tangent space $T_yY$ in $T_yG/H$. 
Thus $N_y Y$ is the subspace of the cotangent space 
consisting of the linear forms on $T_y G/H$ vanishing on~$T_yY$. 

\begin{defi}\label{def:sigma-many}
For given $K$--analytic submanifolds $Y_1, \ldots, Y_m\subseteq G/H$, 
we define the {\em average scaling factor} as the function
$$
 \sigma_H:Y_1\times \cdots \times Y_m\to \R
$$
as follows. For $(y_1, \ldots, y_m)\in Y_1\times \cdots \times Y_m$ 
let $\xi_i\in G$ be such that $\xi_i y_i=y_0$ for all~$i$. We define 
$$
 \sigma_H (y_1, \ldots, y_m) :=
  \E_{(h_1,\ldots, h_m)\in H^m}\, \sigma(h_1^*\xi_1^* N_{y_1} Y_1, \ldots, h_m^* \xi_m^* N_{y_m} Y_m) ,
$$
where the expectation is taken over a uniform $(h_1, \ldots, h_m)\in H\times \ldots\times H$.
\end{defi}

Note that the dependence of $\sigma_H (y_1, \ldots, y_m)$ on the submanifolds $Y_i$ is only through 
their cotangent spaces $T_{y_i}^*Y_i$. Additionally, the average scaling factor is
independent of the choice of~$\xi_i$, since the transporter in $G$ of $y_i$ to $y_0$ is exactly~$H \xi_i$.


\begin{example}\label{ex:sigma-proj}
We view the projective space $\proj^{n-1}=G/H$ as a 
homogeneous $K$--analytic space with $R$--structure 
as in Example~\ref{ex:hom-spaces}. 
Let us explicitly describe the average scaling factor $\s_H$ for the choice 
of $n-1$ projective hyperplanes $Y_i\simeq\proj^{n-2}$ in $\proj^{n-1}$. 
Then the function $\s_H$ is constant and takes the following value
\begin{equation}\label{eq:sigma-proj}
 \alpha_K(1,n-1):= \E \| v_1 \wedge\cdots\wedge v_{n-1}\| ,
\end{equation}
where the $v_i \in S(K^{n-1})$ are chosen independently and uniformly. 
In order to see this, note that the action of $H$ on $T_{e_1}\proj^{n-1}\simeq K^{n-1}$ 
factors through the standard action of $\Gl_{n-1}(R)$;  
see the proof of Proposition~\ref{pro:vols}. 
The uniform distribution on $H$ induces the uniform distribution on $\Gl_{n-1}(R)$.
Moreover, for uniformly chosen $A\in\Gl_{n-1}(R)$, the vector $Ae_1$ is uniformly distributed in $S(K^{n-1})$. 
We will determine  $\alpha_K(1,n-1)$ in 
Corollary~\ref{cor:alpha-proj}. 
\end{example}

The main result of this section is the following nonarchimedean generalization  of Poincar\'e's kinematic formula 
for homogeneous spaces, as stated in \cite{BL:19} and \cite[Thm.~3.8]{howard:93}. 
The proof will be provided in Section~\ref{se:proofGIGF}. 
Recall that we denote the volume of a compact 
$K$--analytic submanifold~$X$ of $G/H$ by $|X|$; see~\eqref{eq:defvol}. 

\begin{thm}\label{thm:GIGF}
Let $G$ denote a compact $K$--analytic group with $R$--structure  
and $H\subseteq G$ be a closed $K$--analytic subgroup. 
Let $Y_1, \ldots, Y_m$ be $K$--analytic submanifolds of $G/H$ 
such that $\sum_{i=1}^m\codim_{G/H} Y_i \leq \dim G/H$. 
Then the submanifolds $g_1Y_1, \ldots, g_mY_m$ intersect transversely, 
for almost all $(g_1, \ldots, g_m)\in G^{m}$, and
$$
 \E_{(g_1, \ldots, g_m)\in G^m} |g_1Y_1\cap \cdots\cap g_m Y_m|
   =\frac{1}{|G/H|^{m-1}} \int_{Y_1\times \cdots\times Y_m}\sigma_H\, \Omega_{Y_1\times\cdots \times Y_m} , 
$$
where the expectation is taken over a uniformly random $(g_1, \ldots, g_m)\in G\times \cdots\times G$. 
\end{thm}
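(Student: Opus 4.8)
The strategy is to follow Howard's classical argument closely, reducing everything to the coarea formula (Theorem~\ref{th:coarea}) applied on the product group $G^m$ and to a pointwise computation of a normal Jacobian, which will be exactly the average scaling factor $\sigma_H$. First I would set up the \emph{incidence variety}
\[
  I := \{ (g_1,\ldots,g_m) \in G^m : g_1 Y_1 \cap \cdots \cap g_m Y_m \text{ contains a transverse intersection point}\},
\]
but it is cleaner to work with the ``universal'' incidence manifold
\[
  \widetilde{I} := \{ (g_1,\ldots,g_m, y) \in G^m \times (G/H) : g_i^{-1} y \in Y_i \text{ for all } i\},
\]
and the two projections $\pi\colon \widetilde I \to G^m$ and $q\colon \widetilde I \to (G/H)^m$, the latter sending $(g_1,\ldots,g_m,y)$ to $(g_1^{-1}y,\ldots,g_m^{-1}y)$. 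The fibre of $\pi$ over $(g_1,\ldots,g_m)$ is (almost everywhere) the finite set $g_1Y_1 \cap \cdots \cap g_m Y_m$, while the fibre of $q$ over $(y_1,\ldots,y_m)$ is isomorphic to $\{(g_1,\ldots,g_m) : g_i^{-1}y = y_i\}$ for a chosen $y$, which up to the diagonal $H$-actions is a translate of $G \times H^{m-1}$ — this is where the factor $|G/H|^{m-1}$ and the $H^m$-average will come from.

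The key steps, in order: (1) Verify that $\widetilde I$ is a $K$-analytic submanifold of $G^m \times (G/H)$ of the expected dimension $m\dim G - \sum_i \codim Y_i$; this follows because the map $(g_1,\ldots,g_m,y) \mapsto (g_i^{-1}y \bmod Y_i)_i$ is a submersion onto the normal directions, using that $G$ acts transitively on $G/H$. Endow $\widetilde I$ with the $R$-structure induced from the ambient product. (2) Apply Theorem~\ref{th:coarea} to $\pi\colon \widetilde I \to G^m$: since $\pi$ is a local isomorphism at the points where the intersection is transverse, and $J(\pi) \equiv 1$ there because $\pi$ is (locally) an inclusion of an open piece followed by a projection that is compatible with the $R$-structures (here I use that left translations are isometries defined over $R$, from the compatibility of the $R$-structure on $G$), I get
\[
  \E_{(g_1,\ldots,g_m)\in G^m} \#\big(g_1 Y_1 \cap \cdots \cap g_m Y_m\big) \cdot |G|^m \ = \ \int_{\widetilde I} J(\pi)\, \Omega_{\widetilde I} \ = \ |\widetilde I|.
\]
Transversality for almost all $(g_1,\ldots,g_m)$ follows from Sard's lemma (Theorem~\ref{th:sard}) applied to $\pi$. (3) Apply Theorem~\ref{th:coarea} to $q\colon \widetilde I \to (G/H)^m$, restricted to the image which lies over $Y_1 \times \cdots \times Y_m$; compute the fibre volume and the normal Jacobian $J(q)$. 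The fibre of $q$ over $(y_1,\ldots,y_m)$ fibres over $G/H$ (the choice of $y$) with fibre $\cong H^m$ intersected with the incidence condition, contributing $|G/H|\cdot|H|^m$ up to the normalization; and the normal Jacobian of $q$ at a point, transverse to the fibre, is precisely the scaling factor $\sigma$ of the conormal spaces $\xi_i^* N_{y_i} Y_i$ as one varies the $h_i \in H$ — averaging over the $H^m$-worth of choices of $y$ in the fibre produces exactly $\sigma_H(y_1,\ldots,y_m)$ as in Definition~\ref{def:sigma-many}. This is the heart: one identifies $D q$ at an incidence point with the subtraction/addition map whose absolute determinant is $\sigma$, invoking Lemma~\ref{le:auxy} and the computation $\sigma(E,F) = \e^{x_1+\cdots+x_k}$.

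Combining (2) and (3): $|\widetilde I| = \int_{(G/H)^m} (\text{fibre volume of } q)\, J(q)^{-1}\ldots$ — more precisely, the coarea formula applied to $q$ gives $\int_{\widetilde I} J(q)\,\Omega_{\widetilde I} = \int_{(G/H)^m} |\text{fibre}|\, \Omega$, and since $q$ maps onto (an open dense subset of) $Y_1\times\cdots\times Y_m$ with the ``transverse'' part of $\Omega_{\widetilde I}$ splitting as $\Omega_{\text{fibre}} \wedge q^*(\Omega_{Y_1\times\cdots\times Y_m})$ times the conormal Jacobian, one reorganizes to obtain
\[
  |\widetilde I| \ = \ |G|^m \cdot \frac{1}{|G/H|^{m-1}} \int_{Y_1\times\cdots\times Y_m} \sigma_H \, \Omega_{Y_1\times\cdots\times Y_m},
\]
after cancelling $|H|^m$ against $|G/H|^m/|G|^m \cdot |G/H|$ using $|G|=|G/H|\cdot|H|$. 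Dividing by $|G|^m$ yields the claimed formula. \textbf{The main obstacle} I anticipate is step (3): carefully disentangling the tangent space $T\widetilde I$ at an incidence point into the ``fibre of $q$'' direction and a complement, and verifying that the induced $R$-structures are compatible so that the normal Jacobian of $q$ factors cleanly as $\sigma(h_1^*\xi_1^* N_{y_1}Y_1,\ldots)$ — in the real case this is a linear-algebra lemma about how conormal spaces sit inside $T_{y_0}(G/H)^m$, and the nonarchimedean version needs Lemma~\ref{le:auxy} together with the behavior of $\sigma$ under the isometric $H$-action, plus care that the averaging over the fibre of $q$ (a copy of $G/H$, over which $h_i$ ranges uniformly when $y$ does) genuinely reproduces the expectation in Definition~\ref{def:sigma-many}. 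The transversality-almost-everywhere claim is comparatively routine via Sard.
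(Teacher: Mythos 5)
Your proposal takes a genuinely different route from the paper's. You set up a single incidence manifold $\widetilde I\subseteq G^m\times(G/H)$ and apply the coarea formula (Theorem~\ref{th:coarea}) to the two projections $\pi\colon\widetilde I\to G^m$ and $q\colon\widetilde I\to(G/H)^m$; this is a double-fibration argument carried out directly over the homogeneous space. The paper instead first establishes Howard's basic integral formula (Proposition~\ref{pro:BIF}) for two submanifolds of $G$ itself, with the key normal-Jacobian identification isolated in Lemma~\ref{le:auxy}, and then derives the homogeneous-space statement from it by lifting each $Y_i$ to $p^{-1}(Y_i)\subseteq G$, following \cite{howard:93} and \cite[A5.2]{BL:19}. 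Both routes are legitimate in principle; yours has the virtue of treating all $m$ submanifolds symmetrically from the outset.

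However, there is a concrete gap in step~(2): the claim $J(\pi)\equiv 1$ is false, and with it $\int_{\widetilde I}J(\pi)\,\Omega_{\widetilde I}=|\widetilde I|$. Locally $\widetilde I$ is a graph $\{(g,y(g))\}$ over $G^m$, but the implicit function $g\mapsto y(g)$ is obtained by inverting the stacked normal projection $T_y(G/H)\to\bigoplus_i T_{y_i}(G/H)/T_{y_i}Y_i$, and that inverse need not be defined over $R$ --- its failure to be so is exactly the $\sigma$-factor. For a concrete check, take $G$ compact abelian with $H=\{e\}$, pass to an additive chart identifying a neighbourhood in $G/H$ with an open subset of $K^2$ with the standard $R$-structure, and let $Y_1=\{v=0\}$, $Y_2=\{v=\varpi u\}$. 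The incidence conditions $g_1^{-1}y\in Y_1$, $g_2^{-1}y\in Y_2$ with $g_i=(a_i,b_i)$ and $y=(u,v)$ solve to $v=b_1$, $u=a_2+(b_1-b_2)/\varpi$; the lattice $(T\widetilde I)_R$ therefore projects onto the index-$q$ sublattice $\{\dot b_1\equiv\dot b_2\pmod\varpi\}$ of $(T_gG^2)_R$, so $J(\pi)=\e$, which equals $\sigma(N_{y_1}Y_1,N_{y_2}Y_2)=\|dv\wedge(\varpi\,du-dv)\|=\e$. The factor you are trying to produce is already sitting inside $J(\pi)$; setting $J(\pi)\equiv 1$ discards it and the whole argument would yield a wrong formula (it would amount to claiming $\sigma_H\equiv 1$).

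The repair is structural: keep $J(\pi)$ as an honest weight, so that $\int_{\widetilde I}J(\pi)\,\Omega_{\widetilde I}=|G|^m\cdot\E|g_1Y_1\cap\cdots\cap g_mY_m|$, and then apply the coarea formula to $q$ with the measurable weight $h=J(\pi)/J(q)$. The resulting fibre integral of $J(\pi)/J(q)$ over $q^{-1}(y_1,\ldots,y_m)\cong(G/H)\times H^m$ is what produces the $H$-average of Definition~\ref{def:sigma-many} together with the $|G/H|^{-(m-1)}$ normalization. In other words, the $\sigma$-identification you correctly flag as ``the heart'' belongs to $J(\pi)$ (via Lemma~\ref{le:auxy}), not to $J(q)$ or to a splitting of $\Omega_{\widetilde I}$; as written, step~(3) attributes it to the wrong map and consequently does not close. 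The transversality-almost-everywhere claim via Sard is fine.
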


Let us discuss some consequences before giving the proof in Subsection~\ref{se:proofGIGF}.
The kinematic formula above simplifies for certain submanifolds $Y$ of $G/H$.
The following extends~\cite[Def.~3.4]{BL:19}. 

\begin{defi}\label{def:TA}
Let $Y$ be a $K$--analytic submanifold of $G/H$. We say that 
{\em $G$ acts transitively on the tangent spaces} of $Y$ if for 
points $y_1,y_2$ of $Y$, there exists $g\in G$ such that 
$gy_1 = y_2$ and $g_*(T_{y_1}Y) = T_{y_2}Y$.  
\end{defi}

For example, in the case of a projective space $G/H=\proj^{n-1}(K)$ 
(see Example~\ref{ex:hom-spaces}),  the group $G=\Gl_{n}(R)$ acts transitively on 
the tangent spaces of any analytic submanifold of $\proj^{n-1}(K)$, 
since $\Gl_{n-1}(R)$ acts transitively on the Grassmannians $\Gr(r,n-1)$. 

\begin{cor}\label{cor:GIGF}
Under the assumptions of Theorem~\ref{thm:GIGF}, 
if moreover $G$ acts transitively on the tangent spaces to $Y_i$ for $i=1, \ldots, m$, 
then we have 
$$
  \E_{(g_1, \ldots, g_m)\in G^m}|g_1Y_1\cap \cdots\cap g_m Y_m|
     =\sigma_H(y_1, \ldots, y_m)\cdot |G/H| \cdot \prod_{i=1}^n\frac{|Y_i|}{|G/H|} ,
$$
where $(y_1, \ldots, y_m)$ is any point of $Y_1\times \cdots\times Y_m$.
\end{cor}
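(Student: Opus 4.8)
The plan is to derive Corollary~\ref{cor:GIGF} directly from Theorem~\ref{thm:GIGF} by showing that, under the transitivity hypothesis, the integrand $\sigma_H$ in the kinematic formula is a constant function on $Y_1\times\cdots\times Y_m$. Once that is established, the integral over $Y_1\times\cdots\times Y_m$ factors as the constant value times the product of volumes $\prod_{i=1}^m |Y_i|$, and dividing by $|G/H|^{m-1}$ and rearranging gives exactly the stated product formula with the factor $|G/H|\cdot\prod_i (|Y_i|/|G/H|)$.

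First I would record that for each fixed $i$, the hypothesis that $G$ acts transitively on the tangent spaces of $Y_i$ means: given any two points $y_i, y_i'\in Y_i$, there is $g\in G$ with $gy_i = y_i'$ and $g_*(T_{y_i}Y_i) = T_{y_i'}Y_i'$; dually, the derivative of left-translation identifies the conormal spaces $N_{y_i}Y_i$ and $N_{y_i'}Y_i$ compatibly with the transporters. So I would fix a basepoint $(y_1,\ldots,y_m)\in Y_1\times\cdots\times Y_m$ and show $\sigma_H(y_1',\ldots,y_m') = \sigma_H(y_1,\ldots,y_m)$ for any other point. Pick $g_i\in G$ with $g_i y_i = y_i'$ and $(g_i)_*(T_{y_i}Y_i) = T_{y_i'}Y_i$. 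In the definition of $\sigma_H$ (Definition~\ref{def:sigma-many}) one chooses $\xi_i'$ with $\xi_i' y_i' = y_0$; the natural choice is $\xi_i' = \xi_i g_i^{-1}$ where $\xi_i y_i = y_0$. Then $\xi_i'^* N_{y_i'}Y_i = (\xi_i g_i^{-1})^* N_{y_i'}Y_i$, and since $(g_i^{-1})^*$ carries $N_{y_i'}Y_i$ onto $N_{y_i}Y_i$ and is an isomorphism defined over $R$ (by compatibility of the $R$-structure with left multiplication, Section~\ref{se:anKgroups}), we get $\xi_i'^* N_{y_i'}Y_i = \xi_i^* N_{y_i}Y_i$ as subspaces of $T_I^*G$ with their induced $R$-structure. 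Averaging over $(h_1,\ldots,h_m)\in H^m$ of $\sigma(h_1^*\xi_1'^* N_{y_1'}Y_1,\ldots)$ therefore yields the same value as at the basepoint. The only subtlety is that $\xi_i'$ is only determined up to left multiplication by $H$; but the average scaling factor is by construction independent of that choice (as noted right after Definition~\ref{def:sigma-many}), so the argument goes through regardless.

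With constancy of $\sigma_H$ in hand, write $c := \sigma_H(y_1,\ldots,y_m)$ for the common value. Theorem~\ref{thm:GIGF} then gives
\[
 \E_{(g_1,\ldots,g_m)\in G^m} |g_1Y_1\cap\cdots\cap g_mY_m|
 = \frac{1}{|G/H|^{m-1}} \int_{Y_1\times\cdots\times Y_m} c\,\Omega_{Y_1\times\cdots\times Y_m}
 = \frac{c}{|G/H|^{m-1}} \prod_{i=1}^m |Y_i|,
\]
using that $\Omega_{Y_1\times\cdots\times Y_m}$ is the product volume form and hence its total integral is $\prod_i |Y_i|$. Finally, rewriting $\frac{1}{|G/H|^{m-1}}\prod_i |Y_i| = |G/H|\cdot\prod_{i=1}^m \frac{|Y_i|}{|G/H|}$ yields the claimed identity. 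I do not expect any genuine obstacle here: the content is entirely contained in Theorem~\ref{thm:GIGF}, and the only thing to verify carefully is the translation-invariance computation showing $\sigma_H$ is constant, which is a routine consequence of the $R$-compatibility of the group structure and the built-in independence of $\sigma_H$ from the auxiliary choices $\xi_i$. (One should also remember to quote that the transversality conclusion for $g_1Y_1,\ldots,g_mY_m$ for almost all $(g_1,\ldots,g_m)$ is already part of Theorem~\ref{thm:GIGF}, so nothing extra is needed for that.)
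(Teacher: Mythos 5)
Your proof is correct and follows the same route as the paper: observe that the transitivity hypothesis forces $\sigma_H$ to be constant, then pull the constant out of the integral in Theorem~\ref{thm:GIGF} and rewrite $|G/H|^{-(m-1)}\prod_i|Y_i|$ as $|G/H|\prod_i(|Y_i|/|G/H|)$. The paper states the constancy of $\sigma_H$ without elaboration, whereas you supply the explicit transporter computation (choosing $\xi_i'=\xi_i g_i^{-1}$ and invoking the independence of $\sigma_H$ from the choice of transporter); your unpacking is accurate and the content matches.
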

 
\begin{proof}
When $G$ acts transitively on the tangent spaces to each $Y_i$, 
then the function $\sigma_H\colon Y_1\times \cdots \times Y_m\to \R$ 
introduced in Definition~\ref{def:sigma-many} is constant. 
The assertion follows from Theorem~\ref{thm:GIGF}.
\end{proof}

We now obtain the integral geometry formula of~\cite{KL:19} as a consequence of Corollary~\ref{cor:GIGF}. 
(In~\cite{KL:19} the formula was only stated for the intersection of two factors and over $K=\Q_p$.)

\begin{cor}\label{cor:KL}Let $G=\mathrm{GL}_n(R)$, with the normalized Haar measure. Then:
\begin{enumerate}
\item Let $Y_1,\ldots,Y_m$ be  $K$--analytic submanifolds of $\proj^{n-1}$ 
of dimensions $d_1,\ldots,d_m$, respectively, such that $\sum_{i=1}^m (n-1- d_i) \leq n-1$. 
Then, for almost all $(g_1, \ldots, g_m)\in G^{m}$, 
the intersection $g_1Y_1 \cap\ldots\cap g_mY_m$ is transversal, 
of dimension $k := d_1 +\ldots + d_m - (m-1)(n -1)$, and 
\begin{equation*}
\E_{(g_1, \ldots, g_m)\in G^m} \frac{|g_1 Y_1 \cap\ldots\cap g_m Y_m|}{|\proj^{k}|} 
  = \prod_{i=1}^m \frac{|Y_i|}{|\proj^{d_i}|} .
\end{equation*}
\item Let $Y$ be a $d$--dimensional  $K$--analytic submanifolds of $\proj^{n-1}$
and $Y'\subseteq Y$ be a measurable subset. If we denote by $|Y'|$ the measure 
of $Y'$ (coming from the measure on $Y$), we have 
\begin{equation*}
\E_{H_1,\ldots,H_d} \#( Y' \cap H_1 \cap\ldots\cap H_{d})   = \frac{|Y'|}{|\proj^{d}|} ,
\end{equation*}
where the $H_i$ are independent uniformly random projective hyperplanes in $\proj^{n-1}$. 
\end{enumerate}
\end{cor}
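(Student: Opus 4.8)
The plan is to deduce both parts from Corollary~\ref{cor:GIGF}, applied with $G=\Gl_n(R)$ and $H$ the stabilizer of the line $Ke_1$, so that $G/H\simeq\proj^{n-1}(K)$ as homogeneous $K$--analytic spaces with $R$--structure (Example~\ref{ex:hom-spaces}). As noted right after Definition~\ref{def:TA}, $G$ acts transitively on the tangent spaces of \emph{every} analytic submanifold of $\proj^{n-1}(K)$, so the hypotheses of Corollary~\ref{cor:GIGF} hold for the given $Y_i$; in particular the transversality of $g_1Y_1,\ldots,g_mY_m$ and the dimension count $k=\sum_i d_i-(m-1)(n-1)$ of part~(1) are already part of Theorem~\ref{thm:GIGF}. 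Writing $c_i:=n-1-d_i=\codim_{\proj^{n-1}}Y_i$ and $N:=n-1$, one has $\sum_i c_i\le N$ and $k=N-\sum_i c_i$, and everything reduces to computing the (constant) value of $\sigma_H$.

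By Definition~\ref{def:sigma-many}, $\sigma_H$ is an average of $\sigma(h_1^*\xi_1^*N_{y_1}Y_1,\ldots,h_m^*\xi_m^*N_{y_m}Y_m)$, where each $\xi_i^*N_{y_i}Y_i$ is a $c_i$--dimensional subspace of $T^*_{y_0}\proj^{n-1}(K)\simeq K^{N}$. As explained in Example~\ref{ex:sigma-proj}, the action of $H$ on $T_{y_0}\proj^{n-1}(K)$ factors through the standard action of $\Gl_{N}(R)$, with a uniform $h_i\in H$ inducing a uniform element of $\Gl_N(R)$; since $\Gl_N(R)$ acts transitively on $G(c_i,N)$ and commutes with duality, the subspaces $h_i^*\xi_i^*N_{y_i}Y_i$ become independent uniformly random subspaces $W_i\subseteq K^N$ of dimension $c_i$. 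Hence $\sigma_H=\E\,\sigma(W_1,\ldots,W_m)$, with $\sigma$ the quantity from~\eqref{eq:def-sigma-many}.

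The heart of the proof is the evaluation of $\E\,\sigma(W_1,\ldots,W_m)$. First I would establish the multiplicativity
$$\sigma(W_1,\ldots,W_m)=\sigma(W_1,\ldots,W_{m-1})\cdot\sigma\big(W_1+\cdots+W_{m-1},\,W_m\big),$$
valid whenever $W_1+\cdots+W_{m-1}$ is a direct sum (and with both sides equal to $0$ otherwise): comparing an $R$--basis of $(W_1+\cdots+W_{m-1})_R$ with the concatenation of $R$--bases of the $(W_i)_R$, $i<m$, the latter differs from the former by a change of basis of absolute determinant $\sigma(W_1,\ldots,W_{m-1})$, by Lemma~\ref{le:RbasChar}. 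Conditioning on $W_1,\ldots,W_{m-1}$ (whose sum is a.s.\ direct, of dimension $\sum_{i<m}c_i$) and using $\Gl_N(R)$--invariance then reduces the computation to the case $m=2$ by induction on $m$. For $m=2$, say $\dim W_1=c_1\le c_2=\dim W_2$: the position vector $x$ of $(W_1,W_2)$ lies in $\bN^{c_1}_o$ and $\sigma(W_1,W_2)=\e^{x_1+\cdots+x_{c_1}}$ (Section~\ref{sec:relative-position-general}), so Corollary~\ref{cor:rho-inheritance}, applied with ambient dimensions $N\le N+1$, yields
$$\E\,\sigma(W_1,W_2)=\sum_{x\in\bN^{c_1}_o}\rho_{c_1,c_2,N}(x)\,\e^{x_1+\cdots+x_{c_1}}=\frac{c_{c_1,c_2,N}}{c_{c_1,c_2,N+1}}.$$
Expanding $c_{k,\ell,n}=\gamma_k\gamma_{n-k}\gamma_\ell\gamma_{n-\ell}/(\gamma_{n-k-\ell}\gamma_n)$ and using $\gamma_{j+1}/\gamma_j=1-\e^{j+1}=(1-\e)\,|\proj^{j}|$ (see~\eqref{eq:vol-proj}), this equals $|\proj^{N-c_1-c_2}|\,|\proj^{N}|\big/\big(|\proj^{N-c_1}|\,|\proj^{N-c_2}|\big)$. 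Feeding this into the induction gives
$$\sigma_H=\E\,\sigma(W_1,\ldots,W_m)=\frac{|\proj^{N-\sum_i c_i}|\cdot|\proj^{N}|^{m-1}}{\prod_{i=1}^{m}|\proj^{N-c_i}|}=\frac{|\proj^{k}|\cdot|\proj^{n-1}|^{m-1}}{\prod_{i=1}^{m}|\proj^{d_i}|}.$$
Substituting into Corollary~\ref{cor:GIGF} with $|G/H|=|\proj^{n-1}|$ and cancelling proves part~(1).

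For part~(2), I would apply part~(1) with $m=d+1$ to $Y_1=Y''$, where $Y''\subseteq Y$ is an arbitrary \emph{open} subset (hence itself a $d$--dimensional analytic submanifold of $\proj^{n-1}(K)$), and $Y_2=\cdots=Y_{d+1}$ a fixed hyperplane $\simeq\proj^{n-2}(K)$. Then $k=0$, $|\proj^0|=1$, and $|Y_i|=|\proj^{n-2}|$ for $i\ge2$, while left-invariance of the Haar measure identifies $\E_{(g_1,\ldots,g_{d+1})}\#(g_1Y''\cap g_2H_0\cap\cdots\cap g_{d+1}H_0)$ with $\E\,\#(Y''\cap H_1\cap\cdots\cap H_d)$ for independent uniformly random hyperplanes $H_i$; so part~(1) gives $\E\,\#(Y''\cap H_1\cap\cdots\cap H_d)=|Y''|/|\proj^d|$. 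Finally, $Y''\mapsto\E\,\#(Y''\cap H_1\cap\cdots\cap H_d)$ and $Y''\mapsto |Y''|/|\proj^d|$ are finite Borel measures on $Y$ that agree on the $\pi$--system of open subsets of $Y$, hence on all Borel subsets, which is part~(2) for a general measurable $Y'$.

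The main obstacle is the evaluation of $\sigma_H$: concretely, proving the multiplicativity identity for $\sigma$ and carrying out the $m=2$ base case, the latter hinging on the precise way the joint density $\rho_{k,\ell,n}$ depends on the ambient dimension (Corollary~\ref{cor:rho-inheritance}); everything else is bookkeeping with the volume formula~\eqref{eq:vol-proj} and the invariance properties already developed.
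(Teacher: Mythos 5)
Your proof is correct, but it takes a genuinely different route from the paper's. The paper evaluates $\sigma_H$ \emph{implicitly}: it applies Corollary~\ref{cor:GIGF} to the special choice $Y_i=\proj^{d_i}$, where $g_1\proj^{d_1}\cap\cdots\cap g_m\proj^{d_m}\simeq\proj^k$ almost surely, obtains the identity~\eqref{eq:sigmaH-pro}, and then divides the general case by this one so that $\sigma_H$ cancels without ever being computed. You instead compute $\sigma_H$ \emph{explicitly}: you identify $\sigma_H=\E\,\sigma(W_1,\ldots,W_m)$ with independent uniform $W_i\in G(c_i,N)$ (via Example~\ref{ex:sigma-proj}), prove the multiplicativity $\sigma(W_1,\ldots,W_m)=\sigma(W_1,\ldots,W_{m-1})\,\sigma(W_1+\cdots+W_{m-1},W_m)$ from Lemma~\ref{le:RbasChar}, condition and induct down to $m=2$, and there use $\sigma(W_1,W_2)=\e^{x_1+\cdots+x_{c_1}}$ together with Corollary~\ref{cor:rho-inheritance} to get $\E\,\sigma(W_1,W_2)=c_{c_1,c_2,N}/c_{c_1,c_2,N+1}=|\proj^{N-c_1-c_2}|\,|\proj^N|/(|\proj^{N-c_1}|\,|\proj^{N-c_2}|)$; the resulting telescoping product reproduces~\eqref{eq:sigmaH-pro}, and I checked each of these steps. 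The paper's method is shorter and needs less machinery (it never touches the joint distribution $\rho_{k,\ell,n}$); your method is a direct derivation of the constant and is essentially the computation the paper later performs, in a harder guise, when analyzing $\alpha_K(k,m)$ in the Schubert setting. For part~(2) the routes also diverge slightly: the paper handles general measurable $Y'$ by going back to the basic integral formula (Proposition~\ref{pro:BIF}), whereas you establish the equality on open subsets via part~(1) and then extend to all Borel $Y'\subseteq Y$ by a $\pi$--system argument; both are legitimate.
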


\begin{proof}
Let us first prove point (1). The average factor $\sigma_H(y_1, \ldots, y_m)$ in Corollary~\ref{cor:GIGF} 
only depends on the cotangent spaces $T_{y_i}^* Y_i$ 
and the choice of $y_i$ in $Y_i$ is arbitrary. 
To determine~$\sigma_H$, we may thus 
choose for~$Y_i$ a projective subspace of dimension~$d_i$. Since,
for almost all $(g_1, \ldots, g_m)\in G^m$, we have 
$g_1\proj^{d_1} \cap\ldots\cap g_m\proj^{d_m} \simeq \proj^{k}$, 
we obtain from Corollary~\ref{cor:GIGF} 
\begin{equation}\label{eq:sigmaH-pro}
 |\proj^{k}| = \sigma_H \cdot |\proj^{n-1}| \cdot \prod_{i=1}^m \frac{|\proj^{d_i}|}{|\proj^{n-1}|} .
\end{equation}
When dividing  the left hand side 
of the equation of Corollary~\ref{cor:GIGF}  
by this, $\s_H$ cancels and the first assertion follows. 

We now move to point (2). We observe that this follows from the first when taking for $Y_2,\ldots,Y_m$ hyperplanes 
and $Y_1=Y$. The variation in terms of the measurable subset $Y'$ of $Y$ follows along 
the same line of the proof, based on Proposition~\ref{pro:BIF} and is omitted.
\end{proof}

\begin{cor}\label{cor:alpha-proj}
The average scaling factor defined in~\eqref{eq:sigma-proj} satisfies
$$
 \alpha_K(1,n-1) = \frac{1-\e}{1-\e^n} \Big(\frac{1-\e^n}{1-\e^{n-1}}\Big)^{n-1} .
$$
Furthermore, suppose that $A=[a_{ij}]\in R^{n\times n}$ is random, where the $a_{ij}\in R$ are chosen 
independently and uniformly at random. Then, for uniformly random~$x\in R^n$, 
$$
 \E |\det(A)| = \alpha_K(1,n) (\E \|x\|)^n = \frac{1-\e}{1-\e^{n+1}} .
$$
\end{cor}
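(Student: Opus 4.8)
The plan is to establish the two formulas in \cref{cor:alpha-proj} as applications of the integral geometry machinery, specifically \cref{cor:KL} combined with the explicit volume of projective space \eqref{eq:vol-proj}. The starting point is \eqref{eq:sigmaH-pro} from the proof of \cref{cor:KL}: taking $G=\GL_n(R)$, $G/H=\proj^{n-1}(K)$, and the submanifolds $Y_i$ to be $n-1$ projective hyperplanes ($d_i=n-2$), so that $k=0$ and a transversal intersection of $n-1$ generic hyperplanes in $\proj^{n-1}$ is a single point ($|\proj^0|=1$). Then \eqref{eq:sigmaH-pro} reads
\[
 1 = \alpha_K(1,n-1)\cdot |\proj^{n-1}(K)| \cdot \left(\frac{|\proj^{n-2}(K)|}{|\proj^{n-1}(K)|}\right)^{n-1},
\]
using \cref{ex:sigma-proj} to identify the constant value of $\sigma_H$ with $\alpha_K(1,n-1)$. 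Solving for $\alpha_K(1,n-1)$ and substituting $|\proj^{m}(K)| = \frac{1-\e^{m+1}}{1-\e}$ from \eqref{eq:vol-proj} gives
\[
 \alpha_K(1,n-1) = \frac{1}{|\proj^{n-1}(K)|}\left(\frac{|\proj^{n-1}(K)|}{|\proj^{n-2}(K)|}\right)^{n-1}
 = \frac{1-\e}{1-\e^n}\left(\frac{1-\e^n}{1-\e^{n-1}}\right)^{n-1},
\]
which is the first claimed identity. The only thing to check carefully here is that the hypotheses of \cref{cor:KL}(1) are met, i.e. that $\sum_{i=1}^{n-1}(n-1-d_i) = n-1 \le n-1$, and that almost surely the intersection of $n-1$ random hyperplanes is a single reduced point — both are immediate.

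For the second part, I would first relate $\alpha_K(1,n) := \E\|v_1\wedge\cdots\wedge v_n\|$ over independent uniform $v_i\in S(K^n)$ to the expected absolute value of a determinant. By \cref{prop:pushfHaarII}(1), writing each $v_i = \varpi^{-k_i} u_i$ with $u_i$ uniform on $S(K^n)$ is not quite what is needed; instead, for a random matrix $A=[a_{ij}]$ with i.i.d.\ uniform entries in $R$, decompose each column $A^{(i)}$ via \eqref{eq:isoKn} as $A^{(i)} = \varpi^{c_i} v_i$ with $v_i \in S(K^n)$ and $c_i\ge 0$ having distribution $\rho_n$ restricted to $\N$ (suitably renormalized), the $v_i$ being independent and uniform on $S(K^n)$ and independent of the $c_i$. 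Since $N$ (here the ordinary determinant) is multilinear in columns, $|\det A| = \e^{c_1+\cdots+c_n}\,\|v_1\wedge\cdots\wedge v_n\|$ (using \eqref{eq:adet-row}-style reasoning, or directly $|\det| $ is homogeneous of degree $1$ in each column). Taking expectations and using independence,
\[
 \E|\det A| = \alpha_K(1,n)\cdot \prod_{i=1}^n \E(\e^{c_i}).
\]
Now $\E(\e^{c_i})$ over a single column $x\in R^n$ is exactly $\E\|x\| = (1-\e^n)(1-\e^{n+1})^{-1}$ by \cref{prop:pushfHaarII}(2) with $m=1$, so $\E|\det A| = \alpha_K(1,n)(\E\|x\|)^n$, giving the middle equality. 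For the final value, substitute the formula for $\alpha_K(1,n)$ just proved (with $n$ in place of $n-1$): $\alpha_K(1,n) = \frac{1-\e}{1-\e^{n+1}}\left(\frac{1-\e^{n+1}}{1-\e^n}\right)^n$, and multiply by $(\E\|x\|)^n = \left(\frac{1-\e^n}{1-\e^{n+1}}\right)^n$; the two bracketed powers cancel, leaving $\E|\det A| = \frac{1-\e}{1-\e^{n+1}}$.

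The main obstacle is the bookkeeping in the second part: one must verify cleanly that a random $R$-column factors as $\varpi^{c}v$ with $v$ uniform on the sphere \emph{independent} of $c$, and that the determinant's column-homogeneity lets the scalar $\e^{c_i}$ factor out of the wedge-norm expectation. This is where \cref{prop:pushfHaarII}(1) (the pushforward of Haar measure under $x\mapsto(k,u)$ is a product measure) does the work — it is precisely the independence statement needed. Everything else is substitution of the closed forms \eqref{eq:vol-proj} and \cref{prop:pushfHaarII}(2), and simplification; in particular one should double-check the exponent arithmetic so that the telescoping of $\left(\frac{1-\e^{n+1}}{1-\e^n}\right)^n$ against $(\E\|x\|)^n$ is exact.
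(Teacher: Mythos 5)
Your proposal is correct and follows essentially the same route as the paper: deriving the first formula by specializing~\eqref{eq:sigmaH-pro} to $n-1$ hyperplanes in $\proj^{n-1}$ and plugging in~\eqref{eq:vol-proj}, and deriving the second by decomposing each random column of $A$ into its norm times a unit-sphere vector, whose independence is supplied by Proposition~\ref{prop:pushfHaarII}. (Two small notes: the paper's proof text writes ``$d_1=\ldots=d_{n-1}=1$'', which appears to be a typo for $d_i=n-2$ -- your reading is the consistent one; also, $\rho_n$ restricted to $\N$ is already a probability measure, so no renormalization is actually needed.)
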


\begin{proof}
We obtain from~\eqref{eq:sigmaH-pro} in the case $d_1=\ldots=d_{n-1}=1$ that 
$$
 1 = \alpha_K(1,n-1) \cdot |\proj^{n-1}| \cdot \Big(\frac{|\proj^{n-2}|}{|\proj^{n-1}|} \Big)^{n-1} .
$$
Substituting~\eqref{eq:vol-proj} yields the first assertion.

Next, let $a_1,\ldots,a_n\in R^n$ denote the columns of $A$ and write 
$a_i = \|a_i\|\cdot u_i$. According to Proposition~\ref{prop:pushfHaarII}, 
$\|a_i\|$ and $u_i$ are independent, and $u_i$ is uniformly distributed in $S(K^n)$.
Therefore
$$
 \E |\det [a_1,\ldots,a_n]| = \E\|a_1\| \cdots \E \|a_n\|\cdot \E |\det [u_1,\ldots,u_n]| 
  = (\E\|a_1\|)^n \alpha_K(1,n) .
$$
The second assertion follows with Proposition~\ref{prop:pushfHaarII}.
\end{proof}

We remark that the distribution of $|\det(A)|$ was determined by Evans~\cite{evans:02}.
The formula for $\E |\det(A)|$ was also obtained in \cite[Cor.~12]{el-manssour-lerario:20}.

\subsection{Proof of Theorem~\ref{thm:GIGF}}\label{se:proofGIGF}

It turns out that, once the analogue of Sard's lemma (Theorem~\ref{th:sard}) 
and the coarea formula (Theorem~\ref{th:coarea}) are settled,
the proofs in Howard~\cite[Thm.~3.8]{howard:93} and~\cite{BL:19} can be translated to 
the nonarchimedean setting. 
The major ingredient in Howard's proof is his basic integral formula~(2.7)
that we restate here in our setting. 

\begin{prop}\label{pro:BIF}
Let $G$ denote a compact $K$--analytic group with $R$--structure.
Further, let $M_1$ and $M_2$ be $K$--analytic submanifolds of $G$ such that 
$\dim(M_1) + \dim(M_2) \ge \dim(G)$. Then, for almost all $g\in G$, the submanifolds 
$M_1$ and $gM_2$ intersect transversally, and for any Borel measurable function 
$h\colon M_1\times M_2 \to [0,\infty]$, we have 
$$
 \int_G \int_{M_1\cap gM_2} h \circ\varphi_g(x) \cdot \Omega_{M_1\cap gM_1}(x)\, \Omega_G(g) = 
 \int_{M_1\times M_1} h\, \sigma(N_{x_1}M_1,N_{x_2}M_2) \, \Omega_{M_1\times M_2}(x_1,x_2) ,
$$ 
where the map $\varphi_g\colon M_1\cap gM_2 \to M_1\times M_2$ is given by 
$\varphi_g(x) := (x,g^{-1}x)$. 
\end{prop}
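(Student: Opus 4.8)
The plan is to adapt Howard's proof of his basic integral formula (2.7) from \cite{howard:93} to our nonarchimedean setting, using the tools already assembled: the coarea formula (Theorem~\ref{th:coarea}), Sard's lemma (Theorem~\ref{th:sard}), and Lemma~\ref{le:auxy}, which computes the absolute determinant of a subtraction map in terms of the quantity $\sigma$ of conormal spaces. First I would set up the \emph{incidence manifold}. Consider the product $M_1\times M_2\times G$ and the map $\Phi\colon M_1\times M_2\times G\to G\times G$, $(x_1,x_2,g)\mapsto (x_1, g x_2)$, or more usefully the map into $G$ given by $(x_1,x_2,g)\mapsto x_1^{-1} g x_2$ (or $g x_2 x_1^{-1}$, depending on the side conventions). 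The set $I := \{(x_1,x_2,g)\mid x_1 = g x_2\}$ is then the fiber over the identity. Using left/right invariance of $\Omega_G$ and of the norms on the tangent spaces (which holds because the $R$--structure on $G$ is compatible), one checks that $\Phi$ is a submersion onto a full-dimensional image, hence $I$ is a $K$--analytic submanifold of $M_1\times M_2\times G$ of dimension $\dim M_1 + \dim M_2$. Transversality of $M_1$ and $gM_2$ for almost all $g$ then follows from applying Sard's lemma (Theorem~\ref{th:sard}) to the projection $I\to G$, $(x_1,x_2,g)\mapsto g$: its critical values form a null set, and over a regular value $g$ the fiber is precisely the graph of $\varphi_g$ over $M_1\cap g M_2$, which forces the intersection to be transversal.

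Next I would compute the relevant absolute Jacobians. There are two natural projections from the incidence manifold $I$: the projection $\pi_G\colon I\to G$ and the projection $\pi_{12}\colon I\to M_1\times M_2$. The key computation is to express the absolute Jacobians of these two projections, at a point $(x_1,x_2,g)\in I$, in terms of $\sigma(N_{x_1}M_1, N_{x_2}M_2)$. This is where Lemma~\ref{le:auxy} enters: after translating everything to the identity of $G$ via the (norm-preserving, $R$--defined) derivatives of left and right multiplication, the normal space to $I$ inside $M_1\times M_2\times G$ along the fiber is governed by a subtraction/addition map on $T_IG$ whose absolute determinant is exactly $\sigma$ of the relevant conormal subspaces, i.e.\ $\sigma(N_{x_1}M_1, N_{x_2}M_2)$. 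More precisely, I expect $J(\pi_G) = 1$ and $J(\pi_{12}) = \sigma(N_{x_1}M_1,N_{x_2}M_2)$ (up to swapping the roles, matching Howard's bookkeeping), using that over $R$ the conormal directions and the ``difference'' map reproduce the quantity in \eqref{eq:def-sigma-many}. One must be careful that in the ultrametric world $\sigma$ is a genuine product of invariant factors rather than a product of sines, but Lemma~\ref{le:auxy} is tailored precisely to this.

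With the two Jacobians in hand, the proof is a double application of the coarea formula (Theorem~\ref{th:coarea}) to the two projections $\pi_G$ and $\pi_{12}$ from $I$, integrating the pullback of $h$ along $\pi_{12}$. Applying the coarea formula along $\pi_G$ writes $\int_I (h\circ\pi_{12})\, J(\pi_G)\,\Omega_I = \int_{g\in G}\big(\int_{\pi_G^{-1}(g)} h\circ\pi_{12}\ \Omega_{\pi_G^{-1}(g)}\big)\Omega_G$; identifying $\pi_G^{-1}(g)$ with $M_1\cap gM_2$ via $\varphi_g$ (an $R$--bianalytic identification with unit Jacobian, by Corollary~\ref{cor:vol-graph}) turns the inner integral into $\int_{M_1\cap gM_2} h\circ\varphi_g\ \Omega_{M_1\cap gM_2}$. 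Applying the coarea formula along $\pi_{12}$ instead, and noting the fibers of $\pi_{12}$ are singletons (given $x_1,x_2$, the element $g$ with $x_1 = gx_2$ is determined when the intersection is transversal---more carefully, one restricts to the open dense locus where this holds, which suffices by Sard), yields $\int_{M_1\times M_2} h\, J(\pi_{12})\,\Omega_{M_1\times M_2} = \int_{M_1\times M_2} h\, \sigma(N_{x_1}M_1,N_{x_2}M_2)\,\Omega_{M_1\times M_2}$. Equating the two expressions (legitimate since $J(\pi_G)=1$ on $I$) gives the claimed identity.

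The main obstacle I anticipate is the bookkeeping in the Jacobian computation on the incidence manifold: one must choose $R$--bases of the tangent spaces of $M_1$, $M_2$, $G$, and of $I$ compatibly, carefully track which translations (left vs.\ right) are applied where, and verify that the normal-space computation reduces exactly to the subtraction map of Lemma~\ref{le:auxy} so that the factor $\sigma(N_{x_1}M_1,N_{x_2}M_2)$ appears and no spurious unit factors survive. A secondary technical point is handling the locus where $M_1\cap gM_2$ fails to be transversal or is empty: by Sard's lemma this is negligible for the outer $G$--integral, and one restricts the incidence manifold to the open subset where $\pi_G$ is a submersion, which does not change any of the integrals since its complement has measure zero. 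Once these are dispatched, the rest is a clean two-fold application of the coarea formula exactly as in Howard.
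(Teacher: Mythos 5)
Your approach is exactly the one the paper takes (which is, in turn, Howard's): set up the incidence manifold $I = \{(x_1,x_2,g) : x_1 = gx_2\}\subseteq M_1\times M_2\times G$, invoke Sard's lemma (Theorem~\ref{th:sard}) for generic transversality, apply the coarea formula (Theorem~\ref{th:coarea}) to the two projections $\pi_G\colon I\to G$ and $\pi_{12}\colon I\to M_1\times M_2$, and use Lemma~\ref{le:auxy} to identify the key Jacobian factor as $\sigma(N_{x_1}M_1,N_{x_2}M_2)$. The paper's own proof is nothing more than a pointer to Howard \S2.9--2.14 with Lemma~\ref{le:auxy} inserted as the replacement for the Euclidean normal-Jacobian computation, so you are essentially reconstructing what the paper leaves implicit.

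One bookkeeping point you flagged as unverified does need fixing: the Jacobian assignment you ``expect'' is reversed, and the final equating step as you wrote it does not close. After translating to the identity (legitimate because the $R$-structure on $G$ is compatible), the derivative of $\pi_G|_I$ is precisely the subtraction map $T_{x_1}M_1\times T_{x_2}M_2\to T_IG$, $(\dot u,\dot v)\mapsto \dot u-\dot v$, whose absolute determinant Lemma~\ref{le:auxy} identifies as $\sigma(N_{x_1}M_1,N_{x_2}M_2)$; whereas $\pi_{12}$ has absolute Jacobian $1$, since $T_{(x_1,x_2,g)}I$ is the graph of an $R$-defined linear map over $T_{x_1}M_1\times T_{x_2}M_2$ and the argument of Corollary~\ref{cor:vol-graph} applies. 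A concrete sanity check: take $G=K^2$ (additive), $M_1 = K\cdot(1,0)$, $M_2 = K\cdot(1,\varpi)$; then $I$ is the graph $(a,b)\mapsto(a-b,-b\varpi)$, so $J(\pi_G)=|\varpi|=\e=\sigma(N_{x_1}M_1,N_{x_2}M_2)$ while $J(\pi_{12})=1$. With your stated assignment the two coarea applications produce $h/\sigma$ rather than $h\sigma$ on the right-hand side. With $J(\pi_G)=\sigma\circ\pi_{12}$ and $J(\pi_{12})=1$ everything closes: coarea applied to $\pi_G$ with integrand $h\circ\pi_{12}$ gives
$\int_I (h\sigma)\circ\pi_{12}\,\Omega_I = \int_G\int_{M_1\cap gM_2} h\circ\varphi_g\,\Omega_{M_1\cap gM_2}\,\Omega_G$,
while coarea applied to $\pi_{12}$ with integrand $(h\sigma)\circ\pi_{12}$ gives $\int_I (h\sigma)\circ\pi_{12}\,\Omega_I = \int_{M_1\times M_2} h\,\sigma\,\Omega_{M_1\times M_2}$; equating the two yields the proposition. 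Apart from this swap (and the ensuing correction to your final sentence), the proposal is sound and mirrors the paper's argument.
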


\begin{proof}(Sketch) 
We proceed exactly as for the proof of Howard~\cite[\S2.9--\S2.14]{howard:93}.
We note that, by a left-translation in the group $G$, for his computation of the 
Jacobian $Jf(\xi,\eta)$ in \S2.13, we may w.l.o.g. assume that $\xi=\eta=I$.
Now we replace Howard's argument by Lemma~\ref{le:auxy}. The remaining 
arguments are by straightforward adaptation and omitted.
\end{proof}

Based on Proposition~\ref{pro:BIF}, the proof of Theorem~\ref{thm:GIGF} 
now runs exactly as in~\cite[A5.2]{BL:19}.

\section{Applications to the geometry of Grassmannians and Schubert varieties} \label{se:grassmannian}

In this section we apply the previous results for studying the geometry of Grassmannians and Schubert varieties. 
In Section~\ref{sec:jointrandom}, we used the position vector
to give a parameterization of the Grassmannian in terms of the relative position of a space
with respect to a fixed one; 
in Section~\ref{sec:jointrandom}, we described the pushforward measure from the 
Grassmannian (endowed with the uniform measure) to the set of relative position vectors. 
We now use these results in Section~\ref{se:Vol-Schubert} to compute the volume of special Schubert varieties. 
In Section \ref{sec:PSC}, we shall outline the construction of a probabilistic version of nonarchimedean Schubert calculus, 
by studying the ``simplest Schubert problem" from a probabilistic point of view, as it was done in \cite{BL:19} for the real case.

We endow the Grassmannian $G(k,n)$ with the $R$--structure from Example \ref{ex:Grass}, 
which coincides with the $R$--structure as homogeneous space 
$G(k,n)=\mathrm{GL}_n(R)/(\mathrm{GL}_k(R)\times\mathrm{GL}_{n-k}(R))$;  
see~Example~\ref{ex:hom-spaces}.
Notice that, by construction, 
the group $\mathrm{GL}_n(R)$ acts on $G(k,n)$ by \emph{isometries}, 
i.e., by transformations that preserve the $R$--structure.

\subsection{Volume of special Schubert varieties}
\label{se:Vol-Schubert}

Let $k,\ell \le n$ and $m \le \min\{k,\ell\}$.
We fix $F\in G(\ell,n)$ and define the {\em special Schubert variety} 
and {\em special Schubert cell}, respectively, by 
\begin{align*}
\Omega_{k,\ell,m,n}(F) &:=\{E \in G(k,n) \mid \dim(E \cap F) \ge m \} , \\
\Omega_{k,\ell,m,n}^o(F) &:= \{E \in G(k,n) \mid \dim(E \cap F) =m \} .
\end{align*}

  It is well known that a Schubert variety is an algebraic variety defined over $\Q$, and
  thus it is an algebraic variety defined over any field containing $\Q$ such as $\R$, $\C$, or $K$.
  For further details about the defining equations, see~\cite[Section~4.1.3]{eisenbud-harris:16}
  or \cite{manivel:01}.
  It is no surprise that the locus of smooth $K$-rational points has the structure of a $K$--analytic
  manifold. The following proposition states precisely some details about this $K$--analytic manifold;
  we remark that the proof is essentially identical to the case over $\R$ or $\C$, and that the details
  are implicit in our discussion. 
We usually drop $F$ in the notation, as $F$ is consider fixed.

\begin{prop}
The Schubert variety $\Omega_{k,\ell,m,n}(F)$ is an algebraic subvariety of the Grassmannian $G(k,n)$, with 
smooth locus $\Omega_{k,\ell,m,n}^o(F)$, which is a 
$K$--analytic submanifold of $G(k,n)$ of codimension $mp$, where 
$p:=n+m-k-\ell$.  
\end{prop}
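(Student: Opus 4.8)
The plan is to verify the three claims—algebraicity, identification of the smooth locus, and the codimension count—by reducing everything to the local normal form for pairs of subspaces established in \cref{th:NF-pairs-subspaces}, together with the parametrization maps $\psi_x$ from \cref{sec:jointrandom}.

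\textbf{Algebraicity.} First I would recall that $\Omega_{k,\ell,m,n}(F)$ is cut out in $G(k,n)$ by the vanishing of the $(k+\ell-m+1)\times(k+\ell-m+1)$ minors of the $(k+\ell)\times n$ matrix obtained by stacking a local matrix representative of $E$ on top of a fixed matrix whose rows span $F$; indeed $\dim(E\cap F)\ge m$ iff $\dim(E+F)\le k+\ell-m$ iff that stacked matrix has rank $\le k+\ell-m$. These minors are polynomial in the Plücker-type local coordinates on $G(k,n)$ supplied by \cref{ex:Grass}, so $\Omega_{k,\ell,m,n}(F)$ is a Zariski-closed subvariety defined over $\Q$ (hence over $K$). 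This is the routine part and I would keep it brief, citing \cite[Section~4.1.3]{eisenbud-harris:16} or \cite{manivel:01} for the standard facts.

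\textbf{Smooth locus and codimension.} The heart of the argument is to exhibit $\Omega^o_{k,\ell,m,n}(F)$ as a $K$--analytic submanifold of the stated codimension. By $\GL_n(R)$-equivariance (the action is by isometries, and it is transitive on pairs with fixed $\dim(E\cap F)$ by \cref{cor:RPP-Char}), it suffices to work near one chosen point $E_0$ with $\dim(E_0\cap F)=m$. Fixing $F=K^\ell\times 0$ and using \cref{th:NF-pairs-subspaces} to put $E_0$ in normal form, I would produce an explicit analytic chart: write a nearby $E$ as the column span of a matrix whose first $k-m$ columns are small perturbations of $e_i+\varpi^{x_i}f_i$ (with all $x_i=0$, i.e.\ $E_0$ transverse off the intersection) and whose last $m$ columns are perturbations of a basis of $E_0\cap F$. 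In these coordinates the condition $\dim(E\cap F)\ge m$ becomes the vanishing of an $m\times p$ matrix block of the local coordinates (the block recording the components of the last $m$ basis vectors of $E$ in the complement of $F$), which is a coordinate subspace of codimension $mp$ with $p=n+m-k-\ell$; and $\dim(E\cap F)=m$ exactly on the open dense part where the remaining blocks have maximal rank. This simultaneously shows $\Omega^o$ is a submanifold of codimension $mp$ and that it is the smooth locus, since any point with $\dim(E\cap F)>m$ is a limit of points in strata of strictly larger dimension inside $\Omega$, forcing non-smoothness there—the standard incidence-variety dimension count, which I would import verbatim from the real/complex case as the proposition statement already allows.

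The step I expect to be the main obstacle is writing down the local chart cleanly enough that the defining block and its codimension are manifest without a long matrix computation; the cleanest route is to package it through \cref{cor:rectangle-BSVD} exactly as in the proof of \cref{th:NF-pairs-subspaces}, so that the chart is literally a restriction of the map $\psi_x$ of \eqref{eq:def-psi} at $x=0$, whose derivative and image dimension are already controlled by \cref{le:Jac-psi} and \cref{le:fibers}. Everything else—algebraicity, the identification of the singular locus via the stratification $\Omega=\bigsqcup_{m'\ge m}\Omega^o_{k,\ell,m',n}$, and the codimension bookkeeping—is then a direct transcription of the classical argument, which is why the proposition is stated with the remark that the proof is essentially identical to the case over $\R$ or $\C$.
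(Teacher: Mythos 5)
Your outline has the right flavor—normal form plus local chart—but there is a genuine error in the equivariance step, and it is precisely the corollary you cite that contradicts it. \cref{cor:RPP-Char} says that $\GL_n(R)$ acts transitively on pairs $(E,F)$ with a \emph{fixed position vector}, not with a fixed value of $\dim(E\cap F)$. The open stratum $\Omega^o_{k,\ell,m,n}(F)$ contains points whose position vectors $(\infty,\dots,\infty,x_{m+1},\dots,x_k)$ have arbitrary finite tails $x_{m+1}\le\dots\le x_k$, and these lie in distinct $\GL_n(R)$-orbits (the stabilizer of $F$ in $\GL_n(R)$ preserves the relative position, exactly as the orthogonal group preserves principal angles over $\R$). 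So you cannot reduce to the single point $E_0$ with ``all $x_i=0$''; the chart you write down only covers one orbit, not all of $\Omega^o$. If you want an equivariance shortcut you would need the full $\GL_n(K)$ (which is transitive on pairs with fixed intersection dimension and does preserve the property of being a $K$--analytic submanifold of a given codimension, though not the $R$-structure), but that is a different group and a different argument than the one you invoke.

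Relatedly, your assertion that in your coordinates ``the condition $\dim(E\cap F)\ge m$ becomes the vanishing of an $m\times p$ matrix block'' glosses over the fact that in any natural matrix chart the Schubert condition is a rank (determinantal) condition, not a linear one. What the paper actually does—this is \cref{le:Omega-chart}, which is where the details ``implicit in our discussion'' live—is work in the standard Pl\"ucker-type chart at an arbitrary $E_0\in\Omega^o$ with an arbitrary finite tail $(r_{m+1},\dots,r_k)=(\varpi^{x_{m+1}},\dots,\varpi^{x_k})$, characterize $\varphi^{-1}(\Omega^o)$ as the locus where a certain $(n-k)\times\ell$ matrix $M_A$ has rank $\ell-m$, and then compute the tangent space of that determinantal locus via \cref{le:det-variety}; only the tangent space, not the locus itself, is the coordinate subspace $\{\dot A_{31}=0\}$ of codimension $mp$. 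Your sketch needs to either (i) be carried out at an arbitrary position vector (not just $x_i=0$) with the determinantal analysis made explicit as in \cref{le:Omega-chart}, or (ii) replace the $\GL_n(R)$-equivariance by a $\GL_n(K)$-equivariance argument (or by the standard resolution $\{(E,L):L\subseteq E\cap F\}\to G(k,n)$, which is a fiber bundle over $G(m,F)$ and maps isomorphically onto $\Omega^o$). As written, the proposal does not establish the submanifold property at all points of $\Omega^o$.

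Your treatment of algebraicity (minor conditions on a stacked matrix, defined over $\Q$) is correct and matches the references the paper cites.
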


We note that $\Omega_{k,\ell, m,n}$ has codimension one iff 
$m=1$ and $p=1$, where the latter means $k+\ell=n$.  
Passing to the orthogonal complement provides the isometry
\begin{equation}\label{eq:goto-oc}
 G(k,n) \to G(n-k,n),\; E \mapsto E^\perp .
\end{equation}
It is straightforward to check that this isometry maps $\Omega_{k,\ell,m,n}(F)$ 
to $\Omega_{n-k,n-\ell,p,n}(F^\perp)$. 
Note that the parameters $m$ and $p$ are exchanged; 
the codimension of this Schubert variety is $pm$. 
Due to this observation on duality, we can assume that $k\le\ell$ 
without loss of generality. 

We now determine the volume of special Schubert varieties over $K$. 
It turns out to be very helpful 
to use a more symmetric notation for Grassmannians: 
let us define for $a,b\in\N$
$$
 \Gra(a,b) := G(a,a+b) = \{ E\subseteq K^{a+b} \mbox{ subspace }\mid \dim E = a\} ,
$$
so that $\Gra(b,a)\simeq\Gra(a,b)$ via the duality isomorphism~\eqref{eq:goto-oc}. 
In this notation, we can express the special Schubert variety as 
$$
 \Omega_{a,a_1;b,b_1} := \{ E \in \Gra(a+a_1,b+b_1) \mid \dim  (E\cap F) \ge a\} 
      = \Omega_{k,\ell,m,n},
$$
where $F\in\Gra(a+b_1, a_1 +b)$ is fixed, and 
\begin{equation}\label{eq:newnotation}
 a= m, \ a_1 = k-m, \ b= p = n+m-k-\ell, \ b_1 = n-k-p .
\end{equation}
Note that $\Omega_{a,a_1;b,b_1}$ has the codimension $ab$ in $\Gra(a+a_1,b+b_1)$. 
(In fact, this Schubert variety 
corresponds to the rectangular Young diagram $a\times b$, 
see~\cite{eisenbud-harris:16, manivel:01}.)
Duality can now be expressed in a more symmetric way: the map 
$$
 \Gra(a+a_1,b+b_1) \to \Gra(b+b_1,a+a_1), E\mapsto E^\perp
$$ 
sends $\Omega_{a,a_1;b,b_1}$ to the special Schubert variety 
$$
 \Omega_{b,b_1;a,a_1} = \{ E^\perp \in \Gra(b+b_1,a+a_1) \mid \dim  (E^\perp\cap F^\perp) \ge b\} ,
$$
where $F^\perp \in \Gra(a_1+b,a+b_1)$. 

We can now state the main result of this section.

\begin{thm}\label{th:main-vol-schubert} 
For any nonnegative integers $a,a_1,b,b_1$, we have 
\begin{equation*}
  \frac{|\Omega_{a,a_1;b,b_1}|}{|G(a+a_1,b+b_1)|} = \frac{
   |\Gra(a,b)| \cdot   |\Gra(a,b_1)| \cdot  |\Gra(a_1,b)|}{
   |\Gra(a+a_1,b)| \cdot  |\Gra(b+b_1,a)|}.
\end{equation*}
\end{thm}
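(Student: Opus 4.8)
The plan is to compute the ratio $|\Omega_{a,a_1;b,b_1}|/|G(a+a_1,b+b_1)|$ via the coarea formula by parameterizing the (dense, open) special Schubert cell $\Omega^o$ — where $\dim(E\cap F)$ is exactly $a$ — and carefully tracking the normal Jacobian, which is precisely where the position vector machinery of Section~\ref{se:rel-pos} enters. Concretely, with $k=a+a_1$, $\ell = a+b_1$, $n = (a+a_1)+(b+b_1)$, a subspace $E\in\Omega^o$ is one whose relative position vector $x\in\Nko$ with respect to the fixed $F\in\Gra(a+b_1,a_1+b)$ has exactly $a$ entries equal to $\infty$ (these record the $a$-dimensional intersection $E\cap F$). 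Since $\Omega^o$ is, up to a null set, the union over such $x$ of the images $\im\psi_x$ studied in Lemma~\ref{le:psi}, and these images are \emph{not} full-dimensional (their codimension is $ab$, matching the Schubert codimension), one can hope to express $|\Omega^o|$ as a weighted sum of fiber volumes. However, a cleaner route is available: reduce to the case where the intersection is cut out, i.e.\ quotient by $U := E\cap F$ on the locus $\Omega^o$ to replace $E,F$ by their images $\bar E\in G(a_1, n-a)$ and $\bar F\in G(b_1+a_1-? \ldots)$ and reduce to the ``transverse'' Schubert situation $\Omega_{0,a_1;b,b_1}$ (no forced intersection), then handle the general $a\ge 1$ case separately or inductively.

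More precisely, the first step I would take is to realize $\Omega^o_{a,a_1;b,b_1}$ as (the smooth locus of) a fiber bundle over the Grassmannian $G(a, a+b+b_1+\ldots)$ parameterizing the $a$-dimensional subspace $U = E\cap F$ inside $F$: the fiber over $U$ consists of $a_1$-dimensional subspaces of $K^n/U$ meeting the image $\bar F$ of $F$ trivially, i.e.\ a transverse (open) Schubert cell. By the coarea formula (Theorem~\ref{th:coarea}) applied to this bundle projection, whose absolute Jacobian is $1$ because all the maps involved are defined over $R$ (cf.\ Corollary~\ref{cor:vol-graph} and the isometric action of $\GL_n(R)$), one gets
$$
  |\Omega^o_{a,a_1;b,b_1}| = |{\rm base}| \cdot |{\rm fiber}|,
$$
where the base volume is a Grassmannian volume computed by Proposition~\ref{pro:vols} and the fiber is a transverse Schubert cell. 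So the problem reduces to: (i) compute the volume of a transverse open special Schubert cell $\Omega^o_{0,a_1;b,b_1}$, and (ii) assemble the $q$-binomial bookkeeping.

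For step (i), the natural tool is Corollary~\ref{cor:rho-inheritance}: the transverse cell $\Omega^o_{0,a_1;b,b_1}(F)\subseteq G(a_1,n)$ with $F\in G(a_1+b, n)$ fixed (here $n = a_1+b+b_1$, so $k=a_1$, $\ell = a_1+b$, and the ``$p$'' of the introduction equals $b$) is exactly $\{E : \chi(E)\in\N^{a_1}_o\}$, i.e.\ the locus where the position vector has no $\infty$ entries. Hence its relative volume is $\sum_{x\in\N^{a_1}_o}\rho_{a_1,a_1+b,n}(x)$, which is the $N=n$ case of the identity in Corollary~\ref{cor:rho-inheritance} with a suitable $N$; more usefully, the generating-function identity $\sum_x \rho_{k,\ell,n}(x)\,\e^{(N-n)(x_1+\dots+x_k)} = c_{k,\ell,n}/c_{k,\ell,N}$ lets one read off $\sum_x \rho(x)$ as a limit/specialization, and the explicit formula for $c_{k,\ell,n}=\g_k\g_{n-k}\g_\ell\g_{n-\ell}/(\g_{n-k-\ell}\g_n)$ turns everything into products of $\g_j = \prod_{i\le j}(1-\e^i)$, i.e.\ into $q$-binomials via Proposition~\ref{pro:vols}. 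Translating back through the substitution \eqref{eq:newnotation} and $\Gra(a,b) = G(a,a+b)$, the three factors $|\Gra(a,b)|,|\Gra(a,b_1)|,|\Gra(a_1,b)|$ in the numerator and $|\Gra(a+a_1,b)|,|\Gra(b+b_1,a)|$ in the denominator should emerge; the symmetric $\Gra$-notation is designed precisely so that the $\gamma$-cancellations group this way, and I would verify the final identity by rewriting both sides purely in terms of $\gamma$'s and checking equality of the resulting rational functions of $\e$.

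The main obstacle I anticipate is bookkeeping the codimension/dimension counts correctly in the fiber-bundle reduction and matching them against Corollary~\ref{cor:rho-inheritance} — in particular, being careful that $\Omega^o$ (not the singular $\Omega$) is what the coarea formula sees, that removing the lower Schubert strata changes neither volume, and that the ``$\infty$-entries occur with density zero'' remark (used in the proof of Corollary~\ref{cor:rho-inheritance}) is exactly what justifies replacing the sum over all of $\Nko$ by the sum over $\N^k_o$. A secondary subtlety is the normal Jacobian of the bundle projection $E\mapsto E\cap F$: one must check it is genuinely $1$, which follows because locally this map is a coordinate projection in charts adapted to the $R$-structure (the charts of Example~\ref{ex:Grass}), so its derivative is defined over $R$ and has all singular values $1$. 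Once these structural points are pinned down, the remainder is the routine $q$-binomial algebra sketched above, which by construction of the $\Gra$-notation collapses to the stated formula; by the same token, as the footnote in the introduction indicates, the identical argument over $\R$ or $\C$ (replacing $\gamma_n$ by the corresponding real/complex Grassmannian volumes and Corollary~\ref{cor:rho-inheritance} by its archimedean analogue) would yield the real and complex versions.
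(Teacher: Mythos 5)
Your overall architecture is the same as the paper's: fiber $\Omega^o = \Omega^o_{a,a_1;b,b_1}$ over the map $\Psi\colon E\mapsto E\cap F$ into $G(a,F)\simeq \Gra(a,b_1)$, reduce the fiber integral to a sum over position vectors on $\cG=\Gra(a_1,b+b_1)$, and use Corollary~\ref{cor:rho-inheritance} plus Proposition~\ref{pro:vols} to grind out the $\gamma$-algebra. But the load-bearing step fails. You assert that the absolute Jacobian of $\Psi$ is identically $1$ because ``locally this map is a coordinate projection in charts adapted to the $R$-structure.'' That is false, and it is exactly the point of Proposition~\ref{pro:Jac-Psi-m}: from Lemma~\ref{le:Omega-chart}(3), in the Grassmannian charts the derivative of $\Psi$ acts by $\dot A_{11}\mapsto \dot A_{11}D^{-1}$ with $D=\diag(r_{a+1},\dots,r_{k})$ encoding the non-$\infty$ position parameters of $E$ against $F$, so $J(\Psi)(E)=\prod_{i=1}^{a_1}|r_i|^{-a}$. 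Since each $|r_i|\le 1$, this is $\ge 1$, and it exceeds $1$ on a positive-measure part of $\Omega^o$ whenever $a\ge 1$. Multiplication by $D^{-1}$ is not defined over $R$, the isometric $\GL_n(R)$-action only tells you $J(\Psi)$ depends on the position vector alone (not that it is $1$), and Corollary~\ref{cor:vol-graph} does not apply because the graph in question is not that of an $R$-defined map.

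This is not a cosmetic gap: it changes the answer. If $J(\Psi)\equiv 1$, the coarea formula would give $|\Omega|=|\Gra(a,b_1)|\cdot|\Gra(a_1,b+b_1)|$; for $a=a_1=b=b_1=1$ that yields $\frac{|\Omega|}{|G(2,4)|}=\frac{1+\e}{1+\e^2}$, whereas the theorem (and Corollary~\ref{cor:dimvol-codim-1}) gives $\frac{(1+\e)^3}{(1+\e+\e^2)^2}$; at $q=2$ these are $6/5$ versus $54/49$. The correct fiber integral in the coarea formula carries the weight $J(\Psi)^{-1}=\e^{\,a(x_1+\cdots+x_{a_1})}$, and that weight is precisely what forces you to apply Corollary~\ref{cor:rho-inheritance} with the nonzero exponent $N-n=a$ — not with $N=n$, where the identity degenerates to $\sum_x\rho(x)=1$ and your ``transverse cell volume'' step becomes vacuous (the transverse cell is the whole Grassmannian). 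Your ``more usefully'' aside about the generating function is the right identity, but in your argument you have no reason to introduce a nonzero exponent because you have zeroed out the Jacobian. Once $J(\Psi)$ is computed correctly, the remainder of your plan — the fiber $\cG$, the generating-function identity with $N-n=a$, and the $\gamma$-cancellation in the $\Gra$-notation — proceeds as in the paper.
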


For the codimension one case $m=p=1$ this specializes as follows. 

\begin{cor}\label{cor:dimvol-codim-1}
Consider the codimension one special Schubert variety 
$\Omega := \Omega_{1, k-1;1, n-1}$ of $G(k,n)$ 
consisting of the $k$--planes nontrivially meeting an $(n-k)$--plane. 
The volume of $\Omega$ is given by:
$$
\frac{|\Omega|}{|G(k,n)|} 
 = |\proj^1| \cdot \frac{|\proj^{k-1}|}{|\proj^{k}|} \cdot \frac{|\proj^{n-k-1}|}{|\proj^{n-k}|} . 
$$
\end{cor}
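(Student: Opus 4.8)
The plan is to deduce this directly from \cref{th:main-vol-schubert} by specialising to the codimension one case. By its geometric description, $\Omega$ is the special Schubert variety $\Omega_{k,\ell,m,n}(F)$ with $\ell=n-k$ and $m=1$, so that $p=n+m-k-\ell=1$ and $\codim\Omega=mp=1$. Translating into the symmetric notation via~\eqref{eq:newnotation} gives $a=m=1$, $a_1=k-m=k-1$, $b=p=1$, $b_1=n-k-p=n-k-1$; that is, $\Omega=\Omega_{1,k-1;1,n-k-1}$, which indeed sits inside $\Gra(a+a_1,b+b_1)=\Gra(k,n-k)=G(k,n)$, with the fixed subspace $F\in\Gra(a+b_1,a_1+b)=\Gra(n-k,k)=G(n-k,n)$ being an $(n-k)$-plane as required.

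With these parameters, \cref{th:main-vol-schubert} reads
\[
  \frac{|\Omega|}{|G(k,n)|}
  = \frac{|\Gra(1,1)|\cdot|\Gra(1,n-k-1)|\cdot|\Gra(k-1,1)|}{|\Gra(k,1)|\cdot|\Gra(n-k,1)|}.
\]
I would then rewrite each of the five factors as a projective volume. From the definition $\Gra(1,c)=G(1,c+1)=\proj^c(K)$ one gets $|\Gra(1,1)|=|\proj^1|$ and $|\Gra(1,n-k-1)|=|\proj^{n-k-1}|$. For the remaining three factors one uses the duality map $\Gra(a,b)\to\Gra(b,a),\ E\mapsto E^\perp$ of~\eqref{eq:goto-oc}, which is an isometry (it preserves the $R$-structure) and hence volume-preserving; this gives $|\Gra(k-1,1)|=|\Gra(1,k-1)|=|\proj^{k-1}|$, $|\Gra(k,1)|=|\proj^{k}|$, and $|\Gra(n-k,1)|=|\proj^{n-k}|$. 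Substituting and regrouping the five factors yields exactly $|\proj^1|\cdot\frac{|\proj^{k-1}|}{|\proj^{k}|}\cdot\frac{|\proj^{n-k-1}|}{|\proj^{n-k}|}$, which is the claimed identity.

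Granting \cref{th:main-vol-schubert}, this corollary is essentially a bookkeeping exercise: the only two points needing care are the identification of the codimension one parameters through~\eqref{eq:newnotation} and the fact that the duality isomorphism~\eqref{eq:goto-oc} is volume-preserving, so that the relevant "column" and "line" Grassmannians really are projective spaces of equal volume. There is no genuine obstacle at this level; the substantive work lies upstream in \cref{th:main-vol-schubert} itself, which rests on the integral geometry formula (\cref{cor:main}), the transitivity of $\GL_n(R)$ on the tangent spaces of Schubert cells, and the explicit distribution $\rho_{k,\ell,n}$ of position vectors from \cref{th:joint-densityK}. As a sanity check one can plug in $|\proj^m|=(1-\e^{m+1})/(1-\e)$ from \cref{example:volproj}, verify that the right-hand side lies in $[0,1]$ and tends to $1$ as $\e\to 0$ (a random $k$-plane almost surely misses a fixed complementary subspace in that regime), and observe that the resulting closed form coincides verbatim with the real and complex formulas of~\cite{BL:19}.
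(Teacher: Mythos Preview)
Your argument is correct and is exactly the paper's approach: the corollary is the specialization of \cref{th:main-vol-schubert} to $(a,a_1,b,b_1)=(1,k-1,1,n-k-1)$, followed by identifying each $\Gra$ factor with a projective space via duality (you also tacitly correct the paper's typo $b_1=n-1$ to the right value $b_1=n-k-1$). One remark on your closing sanity check: the ratio $|\Omega|/|G(k,n)|$ compares a codimension-one volume to a top-dimensional volume, so there is no reason for it to lie in $[0,1]$ (indeed it is $\approx 1+\e$ for small $\e$), and it does not carry the probabilistic interpretation you suggest; also, \cref{th:main-vol-schubert} is proved via the coarea formula applied to the intersection map together with \cref{cor:rho-inheritance}, not via the integral geometry formula \cref{cor:main}.
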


As we already observed, the formulas in Corollary~\ref{cor:dimvol-codim-1} 
are {\em identical} to the ones over $\R$ and over $\C$, which were 
obtained in~\cite{BL:19}.

\begin{remark}\label{re:vol-SpS-new}
By tracing our proof of Theorem~\ref{th:main-vol-schubert}, one can show 
that the same formula also holds in the setting over $\R$ and $\C$. 
Over $\R$ this is a new result 
(the reference \cite{BL:19} only dealt with the codimension one case).
\end{remark}

We turn now to the proof of Theorem~\ref{th:main-vol-schubert}. 
For this, we consider the intersection map 
\begin{equation}\label{eq:def-phi-map}
 \Psi\colon \Omega_{a,a_1;b,b_1}^o \to G(a,F),\, E\mapsto E\cap F.
\end{equation}
Note that all of the fibers of $\varphi$ are isomorphic to $\Gra(a_1,b+b_1)$. 
The next lemma describes the absolute Jacobian of $\Psi$. 
We postpone the proof to Section~\ref{se:lemma-Jac}.

\begin{prop}\label{pro:Jac-Psi-m}
For $E\in\Omega_{a,a_1;b,b_1}^o$ we have 
$$
  J(\Psi)(E) = \prod_{i=1}^{a_1} |r_i|^{-a}  ,
$$
where $r_1,\ldots,r_{a_1}\in R $ correspond to the position vector 
of $E$ and $F$. 
\end{prop}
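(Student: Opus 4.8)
The strategy is to compute the absolute Jacobian of $\Psi$ at a single conveniently chosen point $E\in\Omega_{a,a_1;b,b_1}^o$, using the $\mathrm{GL}_n(R)$-equivariance of $\Psi$ to reduce to a normal form. By Theorem~\ref{th:NF-pairs-subspaces} (applied with $k=a+a_1$, $\ell=a+b_1$, after checking $k\le\ell$, i.e.\ $a_1\le b_1$ — the other case follows by the duality isometry~\eqref{eq:goto-oc} which exchanges the roles), we may pick an $R$-basis of $K^n$ so that $F=\spann\{e_1,\dots,e_{a+b_1}\}$ and $E=\spann\{e_1,\dots,e_a,\; e_{a+1}+\varpi^{x_1}f_1,\dots,e_{a+a_1}+\varpi^{x_{a_1}}f_{a_1}\}$, where $(x_1,\dots,x_{a_1})$ (padded with $a$ copies of $\infty$, which are the intersection $E\cap F$) is the position vector of the pair $(E,F)$ and $r_i=\varpi^{x_i}=\e^{x_i}$. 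Note $E\cap F=\spann\{e_1,\dots,e_a\}$, the target point $\Psi(E)\in G(a,F)$.

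Next I would set up explicit charts. On the source: near $E$, use the Grassmannian chart from Example~\ref{ex:Grass} adapted to $E$, so that $T_E\Omega_{a,a_1;b,b_1}^o$ is identified with a subspace of $\Hom_K(E,K^n/E)$ cut out by the incidence condition $\dim(E'\cap F)\ge a$; the tangent space to the Schubert cell consists of those $\phi\in\Hom(E,K^n/E)$ such that $\phi(E\cap F)\subseteq (F+E)/E$. On the target: $T_{\Psi(E)}G(a,F)=\Hom_K(E\cap F,\; F/(E\cap F))$ with its induced $R$-structure. The derivative $D_E\Psi$ sends a first-order deformation of $E$ to the induced first-order deformation of the intersection $E\cap F$; concretely, in the normal form it is a projection that records how the "horizontal" part $\phi|_{E\cap F}$ moves the intersection inside $F$, modulo the part already tangent to motions of $F$-direction. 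Writing out $D_E\Psi$ in matrix form relative to $R$-bases, I expect it to decompose as a direct product of small linear maps, each attached to a pair of basis indices, exactly as in the proof of Lemma~\ref{le:Jac-psi}. The blocks involving the "old" intersection directions $e_1,\dots,e_a$ paired against the $f_i$-directions contribute factors of the form $\dot c\mapsto \dot c$ (absolute determinant $1$) or $\dot c\mapsto r_i^{-1}\dot c$-type rescalings coming from the fact that moving $E$ by $\varpi^{x_i}$-scaled generators produces an unscaled motion of the intersection; collecting these gives precisely $\prod_{i=1}^{a_1}|r_i|^{-a}$, using Equation~\eqref{eq:adet-row} and multiplicativity $N(\phi_1\times\phi_2)=N(\phi_1)N(\phi_2)$.

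The main obstacle I anticipate is bookkeeping: correctly identifying which coordinate directions in the chart for $\Omega^o$ actually map onto the target (the chart for $\Omega^o$ has dimension $ab+a_1(b+b_1)$, the target $G(a,F)$ has dimension $ab_1$, and $D_E\Psi$ kills an entire complementary subspace corresponding to moving within the fiber $\cong\Gra(a_1,b+b_1)$), and then reading off the correct power of each $|r_i|$ from the remaining block. A clean way to organize this is to mimic the three-map factorization $\phi=\phi_1\times\phi_2\times\phi_3$ of the proof of Lemma~\ref{le:Jac-psi}: isolate the block that contributes nontrivial scaling, verify the others have unit absolute determinant, and compute. Once the normal form is in place the computation is short; the subtlety is purely in choosing the charts so that $D_E\Psi$ is transparently block-diagonal. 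Finally, since $\mathrm{GL}_n(R)$ acts by isometries on $G(k,n)$, on $G(a,F')$ for the various $F'$, and transitively on pairs with a fixed position vector (Corollary~\ref{cor:RPP-Char}), the value $J(\Psi)(E)$ depends only on the position vector, so computing it at the normal-form point suffices for all $E$ with that position vector, which is the claimed formula.
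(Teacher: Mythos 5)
Your plan matches the paper's own proof of this proposition: both pass to the normal form of Theorem~\ref{th:NF-pairs-subspaces}, compute the derivative $D_E\Psi$ in explicit Grassmannian charts (the paper's Lemma~\ref{le:Omega-chart}), and read off the absolute Jacobian from the block structure of this derivative, with the essential scaling $|r_i|^{-1}$ arising exactly as you identify --- deforming $E$ at unit rate in the $i$-th $\varpi^{x_i}$-twisted direction moves $E\cap F$ at rate $r_i^{-1}$ --- which, collected over the $a$ basis vectors of $E\cap F$ and the $a_1$ finite twist directions, gives $\prod_{i=1}^{a_1}|r_i|^{-a}$. Your framing via the intrinsic identifications $T_E\Omega^o_{a,a_1;b,b_1}\subseteq\Hom(E,K^n/E)$ and $T_{E\cap F}\,G(a,F)=\Hom(E\cap F,F/(E\cap F))$ is a workable variant of the paper's explicit matrix-chart bookkeeping and yields the same block decomposition, provided you track the induced $R$-structures and restrict to an $R$-complement of $\ker D_E\Psi$ (which the paper isolates in part~(4) of Lemma~\ref{le:Omega-chart}). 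One small correction: the duality isometry~\eqref{eq:goto-oc} exchanges $(a,a_1)$ with $(b,b_1)$, not $a_1$ with $b_1$, so the reduction to $k\le\ell$ (i.e.\ $a_1\le b_1$, and implicitly $a\le b$ coming from $k+\ell\le n$) is more naturally achieved by interchanging the roles of $E$ and $F$ when invoking the normal-form theorem; this is the same silent reduction the paper makes.
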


\begin{proof}[Proof of Theorem~\ref{th:main-vol-schubert}] 
We abbreviate $\Omega:=\Omega_{a,a_1;b,b_1}$. 
The coarea formula (Theorem~\ref{th:coarea}) applied to the map $\Psi$ from~\eqref{eq:def-phi-map},
combined with isometric invariance, implies that for a fixed $L\in G(a,F)$ we have 
\begin{equation}\label{eq:Omega-produ}
 |\Omega| = |G(a,F)| \cdot \int_{\Psi^{-1}(L)} J(\Psi^{-1})\, d\Psi^{-1}(L) .
\end{equation}
Noting that $\Psi^{-1}(L) \simeq \cG :=\Gra(a_1,b+b_1)=G(k-m,n-m)$
and using Proposition~\ref{pro:Jac-Psi-m}, 
we can express the integral as 
$$
 \int_{E'\in \cG} \e^{m(x_1+\ldots +x_{k-m})} \, d\cG
$$
where $(x_1,\ldots,x_{a_1})$ denotes the position vector of $E'\in \cG$ 
with a fixed subspace $F'\in \Gra(b_1,a_1+b)$. 
We can express this integral as 
$$
 \int \e^{m(x_1+\cdots +x_{k-m})} \, d\cG =
  |\cG| \sum_{x\in\bN_o^{k-m}} \rho_{k-m,\ell-m,n-m}(x) \;\e^{m(x_1+\cdots +x_{k-m})} ,
$$ 
since $\rho_{k-m,\ell-m,n-m}$ is the pushforward distribution of the uniform 
distribution with respect to the map
$\chi\colon \cG \to \bN_o^{k-m}$, 
which sends $E'$ to its position vector with $F'$. 

{We now make essential use of Section~\ref{sec:jointrandom}:}
according to Equation~\eqref{eq:c-quot} and 
Corollary~\ref{cor:rho-inheritance}, 
this sum equals 
\begin{equation}\label{eq:theratio}
\begin{split}
 \frac{c_{k-m,\ell-m,n-m}}{c_{k-m,\ell-m,n}} &= 
 \frac{|\Gra(a_1,a+b+b_1)|}{|\Gra(a_1,b+b_1)|} \; \frac{\g_{n-\ell}}{\g_{n-\ell+m}} \;\frac{\g_{p +m}}{\g_{p}}  \\
 &= \frac{|\Gra(a_1,a+b+b_1)|}{|\Gra(a_1,b+b_1)|} \; \frac{\g_{a_1+b}}{\g_{a+a_1+b}} \;\frac{\g_{a+b}}{\g_{b}} . 
\end{split}
\end{equation}
Hence we obtain from \eqref{eq:Omega-produ}, 
$$
 \frac{|\Omega|}{|G(k,n)|} = 
\frac{|\Gra(a,b_1)| \;|\Gra(a_1,b+b_1)|}{|\Gra(a+a_1,b+b_1)|} \;
      \frac{c_{k-m,\ell-m,n-m}}{c_{k-m,\ell-m,n}} ,
$$
which after substituting in~\eqref{eq:theratio} leads to (note the cancelling) 
$$
 \frac{|\Omega|}{|G(k,n)|} = 
\frac{|\Gra(a,b_1)| \;|\Gra(a_1,a+b+b_1)|}{|\Gra(a+a_1,b+b_1)|} \;
 \frac{\g_{a_1+b}}{\g_{a+a_1+b}} \;\frac{\g_{a+b}}{\g_{b}} .
$$
Using Proposition~\ref{pro:vols}, 
we get 
$$
\frac{|\Omega|}{|G(k,n)|} = 
  \frac{\g_{a+b_1}}{\g_{a} \g_{b_1}} \; 
  \frac{\g_{a+a_1+b+b_1}}{\g_{a_1} \g_{a+b+b_1} }\; 
  \frac{\g_{a+a_1} \g_{b+b_1}}{\g_{a+a_1+b+b_1}} \; 
 \frac{\g_{a_1+b}}{\g_{a+a_1+b}} \; \frac{\g_{a+b}}{\g_{b}} .
$$ 
One term cancels. 
We expand by multiplying with $\frac{\g_a \g_b}{\g_a\g_b}$ 
and rearrange to arrive at 
$$
\frac{|\Omega|}{|G(k,n)|} =
 \frac{\g_{a+b}}{\g_{a} \g_{b}} \cdot 
 \frac{\g_{a+b_1}}{\g_{a} \g_{b_1}} \cdot 
 \frac{\g_{b +a_1}}{\g_b \g_{a_1}} \cdot 
 \frac{\g_{a+a_1} \g_b}{\g_{a+a_1+b}} \cdot 
 \frac{\g_{b+b_1} \g_a}{\g_{b+b_1+a}} . 
$$
This means that 
$$
\frac{|\Omega|}{|G(k,n)|} = 
  \frac{|\Gra(a,b)|\cdot |\Gra(a,b_1)| \cdot  |\Gra(b,a_1)| }{
   |\Gra(a+a_1,b)| \cdot  |\Gra(b+b_1,a)|} ,
$$
which completes the proof.
\end{proof}

\subsection{Proof of Proposition~\ref{pro:Jac-Psi-m}}  
\label{se:lemma-Jac}

Let $m\le k \le \ell$ such that $k+\ell \le n$.
Suppose $E_0\in G(k,n)$, $F\in G(\ell,n)$ such that 
$\dim (E_0\cap F)=m$ and let $x\in\Nko$ be the 
position vector of the pair of subspaces $E_0,F$. 
Then $x_i=\infty$ for $i \le m$.
By Theorem~\ref{th:NF-pairs-subspaces}, 
there is an $R$-basis of~$K^n$, 
$$
 e_1,\ldots,e_m,e_{m+1},\ldots,e_k,f_{m+1},\ldots,f_k, g_1,\ldots, g_{n-2k+m} ,
$$
such that $E_0$ is spanned by $e_1,\ldots,e_k$, and $F$ is spanned by 
\begin{equation}\label{eq:generators-F}
  e_1,\ldots,  e_m,\; 
  e_{m+1} + r_{m+1} f_{m+1},\ldots, e_{k} + r_{k} f_{k}, \; g_1,\ldots,g_{\ell-k} ,
\end{equation}
where $r_i =\varpi^{x_i}$. We put $D:=\diag (r_{m+1},\ldots,r_{k})$. 
Without loss of generality, we may assume that this is the standard basis of $K^n$. 

Consider the Schubert cell 
$\Omega^o =\{ E\in G(k,n) \mid \dim (E\cap F)=m\}$. 
Lemma~\ref{le:Omega-chart} below 
describes its inverse image under the inverse chart
$\varphi\colon R^{(n-k)\times k} \to G(k,n)$, 
which maps $A$ to the column span of $\begin{bmatrix} I_k \\ A \end{bmatrix}$, 
compare Example~\ref{ex:Grass}. 

We will also use the following chart 
$\varphi' \colon U \to R^{(\ell -m) \times m}$ 
defined on an open subset $U\subseteq G(m,F)$ around $E_0\cap F$.
For given $L\in U$, let $v_1,\ldots,v_m$ be an $R$-basis of $L$. 
We expand $v_j$ in the $R$-basis $(e'_i)$ of $F$ defined in~\eqref{eq:generators-F},
\begin{equation}\label{eq:expand-v}
 v_j = \sum_{i=1}^\ell  c_{ij} e'_i ,\quad \mbox{$j=1,\ldots,m$} ,
\end{equation}
which defines the matrix $C=[c_{ij}] \in R^{\ell\times m}$.
Then we define 
\begin{equation}\label{eq:chart-varphi'}
 \varphi'(L) = \mathrm{column span}(C_2 C_1^{-1})
\end{equation}
where 
$C= \begin{bmatrix} C_1 \\ C_2 \end{bmatrix}$,
$C_2 \in R^{(\ell-m)\times m}$. 
(This is independent of the choice of the $R$-basis of $L$.
Note also that $|\det(C_1)|=1$ for $L$ sufficiently close to $E_0\cap F$.) 

The composition $\varphi' \circ \Psi \circ \varphi$ of the charts 
with the intersection map $\Psi$ from~\eqref{eq:def-phi-map}
is well defined on a neighborhood of~$0$.
Proposition~\ref{pro:Jac-Psi-m} is immediately deduced from
(assertion~(5) of) the following technical lemma, 
which analyzes the derivative of $\Psi$ at $0$.

\begin{lemma}\label{le:Omega-chart}
Write $A\in R^{(n-k)\times k}$ in the block decomposition
$$
 A= \begin{bmatrix}
A_{11} & A_{12}\\
A_{21} & A_{22} \\
A_{31} & A_{32}
\end{bmatrix} ,
$$
grouping the rows according to 
$n-k = (k-m) + (\ell -k) + (n+m-k-\ell)$ and the 
columns according to $k = m + (k-m)$. 
\begin{enumerate} 
\item We have $\varphi(A)\in\Omega^o$ iff the 
$(n-k)\times \ell$ matrix
$$
M_A:= 
\begin{bmatrix}
A_{11} & A_{12}-D & 0\\
A_{21} & A_{22}     & I_{\ell-k}  \\
A_{31} & A_{32}     & 0 
\end{bmatrix} 
$$
has rank $\ell -m$.  

\item The tangent space of $\varphi^{-1}(\Omega^o)$ at the origin consists of the matrices 
(using the same subdivisions as above) 
\begin{equation*}\label{eq:derivative-matrix}
\begin{bmatrix}
\dot{A}_{11} & \dot{A}_{12} \\
\dot{A}_{21} & \dot{A}_{22} \\ 
       0        & \dot{A}_{32} 
\end{bmatrix} ,
\end{equation*}
whose lower left $(n+m-k-\ell)\times m$ submatrix $\dot{A}_{31}$ vanishes. 
(This also shows that $\codim(\Omega^o) = (n+m-k-\ell)m$.) 

\item The derivative of $\varphi' \circ\Psi\circ\varphi$ at $0$ maps 
$$
R^{(n-k)\times k} \to R^{(\ell-m)\times m}, \; 
\begin{bmatrix}
\dot{A}_{11} & 0 \\
\dot{A}_{21} & 0\\ 
       0        & 0 
\end{bmatrix} 
\mapsto 
\begin{bmatrix}
\dot{A}_{11} D^{-1}\\
\dot{A}_{21} \\ 
\end{bmatrix} .
$$

\item The kernel of the derivative of $\Psi\circ\varphi$ at $0$ is given by 
$\dot{A}_{12}=0$, $\dot{A}_{22}=0$.
An $R$-complement of the kernel is given by 
$\dot{A}_{12}=0$, $\dot{A}_{22}=0$, $ \dot{A}_{32}=0$. 
 
\item The absolute Jacobian of $\Psi$ satisfies 
$$
  J(\Psi)(\varphi(A)) = \prod_{i=1}^{k-m} |r_i|^{-m} .
$$
\end{enumerate}
\end{lemma}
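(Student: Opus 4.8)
The plan is to prove the five assertions in sequence, as each feeds into the next, culminating in the Jacobian computation. First I would establish (1): the point $\varphi(A)$ lies in the Schubert cell $\Omega^o$ precisely when $\dim(\varphi(A)\cap F)=m$. Writing $F$ in terms of its generators \eqref{eq:generators-F}, membership in $F$ of a vector in $\varphi(A)$ amounts to a linear system whose coefficient matrix (after eliminating the $e_i$-coordinates using the identity block of $\varphi(A)$) is exactly $M_A$. A vector in $\varphi(A)\cap F$ corresponds to an element of $\ker M_A^{\mathrm t}$ (or rather the left kernel), so $\dim(\varphi(A)\cap F) = \ell - \rank M_A$, giving $\dim = m \iff \rank M_A = \ell - m$. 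At $A=0$, $M_0$ has the block $\begin{bmatrix} 0 & -D & 0\\ 0 & 0 & I\\ 0 & 0 & 0\end{bmatrix}$, which has rank $(k-m) + (\ell-k) = \ell -m$ exactly because $D = \diag(r_{m+1},\ldots,r_k)$ is invertible over $K$ (all $x_i<\infty$ for $i>m$); so $A=0$ is indeed in the cell.

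Next I would linearize near $A=0$ to get (2) and (4). Perturbing $M_A$ to first order, the rank-$(\ell-m)$ condition is an open condition on the first $(k-m)+(\ell-k)$ columns but imposes that the perturbation not increase the rank via the last rows; the bottom-left block $A_{31}$ (format $(n+m-k-\ell)\times m$) is the obstruction, and the tangent space to $\varphi^{-1}(\Omega^o)$ is cut out by $\dot A_{31}=0$, which gives $\codim \Omega^o = (n+m-k-\ell)m = mp$ as expected. For (4), I would compute the kernel of $D_0(\Psi\circ\varphi)$: the intersection subspace $\varphi(A)\cap F$ moves only through the coordinates that survive when the first $m$ columns of $A$ vary; a direct computation (using the generators of $F$ and the formula for the intersection) shows that varying $\dot A_{12},\dot A_{22}$ (the last $k-m$ columns in the first two row-blocks) does not move $E\cap F$, so these span the kernel, and setting additionally $\dot A_{32}=0$ gives an $R$-complement — this is an $R$-complement and not merely a $K$-complement because all the transition maps are defined over $R$.

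For (3), I would explicitly compose the three maps. On the $R$-complement from (4), parametrized by $(\dot A_{11},\dot A_{21},\dot A_{31}=0,\ \dot A_{32})$ — but really only $(\dot A_{11},\dot A_{21})$ matter for the image in $G(m,F)$ since $\dot A_{32}$ lives in a fiber direction — I would track how the intersection subspace $E\cap F \subseteq F$ depends to first order on $\dot A_{11},\dot A_{21}$, and then expand it in the chart $\varphi'$ of \eqref{eq:chart-varphi'} built from the $R$-basis \eqref{eq:generators-F} of $F$. The factor $D^{-1}$ in front of $\dot A_{11}$ appears because the generators $e_{m+1}+r_{m+1}f_{m+1},\ldots$ of $F$ are scaled by $r_i=\varpi^{x_i}$: re-expressing the perturbed intersection in that scaled basis divides the relevant coordinates by the $r_i$. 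This is the step I expect to be the main obstacle — keeping the bookkeeping of which block contributes which coordinate straight, and making sure the $D^{-1}$ lands on $\dot A_{11}$ and not elsewhere.

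Finally, (5) follows by combining (3) and (4) with the definition of the absolute Jacobian as $N(D\Psi)$. The derivative $D_0\Psi$, restricted to the $R$-complement of its kernel, is (in the $R$-bases provided by the charts) the block map $(\dot A_{11},\dot A_{21})\mapsto (\dot A_{11}D^{-1},\dot A_{21})$, which is a direct product of the identity on the $\dot A_{21}$-block and, on the $\dot A_{11}$-block (format $(k-m)\times m$), a product of $(k-m)m$ scalar maps $K\to K$ of the form $t\mapsto r_i^{-1}t$ for $i=m+1,\ldots,k$, i.e. $t \mapsto \varpi^{-x_i}t$. By \eqref{eq:abs-det-inv} the absolute determinant of such a scalar map is $|r_i|^{-1}$, and each $r_i$ occurs in $m$ columns, so $N(D_0\Psi) = \prod_{i=m+1}^{k}|r_i|^{-m}$; reindexing so that the $k-m$ finite position parameters are $r_1,\ldots,r_{k-m}$ (equivalently $r_1,\ldots,r_{a_1}$ in the $\Gra$-notation, $a_1 = k-m$) gives $J(\Psi)(\varphi(A)) = \prod_{i=1}^{k-m}|r_i|^{-m} = \prod_{i=1}^{a_1}|r_i|^{-a}$, since $a=m$. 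By isometric invariance of the $R$-structure under $\GL_n(R)$, the computation at our special $E_0$ transfers to any $E\in\Omega^o$, which is exactly the statement of Proposition~\ref{pro:Jac-Psi-m}.
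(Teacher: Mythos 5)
Your proof plan follows the paper's proof of Lemma~\ref{le:Omega-chart} closely in all five parts: the $M_A$ rank criterion for (1), linearization for (2), tracking the intersection $E(t)\cap F$ in the chart $\varphi'$ for (3) (the paper realizes this via explicit curves $A(t)=tE_{i,j}$, finding $\varphi'(E(t)\cap F)=\tfrac{t}{r_{m+i}}E_{i,j}$, which is exactly the ``re-expressing in the scaled basis of $F$'' you describe), the kernel description for (4), and the assembly of the Jacobian as a product of scalar maps for (5). Your treatment of (3)--(5), including the reindexing from $r_{m+1},\ldots,r_k$ to $r_1,\ldots,r_{k-m}$ and the use of Equation~\eqref{eq:adet-row} for the absolute determinant of a scalar map, is correct and matches the paper.

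Two points to tighten. In (1) you have the roles of $E$ and $F$ reversed, and the wrong kernel: the linear system with coefficient matrix $M_A$ (which is $(n-k)\times\ell$) arises by parametrizing a vector $w=\sum_j c_j e_j'\in F$ by $c\in K^\ell$ and asking when $w\in\varphi(A)$, eliminating via the identity block; the solution space is the \emph{ordinary (right) kernel} $\ker M_A\subseteq K^\ell$, of dimension $\ell-\rk M_A$, whereas $\ker M_A^{\mathrm t}$ (the left kernel) has the irrelevant dimension $(n-k)-\rk M_A$. The formula you wrote is right; the description is not. (The paper instead computes $\rk S_A=\dim(\varphi(A)+F)$ directly, arriving at the same rank count by column operations.) In (2), the passage from ``$\rk M_A=\ell-m$'' to the precise assertion $\dot A_{31}=0$ requires computing the tangent space of the determinantal locus $Z^o=\{B\in K^{(n-k)\times\ell}:\rk B=\ell-m\}$ at the point $M_0$; the paper isolates this as Lemma~\ref{le:det-variety}, showing by a minor expansion that at a diagonal point $B_0$ of rank $r$ one has $T_{B_0}Z^o=\{\dot B:\dot B_{ij}=0\ \forall i,j>r\}$, which after translation is exactly $\dot A_{31}=0$. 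Saying ``$A_{31}$ is the obstruction'' gestures at this but is not yet an argument; you should spell out that step.
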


\begin{proof}
  \begin{enumerate}[itemindent=\dimexpr \labelwidth+\labelsep+\parindent \relax, leftmargin=0pt, itemsep=\parskip]
  \item[(1)]\hspace{-\labelsep}--(2)
    Let $S_A \in R^{n\times (k +\ell)}$ denote the matrix obtained from 
    $\begin{bmatrix} I_k \\ A \end{bmatrix}$ by appending to it as columns the generators of $F$ 
    from~\eqref{eq:generators-F}. Note that 
    $$
    \varphi(A) \in \Omega^o \Longleftrightarrow \dim(\varphi(A)+F) = k +\ell -m \Longleftrightarrow \rk(S_A) = k+\ell.
    $$ 
    Writing $S_A$ in block decomposition gives 
    \[
      S_A =
      \begin{bmatrix}
        I_m   & 0         & I_m           & 0 & 0 \\
        0      & I_{k-m} & 0 & I_{k-m} & 0 \\  
        A_{11} & A_{12}  & 0 & D        & 0   \\
        A_{21} & A_{22}  & 0 & 0        & I_{\ell-k}  \\
        A_{31} & A_{32}  & 0 & 0        & 0  
      \end{bmatrix} .
    \]
    The matrix $S_A$ has the same rank as (use column operations) 
    \[
      S'_A =
      \begin{bmatrix}
        0      & 0            & I_m           & 0 & 0 \\
        0      & 0            & 0 & I_{k-m} & 0 \\  
        A_{11} & A_{12}-D  & 0 & D        & 0   \\
        A_{21} & A_{22}     & 0 & 0        & I_{\ell-k}  \\
        A_{31} & A_{32}     & 0 & 0        & 0 
      \end{bmatrix} .
    \]
    Moreover, 
    $\rk(S'_A) = k + \rk(M_A)$, where $M_A$ 
    is defined in the statement of the lemma. 

    Hence we see that $\varphi(A) \in \Omega^o$ 
    iff the matrix $M_A$ has rank $\ell -m$,  
    which proves the first assertion. 
    The latter condition means that 
    \[
    \begin{bmatrix}
      A_{11} & A_{12} & 0\\
      A_{21} & A_{22} & 0\\
      A_{31} & A_{32} & 0  
    \end{bmatrix}
    + 
    \begin{bmatrix}
      0 &-D & 0\\
      0 &  0 & I_{\ell-k}\\
      0 & 0 &  0
    \end{bmatrix}
    \in  Z^0 ,
    \]
    where 
    \[
    Z^o := \{B\in K^{(n-k)\times \ell} \mid \rk(B) = \ell -m \} .
    \]
    Hence the tangent space of 
    $\varphi^{-1}(\Omega^o)$ at $0$ can be 
    identified with the tangent space of $Z^o$ at 
    \[
    \begin{bmatrix}
      0 &-D & 0\\
      0 &  0 & I_{\ell-k} \\
      0 & 0 &  0
    \end{bmatrix} .
    \]
    Applying Lemma~\ref{le:det-variety} below shows the 
    second assertion. 

    \setcounter{enumi}{2}
  \item
    Consider the curve $A(t) = t E_{1,1}$  (so $k>m$). 
    Then 
    $$
    E(t) := \varphi(A(t)) = \mathrm{span} \{e_1+ t f_{m+1}, e_2,\ldots,e_k \} .
    $$
    The vector
    $$
    v = (e_1 + t f_{m+1}) + \frac{t}{r_{m+1}} e_{m+1} = 
    e_1 + \frac{t}{r_{m+1}} \big(e_{m+1} + r_{m+1} f_{m+1} \big) 
    $$
    lies in $E(t)\cap F$. 
    It is straightforward to verify that 
    $$
    E(t) \cap F = \mathrm{span} \{v, e_2,\ldots,e_m \} .
    $$ 
    Expressing $v, e_2,\ldots,e_m$ in terms of our $R$-basis of $F$ 
    as in~\eqref{eq:expand-v} gives the matrix 
    $\begin{bmatrix} I_m \\ t /r_{m+1} E_{1,1}\\ 0\end{bmatrix}$ 
    of format $\ell\times m$. 
    This shows that $v, e_2,\ldots,e_m$ indeed form an $R$-basis of $E(t) \cap F$. 
    Moreover, for the chart $\varphi'$ introduced in \eqref{eq:chart-varphi'},  
    we obtain
    $$
    \varphi'(E(t) \cap F) = \frac{t}{r_{m+1}} E_{1,1}.
    $$ 
    Therefore, 
    $$
    D_0(\varphi' \circ\Psi\circ\varphi)(E_{1,1}) = \frac{1}{r_{m+1}} E_{1,1} . 
    $$
    In the same way we show that, for $1\le i \le k-m$ and $1\le j\le m$,  
    $$
    D_0(\varphi' \circ\Psi\circ\varphi)(E_{i,j}) = \frac{1}{r_{m+i}} E_{i,j} . 
    $$

    Consider now the curve $A(t) = t E_{k-m+1,1}$.
    Then 
    \begin{align*}
      E(t) :=& \ \varphi(A(t)) = \mathrm{span} \{e_1+ t g_{1}, e_2,\ldots,e_k \}
      \qquad \text{and} \\
      E(t) \cap F =& \ \mathrm{span} \{e_1+ t g_{1}, e_2,\ldots,e_m \} .
    \end{align*}
    For the chart $\varphi'$ introduced in \eqref{eq:chart-varphi'},  
    we obtain
    $$
    \varphi'(E(t) \cap F) = t E_{k-m+1,1} , 
    $$ 
    hence 
    $D_0(\varphi' \circ\Psi\circ\varphi)(E_{k-m+1,1}) =E_{k-m+1,1}$. 
    More generally, we see that for $1\le i \le \ell-m$ and $1\le j\le m$, 
    $$
    D_0(\varphi' \circ\Psi\circ\varphi)(E_{k-m+i,j}) = E_{i,j} . 
    $$
    This proves claim (3).
    
  \item
    If $t\mapsto A(t)$ is an analytic curve with 
    derivative $\frac{dA}{dt}(0)$ of the form~\eqref{eq:derivative-matrix} 
    with $\dot{A}_{11}=0$ and $\dot{A}_{12}=0$ , then (in first order)
    $e_1,\ldots,e_m$ lie in $\varphi(A)$. This implies that 
    the derivative of $\Psi\circ\varphi$ at $0$ in direction $\frac{dA}{dt}(0)$ 
    vanishes. Together with the statement in (3), this shows that the kernel of 
    $D_0(\Psi\circ\varphi)$ at~$0$ is as stated. 

  \item
    This follows from (3) and (4). \qedhere
  \end{enumerate}
\end{proof}

The proof of Lemma~\ref{le:Omega-chart} relied on a general fact 
on determinantal varieties that we discuss now. 
Consider the determinantal variety 
$Z := \{B\in K^{m\times n} \mid \rk(B) \le r \}$, 
where $r\le\min\{m,n\}$. It is well known that 
the singular locus 
$$
 Z^o := \{B\in K^{m\times n} \mid \rk(B) =r \}
$$ 
of $Z$ is a $K$--analytic submanifold of $K^{m\times n}$.
We need to compute its tangent spaces. 

\begin{lemma}\label{le:det-variety}
Suppose that $B_0=[b_{ij}]\in Z^0$ satisfies 
$b_{ii} \ne 0 $ for $i\le r$ and $b_{ij}=0$ otherwise. 
Then the tangent space of $Z^o$ at $B_0$ is given by 
$$
T_{B_0}Z^o  =\{ \dot{B} \in K^{(n-k)\times k} \mid  \dot{B}_{ij} = 0 \quad \forall i,j > r \}.
$$ 
In particular, $\codim Z^o = (m-r)(n-r)$. 
\end{lemma}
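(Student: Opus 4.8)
The plan is to describe $Z^o$ in a neighbourhood of $B_0$ as the zero fibre of a $K$--analytic submersion built from a Schur complement, and then to read off the tangent space and the codimension. Write $B\in K^{m\times n}$ in block form $B=[B_{ij}]_{1\le i,j\le 2}$ with $B_{11}$ of format $r\times r$ (and the remaining blocks of the complementary formats). Since the leading $r\times r$ block of $B_0$ is $\diag(b_{11},\dots,b_{rr})$ with all $b_{ii}\neq 0$, the block $B_{11}$ is invertible over $K$ for every $B$ in an open neighbourhood $U$ of $B_0$. For such $B$, block Gaussian elimination gives $\rk(B)=r+\rk\bigl(B_{22}-B_{21}B_{11}^{-1}B_{12}\bigr)$; in particular $\rk(B)\ge r$ on $U$, and $B\in Z^o$ if and only if $\Phi(B):=B_{22}-B_{21}B_{11}^{-1}B_{12}=0$, where $\Phi\colon U\to K^{(m-r)\times(n-r)}$.

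Next I would verify that $\Phi$ is a $K$--analytic submersion on $U$: matrix inversion is $K$--analytic on $\Gl_r(K)$, so $\Phi$ is $K$--analytic, and the partial derivative of $\Phi$ with respect to the block $B_{22}$ is the identity map on $K^{(m-r)\times(n-r)}$, whence $D_B\Phi$ is surjective for all $B\in U$. By the implicit function theorem, $Z^o\cap U=\Phi^{-1}(0)$ is a $K$--analytic submanifold of $K^{m\times n}$ of codimension $(m-r)(n-r)$, and its tangent space at $B_0$ equals $\ker D_{B_0}\Phi$.

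Finally I would compute $D_{B_0}\Phi$. Because $B_0$ has $B_{12}=0$, $B_{21}=0$, and $B_{22}=0$, expanding $\Phi(B_0+\dot B)$ and retaining only the terms linear in $\dot B$, the cross term $\dot B_{21}B_{11}^{-1}\dot B_{12}$ is quadratic and drops out, leaving $D_{B_0}\Phi(\dot B)=\dot B_{22}$. Hence $T_{B_0}Z^o=\ker D_{B_0}\Phi=\{\dot B\mid \dot B_{22}=0\}$, that is, exactly the matrices with $\dot B_{ij}=0$ for all $i,j>r$, and $\codim Z^o=(m-r)(n-r)$, as claimed.

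I expect no serious obstacle here: the only point requiring a little care is the first-order expansion of the Schur complement, where one must use that \emph{both} off-diagonal blocks of $B_0$ vanish in order to conclude that the $\dot B_{21}B_{11}^{-1}\dot B_{12}$ term is genuinely second order at $B_0$. The remaining ingredients — invertibility of $B_{11}$ near $B_0$, the rank identity, and the submersion property of $\Phi$ — are standard linear algebra valid over any field.
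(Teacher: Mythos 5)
Your proof is correct and takes a genuinely different route from the paper's. The paper computes the tangent space directly by differentiating the size--$(r+1)$ minors along an analytic curve $B(t)=B_0+t\dot B+O(t^2)$: for $i,j>r$ the minor built from rows $\{1,\dots,r,i\}$ and columns $\{1,\dots,r,j\}$ expands to $t\,\dot B_{ij}\prod_{\iota\le r}b_{\iota\iota}+O(t^2)$, forcing $\dot B_{ij}=0$, and conversely all such minors are $O(t^2)$ when these vanish. You instead realize $Z^o$ locally as the zero fibre of the Schur-complement map $\Phi(B)=B_{22}-B_{21}B_{11}^{-1}B_{12}$, verify that $\Phi$ is a submersion (the $B_{22}$--partial is the identity), and then read off the tangent space as $\ker D_{B_0}\Phi=\{\dot B\mid \dot B_{22}=0\}$ after noting the cross term is second order at $B_0$. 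The paper's argument is more elementary and self-contained (no implicit function theorem needed), while yours has the advantage of re-establishing in one stroke that $Z^o$ is a $K$--analytic submanifold and of making the codimension $(m-r)(n-r)$ manifest rather than incidental; the paper takes the submanifold structure of $Z^o$ as known beforehand. Your key observation — that the quadratic term $\dot B_{21}B_{11}^{-1}\dot B_{12}$ drops out precisely because \emph{both} off-diagonal blocks of $B_0$ vanish — is correctly identified as the crux. Both arguments are valid over any field of characteristic zero and, indeed, over any field.
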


\begin{proof}
Consider an analytic curve $K\to Z^o, t\mapsto B(t) = B_0 + t\dot{B} +O(t^2)$. 
Then $f(B(t))$ vanishes identically for any minor $f$ of size $r+1$. 
Let $i,j>r$ and pick for $f$ the minor given by selecting the first $r$ rows and 
the $i$th row, and by selecting the first $r$ columns and the $j$th column. 
Expanding with respect to $t$ gives
$$
 f(B(t)) =  t \dot{B}_{ij} \prod_{\iota=1}^r b_{\iota \iota} + O(t^2) ,
$$
hence $\dot{B}_{ij}=0$. On the other hand, if $\dot{B}_{ij}=0$ fo all $i,j>r$, 
one easily verifies that $f(B(t)) = O(t^2)$ for every minor $f$ of size $r+1$.
\end{proof}

\subsection{Toward a probabilistic nonarchimedean Schubert Calculus}\label{sec:PSC}

We apply here the results of the previous sections to the study of Schubert Calculus, extending results of~\cite{BL:19} from 
the real to the nonarchimedean setting. 

We consider the following specific problem, which we call ``the simplest Schubert problem'':
\begin{quote}
\emph{What is the expected number of $k$--planes in $K^n$ nontrivially 
intersecting $k(n-k)$ random independent subspaces $L_1, \ldots, L_{k(n-k)}\subseteq K^n$ of dimension $n-k$?}
\end{quote}

\noindent
The problem can be formulated geometrically as follows. Let $N:=k(n-k)=\dim G(k,n)$ 
and denote by $\Omega(L)\simeq \Omega_{1, k-1;1, n-1}$ 
the Schubert variety of $k$--planes 
nontrivially intersecting 
a fixed subspace $L\simeq K^{n-k}$. Given $L_1, \ldots, L_N\in G(n-k, n)$, 
the set of $k$-planes intersecting all the $L_1, \ldots, L_N$ 
equals $\Omega(L_1)\cap \cdots \cap \Omega(L_N)$. 
Fix $L_0\in G(k,n)$ and denote $\Omega=\Omega(L_0)$; 
since the map $\mathrm{GL}_{n}(R)\to G(k,n),\, g\mapsto g^{-1}L_0$ 
induces the uniform distribution on $G(k,n)$, the expectation we are seeking equals
$$ 
  \mathbb{E}_{(g_1, \ldots, g_N)\in \mathrm{GL}_n(R)^N}\#\big(\Omega(g_1^{-1}L_0)\cap \cdots\cap \Omega(g_1^{-1}L_0)\big)
   =\mathbb{E}_{(g_1, \ldots, g_N)\in \mathrm{GL}_n(R)^N}\#\big(g_1\Omega\cap \cdots\cap g_N\Omega \big) .
$$
This problem can therefore be approached using Corollary \ref{cor:GIGF}, 
provided the hypotheses are verified, 
which is what we will discuss now.

Let us phrase 
a slightly more general result. Recall that the tangent spaces of $\Gr(k,n)$ have a product structure: we have 
$T_A \Gr(k,n) = A^* \otimes A$ for $A\in \Gr(k,n)$. As in \cite[Def.~3.6]{BL:19} and~\cite[\S4.3]{GKZ}, 
we call a $K$--analytic hypersurface $\cM\subseteq \Gr(k,n)$ 
{\em coisotropic} if, for all $A\in \cM$, its conormal spaces $N_A \cM$ are spanned by a rank one vectors. 
As in~\cite{BL:19}, one shows that $\Gl_n(K)$ acts transitively on the tangent spaces 
of a $K$--analytic coisotropic hypersurface $\cM\subseteq \Gr(k,n)$, compare 
Definition~\ref{def:TA}. 
(For a detailed analysis of the coisotropy notion over $\C$ we refer to \cite{kohn-mathews:21}.) 
If a hypersurface $\mathcal{M}$ is coisotropic, then $\mathrm{GL}_{n}(R)$ acts transitively on its tangent spaces. 

As for \cite[Lemma~4.1]{BL:19}, one shows 
that the regular locus of a special Schubert variety $\Omega_{1, k-1;1, n-1}$ of codimension one is coisotropic; 
in particular we have the following.

\begin{lemma}
The group $\mathrm{GL}_n(R)$ acts transitively on tangent spaces of a codimension one Schubert variety. \qed
\end{lemma}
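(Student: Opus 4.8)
The plan is to reduce this statement to the claim that the smooth locus of a codimension one special Schubert variety is coisotropic, since, as noted in the excerpt, once a hypersurface $\cM\subseteq\Gr(k,n)$ is coisotropic, the group $\mathrm{GL}_n(R)$ acts transitively on its tangent spaces (this is the nonarchimedean analogue of \cite[Lemma~4.1]{BL:19}, which I may invoke). So the real work is to exhibit the coisotropy, i.e., to show that at every smooth point $E$ of $\Omega:=\Omega_{1,k-1;1,n-1}$, the conormal line $N_E\Omega \subseteq T_E^*\Gr(k,n) = E \ot (K^n/E)^*$ is spanned by a rank one tensor.

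First I would set up coordinates: fix the $(n-k)$--plane $L$ defining $\Omega$, and work at a smooth point $E$, which by the codimension one condition $k+\ell=n$ satisfies $\dim(E\cap L)=1$. Using Theorem~\ref{th:NF-pairs-subspaces} applied to the pair $(E,L)$ (after swapping so the smaller space comes first), I can choose an $R$--basis of $K^n$ adapted to the relative position, so that $E\cap L$ is spanned by a single basis vector, say $e_1$, and $E$ and $L$ have explicit normal forms. Then I would compute $T_E\Omega$ directly: a tangent vector to $\Gr(k,n)$ at $E$ is a homomorphism $\phi\colon E\to K^n/E$, and the first-order condition for $\phi$ to be tangent to $\Omega$ (i.e., that the perturbed space still meets $L$ nontrivially) amounts to the vanishing of a single linear functional. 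The key computation is of the same flavour as Lemma~\ref{le:Omega-chart} and Lemma~\ref{le:det-variety} in the excerpt — indeed Lemma~\ref{le:det-variety} essentially already describes the tangent space of the corank locus — specialized to the codimension one case, where the tangent space is a hyperplane cut out by one equation, and that equation is $\langle \phi(e_1), \lambda\rangle = 0$ for a suitable $\lambda \in (K^n/E)^*$ determined by $E\cap L$ and the position data. Concretely, $e_1$ spans $E\cap L$ and $\lambda$ is the functional on $K^n/E$ whose kernel is $(E+L)/E$; one checks these are well defined and vary with $E$.

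Having that, the conormal space $N_E\Omega$ is the span of the single functional $\phi \mapsto \langle \phi(e_1),\lambda\rangle$ on $\mathrm{Hom}(E,K^n/E)$, which under the identification $T_E^*\Gr(k,n) = E \ot (K^n/E)^*$ is precisely the rank one tensor $e_1 \ot \lambda$. Hence $N_E\Omega$ is spanned by a rank one vector, so $\Omega^{\reg}$ is a coisotropic hypersurface, and transitivity of the $\mathrm{GL}_n(R)$--action on its tangent spaces follows from the coisotropy argument. I would also remark, as the excerpt does, that one should verify the identifications respect the $R$--structure (the $R$--structure on $T_E\Gr(k,n)$ coming from Example~\ref{ex:Grass} is the tensor product of the induced ones on $E$ and $K^n/E$, cf.\ Lemma~\ref{le:Rstruct}), so that the notions of conormal space and rank one tensor used here are the correct ones; this is routine given the work already done.

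The main obstacle I anticipate is the bookkeeping in the explicit tangent space computation: correctly tracking which block of the chart matrix encodes $\mathrm{Hom}(E,K^n/E)$, identifying the single defining equation, and confirming that the distinguished vector $e_1$ spanning $E\cap L$ and the distinguished covector $\lambda$ annihilating $(E+L)/E$ are exactly the factors of the rank one conormal generator. None of this is deep — it parallels \cite[Lemma~4.1]{BL:19} and the determinantal computation in Lemma~\ref{le:det-variety} — but it requires care to get the identifications and the $R$--structure compatibility right. A secondary point worth a sentence is smoothness: one should note that at a point of $\Omega\setminus\Omega^{\reg}$ (where $\dim(E\cap L)\ge 2$) the variety is singular, so restricting to $\Omega^{\reg}$, i.e., to the open cell $\Omega^o_{1,k-1;1,n-1}$, is both necessary and harmless for the transitivity statement.
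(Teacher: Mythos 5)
Your proposal matches the paper's own (implicit) argument: the lemma is stated with an immediate \qed, its proof being the two preceding sentences, which invoke exactly your two steps—coisotropy of the regular locus of $\Omega_{1,k-1;1,n-1}$ (credited to the argument of \cite[Lemma~4.1]{BL:19}), followed by the fact that $\mathrm{GL}_n(R)$ acts transitively on tangent spaces of any coisotropic hypersurface. Your filled-in computation of the conormal line as $e_1 \ot \lambda$ with $e_1$ spanning $E\cap L$ and $\lambda$ annihilating $(E+L)/E$ is the correct content of that coisotropy claim, and the minor slip of attributing the ``coisotropy $\Rightarrow$ transitivity'' step to \cite[Lemma~4.1]{BL:19} (the paper cites that lemma for the coisotropy of the Schubert variety itself) does not affect the argument.
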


We are interested in studying the intersection of $\dim G(k,n)$ many 
coisotropic $K$--analytic hypersurfaces of $G(k,n)$. For this, we 
introduce as in \cite[Def.~3.18]{BL:19} 
the following analogue of the real average scaling factor,  
generalizing \eqref{eq:sigma-proj}.

\begin{defi}\label{def:alpha}
For $k,m\ge 1$ we define the {\em average scaling factor} over $K$ as 
$$
 \alpha_K(k,m) := \E \| (u_1\ot v_1) \wedge \ldots \wedge (u_{km} \ot v_{km}) \|,
$$
where $u_j \in S(K^k)$ and $v_j \in S(K^{m})$, for $1\le j\le km$, 
are independently and uniformly chosen at random. 
\end{defi}

We have the following general result on intersecting coisotropic $K$--analytic 
hypersurfaces in $G(k,n)$, which in particular applies to the above Schubert problem.

\begin{cor} 
\label{thm:IGF}
Let $\Hy_1, \ldots, \Hy_{N}$ be coisotropic $K$--analytic hypersurfaces of $G(k,n)$, 
where $N :=k(n-k)$. 
Then we have:
$$
 \E_{(g_1,\ldots,g_N)\in \Gl_n(R)^N}
\#\left(g_1\Hy_1\cap\cdots\cap g_N \Hy_N \right)
 =  \alpha_K(k,n-k) \cdot |\Gr(k,n)| \cdot\prod_{i=1}^{N} \frac{|\Hy_i|}{|G(k,n)|}.
$$
In particular, the expected number $\eta_{k,n}$ of $k$-planes in $K^n$ 
nontrivially intersecting $k(n-k)$ 
random independent subspaces $L_1, \ldots, L_n\subseteq K^n$ of dimension $n-k$ equals:
\begin{equation}\label{eq:deltaex}
   \eta_{k,n}=\alpha_K(k,n-k) \cdot |G(k,n)|\cdot \left(|\proj^1| \cdot \frac{|\proj^{k-1}|}{|\proj^{k}|} 
            \cdot \frac{|\proj^{n-k-1}|}{|\proj^{n-k}|}\right)^N.
\end{equation}
\end{cor}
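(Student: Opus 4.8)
The plan is to apply Corollary~\ref{cor:GIGF} to the homogeneous space $G(k,n) = \mathrm{GL}_n(R)/H$, where $H = \mathrm{GL}_k(R)\times\mathrm{GL}_{n-k}(R)$, with the submanifolds $Y_i$ taken to be the regular loci of the coisotropic hypersurfaces $\Hy_i$. The first step is to verify the hypotheses of Corollary~\ref{cor:GIGF}: each $\Hy_i$ has codimension one, so $\sum_i \codim \Hy_i = N = \dim G(k,n)$, and by the discussion preceding the statement (the fact that $\mathrm{GL}_n(R)$ acts transitively on the tangent spaces of a coisotropic $K$--analytic hypersurface, just as over $\R$ in~\cite{BL:19}), the transitivity hypothesis is satisfied for each $Y_i$. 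Thus Corollary~\ref{cor:GIGF} applies and yields
$$
 \E_{(g_1,\ldots,g_N)\in \Gl_n(R)^N} \#\left(g_1\Hy_1\cap\cdots\cap g_N \Hy_N \right)
   = \sigma_H(y_1,\ldots,y_N) \cdot |G(k,n)| \cdot \prod_{i=1}^N \frac{|\Hy_i|}{|G(k,n)|} ,
$$
for any choice of base points $y_i \in \Hy_i$.

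The second step is to identify the constant $\sigma_H(y_1,\ldots,y_N)$ with $\alpha_K(k,n-k)$. Since the value of $\sigma_H$ depends only on the cotangent (conormal) spaces $N_{y_i}\Hy_i$ at the chosen points, and coisotropy means precisely that each $N_{y_i}\Hy_i$ is spanned by a rank-one tensor $u_i\otimes v_i \in A^*\otimes A \cong T^*_{y_i}G(k,n)$ where $A = y_i$, we unwind Definition~\ref{def:sigma-many}. After transporting all base points to $y_0 = p(I)$ and averaging over $(h_1,\ldots,h_N)\in H^N$, the action of $H = \mathrm{GL}_k(R)\times\mathrm{GL}_{n-k}(R)$ on $T^*_{y_0}G(k,n) \cong (K^k)^*\otimes K^{n-k}$ factors through the standard action on each tensor factor; exactly as in Example~\ref{ex:sigma-proj}, averaging a rank-one tensor $u\otimes v$ over this $H$-action produces a uniformly random pair $(u,v) \in S(K^k)\times S(K^{n-k})$ (up to the irrelevant scalar, which we normalize to unit norm). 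Hence
$$
 \sigma_H(y_1,\ldots,y_N) = \E \| (u_1\otimes v_1)\wedge\cdots\wedge(u_N\otimes v_N)\| = \alpha_K(k,n-k)
$$
by Definition~\ref{def:alpha}, which gives the first displayed formula of the corollary.

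For the final assertion~\eqref{eq:deltaex}, we specialize $\Hy_i = \Omega(L_i)$, the codimension one special Schubert variety of $k$--planes meeting a fixed $(n-k)$--plane; by the Lemma just above, its regular locus is coisotropic, so the general formula applies. Each factor $|\Hy_i|/|G(k,n)| = |\Omega|/|G(k,n)|$ is the same, and Corollary~\ref{cor:dimvol-codim-1} evaluates it as $|\proj^1| \cdot \frac{|\proj^{k-1}|}{|\proj^k|}\cdot\frac{|\proj^{n-k-1}|}{|\proj^{n-k}|}$. Raising to the $N$th power and substituting yields~\eqref{eq:deltaex}. The main obstacle in this argument is the precise bookkeeping in the second step: one must check carefully that the $H$-action on the conormal space really does decompose as the tensor product of the standard actions on $K^k$ and $K^{n-k}$, that the uniform (Haar) measure on $H$ pushes forward to the product of uniform measures on $S(K^k)\times S(K^{n-k})$, and that the normalization of the rank-one generator to unit norm does not affect $\sigma_H$ (it does not, since $\sigma$ is defined via $R$--bases of the conormal lines, and any unit-norm generator of a line is an $R$--basis of the induced lattice). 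This is routine given Example~\ref{ex:sigma-proj} and Lemma~\ref{le:Rstruct}, but it is where the content of the identification lies.
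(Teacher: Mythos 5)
Your proof is correct and follows essentially the same route as the paper's: apply Corollary~\ref{cor:GIGF} to the homogeneous space $G(k,n)$ (using the preceding lemma that $\GL_n(R)$ acts transitively on tangent spaces of coisotropic hypersurfaces), identify the constant $\sigma_H$ with $\alpha_K(k,n-k)$ via the tensor decomposition of $T_A G(k,n)\cong\Hom(A,K^n/A)$, and then plug in Corollary~\ref{cor:dimvol-codim-1} for the Schubert hypersurface. You spell out the identification of $\sigma_H$ with $\alpha_K(k,n-k)$ in more detail than the paper, which simply asserts it ``by its definition''; your extra care with the $H$-action on the conormal line and with the normalization of a rank-one generator to unit norm is the genuine content, and you correctly locate it. Two small points: the identification you write, $T^*_{y_i}G(k,n)\cong A^*\otimes A$, has a typo carried over from the paper — it should be $A\otimes(K^n/A)^*$ (or equivalently $T_A G(k,n)\cong A^*\otimes(K^n/A)$), though by the symmetry $\alpha_K(k,m)=\alpha_K(m,k)$ this does not affect the computation. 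And in passing from the general formula to $\eta_{k,n}$ you implicitly use that replacing independent uniform $L_i\in G(n-k,n)$ by $g_i L_0$ for fixed $L_0$ and Haar-random $g_i$ leaves the expectation unchanged, a step the paper spells out explicitly in the discussion preceding the corollary; it would be worth stating this one line.
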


\begin{proof}
Since $\mathrm{GL}_n(R)$ acts transitively on the set of tangent spaces of a coisotropic hypersurface, 
we can apply Corollary~\ref{cor:GIGF} to obtain
\[
  \E_{(g_1,\ldots,g_N)\in \Gl_n(R)^N}
  \#\left(g_1\Hy_1\cap\cdots\cap g_N \Hy_N \right)
  =\sigma_H(y_1, \ldots, y_N)\cdot |G/H| \cdot \prod_{i=1}^N \frac{|\Hy_i|}{|G/H|}	     
\]
where $(y_1, \ldots, y_N)$ is any point of $\Hy_1 \times \cdots \times \Hy_N$.
We denote by $e_0$  the image of the identity point in $G/H$, and let $\xi_i \in G$ 
be elements such that $\xi_i y_i = e_0$.
Then
\[
  \s_H(y_1, \ldots, y_N) = \E_{(h_1, \ldots, h_N) \in H^N} \ \s(h_1^*\xi_1^* N_{y_1}\Hy_1, \ldots, h_N^*\xi_N^* N_{y_N} \Hy_N).	
\]
Because of the canonical identification of $T_A\Gr(k,n)$ and $\Hom(A, K^n/A)$, 
we see that this scaling factor, 
by its definition, equals $\alpha_K(k,n-k)$.
\end{proof}

It is not easy to analyze 
the average scaling factor over $\R$. 
However, in the nonarchimedean setting, we can determine the asymptotic behaviour of 
the average scaling factor 
as $q=|R/\frm|$ goes to infinity. 
For a related result we refer to~\cite{el-manssour-lerario:20}.

\begin{prop}\label{prop:alpha}
For fixed $k,m \geq 1$, we have $\alpha_K(k,m)\to 1$ as $q\to\infty$. 
\end{prop}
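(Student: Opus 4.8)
The plan is to reduce the statement to the observation that a random square matrix over $R$ is invertible over $R$ with probability tending to $1$ as $q\to\infty$. Recall that
$\alpha_K(k,m) = \E\,\|(u_1\ot v_1)\wedge\cdots\wedge(u_{km}\ot v_{km})\|$, where the $u_j\in S(K^k)$ and $v_j\in S(K^m)$ are chosen independently and uniformly. Since the wedge of $N:=km$ elements of the $R$-structure $(A^*\ot A')_R$ always has norm $\le 1$ (it lies in the lattice $(\Lambda^N(A^*\ot A'))_R$, whose elements have norm at most one), the quantity $\alpha_K(k,m)$ is an average of numbers in $[0,1]$, and it equals~$1$ exactly on the event that $u_1\ot v_1,\ldots,u_{N}\ot v_{N}$ form an $R$-basis of $(K^k)^*\ot K^m \simeq K^{N}$, by Lemma~\ref{le:RbasChar}. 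Thus
$$
 1 - \alpha_K(k,m) \ \le\ \Prob\big[\, \|(u_1\ot v_1)\wedge\cdots\wedge(u_{N}\ot v_{N})\| < 1 \,\big],
$$
and it suffices to show this probability tends to $0$ as $q\to\infty$.

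Next I would pass to the residue field. Reduction modulo $\frm$ sends the unit sphere $S(K^k)$ onto $\F_q^k\setminus 0$ (the pushforward of the uniform measure on $S(K^k)=R^k\setminus\frm^k$ is, up to the negligible correction coming from $\frm^k$, the uniform measure on $\F_q^k\setminus 0$), and similarly for $S(K^m)$. The tensors $u_j\ot v_j$ reduce to rank-one matrices $\bar u_j\bar v_j^{\,T}\in\F_q^{m\times k}\cong\F_q^N$, and by Lemma~\ref{le:RbasChar} the wedge $\|(u_1\ot v_1)\wedge\cdots\wedge(u_N\ot v_N)\|$ equals $1$ precisely when the reductions $\bar u_j\ot\bar v_j$ are linearly independent over $\F_q$, i.e.\ when the $N\times N$ matrix $B$ with these vectors as columns is invertible over $\F_q$. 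Hence
$$
 1-\alpha_K(k,m) \ \le\ \Prob_{\F_q}\big[\, \bar u_1\ot\bar v_1,\ldots,\bar u_N\ot\bar v_N \text{ linearly dependent over }\F_q \,\big] + o(1),
$$
where the $o(1)$ accounts for the difference between sampling from $S(K^k)$ and from $\F_q^k\setminus 0$, which is $O(\e^k)\to 0$.

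Finally I would estimate the probability on the right by a union bound over the event that the $j$th column lies in the span of the previous ones. Conditioned on any fixed choice of $\bar u_1\ot\bar v_1,\ldots,\bar u_{j-1}\ot\bar v_{j-1}$, their $\F_q$-span is a proper subspace $W_{j-1}\subsetneq\F_q^N$ of dimension $\le j-1\le N-1$, so $\#W_{j-1}\le q^{N-1}$. The vector $\bar u_j\ot\bar v_j$ ranges over the set of nonzero rank-one matrices, of which there are $(q^k-1)(q^m-1)/(q-1)$, and it is uniformly distributed there; the number of rank-one matrices lying in $W_{j-1}$ is at most $\#W_{j-1}\le q^{N-1}$. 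Therefore $\Prob[\bar u_j\ot\bar v_j\in W_{j-1}] \le q^{N-1}(q-1)/\big((q^k-1)(q^m-1)\big) = O(q^{-1})$, since $N-1 = km-1 < k+m$ only fails for very small $k,m$ — more carefully, one checks $q^{N-1}(q-1) \le q^{km}$ while $(q^k-1)(q^m-1)\ge \tfrac12 q^{k+m}$ for $q$ large, giving a bound $O(q^{km-k-m})$; if $km\le k+m-1$ (that is, $k=1$ or $m=1$, or $k=m=2$) one instead notes directly that the rank-one matrices span $\F_q^N$ and that the relevant counting still yields a bound that goes to $0$ — e.g.\ for $k=1$ the vectors $u_j\ot v_j$ are simply uniform nonzero vectors in $\F_q^m$, and $N=m$ such vectors are independent with probability $\prod_{i=1}^{m}(1-q^{i-1-m})\to 1$. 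Summing the $N$ terms of the union bound gives $1-\alpha_K(k,m)=O(q^{-1})\to 0$, which is the claim. The main obstacle is the low-dimensional bookkeeping: the crude union bound degrades when $km$ is close to $k+m$, so I would handle $k=1$ (and symmetrically $m=1$) and the single case $k=m=2$ separately, using the explicit product formula for the probability that $N$ uniform rank-one matrices are independent, which in each of these cases is a finite product of factors $1-q^{-c}$ with $c\ge 1$ and hence tends to $1$.
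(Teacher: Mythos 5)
Your reduction to the residue field is sound, and in spirit it matches the paper: pass to $\F_q$, show the reduced rank-one vectors are linearly independent with probability $\to 1$, and control the spherical conditioning by an $O(\e^k+\e^m)$ term. But the key counting step in your union bound is too weak, and the gap is wider than you acknowledge.

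You bound $\Prob[\bar u_j\ot\bar v_j\in W_{j-1}]$ by $\#W_{j-1}/\#\{\text{nonzero rank-one matrices}\}$ and use the crude estimate $\#W_{j-1}\le q^{km-1}$. Since the number of nonzero rank-one matrices is $(q^k-1)(q^m-1)/(q-1)\sim q^{k+m-1}$, this gives a ratio $O(q^{km-k-m})$, which tends to $0$ only when $km<k+m$, i.e.\ when $k=1$ or $m=1$. For $k=m=2$ the bound is $O(1)$ (useless), and for any other $k,m\ge 2$ it actually blows up (e.g.\ $k=m=3$ gives $O(q^3)$). So the union bound with the crude count fails for \emph{every} $k,m\ge 2$, not merely the boundary cases you flag; the statement ``$km\le k+m-1$ for $k=m=2$'' is also false (it reads $4\le 3$). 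Worse, the claimed product formula for the probability that $N$ uniform rank-one matrices are independent has no obvious analogue beyond $k=1$ or $m=1$: the rank-one matrices form the affine cone over the Segre variety, not a vector space, and counting linearly independent tuples from that set is not governed by the usual $\prod(1-q^{-c})$ identities.

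The missing ingredient is a genuinely better count of rank-one matrices inside a proper subspace $W\subsetneq\F_q^{km}$. Since the (projectivized) Segre variety $\mathbb{P}^{k-1}\times\mathbb{P}^{m-1}\hookrightarrow\mathbb{P}^{km-1}$ is irreducible, non-degenerate, of dimension $k+m-2$, its intersection with a proper linear subspace has dimension at most $k+m-3$, hence $O(q^{k+m-3})$ points with a constant controlled by the degree $\binom{k+m-2}{k-1}$; that would salvage your union bound with a ratio $O(q^{-1})$. But this is a nontrivial algebro-geometric input that your write-up does not supply. The paper sidesteps all of this: it treats the whole determinant $\det\bigl[\bar u_1\ot\bar v_1,\ldots,\bar u_N\ot\bar v_N\bigr]$ as a single nonzero polynomial of degree $2N$ in the entries of the $\bar u_j,\bar v_j$, applies the Schwartz–Zippel lemma over $\F_q$ to get $\Prob[\det=0]\le 2N/q$, and is done in one stroke. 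That is both simpler and uniform in $k,m$, and I would recommend adopting it in place of the per-column union bound.
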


\begin{proof}
  We consider the polynomial function
  \[
    F\colon (R^{k} \times R^{m})^{km} \rightarrow  R,\  
    (u_1, v_1,u_2, v_2, \ldots, u_{km}, v_{km}) \mapsto (u_1 \otimes v_1) \wedge \ldots \wedge (u_{km} \otimes v_{km}) .
  \]
  By Definition~\ref{def:alpha}, we have 
  $\alpha_K(k,m) = \mathbb{E}_{X} |F(X)|$, 
  where $X=(X_1, \ldots, X_{km})$ with i.i.d.\ uniformly random variables $X_i$ on $S(K^k) \times S(K^m)$.
  Note that  
  $$
  \Prob(|F(X))| = 1) \leq |\alpha_K(k,m)| \le 1 ,
  $$
  It is therefore sufficient to show that $\Prob(|F(X)| = 1) \to 1$ as $q\to\infty$. 

  Suppose that $\xi = (\xi_1,\ldots,\xi_{km})$, where $\xi_i$ are i.i.d.\ uniformly random variables on $R^k \times R^m$. 
  We can think of $X_i$ as resulting from $\xi_i$ by conditioning on the event 
  $\xi_i \in S(K^k) \times S(K^m)$.
  When applying the canonical morphism $R\to R/\frm,\,\xi_i\mapsto \bar{\xi_i}$, the resulting 
  $\bar{\xi}_i$ are i.i.d.\ uniformly distributed in $R/\frm$.
  Note that 
  $|F(\xi)| <1$ iff $F(\bar{\xi}) = 0$. 
  Hence the Schwartz-Zippel lemma, applied over the residue field $R/\frm$, implies that 
  $$
  \Prob\big(|F(\xi)| < 1\big) \leq q^{-1}\deg(F) . 
  $$
  Moreover, setting $\e:=q^{-1}$ as usual, we have 
  for all~$i$, 
  \begin{align*}
    \Prob(\xi_i \not\in S(K^k) \times S(K^m)) 
    = \e^k +  \e^m - \e^{k+m} .
  \end{align*}
  Altogether, we obtain, 
  \begin{align*}
    \Prob(|F(X)| = 1) &\geq \Prob(|F(\xi)| = 1 \text{ and } \xi_1,\ldots,\xi_{km} \in S(K^k) \times S(K^m)) \\
                      &\geq \Prob(|F(\xi)| = 1)  - \sum_i \Prob(\xi_i \not\in S(K^k) \times S(K^m)) \\
                      &\geq 1 - \e \deg(F) - km\left(\e^k +  \e^m - \e^{k+m}\right).
  \end{align*}
  The right-hand side goes to $1$ as $\e\to 0$, which completes the proof.
\end{proof}

As a corollary we deduce the following asymptotic.

\begin{cor} \label{cor:k-plane-expectation-asyptotic}
  As $q\to \infty$, the expected number of $k$-planes in $K^n$ nontrivially intersecting $k(n-k)$ 
  random independent subspaces $L_1, \ldots, L_{k(n-k)} \subseteq K^n$ of dimension $n-k$ converges to~$1$.
\end{cor}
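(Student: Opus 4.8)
The plan is to derive this directly from the explicit formula for $\eta_{k,n}$ given in Corollary~\ref{thm:IGF}, equation~\eqref{eq:deltaex}, together with the asymptotic for the average scaling factor established in Proposition~\ref{prop:alpha}. Recall
\[
  \eta_{k,n} = \alpha_K(k,n-k) \cdot |G(k,n)| \cdot \left(|\proj^1| \cdot \frac{|\proj^{k-1}|}{|\proj^{k}|} \cdot \frac{|\proj^{n-k-1}|}{|\proj^{n-k}|}\right)^{N},
\]
where $N = k(n-k)$. Since $k$ and $n$ are fixed while $q\to\infty$ (equivalently $\e = q^{-1}\to 0$), every factor on the right-hand side is a concrete rational function of $\e$ that can be evaluated in the limit $\e\to 0$.

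First I would recall the closed forms: by \eqref{eq:vol-proj}, $|\proj^m(K)| = (1-\e^{m+1})/(1-\e)$, so $|\proj^m(K)|\to 1$ as $\e\to 0$ for every fixed $m\ge 0$; in particular each of the ratios $|\proj^{k-1}|/|\proj^k|$ and $|\proj^{n-k-1}|/|\proj^{n-k}|$ tends to $1$, and $|\proj^1|\to 1$, so the entire $N$-th power tends to $1$. Next, by Proposition~\ref{pro:vols}, $|G(k,n)| = q^{-k(n-k)}\left[\begin{smallmatrix}n\\k\end{smallmatrix}\right]_q = \g_n/(\g_k\g_{n-k})$ with $\g_m = \prod_{i=1}^m(1-\e^i)$; again each $\g_m\to 1$ as $\e\to 0$, so $|G(k,n)|\to 1$. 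Finally, Proposition~\ref{prop:alpha} gives $\alpha_K(k,n-k)\to 1$ as $q\to\infty$. Multiplying these four limits together yields $\lim_{q\to\infty}\eta_{k,n}(K) = 1$, which is the assertion.

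There is essentially no obstacle here: the corollary is a one-line consequence of assembling results already in hand, the only nontrivial input being Proposition~\ref{prop:alpha}, whose proof (via Schwartz--Zippel over the residue field plus a union bound controlling the probability that a random vector fails to have unit norm) is given just above. If one wanted a fully self-contained argument one could even avoid invoking \eqref{eq:deltaex} and instead start from Corollary~\ref{cor:GIGF} applied to $m=N$ codimension-one special Schubert varieties, using Corollary~\ref{cor:dimvol-codim-1} for $|\Omega|/|G(k,n)|$ and the identification of the relevant average scaling factor with $\alpha_K(k,n-k)$; but this merely reproduces \eqref{eq:deltaex}. I would therefore present the short version: cite \eqref{eq:deltaex}, observe that $|\proj^m|\to 1$ and $\g_m\to 1$ as $\e\to 0$ for all fixed $m$, hence $|G(k,n)|\to 1$ and the bracketed $N$-th power $\to 1$, and combine with $\alpha_K(k,n-k)\to 1$ from Proposition~\ref{prop:alpha} to conclude.
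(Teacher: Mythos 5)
Your proof is correct and follows essentially the same route as the paper: cite equation~\eqref{eq:deltaex}, observe that every factor tends to $1$ as $q\to\infty$ (using $|\proj^m|\to 1$ from~\eqref{eq:vol-proj} and $\gamma_m\to 1$ from Proposition~\ref{pro:vols}), and invoke Proposition~\ref{prop:alpha} for the average scaling factor. The paper states this more tersely, but the content is identical.
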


\begin{proof}
  This follows immediately from Equation~\eqref{eq:deltaex} and Proposition \ref{prop:alpha}, 
  since all the factors in \eqref{eq:deltaex} 
  converge to $1$ as $q$ goes to infinity.
\end{proof}

\section{Expected number of zeros of fewnomials}\label{se:fewnom}

As already mentioned at the end of the introduction, 
the study of the zeros of polynomial systems with few terms 
over an algebraically closed field is a thoroughly investigated topic~\cite{bernstein:75,BKK:76}. 
There is also a well developed theory concerned with counting the real zeros of such systems, 
for which we just refer to~\cite{khovanskii:91,sottile:11}. 
Much less is known in the nonarchimedian setting, we refer to~\cite{lenstra:97,poonen:98,rojas:04}. 

Our goal is to study the number of zeros of random fewnomial systems 
in a fixed nonarchimedean local field. This is strongly inspired by \cite{BETC:19,PB:23},
in which bounds for the expected number of real zeros of fewnomial systems were found. 
Evans~\cite{evans:06} was the first to investigate the expected number of zeros 
in the field of $p$-adics and his results were extended in \cite{KL:19}. 
However, these works 
only dealt with random systems of polynomial equations that are dense, i.e., 
polynomials of a certain degree in which all monomial terms appear.

Before moving to random fewnomials,
let us first 
summarize the known worst case bounds for univariate sparse polynomials.
We fix integers $a_1 < \ldots < a_t$ and consider the polynomial 
$$
 f(x) := c_{1} x^{a_1} + \ldots +  c_{t} x^{a_t} ,
$$
where $c_1,\ldots,c_t \in K$ and $c_1 c_t \ne 0$. 
If $K$ is algebraically closed, then $f$ has exactly $a_t -a_1$ zeros in~$K^\times$, 
counted with multiplicity. 
If $K$ is a real closed field, e.g., $K=\R$, then Descartes' rule states that 
$f$ has at most $t -1$ positive zeros in $\R$ (counted with multiplicity) 
and that this bound is sharp. 
It is remarkable that this upper bound only depends on the number of terms~$t$. 
Lenstra~\cite{lenstra:97} obtained a variant of Descartes' rule in the nonarchimedean setting 
in characteristic zero. 
(See Poonen~\cite{poonen:98} for results in positive characteristic.)
If $K$ is a field extension of $\Q_p$ with $R/\frm \simeq \F_{q}$
and ramification index~$e$, then 
the number of zeros of $f$ in~$K^\times$ 
(counted with multiplicity) is bounded from above by 
$O\Big(t^2 q e\log (et) \Big)$. In particular, 
Lenstra obtained the bound $O(t^2 p \log t)$ over $K=\Q_p$.  
The dependence on $t^2$ is necessary: 
the polynomial 
$f(x) = (x^t-a_1^t) \cdots (x^t-a_{t}^t)$ 
with distinct $a_1,\ldots,a_t \in\Q_p$
has at most $t+1$ terms and has $t^2$ zeros in $\Q_p$ 
provided $\Q_p$ has a $t$--th root of unity. 

Let us now move to the random setting. 
If $K=\R$ and the coefficients $c_i$ are i.i.d.\ standard real Gaussian coefficients, 
then the expected number of real zeros is $O(\sqrt{t})$~\cite{BETC:19,JindalPSZ20}.  
This bound was shown to be optimal in~\cite{JindalPSZ20}, but it seems difficult to 
characterize the supports for which the expected number of zeros is maximal.
The papers~\cite{BETC:19,PB:23} also contain an upper bound for random 
fewnomial systems, but the question of optimality remains open. 

Suppose now that $K$ is a finite extension of $\Q_p$ and that the coefficients $c_i$ are 
independent and uniformly chosen in $R$. 
Surprisingly, it turns out that the expected number of zeros of $f$ in~$K^\times$ is at most one, 
and equality is easy to characterize  (see Theorem~\ref{thm:n=1R}).

\subsection{Integral formula for the number of zeros of fewnomial systems}

As before, $K$ denotes a nonarchimedean local field of characteristic zero 
with valuation ring $R =\{x\in K \mid |x|\le 1 \}$. 
We fix a finite subset $A\subseteq\Z^n$ (the {\em support}) of size $t$
and study fewnomials of the form
\begin{equation}\label{eq:d-def}
 f_i(x) := \sum_{a\in A} c_{ia} x^a ,\quad 1\le i \le n, 
\end{equation}
where $c_{ia}\in K$ and $x^a := x_1^{a_1}\cdots x_n^{a_n}$. 
A zero $x\in (K^\times)^n$ of the system $f_1(x)=0,\ldots,f_n(x)=0$ 
is called {\em nondegenerate} if $\det (\partial_{x_i} f_j(x)) \ne 0$.
Without loss of generality, we assume 
that the affine span of $A$ equals $\R^n$; otherwise, every zero is degenerate.
In particular, we have $t\ge n+1$.

Let us assume now that the $c_{ia}$ are i.i.d.\ uniformly distributed in $R$ 
and consider the corresponding random system
$f_1(x)=0,\ldots,f_n(x)=0$.
We denote by $N_U(f)$ 
the number of nondegenerate zeros of this system lying in a subset $U\subseteq (K^\times)^n$.
We want to determine the expectation $\E(N_U(f))$ of this random variable.
To do so, we associate to $A$ the regular map
\begin{equation}\label{eq:psiA}
  \psi_A\colon (K^\times)^n \to \proj(K^A),\ x \mapsto [x^a]_{a\in A} .
\end{equation}
We also consider the affine version of this map 
\begin{equation}\label{eq:phiA}
  \varphi_A \colon (K^\times)^n \to (K^\times)^A,\ x \mapsto (x^a)_{a\in A} .
\end{equation}

\begin{lemma}\label{le:PropPsi} \
  \begin{enumerate}
  \item The image of $\varphi_A$ is an $n$--dimensional $K$--analytic submanifold of $K^A$ 
    if the linear span of $A$ equals $\R^n$.

  \item The image of $\psi_A$ is an $n$--dimensional $K$--analytic submanifold of $\proj(K^A)$ 
    if the affine span of $A$ equals $\R^n$.
  \end{enumerate}
\end{lemma}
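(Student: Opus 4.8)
The plan is to reduce both statements to the fact that a monomial map on the torus is a group homomorphism whose image is a subtorus, and then to identify the ambient structure of that image. For part~(1), consider $\varphi_A\colon (K^\times)^n \to (K^\times)^A$ as a homomorphism of abelian groups: after choosing an origin $a_0 \in A$ and factoring out $x^{a_0}$, the coordinates of $\varphi_A(x)$ are $x^{a-a_0}$ for $a\in A$, so the image coincides with the image of the homomorphism associated to the integer matrix $M$ whose columns are the vectors $a-a_0$. Since the $\R$-linear span of $A$ is all of $\R^n$, the matrix $M$ has rank $n$, hence $D_x\varphi_A$ has rank $n$ at every point (the Jacobian of a monomial map factors as $\diag(\text{units})\cdot M^{\top}\cdot\diag(\text{units})$ after the logarithmic derivative computation, exactly as over $\R$ or $\C$). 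Thus $\varphi_A$ is an immersion. First I would check injectivity fails only up to a finite kernel, so $\varphi_A$ is a local homeomorphism onto its image; more carefully, I would argue that the image is a $K$--analytic submanifold by exhibiting it locally as the graph of an analytic map: after a unimodular change of variables on $\Z^n$ (which is a $K$--bianalytic automorphism of $(K^\times)^n$) and a permutation of the coordinates of $K^A$, one may assume $n$ of the exponent vectors $a-a_0$ are the standard basis vectors $e_1,\ldots,e_n$; then the corresponding $n$ coordinates of $\varphi_A$ are simply $(x_1,\ldots,x_n)$ and the remaining $t-1-n$ coordinates are monomials in these, so the image is visibly the graph of an analytic map $(K^\times)^n \to K^{t-1-n}$, hence an $n$--dimensional $K$--analytic submanifold.

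For part~(2), observe that $\psi_A = \pi\circ \varphi_A$ where $\pi\colon (K^\times)^A \to \proj(K^A)$ is the canonical projection, and that two points $x,x'$ have $\psi_A(x)=\psi_A(x')$ iff $\varphi_A(x)=\lambda\varphi_A(x')$ for some $\lambda\in K^\times$. Adjoining a new coordinate $a_{-1}$ and passing to the homogenized support does not change the affine span condition, but the cleaner route is: the affine span of $A$ being $\R^n$ means the differences $a-a_0$ span $\R^n$, which is exactly the hypothesis of part~(1), so $\varphi_A$ already has $n$--dimensional image $Z$; then $\psi_A = \pi|_Z \circ \varphi_A$ and one checks that $\pi|_Z$ is a local $K$--bianalytic map onto its image (the fibers of $\pi$ meet $Z$ transversally in the torus direction corresponding to scaling, which is the direction $a_0$; since the affine span is full, this scaling direction is transverse to the tangent space of $Z$ in the relevant quotient). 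Concretely, I would work in the affine chart of $\proj(K^A)$ where the $a_0$-coordinate is nonzero, in which $\psi_A$ becomes $x\mapsto (x^{a-a_0})_{a\in A,\, a\neq a_0}$, i.e.\ exactly the normalized form of $\varphi_A$ from part~(1) with one coordinate dropped; the argument of part~(1) then shows the image is an $n$--dimensional $K$--analytic submanifold of that chart, and since the image is contained in a single chart (or can be covered by finitely many such charts, one per choice of ``largest'' coordinate), we conclude.

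The main obstacle is the passage from ``$\varphi_A$ is an analytic immersion'' to ``$\operatorname{im}\varphi_A$ is an embedded submanifold'': an immersion need not have embedded image in general (the image could accumulate on itself). Here this is resolved by the unimodular-change-of-coordinates trick, which realizes the image globally as a graph and hence automatically embedded; the only thing to be careful about is that the change of variables on $\Z^n$ genuinely induces a $K$--bianalytic automorphism of $(K^\times)^n$ (it does: $x\mapsto x^{U}$ for $U\in\GL_n(\Z)$ has inverse $x\mapsto x^{U^{-1}}$, and both are Laurent-monomial, hence $K$--analytic on the torus). A secondary point is that in part~(2) one must cover $\operatorname{im}\psi_A$ by the standard affine charts $U_i$ of $\proj(K^A)$ and check the graph description in each; this is routine since being a submanifold is local and the transition maps between charts are $K$--bianalytic (Example~\ref{ex:proj-space}). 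I expect the write-up to be short, essentially the two paragraphs above made precise, with the dimension count ``$\dim = n$'' following from $\operatorname{rank} M = n$.
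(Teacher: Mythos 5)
Your reduction in part~(1) has a genuine gap. You claim that after a unimodular change of variables on $\Z^n$ (and reshuffling the coordinates of $K^A$) one may assume that $n$ of the exponent vectors $a-a_0$ are the standard basis vectors $e_1,\ldots,e_n$, and then read off $\im\varphi_A$ as a graph. This is only possible when the lattice generated by $\{a-a_0 : a\in A\}$ is all of $\Z^n$, i.e., when all invariant factors of the integer matrix $M$ are $1$. In general the Smith normal form over $\Z$ gives $M = S\,D\,T$ with diagonal entries $d_1,\ldots,d_n \ge 1$, and you cannot kill the $d_i$ by further unimodular moves. Already for $n=1$ and $A=\{0,2\}$ the matrix is $(2)$, no change of variables turns $x\mapsto x^2$ into $x\mapsto x$, and the image is the set of squares, which is not the graph of anything.

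A symptom of the same problem: your argument never uses $\chara K = 0$, yet the paper points out immediately after the lemma that the statement is \emph{false} in positive characteristic (with $K=\F_2(\!(t)\!)$ and $A=\{2\}$: the squares are not a submanifold of $K$). The missing ingredient is exactly what the paper supplies after reducing to the diagonal case: for each invariant factor $d_i$, the set $P_{d_i} = \{x^{d_i} : x\in K^\times\}$ is \emph{open} in $K^\times$ by Hensel's lemma (equivalently the inverse function theorem applied to $x\mapsto x^d$, whose derivative $dx^{d-1}$ is nonzero precisely because $\chara K = 0$), so $\im\varphi_D = P_{d_1}\times\cdots\times P_{d_n}\times\{1\}^{t-n}$ is an $n$--dimensional submanifold. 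Your approach can be salvaged by replacing the ``graph'' step with this openness argument for each diagonal entry; the rest of your plan (factoring through $\varphi_S$, $\varphi_T$, and the passage from $\varphi_A$ to $\psi_A$ via the affine chart $y_{a_0}\ne 0$) agrees with the paper's proof.
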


\begin{proof}
  \begin{enumerate}[itemindent=\dimexpr \labelwidth+\labelsep+\parindent \relax, leftmargin=0pt, itemsep=\parskip]
    \item
      Let $A=\{a_1, \ldots, a_t\}$. Abusing notation, we also denote by $A\in\Z^{t\times n}$ 
      the matrix with the rows $a_1, \ldots, a_t$. 
      The matrix $A$ has rank $n\le t$ when we assume that the linear span of $A$ equals $\R^n$. 
      Using the Smith normal form, we may factor $A=SDT$, where $S\in\Gl_t(\Z)$, $T\in\Gl_n(\Z)$, 
      $D=\begin{pmatrix} D'\\0\end{pmatrix}$, and 
      $D'$ is the diagonal matrix with nonzero entries $d_1,\ldots,d_n\in\N$, 
      which are the invariant factors of $A$.
      It is straightforward to verify that 
      \begin{equation*}\label{eq:funct-phi}
        \varphi_A = \varphi_S \circ \varphi_D \circ \varphi_T .
      \end{equation*}
      In particular, $\varphi_S$ is bijective, $(\varphi_S)^{-1} = \varphi_{S^{-1}}$, 
      and similarly for $\varphi_T$. 
      Therefore, it suffices to prove the first assertion for 
      the matrix $D$. We have 
      \[
      \im \varphi_D
      = P_{d_1} \times\ldots\times P_{d_n} \times \{1\}^{t-n} , 
      \]
      where $P_d := \{x^d \ | \ x \in K^\times\}$. 
      Hensel's lemma implies that the set $P_d$ is an open subset of $K^\times$ 
      (here we use that $K$ has characteristic zero). 
      Therefore, $\im \varphi_D$ is indeed an $n$-dimensional $K$--analytic submanifold of $K^A$. 

    \item
      Note that $\im \psi_A$ is contained in all the affine charts $U_i=\{y_i \ne 0\}$, where 
      $y_1, \ldots, y_t$ denote the coordinates of $\proj(K^A)$. 
      E.g., viewing $\psi_A$ as a map to $U_t$, we have 
      $\psi_A = \varphi_{A'}$, where $A' := \{a_j - a_t \mid 1 \leq j \leq t-1\}$. 
      Note that $A'$ spans $\R^n$ linearly since $A$ spans $\R^n$ affinely by assumption.
      Applying the first part completes the proof. \qedhere
    \end{enumerate}
\end{proof}

The statement of Lemma~\ref{le:PropPsi} depends on our running assumption that $K$ has characteristic zero. 
For example, if $K=\F_2(\!(t)\!)$ and $A = \{2\} \subseteq \Z$, then the image of $\varphi_A$ is the set of squares. 
However, $\im \varphi_A$ is not a submanifold of $K$, compare Lemma~\ref{le:squares}.

\begin{remark}
Under the assumptions of Lemma~\ref{le:PropPsi}, the proof shows that the image of the map 
$\varphi_A\: (K^\times)^n \rightarrow K^A$ 
is an algebraic set iff all of the invariant factors of $A$ equal $1$. 
For instance, a nonzero $A \in \Z^{t\times 1}$ has exactly one invariant factor given by the gcd of its entries. 
\end{remark}

The following is a variant of~\cite[Theorem 2.2]{BETC:19}  in the nonarchimedean setting. 
The core idea of this approach (over $\R$) goes back to Edelman and Kostlan~\cite{edel_kost}. 

\begin{thm}\label{th:ENU}
  In the above setting, we have for a measurable subset $U\subseteq (K^\times)^n$ that
  \[
    \E(N_U(f))  = \frac{1}{|\proj^n|} \, \int_{U} J(\psi_A) \, d\mu_n,
  \]
  where $J(\psi_A)$ is defined with respect to the standard $R$--structure on $(K^\times)^n \subseteq K^n$ and 
  the quotient $R$--structure on $\proj(K^A)$ induced from $S(K^A) \rightarrow \proj(K^A)$. 
\end{thm}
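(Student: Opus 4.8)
The strategy is the standard Kac--Rice / Edelman--Kostlan argument transported to the nonarchimedean setting via the integral geometry formula from Corollary~\ref{cor:KL}. Write $f=(f_1,\ldots,f_n)$ with coefficient vectors $c^{(i)}=(c^{(i)}_a)_{a\in A}\in R^A$, so that the random system is governed by the point configuration $\psi_A(x)=[x^a]_{a\in A}\in\proj(K^A)$. The key observation is that, since each $f_i$ is a $K$--linear form in the coordinates $(x^a)_{a\in A}$ evaluated at $\psi_A(x)$, a point $x\in(K^\times)^n$ is a zero of the system $f_1=\cdots=f_n=0$ if and only if $\psi_A(x)$ lies on the intersection of the $n$ random projective hyperplanes $H_i := \{[y]\in\proj(K^A) \mid \sum_{a\in A} c^{(i)}_a y_a = 0\}$. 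Moreover, because the $c^{(i)}_a$ are i.i.d.\ uniform in $R$, each $c^{(i)}$ is (up to a positive-measure event and a scaling that does not affect the projective hyperplane) uniformly distributed on $S(K^A)$; thus each $H_i$ is a uniformly random projective hyperplane in $\proj(K^A)\cong\proj^{t-1}$, and the $H_i$ are independent. One must also check that the nondegeneracy condition $\det(\partial_{x_i}f_j(x))\neq 0$ corresponds exactly to transversality of the intersection $\psi_A((K^\times)^n)\cap H_1\cap\cdots\cap H_n$ at the point $\psi_A(x)$; this follows from the chain rule together with the fact (Lemma~\ref{le:PropPsi}) that $\psi_A$ has $n$--dimensional image and, on each affine chart $U_i$, is a local immersion wherever its differential is injective, which is precisely where the monomial Jacobian is nonzero.

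\textbf{Main steps.} First I would record that $\E(N_U(f))$ equals the expected number of points of $\psi_A(U)$ hitting $H_1\cap\cdots\cap H_n$, counted with the transversality caveat just discussed; since by Sard's lemma (Theorem~\ref{th:sard}) the intersections are almost surely transversal, the count is a.s.\ the naive one. Second, I would invoke part~(2) of Corollary~\ref{cor:KL}: for the $n$--dimensional $K$--analytic submanifold $Y:=\im\psi_A\subseteq\proj^{t-1}$ (here $n=\dim Y$ matches the number of hyperplanes) and any measurable $Y'\subseteq Y$, one has $\E_{H_1,\ldots,H_n}\#(Y'\cap H_1\cap\cdots\cap H_n) = |Y'|/|\proj^n|$. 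Applying this with $Y' := \psi_A(U)$ gives
$$
 \E(N_U(f)) = \frac{|\psi_A(U)|}{|\proj^n|}.
$$
Third, I would compute $|\psi_A(U)|$ via Theorem~\ref{th:vol-image} (volume of an image): since $\psi_A$ is injective on $(K^\times)^n$ onto its image $Y$ and $\dim Y = n = \dim (K^\times)^n$, that theorem yields $|\psi_A(U)| = \int_U J(\psi_A)\,d\mu_n$, where $J(\psi_A)$ is the absolute Jacobian computed with the standard $R$--structure on $(K^\times)^n\subseteq K^n$ and the quotient $R$--structure on $\proj(K^A)$ coming from $S(K^A)\to\proj(K^A)$ (which is exactly the $R$--structure used throughout, cf.\ Example~\ref{example:volproj}). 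Combining the last two displays gives the claimed formula.

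\textbf{Injectivity and the main obstacle.} The step requiring genuine care is the reduction from ``zeros of the random system'' to ``points of $Y$ on a random codimension-$n$ linear slice,'' because $\psi_A\colon(K^\times)^n\to\proj(K^A)$ need not be injective (it is injective iff $A$ affinely generates $\Z^n$, not merely $\R^n$). To handle this I would factor $A=SDT$ through its Smith normal form as in the proof of Lemma~\ref{le:PropPsi}: writing $d=d_1\cdots d_n$ for the product of the invariant factors, the map $\varphi_A$ is $d$--to--one onto its image over $K$, but crucially each fibre of $\psi_A$ over a point of $Y$ contributes the same number of solutions of the linear system pulled back, and this multiplicity $d$ is exactly compensated by the Jacobian factor: one checks $J(\varphi_D)$ contributes $|d_1\cdots d_n|$ in a way that makes $\int_U J(\psi_A)\,d\mu_n$ already account for all preimages. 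More cleanly, I would instead argue directly that $N_U(f)$ counts zeros in the domain $(K^\times)^n$ (not points of $Y$), and that the integral-geometry count $\#(\psi_A(U)\cap H_1\cap\cdots\cap H_n)$ refers to points of $Y$; the bridge is precisely Theorem~\ref{th:vol-image}, whose statement $\int_{y\in Y}\#(U\cap\psi_A^{-1}(y))\,\Omega_Y = \int_U J(\psi_A)\,\Omega_X$ automatically sums over all preimages. Thus one applies Corollary~\ref{cor:KL}(2) to bound $\E\#(Y\cap H_1\cap\cdots\cap H_n\cap\psi_A(U))$ — but to recover $\E(N_U(f))$ one must note $N_U(f)=\#\{x\in U : \psi_A(x)\in H_1\cap\cdots\cap H_n\}$, which is $\int_{y}\#(U\cap\psi_A^{-1}(y))$ summed over the intersection points. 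Reconciling these two counts — the ``points upstairs'' count $N_U(f)$ versus the ``points downstairs'' count on $Y$ — is the crux, and it is resolved by combining Corollary~\ref{cor:KL}(2) with Theorem~\ref{th:vol-image} rather than either alone; I expect verifying this bookkeeping (and the matching of nondegeneracy with transversality) to be the only non-routine part, the rest being a direct substitution.
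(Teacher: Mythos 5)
Your overall strategy is the paper's: identify $\im\psi_A$ as an $n$--dimensional submanifold of $\proj(K^A)$ (Lemma~\ref{le:PropPsi}), interpret the zeros of the random fewnomial system as preimages under $\psi_A$ of a random codimension--$n$ linear section, and combine Corollary~\ref{cor:KL}(2) with Theorem~\ref{th:vol-image}. You also correctly flag the one place where care is needed, namely that $N_U(f)$ counts points \emph{upstairs} in $U$ while the integral geometry formula counts points \emph{downstairs} on $Y=\im\psi_A$, and $\psi_A$ need not be injective. But you leave precisely this step unresolved: your display $\E(N_U(f)) = |\psi_A(U)|/|\proj^n|$ in ``Main steps'' is false in general (it undercounts when several $x\in U$ map to the same $y\in Y$), and the paragraph that acknowledges this ends with ``I expect verifying this bookkeeping\ldots to be the only non-routine part,'' i.e.\ an admission that the crux is not carried out.

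Two concrete corrections. First, your Smith normal form multiplicity argument is wrong: the generic fibre cardinality of $\varphi_D(x)=(x_1^{d_1},\ldots,x_n^{d_n},1,\ldots,1)$ over its image is $\prod_i \#\mu_{d_i}(K)$ (the product of the numbers of $d_i$--th roots of unity in $K$), not $\prod_i d_i$; and the Jacobian factor $|d_i x_i^{d_i-1}|$ has nothing to do with cancelling that count. So this route does not close the gap. Second, the clean way to reconcile the two counts is exactly what the paper does: partition $Y$ into the measurable sets $Y_d := \{y\in Y \mid \#(U\cap\psi_A^{-1}(y))=d\}$. Then Theorem~\ref{th:vol-image} says $\int_U J(\psi_A)\,d\mu_n = \int_Y \#(U\cap\psi_A^{-1}(y))\,\Omega_Y = \sum_d d\,|Y_d|$; and, by Sard (almost sure transversality), $N_U(f)=\sum_d d\,\#(Y_d\cap H_1\cap\cdots\cap H_n)$, so Corollary~\ref{cor:KL}(2) applied to each $Y_d$ gives $\E(N_U(f))=\sum_d d\,|Y_d|/|\proj^n|$, which matches. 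Your proposal has the right pieces but stops just short of writing out this $Y_d$--decomposition, which is where the multiplicity $d$ is carried along on both sides and the identity actually closes.
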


\begin{proof}
  Lemma~\ref{le:PropPsi} states that the image $Y$ of $\psi_A$ 
  is a $K$--analytic submanifold of $Z:=\proj(K^A)$ of dimension $n$.
  We may therefore apply Theorem~\ref{th:vol-image} to the map  
  $\psi_A\colon (K^\times)^n\to Z$ and the measurable subset $U$. 
  This implies 
  \begin{equation}\label{eq:iggy}
    \int_U J(\psi_A)\, d\mu_n = \int_{y\in Y} \# \big(U\cap\psi_A^{-1}(y)\big) \Omega_Y(y) = \sum_{d\in\N} d |Y_d| ,
  \end{equation}
  where we have put 
  \[
    Y_d := \{y\in Y \mid \# \big(U\cap\psi_A^{-1}(y)\big) =d \},
  \]
  and $|\cdot|$ refers to the measure coming from the $R$--analytic submanifold $Y$.

  We apply the integral geometry formula of Corollary~\ref{cor:KL}.(2) to the measurable subset $Y_d$ of~$Y$,
  giving 
  \begin{equation}\label{eq:above}
    \E_{H_1,\ldots,H_n} \#( Y_d \cap H_1 \cap\ldots H_n)   = \frac{|Y_d|}{|\proj^{n}|} ,
  \end{equation}
  where the $H_i$ are independent and uniformly random projective hyperplanes in~$\proj(K^A)$.

  A random choice of $(c_{ia})_{a\in A}$ in $R^A$ defines via~\eqref{eq:d-def} a random system of 
  fewnomial equations $f_1(x)=0,\ldots,f_n(x)=0$.  
  It also defines a system of random hyperplanes $H_1,\ldots,H_n$ given by the 
  linear equations $\sum_{a\in A} c_{ia} y_a= 0$ in the variables $(y_a)_{a\in A}$. 
  Clearly, the number of solutions in $U$ of the system $f_1(x)=0,\ldots,f_n(x)=0$ 
  is given by 
  \[
    N_U(f) = \sum_{d\in\N} d\, \# (Y_d \cap H_1 \cap\ldots H_n) 
  \]
  when the hyperplanes $H_1,\ldots,H_n$ intersect $Y_d$ transversally, 
  which happens almost surely by Theorem~\ref{th:sard} (Sard's lemma).
  Taking expectations and combining with \eqref{eq:iggy} and \eqref{eq:above} yields
  \[
    \E N_U(f) = \sum_{d\in\N} d\, \E \# (Y_d \cap H_1 \cap\ldots H_n) 
    = \frac{1}{|\proj^{n}|} \sum_{d\in\N} d \, |Y_d| 
    =  \frac{1}{|\proj^{n}|}\int_U J(\psi_A)\, d\mu_n ,
  \]
  which proves the assertion.
\end{proof}

\subsection{Zeros of random univariate fewnomials}\label{se:univariate}

We apply Theorem~\ref{th:ENU} in the special case $n=1$. 
We let $t \ge 2$, fix integers $a_1 < \ldots < a_t$,    
and study the zeros in $K^\times$ of the random fewnomial
$$
 f(x) := c_{1} x^{a_1} + \ldots +  c_{t} x^{a_t} ,
$$
where the $c_{j}$ are i.i.d.\ uniformly distributed in $R$. 
The map from~\eqref{eq:psiA} specializes to 
\begin{equation}\label{eq:mon-map}
  \psi \colon K^\times \to \proj^{t-1}, \ x \mapsto [x^{a_1} :\ldots:x^{a_t} ] 
  = [1 : x^{\tilde{a}_2} :\ldots:x^{\tilde{a}_t} ] ,
\end{equation}
where $\tilde{a}_i := a_i -a_1$. 
As we restrict attention to zeros $x\ne 0$,  
we could assume without loss of generality that 
all $a_j$ are nonnegative and $a_1 = 0$ via shifting the exponents. 
However, for the sake of clarity, we prefer to formulate the results without this assumption.

\begin{lemma}\label{le:n=1Jpsi}
For $x\in R \backslash \{0\}$ we have 
$$
 J(\psi)(x) 
  = \max_{j\ge 2} |\tilde{a}_j| \cdot |x|^{\tilde{a}_j -1}  \ \le\  |x|^{\tilde{a}_2 -1}
$$
with equality if $|\tilde{a}_2| = 1$. 
Here and in the following, $|\tilde{a}_j| := |\tilde{a}_j|_K$ 
denotes the absolute value of $\tilde{a}_j$ 
as an element of $K$, see \S\ref{se:na-fields}.
\end{lemma}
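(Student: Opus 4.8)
The plan is to compute the absolute Jacobian $J(\psi)(x) = N(D_x\psi)$ directly from the definition, exploiting that $\psi$ maps into the affine chart $U_1 = \{y_1 \neq 0\}$ of $\proj^{t-1}$ and that $D_x\psi$ is then represented by a single column vector, whose absolute determinant equals its norm by~\eqref{eq:adet-row}.

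First I would reduce to the chart computation. Since $x \neq 0$, the point $\psi(x) = [1 : x^{\tilde a_2} : \cdots : x^{\tilde a_t}]$ lies in $U_1$, and in the coordinates of this chart, $\psi$ is given by $x \mapsto (x^{\tilde a_2}, \ldots, x^{\tilde a_t}) \in K^{t-1}$. Its derivative at $x$ is the column vector $(\tilde a_2 x^{\tilde a_2 - 1}, \ldots, \tilde a_t x^{\tilde a_t - 1})^T$. However, one must be careful: the quotient $R$--structure on $\proj^{t-1}$ induced from $S(K^t) \to \proj^{t-1}$ need not agree with the standard $R$--structure on the chart $U_1$ at the point $\psi(x)$ unless $\|\psi(x)\|$ is appropriately normalized. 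Since $\psi(x)$ in homogeneous coordinates $(1, x^{\tilde a_2}, \ldots, x^{\tilde a_t})$ has norm $\max_j\{1, |x|^{\tilde a_j}\} = 1$ when $|x| \le 1$ (because $\tilde a_j > 0$), the vector $(1, x^{\tilde a_2}, \ldots, x^{\tilde a_t})$ already lies on $S(K^t)$, so the chart coordinates on $U_1$ agree with the quotient $R$--structure here exactly as in Example~\ref{ex:proj-space}; no rescaling is needed. Thus $J(\psi)(x) = N(D_x\psi) = \|(\tilde a_2 x^{\tilde a_2 - 1}, \ldots, \tilde a_t x^{\tilde a_t - 1})\| = \max_{j \ge 2} |\tilde a_j|\,|x|^{\tilde a_j - 1}$ by~\eqref{eq:adet-row}.

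Next I would derive the inequality. Since $x \in R \setminus \{0\}$, we have $|x| \le 1$, and $\tilde a_2 \le \tilde a_j$ for all $j \ge 2$, so $|x|^{\tilde a_j - 1} \le |x|^{\tilde a_2 - 1}$. Also $|\tilde a_j|_K \le 1$ for every integer $\tilde a_j$, since integers lie in $R$. Combining, $|\tilde a_j|\,|x|^{\tilde a_j - 1} \le |x|^{\tilde a_2 - 1}$ for each $j \ge 2$, hence the max is at most $|x|^{\tilde a_2 - 1}$. Finally, if $|\tilde a_2| = 1$ (equivalently $\tilde a_2 = a_2 - a_1$ is a unit in $R$, e.g.\ coprime to the residue characteristic), the $j = 2$ term equals exactly $|x|^{\tilde a_2 - 1}$, giving equality.

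I do not expect a serious obstacle here: the only subtlety worth double-checking is the normalization of the $R$--structure on $\proj^{t-1}$ at the point $\psi(x)$, i.e.\ confirming that lifting to the representative with unit norm produces precisely the chart coordinates used in Example~\ref{ex:proj-space}, so that $J(\psi)(x)$ is literally the norm of the derivative column. Everything else is the elementary observation that $|\cdot|_K \le 1$ on $\Z$ together with monotonicity of $t \mapsto |x|^t$ for $|x| \le 1$.
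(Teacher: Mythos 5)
Your proposal is correct and takes essentially the same approach as the paper: both identify that the lift $(1, x^{\tilde a_2},\ldots,x^{\tilde a_t})$ of $\psi(x)$ has norm $1$, so the $R$--structure on the tangent space of $\proj^{t-1}$ is the standard one on the chart (the paper phrases this as $R^t = R\varphi(x)\oplus(0\times R^{t-1})$, you phrase it via the chart of Example~\ref{ex:proj-space}), and then apply~\eqref{eq:adet-row} to identify $J(\psi)(x)$ with the norm of the derivative column. The elementary estimates $|x|\le 1$, $|\tilde a_j|\le 1$ and the equality case when $|\tilde a_2|=1$ match the paper verbatim.
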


\begin{proof}
  We put $\varphi(x) := (1, x^{\tilde{a}_2},\ldots,x^{\tilde{a}_t})$
  and note that 
  $K\varphi(x) \cap R^n = R\varphi(x)$. 
  Therefore, the tangent space $T_{\psi(x)}\proj^{t-1} = K^t / K\varphi(x)$
  has the $R$--structure $R^t / R\varphi(x)$. 
  Note that 
  $R^t = R\varphi(x) \oplus (0\times R^{t-1})$.
  By the definition of the absolute Jacobian, we obtain 
  \begin{equation*}\label{eq:Jformel}
    J(\psi)(x) = \|\varphi'(x)\| = \max_{j\ge 2} | \varphi_j'(x) | = 
    \max_{j\ge 2} |\tilde{a}_j| \cdot |x|^{\tilde{a}_j-1} \le |x|^{\tilde{a}_2 -1} .
  \end{equation*}
  Here we used $|x| \le 1$ and $|\tilde{a}_j| \le 1$ since $\tilde{a}_j$ is an integer. 
  If $|\tilde{a}_2| = 1$, then the term $|\tilde{a}_2| \cdot |x|^{\tilde{a}_2}$
  is maximal for every $x$, and get equality above.
\end{proof}

We next show that 
$\E(N_{K^\times} (f)) \le 1$, which we find quite remarkable!
More precisely, we will derive upper bounds on the
expected number of zeros in the subsets 
$\RnO:= R\setminus\{0\}$ and $\mnO := \frm\setminus\{0\}$.
Typically, these upper bounds are sharp. 
Note that we have the disjoint decomposition 
$K^\times = \mnO \cup R^\times\cup (R\setminus\frm)$
into open subsets. 

\begin{thm}\label{thm:n=1R}
  Let $a_1 < a_2 < \ldots < a_t$ be integers and 
  $f(x) := c_{1} x^{a_1}  + c_2 x^{a_2} + \ldots +  c_{t} x^{a_t}$ 
  with i.i.d.\ uniformly distributed coefficients~$c_{j}$ in $R$.
  We have 
  \[
    \E(N_{R^\times}(f)) = \frac{1-\e}{1+\e}\, \cdot \max_{j\ge 2} |a_j -a_1|, \quad
    \E(N_{\mnO}(f)) \ \le\ \frac{1-\e}{1+\e} \Big( (1-\e^{a_2 - a_{1}})^{-1} -1 \Big) , 
  \]
  with equality if $|a_2-a_1| = 1$. Moreover, 
  we have  
  $$
  \E(N_{K^\times}(f)) \ \le\ \frac{1-\e}{1+\e} 
  \Big( (1-\e^{a_2-a_1})^{-1} + (1-\e^{a_t - a_{t-1}})^{-1} -1 \Big) , 
  $$
  with equality if $|a_2-a_1| = 1$ and $|a_t - a_{t-1}| = 1$. 
In particular, $\E(N_{K^\times}(f)) \le 1$. 
\end{thm}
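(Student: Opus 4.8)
The plan is to apply Theorem~\ref{th:ENU} with $n=1$ on each of the three open pieces $\mnO$, $R^\times$, and $R\setminus\frm$ separately, integrating the absolute Jacobian $J(\psi)$ computed in Lemma~\ref{le:n=1Jpsi}, and then to prove the upper bounds by replacing $J(\psi)(x)$ with the easily-integrable bound $|x|^{\tilde a_2-1}$ near $0$ (respectively its mirror image near the boundary of the unit disk, obtained via the inversion $x\mapsto x^{-1}$). Throughout I write $\tilde a_j := a_j-a_1$ and $\e=q^{-1}$, and recall $|\proj^1| = (1-\e^2)/(1-\e) = 1+\e$ from \eqref{eq:vol-proj}.

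First I would treat $R^\times$. On $R^\times$ we have $|x|=1$, so Lemma~\ref{le:n=1Jpsi} gives $J(\psi)(x) = \max_{j\ge 2}|\tilde a_j|$, a constant. Since $\mu(R^\times) = 1-\e$, Theorem~\ref{th:ENU} yields $\E(N_{R^\times}(f)) = \frac{1}{1+\e}(1-\e)\max_{j\ge 2}|a_j-a_1|$, which is the claimed exact formula (no inequality). Next, for $\mnO$ I would use the second half of Lemma~\ref{le:n=1Jpsi}, $J(\psi)(x)\le |x|^{\tilde a_2-1}$, with equality when $|\tilde a_2|=1$. Using Proposition~\ref{prop:pushfHaarII}(1) with $n=1$, the distribution of $\|x\| = |x|$ on $\frm$ is $\e^k$ for $k\ge 1$ with $\mu(\{|x|=\e^k\}) = (1-\e)\e^k$, so
\[
 \int_{\mnO} |x|^{\tilde a_2-1}\, d\mu = \sum_{k\ge 1} \e^{k(\tilde a_2-1)} (1-\e)\e^k = (1-\e)\sum_{k\ge 1}\e^{k\tilde a_2} = (1-\e)\Big( (1-\e^{\tilde a_2})^{-1} - 1\Big).
\]
Dividing by $|\proj^1| = 1+\e$ gives the stated bound on $\E(N_{\mnO}(f))$, with equality when $|a_2-a_1|=1$ since then the Jacobian bound is an equality pointwise.

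For the piece $R\setminus\frm = \{x : |x|>1\}$ I would change variables via $y=x^{-1}$, which is a $K$-bianalytic self-map of $K^\times$ sending $R\setminus\frm$ to $\frm$. Under this substitution the monomial map $\psi$ from \eqref{eq:mon-map} becomes, up to reordering coordinates, the monomial map associated to the reversed, negated exponent list; concretely $\psi(x) = [x^{a_1}:\cdots:x^{a_t}] = [y^{-a_t}:\cdots:y^{-a_1}]\cdot y^{a_t}$ so as a point of $\proj^{t-1}$ it equals $[y^{a_t-a_t}:\cdots:y^{a_t-a_1}]$, i.e.\ the monomial map of the shifted exponents $a_t-a_j$, whose smallest gap above the minimum is $a_t-a_{t-1}$. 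Hence the same computation as for $\mnO$ applies to the reversed problem, giving $\E(N_{R\setminus\frm}(f)) \le \frac{1-\e}{1+\e}\big((1-\e^{a_t-a_{t-1}})^{-1}-1\big)$ with equality when $|a_t-a_{t-1}|=1$. Adding the three contributions and simplifying,
\[
 \E(N_{K^\times}(f)) \le \frac{1-\e}{1+\e}\Big( (1-\e^{a_2-a_1})^{-1} + (1-\e^{a_t-a_{t-1}})^{-1} - 1\Big),
\]
since $\max_{j\ge 2}|a_j-a_1|\le 1$ (the exponents are integers) contributes the ``$+\,1$'' from $R^\times$ that cancels one of the ``$-1$''s, and equality throughout requires $|a_2-a_1|=|a_t-a_{t-1}|=1$. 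Finally, to get $\E(N_{K^\times}(f))\le 1$, observe that $(1-\e^m)^{-1}\le (1-\e)^{-1}$ for every $m\ge 1$, so the bracket is at most $2(1-\e)^{-1}-1 = (1+\e)/(1-\e)$, and multiplying by $(1-\e)/(1+\e)$ gives exactly $1$.

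The routine parts are the geometric-series evaluations and the bookkeeping when combining the three pieces; the one step needing genuine care is the inversion argument for $R\setminus\frm$ — specifically checking that $J(\psi)$ transforms correctly under $x\mapsto x^{-1}$ (it does, because $\psi\circ(x\mapsto x^{-1})$ is again a monomial map in the sense of \eqref{eq:mon-map} after the projective rescaling, and $J$ of a $K$-bianalytic reparametrization is governed by the chain rule together with $|J(x\mapsto x^{-1})(y)| = |y|^{-2}$, which must be tracked against the change in the Haar measure). I expect this bookkeeping to be the main obstacle; everything else follows mechanically from Theorem~\ref{th:ENU}, Lemma~\ref{le:n=1Jpsi}, and Proposition~\ref{prop:pushfHaarII}.
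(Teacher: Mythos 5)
Your proof is correct and takes essentially the same route as the paper: decompose $K^\times = R^\times \sqcup \mnO \sqcup (K\setminus R)$, apply Theorem~\ref{th:ENU} together with Lemma~\ref{le:n=1Jpsi} on the first two pieces, and reduce $K\setminus R$ to $\mnO$ via $x\mapsto x^{-1}$. The one point worth noting: where you propose to transport the Jacobian $J(\psi)$ under $x\mapsto x^{-1}$ and verify that the chain-rule factor cancels against the change in Haar measure (the step you rightly flag as needing care), the paper sidesteps this entirely by passing to the reverse polynomial $\tilde f(x) := x^{a_t} f(x^{-1})$ and observing that $N_{K\setminus R}(f) = N_{\mnO}(\tilde f)$ as a bijection of (nondegenerate) zero sets; since $\tilde f$ is again a random fewnomial of the same form with support $\{a_t - a_j\}$, the already-proved $\mnO$ bound applies directly, with no Jacobian bookkeeping at all.
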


\begin{proof}
Theorem~\ref{th:ENU} implies for any open subset $U\subseteq \RnO$ that 
\[
    \E(N_{U}(f))  =  \frac{1}{|\proj^1|} \, \int_{U} J(\psi) \, d\mu .
\] 
Taking $U=R^\times$ and using Lemma~\ref{le:n=1Jpsi}, this gives the  
stated formula for $\E(N_{R^\times}(f))$.  
Moreover, we obtain with Lemma~\ref{le:n=1Jpsi} that 
\[
    \E(N_{U}(f))  
    \ \leq \ 
    \frac{1}{|\proj^1|} \, \int_{U} |x|^{\tilde{a}_2 -1} \, d\mu(x) ,
\]
with equality holding if $|\tilde{a}_2|=1$. 
  Taking $U=\mnO$ and using Proposition~\ref{prop:pushfHaarII} gives
  \[
    \E(N_{\mnO}(f)) \leq \frac{1}{|\proj^1|} \int_{\frm} |x|^{\tilde{a}_2 -1} \, d\mu(x)
    = \frac{1}{1+\e} \left(\frac{1-\e}{1-\e^{\tilde{a}_2}}  - \mu(R^\times)\right), 
  \]
with equality holding if $|\tilde{a}_2|=1$. 
  
To bound $\E(N_{K^\times}(f))$, we consider 
the reverse polynomial $\tilde{f} := x^{a_t}f(x^{-1})$, which satisfies 
$N_{K\setminus R}(f) = N_{\mnO}(\tilde{f})$ since $x \mapsto x^{-1}$ 
is a bijection between $K \backslash R$ and $\mnO$. 
We obtain
\begin{equation*}\label{eq:EEE}
  \E(N_{K^\times}(f)) = \E(N_{R^\times}(f)) + \E(N_{\mnO}(f)) + \E(N_{\mnO}(\tilde{f})).
\end{equation*}
The bound on $\E(N_{K^\times}(f))$ follows now from the estimates for the summands.
\end{proof}

\begin{remark}
  We observe that in the above proof, we could have bounded directly 
  \[
    \E(N_{K^\times}(f)) \ \le\ \frac{1}{|\proj^1|} 
    \big(\mu(R^\times) + 2 \mu(\frm) \big) 
    = \frac{1}{|\proj^1|} \big( 1+ \mu(\frm) \big)  = 1 ,
  \]
  where we used $\mu(\frm) = \e$ for the right-hand equality. 
  Theorem~\ref{thm:n=1R} even allows us to conclude $\E(N_{K^\times}(f)) = 1$ 
  when 
  $a_t - a_{t-1} = a_2 - a_1 = 1$. 
\end{remark}

\begin{example}
Consider the random polynomial 
$f(x)= c_0 + c_2 x^2 + c_3 x^3 \in \Z_2[x]$ 
with independent coefficients $c_i\in\Z_2$. We claim that 
\[
  \E(N_{\mnO}(f)) = \frac{1-\e}{1+\e}\cdot \frac{\e^3}{1-\e^2} = \frac{1}{18},\quad 
  \E(N_{R^\times}(f)) = \frac{1-\e}{1+\e} = \frac13,\quad 
  \E(N_{K\setminus R}(f)) = \frac{\e}{1+\e} = \frac13 .
\]
Note that $J(\psi)$ is constant on the annulus $\frm^{k} \backslash \frm^{k+1}$, so 
\[
  \E(N_{\mnO}(f)) = \frac{1}{|\proj^1|} \, \int_{\frm} J(\psi) \, d\mu 
  = \frac{1}{1+\e} \sum_{k=1}^\infty (1-\e) \e^k J\psi(\e^k) .
\]
Since $a_2-a_1=2$, we get
\[
  J\psi(\e^k) = \min\{ |2|_2 \e^k, |3|_2 \e^{2k} \} = \min\{ \e^{k+1}, \e^{2k}\} = 
  \left\{
    \begin{array}{cl}
      1 & \mbox{ if $k=0$,} \\
      \e^{k+1} &\mbox{ if $k \ge 1$.}
    \end{array}
  \right.
\]
Hence 
\[
  \sum_{k=1}^\infty \e^k J\psi(\e^k) = \sum_{k=1}^\infty \e^{2k+1} =  \sum_{i=0}^\infty \e^{2(i+1)+1} = \frac{\e^3}{1-\e} ,
\] 
which shows the first equality. The other two follow similarly. 
\end{example}

As an immediate consequence of Theorem~\ref{thm:n=1R}
we obtain a result reminiscent of the situation for real random fewnomials, 
where it is known that the positive real zeros concentrate around~$1$; see \cite[Thm.~3]{JindalPSZ20}.

\begin{cor}\label{cor:n=1concentrate_roots}
  In the setting of Theorem~\ref{thm:n=1R}, 
  the probability that the random fewnomial $f$ has a zero in $\mnO$ goes to zero, 
  as $a_2-a_1\to\infty$. \qed
\end{cor}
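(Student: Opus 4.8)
The plan is to derive this directly from the upper bound on $\E(N_{\mnO}(f))$ in Theorem~\ref{thm:n=1R}, together with Markov's inequality, after disposing of the negligible contribution of degenerate zeros.

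First I would rewrite the bound from Theorem~\ref{thm:n=1R} using $(1-\e^{a_2-a_1})^{-1}-1 = \e^{a_2-a_1}/(1-\e^{a_2-a_1})$, so that
\[
  \E(N_{\mnO}(f)) \ \le\ \frac{1-\e}{1+\e}\cdot \frac{\e^{a_2-a_1}}{1-\e^{a_2-a_1}} .
\]
Since $\e = q^{-1}\in(0,1)$, the quantity $\e^{a_2-a_1}$ tends to $0$ as $a_2-a_1\to\infty$, hence so does the right-hand side; thus $\E(N_{\mnO}(f))\to 0$. As $N_{\mnO}(f)$ is a nonnegative integer-valued random variable, Markov's inequality yields $\Prob(N_{\mnO}(f)\ge 1)\le \E(N_{\mnO}(f))\to 0$.

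It then remains to pass from the event $\{N_{\mnO}(f)\ge 1\}$ to the event that $f$ has \emph{some} zero in $\mnO$, i.e.\ to check that almost surely $f$ has no degenerate zero in $K^\times$. I would argue this by a discriminant computation: writing $f(x)=x^{a_1}g(x)$ with $g(x)=c_1+c_2x^{a_2-a_1}+\cdots+c_tx^{a_t-a_1}$, a point $x\in K^\times$ is a degenerate zero of $f$ precisely when it is a multiple root of $g$. The set of coefficient vectors $(c_1,\ldots,c_t)\in R^t$ with $c_t\ne 0$ for which $g$ has a multiple root is cut out by the vanishing of $\mathrm{disc}(g)$, a polynomial in the $c_j$ that is not identically zero: specializing the intermediate coefficients $c_2,\ldots,c_{t-1}$ to zero gives $g=c_1+c_tx^{a_t-a_1}$, whose roots are the $(a_t-a_1)$-th roots of $-c_1/c_t$ and hence pairwise distinct since $\chara K=0$. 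Consequently this locus, together with $\{c_t=0\}$, has measure zero, so almost surely every zero of $f$ in $\mnO$ is nondegenerate and is counted by $N_{\mnO}(f)$. Combining the two parts,
\[
  \Prob\big(f \text{ has a zero in } \mnO\big)=\Prob(N_{\mnO}(f)\ge 1)\ \longrightarrow\ 0 \qquad (a_2-a_1\to\infty),
\]
which is the assertion. No step here presents a genuine obstacle; the only point requiring a little care is the non-vanishing of the discriminant polynomial, which the specialization above settles, and this could alternatively be avoided entirely by bounding the expected total number of zeros (with multiplicity) instead.
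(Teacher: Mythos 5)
Your proof is correct and follows the intended route: the expectation bound from Theorem~\ref{thm:n=1R} is $O(\e^{\,a_2-a_1})$ and hence tends to $0$, and Markov's inequality applied to the nonnegative integer-valued random variable $N_{\mnO}(f)$ gives the conclusion. You also correctly identify and close a small gap that the paper silently passes over—namely that $N_{\mnO}(f)$ counts only nondegenerate zeros—and your discriminant argument (specialize the middle coefficients to zero to check $\mathrm{disc}(g)$ is not identically zero, then use that the vanishing locus of a nonzero polynomial has Haar measure zero in $R^t$) is a clean way to rule out degenerate zeros almost surely; counting zeros with multiplicity, as you note, would work equally well.
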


Consequently, if $a_2-a_1$ is large, we expect the zeros of $f$ within $R\setminus\{0\}$ to lie in $R^\times$. 
If we assume that both gaps $a_2-a_1,a_t-a_{t-1}$ tend to infinity, then we expect the zeros of $f$ in $K^\times$ 
to lie in~$R^\times$.

\begin{example}
  Consider the random polynomial $f(x) := c_1 + c_2 x^d $ with independent uniformly distributed coefficients $c_1, c_2$ in $R$. 
  In order for $f$ to have a zero outside $R^\times$, it is necessary that $\val(c_1/c_2)$ is nonzero and 
  divisible by~$d$. It is easily seen that the probability for this event goes to zero as $d\to\infty$ 
  (use Proposition~\ref{prop:pushfHaarII}).
\end{example}

\subsection{A few results on zeros of random fewnomial systems}\label{se:fewnom-systems}

Let $A\subseteq\Z^n$ be a finite support.
By the {\em Newton polytope} $P$ of $A$ we understand its convex hull,
and by a {\em vertex} of $P$ we mean an extremal point of it. 
We call $P$ {\em rectangular} if it is of the form 
\[
  P =\{\ell_1,\ell_1+1,\ldots,r_1\} \times\ldots\times \{\ell_n,\ell_n+1,\ldots,r_n\} ,
\] 
for some integers $\ell_i < r_i$. 
We will call the vertex $\ell := (\ell_1, \ldots, \ell_n)$ 
the \emph{$\R_+^n$-minimal vertex of $P$}; 
note that for any $u\in \R_+^n$, $\ell$ minimizes $P\to\R,v\mapsto \langle u, v\rangle$.
We call $A$ \emph{gap-free rectangular at the vertex $\ell$} iff 
$A$ contains all the $n$  neighbours $\ell + e_1,\ldots,\ell+e_n$ at distance~$1$ of $\ell$, 
where $e_i$ stands for the $i$th standard basis vector. 
Similarly, we define $A$ to be gap-free rectangular at the vertex $v$ 
if all of its $n$ neighbours at distance~$1$ within $P \cap \Z^n$ are contained in~$A$. 
If this is the case at all vertices of $P$, then we call $A$ {\em gap-free rectangular}.

For instance, the support 
$$
 A=\{(0,1), (0,0), (1,0),(r-1,0),(r,0), (r,1),(r,r-1),(r,r),(r-1,r),(1,r),(0,r),(0,r-1)\}.
$$
of cardinality $12$ is gap-free rectangular (illustrated in Figure~\ref{figure:gapfree rectangular support}).

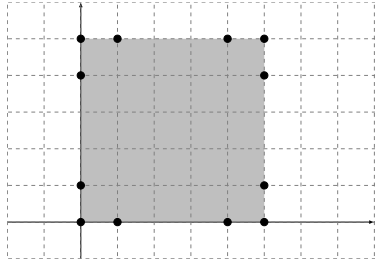
\begin{figure}[h]
  \centering
  \newcommand{\rforpic}{5}
  \resizebox{5cm}{!}{
    \begin{tikzpicture}{scale=0.05}
      \coordinate (Origin)   at (0,0);
      \coordinate (XAxisMin) at (-2,0);
      \coordinate (XAxisMax) at (\rforpic+3,0);
      \coordinate (YAxisMin) at (0,-1);
      \coordinate (YAxisMax) at (0,\rforpic+1);
      \draw [thin, black,-latex] (XAxisMin) -- (XAxisMax);
      \draw [thin, black,-latex] (YAxisMin) -- (YAxisMax);
      \clip (-2,-1) rectangle (8cm,6cm); 
      \draw[style=help lines,dashed] (-14,-14) grid[step=1cm] (14,14); 

      \def\pointlist {(0,1), (0,0), (1,0), (\rforpic-1,0), (\rforpic,0), (\rforpic,1), (\rforpic,\rforpic-1),
        (\rforpic,\rforpic), (\rforpic-1,\rforpic), (1,\rforpic), (0,\rforpic), (0,\rforpic-1)}
      
      \fill[gray, opacity=0.5] (0,1) \foreach \mypt in \pointlist {-- \mypt} -- cycle;
      \foreach \mypt in \pointlist {
        \node[draw, circle, inner sep=2pt, fill] at \mypt {};
      }
    \end{tikzpicture}
  }
  \caption{An example of a gap-free rectangular support set.}
  \label{figure:gapfree rectangular support}
\end{figure}

\begin{defi}
Consider a random system 
$f_1(x)=0,\ldots,f_n(x)=0$, where 
$$
 f_i(x) = \sum_{a\in A} c^{(i)}_{a} x_1^{a_1}\cdots x_n^{a_n},
$$
with i.i.d.\ uniformly distributed coefficients~$c^{(i)}_{a}$ in $R$. 
We denote by 
$N_U(A)$ the number of nondegenerate zeros of 
this system lying in the subset $U\subseteq(K^\times)^n$.
\end{defi}

The next result generalizes Theorem~\ref{thm:n=1R}.
Note the upper bound 
$(1-\e) (1+\e)^n(1-\e^{n+1})^{-1}$ in Proposition~\ref{pro:rectN}
equals~$1$ when $n=1$, but exceeds $1$ otherwise.

\begin{prop}\label{pro:rectN}
  Suppose the Newton polytope of $A\subseteq\Z^n$ is rectangular and 
  let $\ell$ denote its $\R_+^n$-minimal vertex. Then:
  \begin{enumerate}
  \item
    We have 
    \[
      \E(N_{(\RnO)^n}(A)) \le 1.
    \] 
    Equality holds iff 
    $A$ is gap-free rectangular at the vertex $\ell$. 

  \item
    We have 
    $$
    \E(N_{(K^\times)^n}(A)) \ \le\ \frac{(1-\e) (1+\e)^n}{1-\e^{n+1}} .
    $$
    Equality holds iff $A$ is gap-free rectangular. 
  \end{enumerate}
\end{prop}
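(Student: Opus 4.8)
The plan is to use the integral formula of Theorem~\ref{th:ENU} to reduce the statement to a computation of the integral $\int_U J(\psi_A)\,d\mu_n$ over the two relevant sets $U=(\RnO)^n$ and $U=(K^\times)^n$. First I would reduce to the normalized situation by translating the support: replacing $A$ by $A-\ell$ does not change the zero set in $(K^\times)^n$ (it multiplies every $f_i$ by the monomial $x^\ell$, a unit on $(K^\times)^n$), and it sends the $\R_+^n$-minimal vertex to the origin. So without loss of generality $\ell=0$ and $A\subseteq\N^n$ with $0\in A$, and the gap-free-rectangular-at-$\ell$ condition becomes: $e_1,\dots,e_n\in A$. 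Here $\psi_A\colon (K^\times)^n\to\proj(K^A)$, $x\mapsto[x^a]_{a\in A}$, and since $0\in A$ the image lies in the affine chart $\{y_0\ne 0\}$, so $\psi_A$ identifies with $\varphi_{A'}\colon x\mapsto(x^a)_{a\in A\setminus\{0\}}$ into $K^{A\setminus 0}$.

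Next I would compute $J(\psi_A)(x)$ pointwise, exactly as in Lemma~\ref{le:n=1Jpsi} but in $n$ variables. Writing $A\setminus\{0\}=\{a^{(1)},\dots,a^{(t-1)}\}$, the derivative of $\varphi_{A'}$ at $x$ has rows $\partial_{x_j} x^{a} = a_j x^{a}/x_j$, and by~\eqref{eq:abs-det} together with the formula $N(\cdot)=$ max absolute value of maximal minors, one gets
$$
 J(\psi_A)(x) = \max_{\substack{B\subseteq A\setminus\{0\}\\ |B|=n}} \Big| \det\big[a_j x^{a}/x_j\big]_{a\in B, 1\le j\le n}\Big|
 = \max_{\substack{B\subseteq A\setminus\{0\}\\ |B|=n}} |\det(B)|\cdot \prod_{a\in B}\|x^{a}\| \cdot \prod_{j}\|x_j\|^{-1},
$$
where $\det(B)$ is the integer determinant of the $n\times n$ matrix of exponent vectors in $B$. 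Since $|\det(B)|\le 1$ (it is an integer) and each $|x|\le 1$ on $\RnO^n$, the term $B=\{e_1,\dots,e_n\}$, which has $\det=\pm1$ and contributes $\prod_j |x_j|^{1}\cdot\prod_j|x_j|^{-1}=1$, dominates; so $J(\psi_A)(x)\le 1$ on $\RnO^n$, with equality for all $x$ iff $e_1,\dots,e_n\in A$ (this needs a small argument: if some $e_i\notin A$, one checks $J(\psi_A)<1$ on a positive-measure set, using that every $a\in A\setminus 0$ with $a\ne e_j$ for all $j$ either has $|a_j|<1$ for the relevant coordinate or has $\|x^a\|/\|x_j\|\le |x_j|^{|a|-1}$ strictly smaller than $1$ somewhere). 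Then by Theorem~\ref{th:ENU},
$$
 \E(N_{\RnO^n}(A)) = \frac{1}{|\proj^n|}\int_{\RnO^n} J(\psi_A)\,d\mu_n \le \frac{\mu_n(\RnO^n)}{|\proj^n|} = \frac{(1-\e)^n}{\,(1-\e^{n+1})/(1-\e)\,}\cdot\frac{1}{(1-\e)^{n-1}},
$$
which I would simplify to exactly $1$ using $|\proj^n|=(1-\e^{n+1})/(1-\e)$ and $\mu_n(\RnO^n)=(1-\e)^n$ --- wait, these do not match directly, so the bound $J(\psi_A)\le 1$ is not tight enough for part (1); I must instead argue that $J(\psi_A)=1$ identically on $\RnO^n$ in the gap-free case and then the integral is $\mu_n(\RnO^n)=(1-\e)^n$, which still is not $|\proj^n|$. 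The resolution: on $\RnO^n$ one actually has $J(\psi_A)(x)=1$ only on $(R^\times)^n$, and is $<1$ off it, so the correct computation (exactly paralleling the $n=1$ case where $\E(N_{R^\times})=\frac{1-\e}{1+\e}\cdot 1$ and the $\mnO$ contribution is a geometric tail) gives $\E(N_{\RnO^n}(A))\le 1$ with the full integral, via $\int_{\RnO^n} J(\psi_A) = \mu_n((R^\times)^n) + (\text{sum over shells } \prod_j \frm^{k_j}\setminus\cdots)$ and $J$ on the shell $\|x_j\|=\e^{k_j}$ bounded by $\prod_j\e^{k_j}$ (coming from the $B=\{e_1,\dots,e_n\}$ term when gap-free, and strictly less otherwise). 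Summing the geometric series $\sum_{k\in\N^n}(1-\e)^n\e^{|k|}\cdot\e^{|k|}$-type terms reproduces $|\proj^n|$ after multiplication, yielding exactly $1$; the strict inequality for non-gap-free $A$ follows since then some shell contributes strictly less.

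For part (2), I would pass to $(K^\times)^n$ by the standard covering: $(K^\times)^n = \bigsqcup_{\epsilon\in\{0,1,\dots\}^n}$... more precisely use the partition of $(K^\times)^n$ according to $\mathrm{sign}$ of valuations, i.e.\ for each $S\subseteq[n]$ the region $\{|x_j|>1 \iff j\in S\}$, and on each region perform the substitution $x_j\mapsto x_j^{-1}$ for $j\in S$, which turns $f_i$ into the Laurent polynomial obtained by reflecting $A$ in the coordinates in $S$; the reflected support has $\R_+^n$-minimal vertex one of the other vertices of $P$ (the one minimal in the cone $\{u: u_j\le 0 \text{ for } j\in S\}$). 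Each of the $2^n$ pieces then contributes, by part (1) applied to the reflected support, at most the analogous integral, with equality iff $A$ is gap-free rectangular at that vertex. Adding the $2^n$ contributions gives a geometric-series sum which I would evaluate to $\frac{(1-\e)(1+\e)^n}{1-\e^{n+1}}$, and equality holds iff $A$ is gap-free rectangular at \emph{all} vertices. The main obstacle I anticipate is the bookkeeping in part (1): pinning down exactly which $x$ have $J(\psi_A)(x)=1$, proving the strict inequality on a positive-measure set when $A$ fails to be gap-free at $\ell$, and organizing the shell-by-shell geometric summation so that it collapses cleanly to $|\proj^n|^{-1}\cdot(\text{answer})$. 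Everything else is a direct generalization of the one-variable computation in Section~\ref{se:univariate} together with the coordinate-reflection trick already used there for $\tilde f$.
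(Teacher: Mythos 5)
Your overall framework coincides with the paper's: reduce to $\ell=0$, bound $J(\psi_A)$ pointwise via the formula $N(D\varphi)=\max_I|\det D\varphi(x)_I|$ over $n\times n$ minors, then for part (2) partition $(K^\times)^n$ according to which coordinates lie in $K\setminus R$ and use the reflection $x_i\mapsto x_i^{-1}$. Your formula for $J(\psi_A)(x)$ and the reduction to $\ell=0$ are right, and part (2) is outlined the same way the paper does it. But two errors derail part (1).

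First, a measure slip: $\mu_n\big((\RnO)^n\big)=1$, not $(1-\e)^n$. Here $\RnO = R\setminus\{0\}$ differs from $R$ by a null set, so $\mu(\RnO)=\mu(R)=1$. You are confusing $\RnO$ with $R^\times=S(K)$, which has measure $1-\e$.

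Second, and more substantively, your claim that in the gap-free case "$J(\psi_A)(x)=1$ only on $(R^\times)^n$, and is $<1$ off it" is false, and the shell-by-shell geometric summation built on it is not what happens. The paper's Lemma~\ref{le:Jle1}(2) shows that if $A$ is gap-free rectangular at $\ell=0$ then $J(\psi_A)(x)=1$ for \emph{every} $x\in(\RnO)^n$, including deep inside $(\mnO)^n$. You can see this from your own formula: taking the minor indexed by $I=\{e_1,\dots,e_n\}$, the matrix $D\varphi(x)_I=[\partial_j x^{e_i}]_{ij}$ is $I_n$ because $\partial_j x^{e_i}=\delta_{ij}$ identically, so this minor has absolute value exactly $1$ regardless of $|x_j|$; equivalently the row-sum vector $b=\sum_{a\in I}a=(1,\dots,1)$ makes the monomial factor $x_1^{b_1-1}\cdots x_n^{b_n-1}$ vanish. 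Consequently $\int_{(\RnO)^n}J(\psi_A)\,d\mu=1$ exactly in the gap-free case, with no geometric-series cancellation involved, and this plus the upper bound $J(\psi_A)\le1$ from Lemma~\ref{le:Jle1}(1) is the whole content of the "if" direction. The converse direction (equality implies gap-free) is handled by Lemma~\ref{le:Jle1}(3): if $J(\psi_A)=1$ almost everywhere on $(\RnO)^n$, then $J(\psi_A)(x)=1$ at some $x\in(\mnO)^n$; since all $|x_j|<1$ there, this forces the maximizing minor to have $b=(1,\dots,1)$ and $M_I$ a permutation matrix, hence $A$ gap-free at $\ell$. Your sketch only vaguely gestures at this converse. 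Once part (1) is stated as the bound $\int_{(\mnO)^r\times(\RnO)^{n-r}}J(\psi_A)\,d\mu\le\e^r$ with equality iff gap-free, part (2) follows cleanly as you outline, summing $\sum_{r=0}^n\binom{n}{r}\e^r=(1+\e)^n$.
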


The proof relies on the following auxiliary result, which extends Lemma~\ref{le:n=1Jpsi}. 
Recall the map~$\psi_A$ from~\eqref{eq:psiA}.

\begin{lemma}\label{le:Jle1}
  Let $A\subseteq\Z^n$ be a support with a rectangular Newton polytope. Then: 
  \begin{enumerate}
  \item $J(\psi_A)(x) \le 1$ for all $x\in (\RnO)^n$.

  \item If $A$ is gap-free rectangular at the $\R_+^n$-minimal vertex $\ell$, 
    then $J(\psi_A)(x) = 1$ for all $x\in (\RnO)^n$.

  \item  If $J(\psi_A)(x) = 1$ at some point $x\in (\mnO)^n$, 
    then $A$ is gap-free rectangular at $\ell$.

  \end{enumerate}
\end{lemma}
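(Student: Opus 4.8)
The plan is to derive an explicit formula for $J(\psi_A)$ in terms of maximal minors of a monomial Jacobian matrix, parallel to Lemma~\ref{le:n=1Jpsi}, and then read off the three assertions.

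First I would set up coordinates. Since the Newton polytope $P$ is rectangular with $\R_+^n$-minimal vertex $\ell$, every $a\in A$ satisfies $a\ge\ell$ componentwise, so for $x\in(\RnO)^n$ the vector $v(x):=(x^{a-\ell})_{a\in A}$ lies in $R^A$ with $\ell$-component $v(x)_\ell=1\in R^\times$. Hence $\psi_A(x)$ lies in the affine chart $\{y_\ell\ne 0\}$, the module $R^A$ splits as $Rv(x)\oplus R^{A\setminus\{\ell\}}$ (the latter being the coordinate submodule), and the quotient $R$--structure on $T_{\psi_A(x)}\proj(K^A)=K^A/Kv(x)$ is identified with the standard one on $K^{A\setminus\{\ell\}}$. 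Since $\partial_{x_i}x^{a-\ell}=(a_i-\ell_i)\,x^{a-\ell-e_i}$ and $v(x)_\ell$ is constant, the derivative $D_x\psi_A\colon K^n\to K^{A\setminus\{\ell\}}$ is represented by the matrix $M(x)$ with rows indexed by $a\in A\setminus\{\ell\}$ and $(a,i)$-entry $(a_i-\ell_i)\,x^{a-\ell-e_i}$. Using~\eqref{eq:abs-det} (and $|A|\ge n+1$), this gives
\[
  J(\psi_A)(x)=N(M(x))=\max\bigl\{\,|\det M(x)_I|\;:\;I\subseteq A\setminus\{\ell\},\ |I|=n\,\bigr\},
\]
where $M(x)_I$ denotes the $n\times n$ submatrix on the rows $I$. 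The crucial point is that whenever $(a-\ell)_i=0$ the coefficient $a_i-\ell_i$ vanishes, so every entry of $M(x)$ is an integer multiple of a monomial with nonnegative exponent vector; in particular each entry has norm $\le 1$ on $(\RnO)^n$.

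Parts~(1) and~(2) then follow quickly. For~(1), expanding $\det M(x)_I$ by the Leibniz rule and using the ultrametric inequality gives $|\det M(x)_I|\le 1$ for every $I$, hence $J(\psi_A)(x)\le 1$. For~(2), if $A$ is gap-free rectangular at $\ell$ then $\ell+e_1,\dots,\ell+e_n\in A$; taking $I=\{\ell+e_1,\dots,\ell+e_n\}$, the $(j,i)$-entry of $M(x)_I$ equals $((\ell+e_j)_i-\ell_i)\,x^{e_j-e_i}=\delta_{ij}$, so $M(x)_I=I_n$ and $|\det M(x)_I|=1$; together with~(1) this forces $J(\psi_A)(x)=1$ for all $x\in(\RnO)^n$.

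The remaining assertion~(3) is the real point, and its short combinatorial core is where I expect the main (if modest) difficulty. Fix $x\in(\mnO)^n$, so $|x_i|<1$ for all $i$, and suppose $J(\psi_A)(x)=1$, say $|\det M(x)_I|=1$ with $I=\{b_1,\dots,b_n\}\subseteq A\setminus\{\ell\}$. For a permutation $\sigma$, the corresponding Leibniz term of $\det M(x)_I$ has norm $\prod_{j}|(b_j-\ell)_{\sigma(j)}|\cdot|x^{w_\sigma}|$, where $w_\sigma:=\sum_j(b_j-\ell)-\sum_j e_{\sigma(j)}=\sum_j(b_j-\ell)-(1,\dots,1)$. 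Since all terms have norm $\le 1$ while the determinant has norm $1$, some term has norm exactly $1$; as $|x_i|<1$ strictly, this forces $w_\sigma=0$, i.e.\ $\sum_j(b_j-\ell)=(1,\dots,1)$. Each $b_j-\ell$ is a nonzero nonnegative integer vector, so $\|b_j-\ell\|_1\ge 1$; summing over $j$ gives $\sum_j\|b_j-\ell\|_1=n$, whence each $b_j-\ell$ is a standard basis vector, and since they sum to $(1,\dots,1)$ we conclude $\{b_1,\dots,b_n\}=\{\ell+e_1,\dots,\ell+e_n\}$. Therefore $\ell+e_1,\dots,\ell+e_n\in A$, i.e.\ $A$ is gap-free rectangular at $\ell$. (The condition $\prod_j|(b_j-\ell)_{\sigma(j)}|=1$, needed for the chosen term to have norm $1$, is then automatic since each $b_j-\ell$ is a standard basis vector.)
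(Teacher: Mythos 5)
Your proof is correct and follows essentially the same approach as the paper: you compute $J(\psi_A)$ via the same derivative-matrix maximal-minor formula after shifting the support to the $\R_+^n$-minimal vertex, and the combinatorial core of part (3) is identical (exponent vectors summing to $(1,\ldots,1)$ must be the standard basis vectors). The only cosmetic difference is that you expand $\det M(x)_I$ term by term with the Leibniz formula, while the paper factors $D\varphi(x)$ as $\mathrm{diag}(x^{a_j})\cdot M\cdot\mathrm{diag}(x_i^{-1})$ and reads off $|\det D\varphi(x)_I| = |x^{b-\mathbf{1}}|\,|\det M_I|$ directly.
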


\begin{proof}
  \begin{enumerate}[itemindent=\dimexpr \labelwidth+\labelsep+\parindent \relax, leftmargin=0pt, itemsep=\parskip]
  \item
    By shifting, we may assume that $\ell_1=\ldots=\ell_n=0$ so that $0\in A$ and 
    $A\subseteq\N^n$. Let us write 
    $A=\{0,a_2,\ldots,a_t\}$, where $a_i\in\N^n$. 
    Consider the map 
    \begin{equation}\label{eq:defphi}
      \varphi\colon (K^\times)^n \to (K^\times)^{t-1},\, \varphi(x) =(x^{a_2},\ldots,x^{a_t}).
    \end{equation}
    We have $\psi_A(x)=[1:\varphi(x)]$.
    We can identify the tangent space of $\proj^t(K)$ at $\psi_A(x)$ 
    with $0\times K^{t-1}$, since the latter is an 
    $R$--complement of $\psi_A(x)= \operatorname{span}_K\{ (1,\varphi(x))\}$.
    By the definition of the absolute Jacobian~\eqref{eq:AJ-nl}, we have 
    $$
    J(\psi_A)(x) = N(D\psi_A)(x) = N(D\varphi)(x) .
    $$
    If we denote by $D\varphi(x)_I$ the square submatrix of  $D\varphi(x)$
    obtained by selecting the rows indexed by a subset $I\subseteq\{a_2,\ldots,a_t\}$ of cardinality $n$, 
    then we can write by~\eqref{eq:abs-det}: 
    $$
    N(D\varphi)(x) = \max_I |\det D\varphi(x)_I| . 
    $$ 
    The derivative of $\varphi$ is given by 
    $$
    D\varphi(x) = \mathrm{diag}(x^{a_2},\ldots,x^{a_t})\, M\, \mathrm{diag}(x_1^{-1},\ldots,x_n^{-1}) ,
    $$
    where $M\in\Z^{(t-1)\times n}$ denotes the matrix with the rows $a_2,\ldots,a_t$. 
    Therefore, writing $b:=\sum_{i\in I} a_i$, we obtain 
    $$
    \det D\varphi(x)_I = x^b \cdot \det M_I \cdot (x_1\cdots x_n)^{-1} 
    = x_1^{b_1-1} \cdots x_n^{b_n-1} \det M_I  .
    $$
    If $M_I$ is invertible, then $b_1,\ldots,b_n>0$, 
    otherwise the row span of $M$ would be contained in an $(n-1)$--dimensional coordinate subspace. 
    Therefore, $x_i^{b_i-1}\in R$ and hence $|x_i^{b_i-1}| \le 1$. 
    Moreover, $|\det M_I| \le 1$ since $\det M_I$ is an integer. 
    It follows that $|\det D\varphi(x)_I| \le 1$ and therefore 
    $ J(\psi_A)(x) =N(D\varphi)(x) \le 1$. 
    This proves the first assertion. 

  \item
    For the second assertion, assume that $A$ is gap-free rectangular at $0$.
    Then we choose $I$ to be the set of indices of
    the neighbours of $0$. Then $M_I$ is a permutation matrix and 
    $b=(1,\ldots,1)$. Hence indeed $ J(\psi_A)(x) = 1$ in this case. 

  \item
    For the third assertion, assume conversely that $ J(\psi_A)(x) = 1$ for some $x\in (\mnO)^n$. 
    Then there exists  $I$ such that $|\det D\varphi(x)_I| = 1$. 
    The corresponding $b$ must satisfy $b=(1,\ldots,1)$. Since $M_I$ is 
    invertible and has the row sum $b$, it must be a permutation matrix. 
    This shows that $A$ is gap-free rectangular at $0$ and 
    completes the proof. \qedhere
  \end{enumerate}
\end{proof}

\begin{proof}[Proof of Proposition~\ref{pro:rectN}]
  \begin{enumerate}[itemindent=\dimexpr \labelwidth+\labelsep+\parindent \relax, leftmargin=0pt, itemsep=\parskip]

    \item
      From Lemma~\ref{le:Jle1} we conclude that 
      \begin{equation}\label{eq:JB}
        \int_{(\mnO)^r\times (\RnO)^{n-r}} J(\psi_A)\, d\mu \ \le\ 
        \mu(\frm^r\times R^{n-r}) = \mu(\frm^r) = \e^r ,
      \end{equation}
      with equality holding if $A$ is gap-free rectangular at~$\ell$. 
      In the case $r=0$ we get
      \begin{equation*}
        \int_{(\RnO)^n} J(\psi_A)\, d\mu \ \le\ 1 .
      \end{equation*}
      and equality holds if $A$ is gap-free rectangular at~$\ell$.
      Conversely, if equality holds above, then 
      $J(\psi_A)(x)=1$ for almost all $x\in (\RnO)^n$.
      Hence $J(\psi_A)(x)=1$ for some $x\in (\mnO)^n$.
      Lemma~\ref{le:Jle1} implies that $A$ is gap-free rectangular at~$\ell$.
      This proves the first assertion.

    \item
      For the second assertion, Theorem~\ref{th:ENU} and \eqref{eq:JB} imply that 
      \begin{equation}\label{eq:ENr}
        \E(N_{(\mnO)^r \times (\RnO)^{n-r}}(A)) \ \le\ \frac{\e^r}{|\proj^n|} .
      \end{equation}
      We now proceed by a symmetry argument 
      based on the observation that 
      the involution $x\mapsto x^{-1}$ yields a bijection between 
      $K \backslash R$ and $\mnO$.
      For a subset $I \subseteq \{1, \ldots, n\}$, we define 
      \[
        U_I := \left\{(x_1, \ldots, x_n) \in (K^\times)^n \ \Big| \ 
          \begin{array}{ll}
            x_i \in K \backslash R &\text{for } i \in I ,\\
            x_i \in R \backslash \{0\} &\text{for } i \notin I
          \end{array}
        \right\}.
      \]
      Clearly the $U_I$ are mutually disjoint and cover $(K^\times)^n$.
      We claim that for all $I$ 
      \begin{equation}\label{eq:partition volume}
        \E(N_{U_I}(A)) \le \frac{\e^{\#I}}{|\proj^n|} .
      \end{equation}
      In order to show this, we may assume $I=\{1,\ldots,r\}$ by symmetry, so 
      $U_I = (K\setminus R)^r \times (\RnO)^{n-r}$.
      Let $\tau\: \Z^n \rightarrow \Z^n$ be the involution defined by 
      $\tau(a)_i = -a_i$ if $i\le r$ and $\tau(a)_i = a_i$ otherwise. We note that 
      $\tau(A)$ is gap-free rectangular exactly when $A$ is. 
      Using that $x\mapsto x^{-1}$ yields a bijection between 
      $K \backslash R$ and $\mnO$, we see that 
      \[
        \E(N_{U_I} (A)) = \E(N_{(\mnO)^r \times (\RnO)^{n-r}}(\tau(A))) .
      \]
      Thus~\eqref{eq:partition volume}  follows from \eqref{eq:ENr}.

      Since the $U_I$ form a partition of $(K^\times)^n$, we obtain 
      \begin{align*}
        \E(N_{(K^\times)^n}(A)) \ = \sum_{I \subseteq \{1, \ldots, n\}} \E(N_{U_I}(A)) 
        \ \le\ 
        \frac{1}{|\proj^n|}  \sum_{r=0}^n {n\choose r} \e^r 
        = \frac{1}{|\proj^n|} (1+\e)^n .
      \end{align*}
      By \eqref{eq:vol-proj} we have $|\proj^n|= \frac{1-\e^{n+1}}{1-\e}$, and so
      the upper bound of the second statement follows.

      Moreover, we get equality if $A$ is gap-free rectangular.
      Conversely, assume we have equality. Then there must be 
      equality in \eqref{eq:ENr}, which implies that $A$ is gap-free rectangular
      at $\ell=0$. By symmetry,
      it follows that $A$ is gap-free rectangular 
      at every vertex of $P$. \qedhere
    \end{enumerate}
\end{proof}

Besides the case where the Newton polytope has a rectangular shape, 
we can also give sharp estimates for certain simplices
that arise naturally when studying homogeneous polynomials.
The next results extends insights from~\cite{KL:19}  
obtained for dense homogeneous random systems. 
The proof is analogous to the one of Proposition~\ref{pro:rectN} 
and so is omitted.

\begin{prop}
  Consider the simplex 
  $P=\{a\in\R^n \mid a_i \ge 0, a_1 +\cdots + a_n \le d\}$. 
  Let $A$ be a subset of $P$ containing all the vertices $v$ of $P$ 
  and such that for each vertex $v$, its $n$ closest 
  neighbors in $P\cap \Z^n$ are also contained in $A$.
  Then we have $\E(N_{(K^\times)^n}(A)) = 1$. 
\end{prop}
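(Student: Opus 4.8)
The plan is to reduce the assertion, by way of \cref{th:ENU}, to the single volume identity
$\int_{(K^\times)^n}J(\psi_A)\,d\mu_n=|\proj^n|$, and then to evaluate the left–hand side by recognizing $\psi_A$ as a sub‑Veronese embedding of $\proj^n$ precomposed with the standard open chart of $\proj^n$. Recall from \cref{example:volproj} that $|\proj^n|=(1-\e^{n+1})/(1-\e)$, so this identity is exactly what is needed.

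Concretely, I would homogenize. Since $0$ is a vertex of $P$, each $a\in A$ satisfies $a_i\ge 0$ and $|a|:=a_1+\dots+a_n\le d$, so $a$ corresponds to the degree-$d$ monomial $z_0^{d-|a|}z_1^{a_1}\cdots z_n^{a_n}$ in variables $z_0,\dots,z_n$; write $\tilde a:=(d-|a|,a_1,\dots,a_n)$. With this identification $\psi_A=\sigma\circ\iota$, where $\iota\colon(K^\times)^n\to\proj^n$, $x\mapsto[1:x_1:\dots:x_n]$, is the open embedding onto the big torus, and $\sigma\colon\proj^n\to\proj(K^A)$, $[z]\mapsto[z^{\tilde a}]_{a\in A}$. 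Because $A$ contains every vertex of $P$, all pure powers $z_k^d$ ($0\le k\le n$) occur among the $z^{\tilde a}$, so these monomials have no common zero and $\sigma$ is a morphism (defined on all of $\proj^n$), $K$--analytic for the standard projective $R$--structures on source and target. Since $\iota$ is injective with $J(\iota)=|\det D\iota|$ and its image is the complement of finitely many hyperplanes, hence of full measure, the coarea formula (\cref{th:coarea}) applied to $\iota$, combined with the chain-rule identity $J(\sigma\circ\iota)=(J(\sigma)\circ\iota)\cdot J(\iota)$, yields $\int_{(K^\times)^n}J(\psi_A)\,d\mu_n=\int_{\proj^n}J(\sigma)\,\Omega_{\proj^n}$. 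So everything reduces to showing $J(\sigma)\equiv 1$ on $\proj^n$.

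This last step is the heart of the matter, and it is where the hypotheses are fully used. On the standard chart $U_i=\{[z]\mid |z_i|=\max_k|z_k|\}$, with local coordinates $u_j=z_j/z_i\in R$ ($j\ne i$), the monomial $z_i^d$ is dominant on $\sigma(U_i)$, so in the corresponding chart of $\proj(K^A)$ the map $\sigma$ is the monomial map $u\mapsto(u^{c_a})_{a\ne de_i}$ with $c_a\in\Z_{\ge0}^{\,n}$. Its derivative has all entries in $R$ — each nonzero entry is an integer multiple of a monomial $u^{c_a-e_l}$ with $c_a-e_l\in\Z_{\ge0}^{\,n}$, evaluated at $u\in R^n$ — so $N(D\sigma(u))\le 1$; and it contains an $n\times n$ identity submatrix, coming from the $n$ edge-neighbours $(d-1)e_i+e_j$ ($j\ne i$) of the vertex $de_i$ (of the vertex $0$ when $i=0$), which by hypothesis lie in $A$ and give $c_a=e_j$, so that every coordinate $u_j$ appears literally among the components of $\sigma$; hence $N(D\sigma(u))\ge 1$. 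Therefore $J(\sigma)=N(D\sigma)=1$ on each $U_i$, hence on $\proj^n=\bigcup_iU_i$. Combining, $\int_{(K^\times)^n}J(\psi_A)\,d\mu_n=\int_{\proj^n}\Omega_{\proj^n}=|\proj^n|$, and \cref{th:ENU} gives $\E(N_{(K^\times)^n}(A))=|\proj^n|/|\proj^n|=1$.

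The main obstacle is precisely this chart-by-chart computation of $J(\sigma)$: one must check that the upper bound $N(D\sigma)\le 1$ (which only uses $A\subseteq P\cap\Z^n$, i.e.\ integrality and the fixed Newton polytope) and the lower bound $N(D\sigma)\ge 1$ (which uses the gap-free condition, now required at \emph{every} vertex) hold simultaneously in each chart $U_i$. This is the same mechanism as in \cref{le:Jle1} and the proof of \cref{pro:rectN}, reorganized so as to exploit that the simplex $P$ is the Newton polytope of $\proj^n$; weakening the hypothesis would only supply the identity block at some vertices, giving $J(\sigma)\le 1$ and hence $\E(N_{(K^\times)^n}(A))\le 1$.
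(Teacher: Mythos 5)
Your proof is correct and takes a genuinely different route from what the paper has in mind. The paper declares this proof ``analogous to'' that of \cref{pro:rectN}, whose mechanism is to partition $(K^\times)^n$ into the $2^n$ sets $U_I$, apply the Jacobian estimate of \cref{le:Jle1} on $(\RnO)^n$, and transport to the other pieces by the coordinate-inversion involution $x_i\mapsto x_i^{-1}$. You instead homogenize: factoring $\psi_A=\sigma\circ\iota$ through the big torus of $\proj^n$, transporting the integral via the coarea formula so that only $\int_{\proj^n}J(\sigma)\,\Omega_{\proj^n}$ remains, and then showing $J(\sigma)\equiv 1$ on each of the $n+1$ standard charts $U_i$ of $\proj^n$ --- integrality of the exponents gives $N(D\sigma)\le 1$, while the gap-free hypothesis at the vertex $v_i$ whose pure power dominates $\sigma(U_i)$ supplies an $n\times n$ identity block for $N(D\sigma)\ge 1$. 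This collapses $2^n$ pieces to $n+1$ chart computations and is arguably better adapted to the simplex than a verbatim transposition of the rectangular argument: after inverting two or more coordinates the transformed Newton polytope of $P$ no longer has a unique $\Rp^n$-minimal vertex (two of the $n+1$ vertices of $P$ compete), so the argument of \cref{pro:rectN} would require extra care on those pieces, whereas the projective picture absorbs all cases at once. What the two routes share is the heart of \cref{le:Jle1}: nonnegative integer exponents bound $N$ from above, and an $n$-element gap-free neighbour set at the relevant vertex bounds it from below.
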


Concretely, the condition of the $n$ closest neighbors in $P\cap \Z^n$ 
being contained in $A$ 
is equivalent to the statement that
$A$ contains the neighbors 
$(1,0,\ldots,0),\ldots,(0,\ldots,0,1)$ of the vertex $0$, the 
neighbors 
$(d-1,1,0,\ldots,0),\ldots,
(d-1,0,\ldots,0,1)$ of the vertex $(d,0,\ldots,0)$, 
and so on.

\subsection{Example: quadratic polynomial}
We study here the special case of a random quadratic polynomial
$f=c_1 + c_2 x^2$. Theorem~\ref{thm:n=1R} tells us that in characteristic zero 
\begin{equation}\label{eq:quadeq}
 \E(N_{R^\times}(f)) \ \le\  \frac{1-\e}{1+\e}, \quad 
 \E(N_{\mnO}(f)) \ \le\ \frac{\e^2}{1+\e^2} ,
\end{equation}
with equality when $|2|_K=1$. 
The goal of this subsection is to 
directly verify these formulas. 
This is not only instructive, but also reveals that 
the formulas are also valid in positive 
characteristic, even in characteristic two!

We need the following auxiliary result. 

\begin{lemma}\label{le:squares}
  Let $R/\frm\simeq\F_q$ with $q=p^m$ and denote by 
  $N$ the index of $(R^\times)^2$ in $R^\times$. 
  \begin{enumerate}
  \item
    If $p>2$, then $N=2$.

  \item
    If $p=2$ and $\chara R = 0$, then $N=2q$.

  \item
    If $p=2$ and $\chara R >0$, then $N=\infty$. 
  \end{enumerate}
\end{lemma}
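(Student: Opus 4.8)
The plan is to compute the index $N = [R^\times : (R^\times)^2]$ in each case by a standard filtration argument on the unit group. Recall that $R^\times$ has the filtration $R^\times \supseteq U_1 \supseteq U_2 \supseteq \cdots$, where $U_i := 1 + \frm^i$, and $R^\times/U_1 \simeq \F_q^\times$ is cyclic of order $q-1$, while $U_i/U_{i+1} \simeq \F_q$ (additively) for each $i \ge 1$. The point is that squaring acts on each graded piece in a way that is easy to analyze: on the cyclic group $\F_q^\times$ squaring has kernel and cokernel of size $\gcd(2, q-1)$, and on the $\F_q$-vector space pieces $U_i/U_{i+1}$ squaring is multiplication by $2$ (since $(1 + \varpi^i a)^2 = 1 + 2\varpi^i a + \varpi^{2i}a^2$).

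First I would treat case (1), $p > 2$: then $2 \in R^\times$, so multiplication by $2$ is an isomorphism on each $U_i/U_{i+1}$, hence squaring is an isomorphism $U_1 \to U_1$ (one can either pass to the limit through the graded pieces, or simply invoke Hensel's lemma: for $u \in U_1$, the polynomial $X^2 - u$ has a simple root mod $\frm$). Therefore $(R^\times)^2 \supseteq U_1$ and $N = [\F_q^\times : (\F_q^\times)^2] = 2$ since $q$ is odd. For case (2), $p = 2$ and $\chara R = 0$: now $|2|_K < 1$, say $\val(2) = e \ge 1$ (the absolute ramification index over $\Q_2$). Squaring on $\F_q^\times$ is now injective with cokernel of order $\gcd(2, q-1) = 1$, so it contributes nothing to the index; the whole index comes from $U_1$. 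On $U_1$, squaring maps $U_i$ into $U_{\min(2i, i+e)}$, and the cleanest route is to count: $U_1/U_{2e+1}$ is a finite group of order $q^{2e}$, and one shows directly (going piece by piece through $U_i/U_{i+1}$, using that multiplication by $2$ has kernel $\F_q$ on the $i$-th piece when $i \le e$ and is injective when $i > e$) that the squaring map on $U_1$ has kernel $\{\pm 1\}$ of order $2$ and image of index $2q$ in $U_1$; equivalently $[U_1 : (U_1)^2] = |{\ker}| \cdot |\mathrm{coker}|/ \cdots$ — more precisely, since $U_1$ is a finitely generated $\Z_2$-module of $\Z_2$-rank $[K:\Q_2] = ef$ plus a torsion part, one has $[U_1:(U_1)^2] = 2^{ef} \cdot |U_1[2]| = 2^{ef} \cdot 2 = 2q \cdot 2^{e f - m}$... so I must be careful and instead argue via the local class-field-theoretic identity $[K^\times : (K^\times)^2] = 2 \cdot |2|_K^{-1} \cdot \#\mu_2(K)$, which gives $[K^\times:(K^\times)^2] = 2 \cdot q^e \cdot 2$; combined with $K^\times \simeq \Z \times R^\times$ this yields $[R^\times : (R^\times)^2] = q^e \cdot 2$. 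I would then note $q^e$ is exactly $|2|_K^{-1}$ and reconcile with the stated answer $N = 2q$ — which holds precisely when $e = 1$ (unramified extensions of $\Q_2$); I would state the result in the form that matches the intended application, or correct it to $N = 2|2|_K^{-1}$ in general.

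For case (3), $p = 2$ and $\chara R > 0$, so $R \simeq \F_q\llbracket t\rrbracket$ with uniformizer $t$: here squaring is the Frobenius-type map, $(1 + \sum_{i\ge 1} a_i t^i)^2 = 1 + \sum_{i \ge 1} a_i^2 t^{2i}$ (the cross terms vanish in characteristic $2$), so $(U_1)^2$ lands entirely inside $\{1 + \sum_{j \text{ even}} b_j t^j\}$ and misses all the $U_1/U_{2i+2}$ pieces coming from odd exponents. Thus $U_1/(U_1)^2$ is infinite — for instance the classes of $1 + t, 1 + t^3, 1 + t^5, \ldots$ are independent — so $N = \infty$.

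The main obstacle I expect is case (2): getting the index $[U_1 : (U_1)^2]$ exactly right requires carefully tracking how the squaring map interacts with the jumps in the unit filtration at level $e = \val(2)$, and the cleanest bookkeeping is through the structure of $U_1$ as a pro-$2$ group — it is $\mu_{2^\infty}(K) \times \Z_2^{[K:\Q_2]}$ (by the classification of the multiplicative group of a local field), whence $[U_1 : (U_1)^2] = \#\mu_2(K) \cdot 2^{[K:\Q_2]}$, and then I would verify that $\#\mu_2(K) \cdot 2^{[K:\Q_2]}$ equals $2 |2|_K^{-1}$ by the well-known formula $\#\mu_2(K) = |2|_K^{-1} \cdot [K:\Q_2]^{-1} \cdot (\text{stuff})$ — i.e.\ I must be honest that the clean statement ``$N = 2q$'' presupposes $K/\Q_2$ unramified, and I would either add that hypothesis or replace $2q$ by the ramification-aware quantity $2 \cdot \#(R/2R) = 2|2|_K^{-1}$.
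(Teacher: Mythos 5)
Your proposal is correct and, in case (2), sharper than the paper's statement. The paper's proof is a one-line citation of an index formula from Lorenz (\cite[Satz~7]{lorenzII}); you instead work from the unit filtration $U_i = 1 + \frm^i$ and, for case (2), from the structure theorem $U_1 \simeq \mu_{2^\infty}(K) \times \Z_2^{[K:\Q_2]}$. These are essentially equivalent (the cited formula is proved via exactly this structure result), but your route is self-contained, and your direct arguments for (1) (Hensel) and (3) (squaring is Frobenius in characteristic two, so it misses all odd-indexed jets) are clean.

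More to the point, the concern you raise about case (2) is justified: the stated value $N = 2q$ holds only when $K/\Q_2$ is unramified. In general $N = 2\,|2|_K^{-1} = 2q^e$ where $e$ is the absolute ramification index, and the assertion ``$\|2\|_K = q^{-1}$'' in the paper's proof should read $|2|_K = q^{-e}$. Substituting $N = 2|2|_K^{-1}$ is in fact what makes the subsequent verification around Equation~\eqref{eq:caseq=2} come out right, since the factor appearing there is $2/N = |2|_K$. Your suggestion to restate the lemma as $N = 2|2|_K^{-1}$, or else to add the unramified hypothesis, is the correct fix. Your caution about the filtration bookkeeping is also warranted: on the graded piece $U_i/U_{i+1}$, squaring acts as the Frobenius for $i < \val(2)$, as an Artin--Schreier-type map with kernel of order $2$ for $i = \val(2)$, and as multiplication by a unit for $i > \val(2)$, so reading the index off the graded pieces directly is genuinely delicate and retreating to the structure theorem is the safer path.
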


\begin{proof}
  \cite[Satz~7]{lorenzII} states that 
  \[
    N= [R^\times : (R^\times)^s] =  \|s\|_K \cdot \#\{ x\in K \mid x^s =1\},
  \]
  which for $s=2$ implies that $N = \|2\|_K\cdot 2$.
  If $p>2$, then $\|2\|_K =1$, hence $N=2$.
  If $p=2$ and $\chara R = 0$, then $\|2\|_K =q^{-1}$, hence $N=2q$.
  Finally, if  $p=2$ and $\chara R >0$, then $|2|_K = 0$, hence $N=\infty$. 
  This proves the lemma.  
\end{proof}

According to Proposition~\ref{prop:pushfHaarII}, we can assume that 
$c_1 = \varpi^k u$, $c_2 = \pi^\ell  v$ with independent $k,f,u,v$, where 
$e,f\in\Z$ have the discrete distribution $\rho_1$ 
(defined in Proposition~\ref{prop:pushfHaarII})  
and $u,v\in R^\times$ are uniform 
with respect to the Haar measure $\nu$ of $R^\times$. 
Observe that $w:=-uv^{-1}$ is uniformly distributed 
in $R^\times$ with respect to $\nu$. 

We want to count the solutions of the equation 
$$
 x^2 = -c_1 c_2^{-1} = \varpi^{k-\ell} w .
$$
A solution exists in $R^\times$ iff $k=\ell$, and $w$ is a square in $R^\times$. 
Moreover, a solution exists in $\frm$ iff $k> \ell$, $k-\ell$ is even, and $w=-uv^{-1}$ 
is a square in $R^\times$. 
Furthermore, the solutions come in pairs if 
the characteristic~$p$ of the residue field of $R$ satisfies $p>2$. 
We calculate with Proposition~\ref{prop:pushfHaarII} 
\begin{align*}
 \Prob(k=\ell) = \sum_{i\ge 0} \rho_1(i)^2 
    = (1-\e)^{2} \sum_{i\ge 0} \e^{2i} 
    = \frac{(1-\e)^2}{1-\e^2}     
    = \frac{1-\e}{1+\e} ,
\end{align*}
\begin{align*}
 \Prob(k > \ell, \mbox{$k-\ell$ even}) = \sum_{f\ge 0} \sum_{k >0} \rho_1(f)\rho_1(f+2k) 
    = (1-\e)^{2} \sum_{\ell\ge 0} \e^{2\ell}\sum_{i> 0} e^{2i} 
   = \frac{\e^2}{(1+\e)^2} .
\end{align*}
We note that 
$$
 \Prob(\mbox{$w$ is square in $R^\times$}) = N^{-1} ,
$$
where $N$ denotes the index of the subgroup $(R^\times)^2$ of squares in $R^\times$.
Lemma~\ref{le:squares} above determines~$N$ in all possible cases.

Assume first that $p>2$. 
Then $N=2$ by Lemma~\ref{le:squares} and we obtain
\begin{align*}
 \E(N_{R^\times}(f)) &= \Prob(k= \ell) \cdot 
  \Prob(\mbox{$w$ is square in $R^\times$}) \cdot 2 
  = \frac{1-\e}{1+\e} , \\
 \E(N_{\mnO}(f)) &= \Prob(k>\ell, \mbox{$k-\ell$ even}) \cdot 
  \Prob(\mbox{$w$ is square in $R^\times$}) \cdot 2 
   = \frac{\e^2}{(1+\e)^2} 
\end{align*}
as stated in \eqref{eq:quadeq} above. 
(Note $|2|_K=1$ since $2\not\in\frm$.)  

Suppose now $p=2$. 
If $\chara R =0$, then $N=2q$ by Lemma~\ref{le:squares} 
and we obtain as above 
\begin{equation}\label{eq:caseq=2}
 \E(N_{R^\times}(f)) =  |2|_K \cdot \frac{(1-\e)}{1+\e},\quad 
 \E(N_{\mnO}(f)) = |2|_K \cdot \frac{\e^2}{(1+\e)^2} .
\end{equation}
This is consistent with Theorem~\ref{thm:n=1R} 
since we have $|2|_K< 1$ in this case. 
More precisely, when entering the proof of this theorem, 
we first note that by Lemma~\ref{le:n=1Jpsi},
we have 
$J(\psi)(x) = |2|_K |x|$. 
Proposition~\ref{prop:pushfHaarII} gives 
$$
\int_{\frm} |x| \, d\mu(x) = 
  (1- \e) (1-\e^{2})^{-1}  - \mu(R^\times) 
  = \frac{\e^2}{1+ \e} .
$$
Thus, using 
$|\proj^1| = \frac{1-\e^{2}}{1-\e}$, 
we obtain as for Theorem~\ref{thm:n=1R} that 
\[
  \E(N_{\mnO}(f))  = 
  \frac{1}{|\proj^1|} \, \int_{\frm} J(\psi) \, d\mu = 
  \frac{|2|_K}{|\proj^1|} \, \int_{\frm} |x| \, d\mu(x) 
  = |2|_K \cdot \frac{\e^2}{(1+\e)^2} , 
\]
which is the same result as in~\eqref{eq:caseq=2}. 
Similarly, one checks the formula for $\E(N_{R^\times}(f))$.

We finally turn to the case $p=2$ and $\chara R >0$
(in which case $\chara R = 2$).
Lemma~\ref{le:squares} gives $N=\infty$ and hence 
$\E(N_{\RnO}(f))  = 0$ in this case. 
We note that this is consistent with the proof of Theorem~\ref{thm:n=1R}:
we have 
$|2|_K = |0|_K =0$.
Hence, $J(\psi)(x)$ vanishes identically. 

Summarizing, we have directly verified the Equations~\eqref{eq:quadeq} 
in any characteristic.

\bibliographystyle{plain}
\bibliography{lit}

\begin{thebibliography}{10}

\bibitem{AEML}
Rida Ait El~Manssour and Antonio Lerario.
\newblock Probabilistic enumerative geometry over {$p$}-adic numbers: linear
  spaces on complete intersections.
\newblock {\em Ann. H. Lebesgue}, 5:1329--1360, 2022.

\bibitem{AA}
Avraham Aizenbud and Nir Avni.
\newblock Representation growth and rational singularities of the moduli space
  of local systems.
\newblock {\em Invent. Math.}, 204(1):245--316, 2016.

\bibitem{amelunxen-diss}
Dennis Amelunxen.
\newblock {\em Geometric analysis of the condition of the convex feasibility
  problem}.
\newblock PhD thesis, Institute of Mathematics, University of Paderborn, 2011.

\bibitem{bernstein:75}
D.~N. Bernstein.
\newblock The number of roots of a system of equations.
\newblock {\em Funkcional. Anal. i Prilo\v{z}en.}, 9(3):1--4, 1975.

\bibitem{BKK:76}
D.~N. Bernstein, A.~G. Ku\v{s}nirenko, and A.~G. Hovanski\u{\i}.
\newblock Newton polyhedra.
\newblock {\em Uspehi Mat. Nauk}, 31(3(189)):201--202, 1976.

\bibitem{bhar-et-al:21}
Manjul Bhargava, John Cremona, Tom Fisher, and Stevan Gajovic.
\newblock The density of polynomials of degree n over {$\mathbb{Z}_p$} having
  exactly r roots in {$\mathbb{Q}_p$}.
\newblock arXiv:2101.09590, 2021.

\bibitem{Borel}
Armand Borel.
\newblock {\em Linear algebraic groups}, volume 126 of {\em Graduate Texts in
  Mathematics}.
\newblock Springer-Verlag, New York, second edition, 1991.

\bibitem{bourbaki:var-diff-analyt}
Nicolas Bourbaki.
\newblock {\em \'{E}l\'{e}ments de math\'{e}matique. {F}asc. {XXXIII}.
  {V}ari\'{e}t\'{e}s diff\'{e}rentielles et analytiques. {F}ascicule de
  r\'{e}sultats ({P}aragraphes 1 \`a 7)}.
\newblock Actualit\'{e}s Scientifiques et Industrielles, No. 1333. Hermann,
  Paris, 1967.

\bibitem{bourbaki-II-7-9}
Nicolas Bourbaki.
\newblock {\em Integration. {II}. {C}hapters 7--9}.
\newblock Elements of Mathematics (Berlin). Springer-Verlag, Berlin, 2004.
\newblock Translated from the 1963 and 1969 French originals by Sterling K.
  Berberian.

\bibitem{BBLM}
Paul Breiding, Peter B\"{u}rgisser, Antonio Lerario, and L\'{e}o Mathis.
\newblock The zonoid algebra, generalized mixed volumes, and random
  determinants.
\newblock {\em Adv. Math.}, 402:Paper No. 108361, 57, 2022.

\bibitem{buerg-intersect}
Peter B\"{u}rgisser.
\newblock The condition of intersecting a projective variety with a varying
  linear subspace.
\newblock arXiv:1510.04142v2, 2015.

\bibitem{PB:23}
Peter B\"urgisser.
\newblock Real {Z}eros of {M}ixed {R}andom {F}ewnomial {S}ystems.
\newblock In {\em Proceedings of the {I}nternational {S}ymposium on {S}ymbolic
  \& {A}lgebraic {C}omputation ({ISSAC} 2023)}, pages 107--115. ACM, New York,
  2023.

\bibitem{BETC:19}
Peter B\"{u}rgisser, Alperen~A. Erg\"{u}r, and Josu\'{e} Tonelli-Cueto.
\newblock On the number of real zeros of random fewnomials.
\newblock {\em SIAM J. Appl. Algebra Geom.}, 3(4):721--732, 2019.

\bibitem{BL:19}
Peter B\"{u}rgisser and Antonio Lerario.
\newblock Probabilistic {S}chubert calculus.
\newblock {\em J. Reine Angew. Math.}, 760:1--58, 2020.

\bibitem{caruso:20}
Xavier Caruso.
\newblock Where are the zeroes of a random p-adic polynomial?
\newblock Preprint hal-02557280, 2020.

\bibitem{Caruso22}
Xavier Caruso.
\newblock Where are the zeroes of a random {$p$}-adic polynomial?
\newblock {\em Forum Math. Sigma}, 10:Paper No. e55, 41, 2022.

\bibitem{CLT}
Antoine Chambert-Loir and Yuri Tschinkel.
\newblock Igusa integrals and volume asymptotics in analytic and adelic
  geometry.
\newblock {\em Confluentes Math.}, 2(3):351--429, 2010.

\bibitem{chavel:06}
Isaac Chavel.
\newblock {\em Riemannian geometry}, volume~98 of {\em Cambridge Studies in
  Advanced Mathematics}.
\newblock Cambridge University Press, Cambridge, second edition, 2006.
\newblock A modern introduction.

\bibitem{CCL}
Raf Cluckers, Georges Comte, and Fran\c{c}ois Loeser.
\newblock Local metric properties and regular stratifications of {$p$}-adic
  definable sets.
\newblock {\em Comment. Math. Helv.}, 87(4):963--1009, 2012.

\bibitem{strictC1}
Raf Cluckers, Immanuel Halupczok, Fran\c{c}ois Loeser, and Michel Raibaut.
\newblock Distributions and wave front sets in the uniform non-archimedean
  setting.
\newblock {\em Trans. London Math. Soc.}, 5(1):97--131, 2018.

\bibitem{comte}
George Comte.
\newblock Formule de {C}auchy-{C}rofton pour la densit\'{e} des ensembles
  sous-analytiques.
\newblock {\em C. R. Acad. Sci. Paris S\'{e}r. I Math.}, 328(6):505--508, 1999.

\bibitem{EdelmanKostlan95}
Alan Edelman and Eric Kostlan.
\newblock How many zeros of a random polynomial are real?
\newblock {\em Bull. Amer. Math. Soc. (N.S.)}, 32(1):1--37, 1995.

\bibitem{edel_kost}
Alan Edelman and Eric Kostlan.
\newblock How many zeros of a random polynomial are real?
\newblock {\em Bull. Amer. Math. Soc. (N.S.)}, 32(1):1--37, 1995.

\bibitem{eisenbud-harris:16}
Davis Eisenbud and Joe Harris.
\newblock {\em 3264 and all that: a second course in algebraic geometry}.
\newblock Cambridge University Press, 2016.

\bibitem{E21}
Yassine El~Maazouz.
\newblock The gaussian entropy map in valued fields.
\newblock {\em Algebraic statistics}, 2021.
\newblock To appear.

\bibitem{EK22}
Yassine El~Maazouz and Enis Kaya.
\newblock Sampling from $ p $-adic algebraic manifolds.
\newblock {\em arXiv:2207.05911}, 2022.

\bibitem{ET19}
Yassine El~Maazouz and Ngoc~Mai Tran.
\newblock Statistics and tropicalization of local field gaussian measures.
\newblock {\em arXiv:1909.00559}, 2019.

\bibitem{ETTC:23}
Alperen Erg\"ur, M\'ate L.Telek, and Josu\'e Tonelli-Cueto.
\newblock Real zeros of random mixed fewnomial systems: The `trick' strikes
  back.
\newblock arXiv:2306.06784, 2023.

\bibitem{evans:02}
Steven~N. Evans.
\newblock Elementary divisors and determinants of random matrices over a local
  field.
\newblock {\em Stochastic Process. Appl.}, 102(1):89--102, 2002.

\bibitem{Evans}
Steven~N. Evans.
\newblock The expected number of zeros of a random system of {$p$}-adic
  polynomials.
\newblock {\em Electron. Comm. Probab.}, 11:278--290, 2006.

\bibitem{evans:06}
Steven~N. Evans.
\newblock The expected number of zeros of a random system of {$p$}-adic
  polynomials.
\newblock {\em Electron. Comm. Probab.}, 11:278--290, 2006.

\bibitem{federer:59}
Herbert Federer.
\newblock Curvature measures.
\newblock {\em Trans. Amer. Math. Soc.}, 93:418--491, 1959.

\bibitem{forey}
Arthur Forey.
\newblock A motivic local {C}auchy-{C}rofton formula.
\newblock {\em Manuscripta Math.}, 166(3-4):523--533, 2021.

\bibitem{GaWe1}
D.~Gayet and J.-Y. Welschinger.
\newblock Lower estimates for the expected {B}etti numbers of random real
  hypersurfaces.
\newblock {\em J. Lond. Math. Soc.}, 90:105--120, 2014.

\bibitem{GaWe3}
Damien Gayet and Jean-Yves Welschinger.
\newblock Expected topology of random real algebraic submanifolds.
\newblock {\em J. Inst. Math. Jussieu}, 14(4):673--702, 2015.

\bibitem{GaWe2}
Damien Gayet and Jean-Yves Welschinger.
\newblock Betti numbers of random real hypersurfaces and determinants of random
  symmetric matrices.
\newblock {\em J. Eur. Math. Soc. (JEMS)}, 18(4):733--772, 2016.

\bibitem{GKZ}
Israel~M. Gelfand, Mikhail~M. Kapranov, and Andrei~V. Zelevinsky.
\newblock {\em Discriminants, resultants and multidimensional determinants}.
\newblock Modern Birkh\"auser Classics. Birkh\"auser Boston, Inc., Boston, MA,
  2008.
\newblock Reprint of the 1994 edition.

\bibitem{golub-van-loan:13}
Gene~H. Golub and Charles~F. Van~Loan.
\newblock {\em Matrix computations}.
\newblock Johns Hopkins Studies in the Mathematical Sciences. Johns Hopkins
  University Press, Baltimore, MD, fourth edition, 2013.

\bibitem{GP:74}
Victor Guillemin and Alan Pollack.
\newblock {\em Differential topology}.
\newblock Prentice-Hall, Inc., Englewood Cliffs, N.J., 1974.

\bibitem{halmos:50}
Paul~R. Halmos.
\newblock {\em Measure {T}heory}.
\newblock D. Van Nostrand Company, Inc., New York, N. Y., 1950.

\bibitem{howard:93}
Ralph Howard.
\newblock The kinematic formula in {R}iemannian homogeneous spaces.
\newblock {\em Mem. Amer. Math. Soc.}, 106(509):vi+69, 1993.

\bibitem{igusa:2000}
Jun-ichi Igusa.
\newblock {\em An introduction to the theory of local zeta functions},
  volume~14 of {\em AMS/IP Studies in Advanced Mathematics}.
\newblock American Mathematical Society, Providence, RI; International Press,
  Cambridge, MA, 2000.

\bibitem{JindalPSZ20}
Gorav Jindal, Anurag Pandey, Himanshu Shukla, and Charilaos Zisopoulos.
\newblock How many zeros of a random sparse polynomial are real?
\newblock In Ioannis~Z. Emiris and Lihong Zhi, editors, {\em {ISSAC} '20:
  International Symposium on Symbolic and Algebraic Computation, Kalamata,
  Greece, July 20-23, 2020}, pages 273--280. {ACM}, 2020.

\bibitem{kedlaya:22}
Kiran~S. Kedlaya.
\newblock {\em {$p$}-adic differential equations}, volume [199] of {\em
  Cambridge Studies in Advanced Mathematics}.
\newblock Cambridge University Press, Cambridge, second edition, 2022.

\bibitem{khovanskii:91}
A.~G. Khovanski\u{\i}.
\newblock {\em Fewnomials}, volume~88 of {\em Translations of Mathematical
  Monographs}.
\newblock American Mathematical Society, Providence, RI, 1991.
\newblock Translated from the Russian by Smilka Zdravkovska.

\bibitem{klain-rota:97}
Daniel~A. Klain and Gian-Carlo Rota.
\newblock {\em Introduction to geometric probability}.
\newblock Lezioni Lincee. [Lincei Lectures]. Cambridge University Press,
  Cambridge, 1997.

\bibitem{kohn-mathews:21}
Kathl\'{e}n Kohn and James~C. Mathews, Jr.
\newblock Isotropic and coisotropic subvarieties of {G}rassmannians.
\newblock {\em Adv. Math.}, 377:107492, 37, 2021.

\bibitem{Ko2000}
Eric Kostlan.
\newblock On the expected number of real roots of a system of random polynomial
  equations.
\newblock In {\em Foundations of computational mathematics ({H}ong {K}ong,
  2000)}, pages 149--188. World Sci. Publ., River Edge, NJ, 2002.

\bibitem{KL:19}
Avinash Kulkarni and Antonio Lerario.
\newblock {$p$}-adic integral geometry.
\newblock {\em SIAM J. Appl. Algebra Geom.}, 5(1):28--59, 2021.

\bibitem{lenstra:97}
Henrik~W. Lenstra, Jr.
\newblock On the factorization of lacunary polynomials.
\newblock In {\em Number theory in progress, {V}ol. 1
  ({Z}akopane-{K}o\'{s}cielisko, 1997)}, pages 277--291. de Gruyter, Berlin,
  1999.

\bibitem{Letwo}
Antonio Lerario.
\newblock Random matrices and the average topology of the intersection of two
  quadrics.
\newblock {\em Proc. Amer. Math. Soc.}, 143(8):3239--3251, 2015.

\bibitem{LeLu:gap}
Antonio Lerario and Erik Lundberg.
\newblock Gap probabilities and {B}etti numbers of a random intersection of
  quadrics.
\newblock {\em Discrete Comput. Geom.}, 55(2):462--496, 2016.

\bibitem{LMasymptotic}
Antonio Lerario and Leo Mathis.
\newblock Probabilistic {S}chubert calculus: asymptotics.
\newblock {\em Arnold Math. J.}, 7(2):169--194, 2021.

\bibitem{lorenzII}
Falko Lorenz.
\newblock {\em Algebra. {V}ol. {II}}.
\newblock Universitext. Springer, New York, 2008.
\newblock Fields with structure, algebras and advanced topics, Translated from
  the German by Silvio Levy, With the collaboration of Levy.

\bibitem{manivel:01}
Laurent Manivel.
\newblock {\em Symmetric functions, {S}chubert polynomials and degeneracy
  loci}, volume~6 of {\em SMF/AMS Texts and Monographs}.
\newblock American Mathematical Society, Providence, RI; Soci\'et\'e
  Math\'ematique de France, Paris, 2001.

\bibitem{el-manssour-lerario:20}
Rida Ait~El Manssour and Antonio Lerario.
\newblock Probabilistic enumerative geometry over p-adic numbers: linear spaces
  on complete intersections.
\newblock {\em Annales Henri Lebesgue}, 5:1329--1360, 2022.

\bibitem{Leothesis}
Leo Mathis.
\newblock {T}he handbook of zonoid calculus --- hdl.handle.net.
\newblock \url{https://hdl.handle.net/20.500.11767/129410}.
\newblock [Accessed 09-Jul-2023].

\bibitem{milnor:65}
John~W. Milnor.
\newblock {\em Topology from the differentiable viewpoint}.
\newblock The University Press of Virginia, Charlottesville, Va., 1965.
\newblock Based on notes by David W. Weaver.

\bibitem{Muirhead}
Robb~J. Muirhead.
\newblock {\em Aspects of multivariate statistical theory}.
\newblock John Wiley \& Sons, Inc., New York, 1982.
\newblock Wiley Series in Probability and Mathematical Statistics.

\bibitem{NazarovSodin1}
Fedor Nazarov and Mikhail Sodin.
\newblock On the number of nodal domains of random spherical harmonics.
\newblock {\em Amer. J. Math.}, 131(5):1337--1357, 2009.

\bibitem{oesterle:82}
Joseph Oesterl\'{e}.
\newblock R\'{e}duction modulo {$p^{n}$} des sous-ensembles analytiques
  ferm\'{e}s de {${\bf Z}^{N}_{p}$}.
\newblock {\em Invent. Math.}, 66(2):325--341, 1982.

\bibitem{paige-wei:94}
C.~C. Paige and M.~Wei.
\newblock History and generality of the {${\rm CS}$} decomposition.
\newblock {\em Linear Algebra Appl.}, 208/209:303--326, 1994.

\bibitem{poonen:98}
Bjorn Poonen.
\newblock Zeros of sparse polynomials over local fields of characteristic
  {$p$}.
\newblock {\em Math. Res. Lett.}, 5(3):273--279, 1998.

\bibitem{popa}
Minea Popa.
\newblock Chapter 3. p-adic integration.
\newblock Preprint.

\bibitem{robert:00}
Alain~M. Robert.
\newblock {\em A course in {$p$}-adic analysis}, volume 198 of {\em Graduate
  Texts in Mathematics}.
\newblock Springer-Verlag, New York, 2000.

\bibitem{rojas:04}
J.~Maurice Rojas.
\newblock Arithmetic multivariate {D}escartes' rule.
\newblock {\em Amer. J. Math.}, 126(1):1--30, 2004.

\bibitem{santalo}
Luis~A. Santal\'{o}.
\newblock {\em Integral geometry and geometric probability}.
\newblock Cambridge Mathematical Library. Cambridge University Press,
  Cambridge, second edition, 2004.
\newblock With a foreword by Mark Kac.

\bibitem{santalo:04}
Luis~A. Santal\'{o}.
\newblock {\em Integral geometry and geometric probability}.
\newblock Cambridge Mathematical Library. Cambridge University Press,
  Cambridge, second edition, 2004.
\newblock With a foreword by Mark Kac.

\bibitem{Sarnak}
P.~Sarnak.
\newblock Letter to {B}. {G}ross and {J}. {H}arris on ovals of random planes
  curve.
\newblock {\em available at
  \url{http://publications.ias.edu/sarnak/section/515}}, 2011.

\bibitem{schikhof}
W.~H. Schikhof.
\newblock {\em Ultrametric calculus}, volume~4 of {\em Cambridge Studies in
  Advanced Mathematics}.
\newblock Cambridge University Press, Cambridge, 2006.
\newblock An introduction to $p$-adic analysis, Reprint of the 1984 original
  [MR0791759].

\bibitem{schneiderp:11}
Peter Schneider.
\newblock {\em {$p$}-adic {L}ie groups}, volume 344 of {\em Grundlehren der
  Mathematischen Wissenschaften [Fundamental Principles of Mathematical
  Sciences]}.
\newblock Springer, Heidelberg, 2011.

\bibitem{serre-class:65}
Jean-Pierre Serre.
\newblock Classification des vari\'{e}t\'{e}s analytiques {$p$}-adiques
  compactes.
\newblock {\em Topology}, 3:409--412, 1965.

\bibitem{serre:64}
Jean-Pierre Serre.
\newblock {\em Lie algebras and {L}ie groups}, volume 1964 of {\em Lectures
  given at Harvard University}.
\newblock W. A. Benjamin, Inc., New York-Amsterdam, 1965.

\bibitem{serre:81}
Jean-Pierre Serre.
\newblock Quelques applications du th\'{e}or\`eme de densit\'{e} de
  {C}hebotarev.
\newblock {\em Inst. Hautes \'{E}tudes Sci. Publ. Math.}, (54):323--401, 1981.

\bibitem{shmueli}
Roy Shmueli.
\newblock {The Expected Number of Roots over The Field of p-adic Numbers}.
\newblock {\em International Mathematics Research Notices}, 2023(3):2543--2571,
  11 2021.

\bibitem{shsm}
M.~Shub and S.~Smale.
\newblock Complexity of {B}ezout's theorem. {II}. {V}olumes and probabilities.
\newblock In {\em Computational algebraic geometry ({N}ice, 1992)}, volume 109
  of {\em Progr. Math.}, pages 267--285. Birkh\"auser Boston, Boston, MA, 1993.

\bibitem{ShSm1}
Michael Shub and Steve Smale.
\newblock Complexity of {B}\'ezout's theorem. {I}. {G}eometric aspects.
\newblock {\em J. Amer. Math. Soc.}, 6(2):459--501, 1993.

\bibitem{ShSm3}
Michael Shub and Steve Smale.
\newblock Complexity of {B}ezout's theorem. {III}. {C}ondition number and
  packing.
\newblock {\em J. Complexity}, 9(1):4--14, 1993.
\newblock Festschrift for Joseph F. Traub, Part I.

\bibitem{sottile:11}
Frank Sottile.
\newblock {\em Real solutions to equations from geometry}, volume~57 of {\em
  University Lecture Series}.
\newblock American Mathematical Society, Providence, RI, 2011.

\bibitem{stewart:77}
G.~W. Stewart.
\newblock On the perturbation of pseudo-inverses, projections and linear least
  squares problems.
\newblock {\em SIAM Rev.}, 19(4):634--662, 1977.

\bibitem{tempered}
David Taylor, V.~S. Varadarajan, Jukka Virtanen, and David Weisbart.
\newblock Temperedness of measures defined by polynomial equations over local
  fields.
\newblock {\em Pacific J. Math.}, 296(1):227--256, 2018.

\bibitem{thompson:72}
R.~C. Thompson.
\newblock Principal submatrices. {IX}. {I}nterlacing inequalities for singular
  values of submatrices.
\newblock {\em Linear Algebra Appl.}, 5:1--12, 1972.

\bibitem{vanpeski:21}
Roger Van~Peski.
\newblock Limits and fluctuations of {$p$}-adic random matrix products.
\newblock {\em Selecta Math. (N.S.)}, 27(5):Paper No. 98, 71, 2021.

\bibitem{weil:41}
Andr\'{e} Weil.
\newblock {\em L'int\'{e}gration dans les groupes topologiques et ses
  applications}.
\newblock Actual. Sci. Ind., no. 869. Hermann et Cie., Paris, 1940.
\newblock [This book has been republished by the author at Princeton, N. J.,
  1941.].

\bibitem{weilAdeles}
Andr\'{e} Weil.
\newblock {\em Adeles and algebraic groups}, volume~23 of {\em Progress in
  Mathematics}.
\newblock Birkh\"{a}user, Boston, Mass., 1982.
\newblock With appendices by M. Demazure and Takashi Ono.

\bibitem{zelenov:19}
E.~I. Zelenov.
\newblock {$p$}-adic {G}aussian random variables.
\newblock {\em Tr. Mat. Inst. Steklova}, 306(Matematicheskaya Fisika i
  Prilozheniya):131--138, 2019.

\end{thebibliography}
\section{Appendix}

\subsection{Proof of Theorem~\ref{th:sard} (Sard's lemma)}\label{sec:proofsard}

First we observe that, using the fact that our $K$--analytic manifolds have a countable basis, 
we can reduce to the case of open subsets $X=U\subseteq K^n$ and $Y=V\subseteq K^m$.

The proof proceeds by induction on $n$. 
If $n=0$, then $U$ is countable. If $m=0$, then $C(\varphi)$ is empty and the statement is trivially true.
Otherwise, $m>0$, in which case $C(\varphi)=U$, $\varphi(U)$ is countable, and hence of 
measure zero in $V$.  

We assume now $n>0$ for the induction step. 
If $m=0$, then $C(\varphi)=\varnothing$ and we are done. So we can assume $m>0$. 

For $k\geq 1$, we denote by  $C_k$ the subset of $U$ consisting of the points where all the derivatives of $\varphi$ 
up to order~$k$ vanish. We also set $C_0:=C(\varphi)$. 
Clearly, each $C_k$ is closed and we have the sequence of inclusions 
$C_0\supseteq C_1\supseteq C_2\supseteq \ldots$. 
The intersection
$\cap_{k\geq 1}C_k$ consists of the points~$x$ where {\em all} the derivatives of $\varphi$ vanish.
Since $\varphi$ is $K$--analytic, and we assume that $\chara K=0$, it follows that, if $x\in \cap_{k\geq 1} C_k$, then
$\varphi$ is {\em locally constant} on a neighbourhood of~$x$. 
Therefore, if we define $U'$ to be the open subset of $U$ consisting of the points~$p$ for which there is an
open neighbourhood on which $\varphi$ is not constant, we can write 
\begin{equation}\label{eq:dec}
 U' \cap C_0 = \bigcup_{k\geq 0}U'\cap (C_k\setminus C_{k+1}) .
\end{equation} 
We note that $C_0\setminus U'$ can be written as a union of open subsets on which $\varphi$ is constant. 
Since $K^n$ has a countable basis, we can assume that this is a countable union. Therefore, the image 
$\varphi(C_0\setminus U')$ is countable and hence of measure zero in $K^m$. Since
$$
\varphi(C_0) = \varphi(C_0\setminus U') \cup \bigcup_{k\geq 0}\varphi(U'\cap (C_k\setminus C_{k+1})).
$$ 
it suffices to prove that all the sets $\varphi(C_k\setminus C_{k+1})$ have measure zero.

We first treat the case $k=0$, which is the crucial step. 
Again, since $K^n$ has a countable basis, it is sufficient to show that around each point $p\in C_0\setminus C_1$ 
there is an open set $U_p$ such that $\varphi(U_p\cap C_0)$ has measure zero. 
Since $p\notin C_1$, 
there exists some partial derivative of $\varphi$ that does not vanish at $p$. 
By relabelling the coordinates and the components $\varphi_i$ of $\varphi$, we 
can assume that 
$\frac{\partial \varphi_1}{\partial x_1}(p)\neq 0$. 
Consider now the $K$--analytic map $h:U\to K^n$ defined by
\begin{equation}
  h(x)=(\varphi_1(x), x_2, \ldots, x_n) .
\end{equation}
The derivative $D_p h$ is invertible. By the Inverse Function Theorem~\cite[Prop.~6.4]{schneiderp:11}, 
there exist open neighbourhoods $U_p$ and $\tilde{U}_p$ of $p$ and $h(p)$, 
respectively, 
such that $h\colon U_p\to \widetilde{U}_p$ is a $K$--bianalytic isomorphism. 
We define the $K$--analytic map $\widetilde{\varphi}:=\varphi\circ h^{-1}\colon \widetilde{U}_p\to K^m$, which  
has the same critical values as $\varphi$ restricted to $U_p$. 
Moreover $\widetilde\varphi$ preserves the first coordinate. 
Therefore, $\widetilde{\varphi}$~induces $K$--analytic maps
$\widetilde{\varphi}_t\colon U_{t,p} \to K^{m-1}$  
such that $\widetilde\varphi(t,x_2,\ldots,x_m)= (t,\widetilde{\varphi}_t(x_2,\ldots,x_m))$,
where $U_{t,p} \subseteq K^{n-1}$ denotes the section of $U_p$ over~$t$.
The Jacobian matrix of $\widetilde{\varphi}$ has the following form 
\begin{equation}
 D \widetilde\varphi=\left(\begin{array}{c|ccc}
   1 & 0 & \cdots & 0 \\\hline a_2 &  &  &  \\\vdots &  & D\widetilde{\varphi}_t &  \\a_n &  &  & 
  \end{array}\right) .
\end{equation}
Therefore $\rk(D\widetilde \varphi)=1+\rk(D\widetilde \varphi_t)$ and hence 
$$
 C(\widetilde\varphi) = \bigcup_t \{t\}\times C(\widetilde \varphi_t) .
$$
We obtain 
$$
 \widetilde\varphi (C(\widetilde\varphi)) = 
  \bigcup_t \{t\}\times \widetilde\varphi_t(C(\widetilde \varphi_t)).  
$$
By induction hypothesis, $\widetilde\varphi_t(C(\widetilde \varphi_t))$ has measure zero in $K^{m-1}$.
Fubini's theorem implies that $\widetilde\varphi (C(\widetilde\varphi))$ has measure zero as well;
e.g., see \cite[\S 3.6, Thm.~A]{halmos:50}.  
This shows that $\varphi(C_0\setminus C_{1})$ has measure zero.

Let us prove now that $\varphi(C_k\setminus C_{k+1})$ has measure zero 
in the case $k\geq 1$.  Let $p\in C_k\setminus C_{k+1}$. 
As before, it is sufficient to show there exists an open neighbourhood $U_p$ 
such that $\varphi(U_p\cap C_k)$ has measure zero.
There is a $(k+1)$--th derivative of of some component $\varphi$, say $\varphi_1$, 
that does not vanish at $p$.  
Thus we can find a $k$--th derivative of $\varphi_1$, say $\psi$, which vanishes on $C_k$ (by definition of $C_k)$ 
and such that $\frac{\partial \psi}{\partial x_1}(p)\neq 0$. Consider the map $h:U\to K^n$ defined by
\begin{equation}
 h(x_1, \ldots, x_n)=(\psi(x), x_2, \ldots, x_n).
\end{equation}
As above, the inverse function theorem guarantees there are open neighbourhoods $U_p$ and $\widetilde{U}_p$ 
of $p$ of $h(p)$, respectively, 
such that $h:U_p\to \widetilde{U}_p$ is a $K$--analytic isomorphism. 
By construction, this isomorphism carries $C_k\cap U_p$ into $\{0\}\times K^{n-1}$ (since $\psi$ vanishes on $C_k$). 
Therefore the map $\widetilde{\varphi}:=\varphi\circ h^{-1}$ has all its critical points of type $C_k$ 
on the hyperplane $\{0\}\times K^{n-1}$. Let us denote by  
$U_{0,p}\subseteq K^{n-1}$ the section of $U_p$ over $0$ obtained 
by intersecting with this hyperplane. 
We apply the induction hypothesis to the restriction 
$\widetilde{\varphi}_0\colon U_{0,p} \to K^m$ 
of $\widetilde{\varphi}$.
Accordingly, 
the set of critical values of $\widetilde{\varphi}_0$ has measure zero in $K^m$.
But each critical point of $\widetilde{\varphi}$ of type $C_k$ 
is a critical point of $\widetilde{\varphi}_0$. 
Therefore, 
$\widetilde{\varphi}_0(U_p \cap C_k)$
has measure zero as well. 
This concludes the proof.

\subsection{A proof of the coarea formula}\label{sec:proofcoarea}
We proceed by a technical result that will be needed in the proof of the coarea formula.

\begin{lemma}\label{pro:section_complement}
Suppose $A_0\in K^{m\times n}$ has rank $m\le n$  
and let $L$ be an $R$-complement of $\ker A_0$.  
There is an open neighborhood $\mathcal{O}$ of $A_0$ in $K^{m\times n}$ 
and there are rational functions $f_i\colon\mathcal{O} \to R^n$ such that
$(f_1(A),\ldots,f_{n-m}(A))$ is an $R$--basis of $(\ker A)_R$,  
and $L$ is an $R$-complement of $\ker A$, for all $A\in\mathcal{O}$. 
\end{lemma}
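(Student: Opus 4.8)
The idea is entirely linear-algebraic: reduce to a normal form via $\GL(R)$-transformations, exhibit the section in that normal form by explicit Cramer-type formulas, and transport it back. First I would use the row space of $A_0$ together with its orthogonal-complement data to pick coordinates. Concretely, since $A_0$ has rank $m$ and $L$ is an $R$-complement of $\ker A_0$, the columns of $A_0$ indexed by a suitable set $J$ of size $m$ can be taken to form an $R$-basis of the column span, and $L$ can be arranged to be spanned by the standard basis vectors $e_j$, $j\in J$. More precisely, after applying a fixed transformation in $\GL_m(R)$ on the left and a fixed permutation of columns (which does not affect the conclusion, only relabels the $f_i$), I may assume $A_0=[\,I_m \mid B_0\,]$ with $B_0\in R^{m\times(n-m)}$ and $L=K^m\times 0$. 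In this normal form, $\ker A_0$ is the column span of $\left[\begin{smallmatrix} -B_0 \\ I_{n-m}\end{smallmatrix}\right]$, which is visibly an $R$-basis of $(\ker A_0)_R$, and $L$ is an $R$-complement of it.

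\textbf{The explicit section.} For $A$ near $A_0$, write $A=[\,A' \mid A''\,]$ with $A'\in K^{m\times m}$ near $I_m$ and $A''\in K^{m\times(n-m)}$ near $B_0$. On the open set $\mathcal{O}:=\{A : |\det A'|=1\}$ (which contains $A_0$ and is open since $A\mapsto\det A'$ is continuous and $R^\times$ is open and closed in $R$), the matrix $A'$ lies in $\GL_m(R)$, so $(A')^{-1}$ has entries that are rational functions of $A$ with values in $R$ — indeed $(A')^{-1}=\mathrm{adj}(A')/\det A'$ and $\det A'\in R^\times$. Then the columns of
\[
  \begin{bmatrix} -(A')^{-1}A'' \\ I_{n-m}\end{bmatrix}
\]
lie in $R^n$, span $\ker A$ over $K$, and their wedge product has norm $1$ (the lower $(n-m)\times(n-m)$ minor is the identity, so by Lemma~\ref{le:RbasChar} they form an $R$-basis of $(\ker A)_R$). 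Defining $f_i(A)$ to be the $i$-th such column gives rational functions $\mathcal{O}\to R^n$ with the required basis property. Moreover $L=K^m\times 0$ satisfies $L_R\oplus(\ker A)_R=R^n$: any $v\in R^n$ decomposes uniquely as its first $m$ coordinates (adjusted) plus an $R$-combination of the $f_i(A)$, because the lower block of the $f_i$ is the identity — this is just reading off the last $n-m$ coordinates of $v$. Finally I transport back through the fixed transformations used to reach the normal form; these lie in $\GL_m(R)\times(\text{permutation})$, hence preserve all the integrality and $R$-basis statements and send $L$ back to the original complement, while rational functions pull back to rational functions.

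\textbf{Main obstacle.} The only genuinely delicate point is the bookkeeping in the reduction to normal form: I must check that the left multiplication and column permutation needed to bring $(A_0,L)$ into the shape $([\,I_m\mid B_0\,],\,K^m\times 0)$ can indeed be realized by elements of $\GL_m(R)$ and $\GL_n(R)$ respectively, using precisely the hypothesis that $L$ is an $R$-complement of $\ker A_0$ (not merely a $K$-complement). Concretely, one picks $J\subseteq[n]$ so that $\{e_j : j\in J\}$ is an $R$-basis of $L_R$; complementarily $\{e_j : j\notin J\}$ maps to an $R$-basis of $R^n/\ker(A_0)_R$, which forces the submatrix $(A_0)_{[m],J^c}$ — wait, rather the submatrix on columns $J$ — to be in $\GL_m(R)$ after the change of basis on the target given by $A_0$ restricted to $L$. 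Unwinding this via Lemma~\ref{le:extend-basis} and the Smith normal form gives exactly the claimed $\GL_m(R)$-factor. Once this is set up the rest is the routine Cramer's-rule computation sketched above.
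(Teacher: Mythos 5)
Your overall strategy — reduce to a normal form and write down the section by Cramer's rule — is the same as the paper's, but two of the details you wave at are precisely where the work is, and one of them as written is a genuine gap.

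First, the claim that the $J$-submatrix of $A_0$ "must be in $\GL_m(R)$" is false. Take $m=1$, $n=2$, $A_0=[p,\,0]$, $L=K\times 0$. Then $L$ is an $R$-complement of $\ker A_0=0\times K$, but $A_0(L_R)=pR\ne R$, so no choice of $J$ (and no left $\GL_1(R)$-action or column permutation) turns $A_0$ into $[1\mid B_0]$. The hypothesis that $L$ is an $R$-complement of $\ker A_0$ forces $A_0|_{L}$ to be a $K$-isomorphism, not an isomorphism of lattices. So the left normalization needed to reach $[\,I_m\mid B_0\,]$ lives in $\GL_m(K)$, not $\GL_m(R)$. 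This does not wreck the proof — left multiplication by $\GL_m(K)$ preserves $\ker A$ and hence $(\ker A)_R$ and the whole statement — but your "Main obstacle" paragraph, which asserts that Lemma~\ref{le:extend-basis} and Smith normal form give a $\GL_m(R)$-factor, is simply wrong, and you should not lean on it.

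Second, and more seriously, the neighborhood $\mathcal{O}:=\{A : |\det A'|=1\}$ is too large for the argument to go through. For $A\in\mathcal{O}$ in your sense (a general $A\in K^{m\times n}$ with $|\det A'|=1$) one can have $A'=\mathrm{diag}(p^{-1},p,1,\ldots,1)$, which is not in $\GL_m(R)$, and $A''$ with entries of arbitrarily large norm. Then $(A')^{-1}$ need not have entries in $R$, nor need $-(A')^{-1}A''$, so the columns you write down are not in $R^n$, are not an $R$-basis of $(\ker A)_R$, and $L$ fails to be an $R$-complement of $\ker A$. The condition $|\det A'|=1$ alone controls none of this. You must restrict to $A$ with entries in $R$, e.g.\ $\mathcal{O}:= R^{m\times n}\cap\{|\det A'|=1\}$ (still open, still contains $[\,I_m\mid B_0\,]$), or to a ball $\|A-A_0\|<1$. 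This quantitative choice of neighborhood is exactly the point the paper is careful about: it keeps the normal form $[D\mid 0]$, sets $\delta:=|\det D|$, and takes the neighborhood $\|C\|<\delta$, $|\det B-\det D|<\delta$, so that the ultrametric inequality forces $|\det B|=\delta$, $\|B^{-1}\|\le\delta^{-1}$, and hence $B^{-1}c_j\in R^m$. Your version can be made to work, but only after imposing integrality of $A'$ and $A''$, which you should state explicitly.
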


\begin{proof} 
By scaling we may assume without loss of generality that $A_0$ has entries in $R$. 
Using the Smith normal form, we may assume that 
$A_0=[D \ 0]$ 
and $L=K^m\times \{0\}^{n-m}$,
where $D$ is an $m\times m$ diagonal matrix with nonzero entries in~$R$
and $0$ stands for the zero matrix of format $m\times (n-m)$. 
Notice that, since $A_0\in R^{m\times n}$, a small neighborhood of $A_0$ will still be made of matrices with integral entries.
We write a matrix $A\in R^{m\times n}$ in the form
$A=[B \ C]$, 
with $B\in R^{m\times m}$ and $C\in R^{m\times (n-m)}$. 
If $B$ is invertible, we have  
\begin{equation}\label{eq:kern}
  \ker A = \{ (-B^{-1}Cy, y) \mid y \in K^{n-m} \}.
\end{equation}
Put $\delta:= |\det(D)|>0$ and 
consider the  open neighborhood of $A_0=[D \ 0]$ consisting of 
the matrices $A=[B \ C] \in R^{m\times n}$ satisfying 
$\|C\| < \delta$ and $|\det(B) - \det(D)| < \delta$. 
For those $B$, we have 
$|\det(B)| = \delta $ by the ultrametric property, hence 
$$
 \|B^{-1}\| = |\det(B)^{-1}| \cdot \|\mathrm{adj}(B)\| \le |\det(B)^{-1}| = \delta^{-1} ,
$$
using that the adjugate $\mathrm{adj}(B)$ of $B$ has entries in $R$. 
Let $c_j$ denote the $j$th column of $C$ and $e_j\in K^{n-m}$ the $j$th standard basis vector.  
From $\|c_j\| \le \|C\| < \delta$, we get 
$\|B^{-1}c_j\| \le \|B^{-1}\| \|c_j\| <1$, hence $B^{-1}c_j \in R^m$. 
From this we conclude that  
$$
 (f_1(A),\ldots,f_{n-m}(A)) :=((-B^{-1} c_1,e_1), \ldots,  (-B^{-1}c_{n-m},e_{n-m}))
$$
is an $R$--basis of $(\ker A)_R$. 
Clearly, the basis vectors depend rationally on $B,C$. 
Moreover, $L=K^m\times \{0\}^{n-m}$ is an $R$-complement of $\ker A$.  
\end{proof}

\begin{proof}[Proof of Theorem~\ref{th:coarea}]
The proof runs as in Howard~\cite[Appendix]{howard:93}. 
We first note that $J(\varphi)(x)=0$ for $x\in C(\varphi)$. 
Hence the domain $X$ of integration on the left-hand side can be replaced 
by the open subset $X\setminus C(\varphi)$. Note that the assertion 
trivially holds if $X = C(\varphi)$: in fact, in this case, the image of $X$ has measure zero and 
for all the points $y\notin \varphi(X)$ the integrand on the right hand side is zero.

We can therefore w.l.o.g.\ focus on the case where $C(\varphi)=\varnothing$.
Using a partition of unity, 
we can reduce to the local statement, where 
$X\subseteq R^n$ and $Y\subseteq R^m$ are open subsets, whose 
$R$--structures are induced from the standard $R$--structures. 
(In fact, we do not even need to employ partitions of unity 
since every covering of~$X$ by open subsets can be refined to a 
{\em disjoint} covering (see \cite[Lemma~1.4]{schneiderp:11}.)

Let $x_0\in X$, and consider the Jacobian $A_0 := D_{x_0}\varphi \in K^{m\times n}$.
According to Lemma~\ref{pro:section_complement}, 
there is an $R$--complement $L$ of $\ker A_0$ and 
there are rational functions $f_i(x)$ of~$x$ defined 
in an neighborhood of $x_0$, such that 
$L$ is an $R$--complement of $\ker D_{x}\varphi$, and such that 
$(f_{m+1}(x),\ldots,f_{n}(x))$ form an $R$--basis of $\ker  D_{x}\varphi$, 
for all~$x$ in this neighborhood. By shrinking, we may 
assume that this neighborhood equals $X$. 
We fix an $R$--basis 
$f_1,\ldots,f_m$ of $L_R$. 
Further, let 
$e_1,\ldots,e_m$ denote the standard basis of $K^m$. 
Then, for $x\in X$, we have the $R$--bases
$$ 
(f_1,\ldots,f_m) \mbox{ of  $L_R$}, \quad
(f_{m+1}(x),\ldots,f_{n}(x)) \mbox{ of  $(\ker D_x\varphi)_R$},\quad 
(e_{1},\ldots,e_{m}) \mbox{ of $(T_{\varphi(x)}Y)_R= R^m$} . 
$$ 
Joining the first two bases yields an $R$--basis of~$(T_xX)_R=R^n$. 
We denote the corresponding dual bases of 
$L_R^*$, $(\ker D_x\varphi)_R^*$, and $(T_y^*Y)_R$ by 
$(\s_1,\ldots,\s_{m})$, $(\s_{n-m+1}(x),\ldots,\s_n(x))$, and 
$(\tau_{1},\ldots,\tau_{m})$, respectively.
Then we define the analytic forms $\omega_1,\omega_2$ on $X$ 
and $\eta$ on $Y$ by 
$$
 \omega_1 := \s_1\wedge \ldots\wedge \s_{m},\quad 
 \omega_2(x) := \s_{m+1}(x)\wedge \ldots\wedge \s_{n}(x) , \quad 
  \eta := \tau_{1}\wedge \ldots\wedge \tau_{m}. 
$$
By the definition of the volume form \eqref{eq:defOmega} we have,
$$
 \Omega_X = | \omega_1\wedge \omega_2| , \quad 
 \Omega_Y = |\eta| , \quad 
 \Omega_{\varphi^{-1}(y)} = \left\vert \ \omega_{2} {|}_{\varphi^{-1}(y)} \ \right\vert , 
$$
where for the third statement we used that 
$T_x \varphi^{-1}(y) = \ker D_x\varphi$ if $y=\varphi(x)$.  

For $1\le j \le m$ we write 
$D_x\varphi (f_j) =\sum_{i=1}^m a_{ij} e_i$
with $a_{ij}\in K$, with the matrix $A=[a_{ij}]$.
(Note that $D_x\varphi (f_j(x)) =0$ for $j>m$.) 
We have $\varphi^* \tau_i = \tau_i \circ D_x\varphi = \sum_{j=1}^m a_{ij} \sigma_j$.
Hence
$$
 \varphi^* (\tau_1 \wedge\ldots\wedge \tau_m) =  
  \varphi^* (\tau_1) \wedge\ldots\wedge \varphi^*(\tau_m) = 
  \sum_{j_1,\ldots,j_m} a_{1 j_1}\ldots a_{m j_m} \sigma_{j_1}\wedge\ldots,\wedge\sigma_{j_m} = 
  \det(A)\, \omega_1 .
$$ 
Moreover, we have $J(\varphi)(x) =|\det(A)|$ for $x\in X$ by the definition of the absolute Jacobian.
Therefore, 
$$
 J(\varphi)\, \Omega_X = | \varphi^* \, \eta  \wedge \omega_2 | .
$$
So it suffices to prove that 
\begin{equation}\label{eq:Esi}
  \int_X h\, |\omega_2 \wedge \varphi^* \eta | = 
  \int_{y\in Y} \Big(\int_{\varphi^{-1}(y)} h\, |\omega_2| \Big) \eta (y) ,
\end{equation}
but this is guaranteed by Lemma~\ref{le:FInt}. 
\end{proof}


\end{document}